\documentclass[12pt]{amsart}
\usepackage{amsfonts,amsmath,amsthm,amssymb,mathrsfs}
\usepackage{MnSymbol}
\usepackage{tikz, tikz-cd}\usetikzlibrary{arrows}
\usetikzlibrary{decorations.markings}
\usepackage[square]{natbib}
\setcitestyle{numbers}

\usepackage[all]{xy}
\usepackage[vcentermath]{youngtab}
\usepackage{ytableau}
\usepackage{hyperref}

\usepackage[margin=1.1in]{geometry}

\definecolor{darkred}{rgb}{1,0,0}

\newtheorem{theorem}{Theorem}[section]
\newtheorem{thm}[theorem]{Theorem}

\newtheorem{lem}[theorem]{Lemma}

\newtheorem{prop}[theorem]{Proposition}

\newtheorem{cor}[theorem]{Corollary}
\newtheorem{conj}[theorem]{Conjecture}

\theoremstyle{remark}

\theoremstyle{definition}

\newtheorem{lemdef}[theorem]{Definition/ Lemma}
\newtheorem{rmk}[theorem]{Remark}
\newtheorem{example}[theorem]{Example}
\newtheorem{defn}[theorem]{Definition}

\newtheorem*{claim*}{Claim}
\newtheorem*{defn*}{Definition}

%to remove marginal notes, uncomment the following:
%\renewcommand{\margin}[1]{}

\newcommand{\bbc}{\mathbb{C}}

\newcommand{\bbn}{\mathbb{N}}

\newcommand{\bbp}{\mathbb{P}}
\newcommand{\bbq}{\mathbb{Q}}
\newcommand{\bbr}{\mathbb{R}}

\newcommand{\bbz}{\mathbb{Z}}

\newcommand{\mca}{\mathcal{A}}
\newcommand{\mcb}{\mathcal{B}}
\newcommand{\mcc}{\mathcal{C}}
\newcommand{\mcd}{\mathcal{D}}

\newcommand{\mcf}{\mathcal{F}}

\newcommand{\mci}{\mathcal{I}}

\newcommand{\mcl}{\mathcal{L}}
\newcommand{\mcm}{\mathcal{M}}

\newcommand{\mcu}{\mathcal{U}}

\newcommand{\tQ}{\tilde{Q}}

\renewcommand{\aa}{\alpha}
\newcommand{\bb}{\beta}
\newcommand{\lam}{\lambda}

\newcommand{\ov}{\overline}
\newcommand{\eps}{\epsilon}

\newcommand{\GL}{\textnormal{GL}}
\newcommand{\SL}{\textnormal{SL}}

\newcommand{\Gr}{{\rm Gr}}

\def\tGr{\widetilde \Gr}

\newcommand\preceqdot{\mathrel{\ooalign{$\prec$\cr
  \hidewidth\raise0.225ex\hbox{$\cdot\mkern0.5mu$}\cr}}}

\DeclareMathOperator{\Aut}{Aut}

\newcommand{\Bound}{{\rm B}}
%the (right) twist of a matrix, principal form or subspace
%the left twist of a matrix, principal form or subspace
\newcommand{\rmci}{\vec{\mci}}%A (forward) Grassman necklace
\newcommand{\rightI}{\vec{I}}%An element of a Grassman necklace
%A reverse Grassman necklace
%An element of a reverse Grassman necklace

%\DeclareMathOperator{\sff}{sf}

%05E10 (combinatorial aspects of rep theory),05C10 (planar graphs, geom. and top. aspects of graph theory),14M15 (Grassmannians, schub. vars, ..)
%15B48 (positive matrices and their generalizations), 20C30 (reps of finite symmetric groups), 05B35 (matroids, geom. lattices)
%82B20 (Equilibrium statistical mechanics, Lattice systems (Ising, dimer, Potts, etc.) and systems on graphs
% 17B37 (qtm groups and related deformations)

\title{Cyclic symmetry loci in Grassmannians}
\author{Chris Fraser}
\address{University of Minnesota, 206 Church St SE, Minneapolis.}
\email{cfraser@umn.edu}
\keywords{}

\setcounter{tocdepth}{1}

\numberwithin{equation}{section}

\begin{document}
\begin{abstract}
The Grassmannian $\Gr(k,n)$ admits an action by a finite cyclic group via the cyclic shift map. We give a simple description of the points fixed by each element of this cyclic group, extending Karp's description of the points fixed by the cyclic shift itself. We give a cell decomposition of the set of totally nonnegative points in each cyclic symmetry locus and describe efficient total positivity tests, extending results of Postnikov to the cyclically symmetric setting. We describe a conjectural generalized cluster structure on cyclic symmetry loci provided the order of the orbifold point is sufficiently large. The generalized exchange relations we find should be a Higher Teichm\"uller analogue of the relations Chekhov and Shapiro used to study Teichm\"uller theory of orbifolds.  
\end{abstract}

\maketitle
%\tableofcontents
\vspace{-.3in}

\section*{Introduction}\label{secn:intro}
Let $\Gr(k,n)$ denote the Grassmannian of $k$-subspaces in $\bbc^n$. Cyclic permutation of the coordinates on $\bbc^n$ induces an automorphism of order $n$, the {\sl cyclic shift automorphism} $\rho \in \Aut(\Gr(k,n))$.\footnote{We suppress a certain sign from the definition of $\rho$ until the body of the paper.} This cyclic symmetry is an important feature underlying the structures describing Poisson geometry, total nonnegativity, and clusters, for the Grassmannian. 

For example, the cyclic shift map is an automorphism of $\Gr(k,n)$ with its standard Poisson structure \cite{Yakimov}. The Grasmannian bears a distinguished semialgebraic subset $\Gr(k,n)_{\geq 0} \subset \Gr(k,n)$, the {\sl TNN Grassmannian}, consisting of those points whose Pl\"ucker coordinates lie in $\bbr_{\geq 0}$. Postnikov (following Lusztig \cite{Lusztig94}) endowed $\Gr(k,n)_{\geq 0}$ with a cell decomposition \cite{Postnikov}. The cyclic shift map is a cellular self-homeomorphism of $\Gr(k,n)_{\geq 0}$, and the resulting cyclic symmetry features prominently in the combinatorics which indexes the cells. Recent work of Galashin, Karp, and Lam \cite{GKLI} uses the cyclic shift map as a main ingredient in the construction of a {\sl contractive flow } on $\Gr(k,n)_{\geq 0}$. This flow yields a homeomorphism of $\Gr(k,n)_{\geq 0}$ with a closed ball, confirming a longstanding conjecture of Postnikov. 

From a cluster perspective, the cyclic shift map is a {\sl cluster automorphism}, permuting the cluster variables, the clusters, and the (Gross-Hacking-Keel-Kontsevich) {\sl $\theta$ basis} \cite{GHKK} in the cluster algebra structure on the homogeneous coordinate ring $\bbc[\Gr(k,n)]$. The last of these facts gives rise to a new proof of Rhoades' protoypical cyclic seiving result for for the action of promotion on SSYT of rectangular shape \cite{Rhoades,ShenWeng}.

\medskip

This paper systematically studies the fixed point loci of this interesting cyclic action on the Grassmannian. For $\ell \in \bbz / n \bbz$, define the {\sl $\ell$-fixed locus} $\Gr(k,n)^{\rho^\ell} \subset \Gr(k,n)$ as the subset of $\rho^\ell$-fixed points. We also refer to $\Gr(k,n)^{\rho^\ell}$ as a {\sl cyclic symmetry locus}. Karp showed that the $1$-fixed locus in $\Gr(k,n)$ consists of exactly $\binom{n}k$ many points, exactly one of which is TNN \cite{Karp}. %These points have interesting relationships e.g. with quantum cohomology of Grassmannians and moment curves \cite{Karp}. 
On the other hand, the $n$-fixed locus is $\Gr(k,n)$ itself, with a cluster algebra structure and a rich theory of total nonnegativity. This paper aims to appropriately extend such results to arbitrary cyclic symmetry loci. 

\medskip

First, we describe each cyclic symmetry locus as a projective algebraic variety (cf. Proposition~\ref{prop:components}): it is an explicit disjoint union of algebraic varieties, each of which is a product of (smaller) Grassmannians. We point out that, after an appropriate linear change of coordinates on $\bbc^n$, we can view the cyclic symmetry locus as a  
set of torus-fixed points, and as a Richardson variety in $\Gr(k,n)$. 

Second, we describe the semialgebraic subset $\Gr(k,n)^{\rho^\ell}_{\geq 0} \subset \Gr(k,n)^{\rho^\ell}$ consisting of TNN and $\ell$-fixed points (cf. Theorem~\ref{thm:cells}). We give a cell decomposition of this space and describe the cell closure partial order as the dual poset of an order ideal in Bruhat order on the affine symmetric group $\tilde{S}_\ell$. As $k \to \infty$, these lower order ideas cover all of $\tilde{S}_\ell$. We also show that $\Gr(k,n)^{\rho^\ell}_{\geq 0}$ is homeomorphic to a closed ball using the techniques from \cite{GKLI}.

Third, we study the existence and structure of efficient {\sl total positivity (TP) tests} for $\ell$-fixed points (cf.~Theorem~\ref{thm:TPtests}). We ask: given an $\ell$-fixed point~$X$, how many (and which) Pl\"ucker coordinates must we test in order to guarantee that {\sl all} Pl\"ucker coordinates of~$X$ are positive. We demonstrate the existence of efficient TP tests
%\footnote{By this we mean, tests whose size matches the dimension of the TP cell.} 
for the $\ell$-fixed locus in $\Gr(k,n)$, for {\sl any value} of the parameters $n,k,\ell$. Our construction uses a cyclically symmetric version of {\sl bridge decompositions} of plabic graphs, and yields efficient TP tests for all cells in  $\Gr(k,n)_{\geq 0}^{\rho^\ell}$, not merely for the top cell $\Gr(k,n)^{\rho^\ell}_{>0}$.  

Fourth and finally, we investigate the existence of cluster structures on the cyclic symmetry locus. The locus is a disconnected space, but it admits a {\sl distinguished component} $\mcd = \mcd_n(k,\ell)$ containing the $\ell$-fixed TNN points. Let $p = \frac{n}{\gcd(\ell,n)}$ denote the order of $\rho^\ell \in {\rm Aut}(\Gr(k,n))$, which we refer to as the {\sl order of the orbifold point}. Provided $p \in [k,\infty)$, we endow $\mcd$ with a (conjectural) atlas of generalized cluster charts, each of which is an efficient total positivity test for $\Gr(k,n)^{\rho^\ell}$.
{\sl Generalized cluster algebras} \cite{ChekShap,CAPG} are a a well-behaved generalization of Fomin and Zelevinsky's cluster algebras in which the usual exchange binomials are replaced by longer sums of monomials. Our atlas is conjectural: we are currently only able to show that our upper generalized cluster algebra coincides with $\bbc[\mcd]$ for convenient values of the parameters $k,\ell,n$ (cf.~Theorem~\ref{thm:onestep} and discussion thereafter). We henceforth omit the adjective conjectural from the introduction.  The assumption $p \in [k,\infty)$ is exactly the assumption that $\mcd_{p\ell}(k,\ell)$ is a product of projective spaces $(\bbp^{\ell-1})^k$, not merely a product of Grassmannians.

Our generalized cluster structure on $\mcd$ has one generalized cluster exchange relation, with the remaining relations binomial. Fixing $k,\ell$ and varying the order of the orbifold point $p \in [k,\infty)$, the cluster structure on $\bbc[\mcd]$ is a specialization of a generalized cluster algebra 
$\mca_{\rm cyc}(k,\ell)$. In the latter, the coefficients $z_s$ of the generalized exchange relation are indeterminates. To obtain the former, we specialize the indeterminate $z_s$ to a $q$-binomial coefficient at a $p$th root of unity: $z_s \mapsto \binom k s_q \in \bbc$ with $q := e^{\frac{2\pi i}{p}}$. The specializations $p = k$ and $p=\infty$ correspond to the right and left {\sl companion} cluster algebras, a pair of Fomin-Zelevsinky cluster algebras associated to any generalized cluster algebra \cite{NakRup}.

Under an extra assumption $k<\ell$ on the parameters, Gekhtman, Shapiro, and Vainshtein have constructed a cluster structure on an affine space of {\sl periodic band matrices} whose width and periodicity are recorded by parameters $k$ and $\ell$ \cite{GSVStaircase}. We introduce a notion of {\sl quasi-homomorphism} \cite{FraserQH} of generalized cluster algebras and show that $\mca_{\rm cyc}(k,\ell)$ is a quasi-homomorphic image of the GSV structure on band matrices. The algebra $\mca_{\rm cyc}(k,\ell)$ makes sense when $k \leq \ell$, so this connection might be a way of extending \cite{GSVStaircase} to these cases. We also conjecture (Conjecture~\ref{conj:quantumaffine})  a relationship between $\mca_{\rm cyc}(k,\ell)$ and the representation theory of quantum affine $\mathfrak{sl}_k$ at $\ell$th roots of unity building on \cite{Gleitz}.

Another motivation for studying cyclic symmetry loci comes from Higher Teichm\"uller theory. The first appearance of generalized cluster algebras in nature \cite{ChekShap} was in the setting of decorated Teichm\"uller theory of orbifold surfaces (cf.~\cite{GSVDouble,GSVStaircase, Gleitz,WKB} for other appearances.)
%\footnote{Other incarnations of generalized cluster algebras are in exact WKB analysis \cite{WKB}, representation theory of $U_q(\mathfrak{sl}_2)$ at $q$ a root of unity \cite{Gleitz}, and in the construction of (exotic) cluster structures on $\GL_n$ and its Drinfeld double \cite{GSVDouble, GSVStaircase}.}. 
In that story, the exchange polynomial for an arc ending near an orbifold point of order $p$ is the {\sl orbifold Ptolemy relation} $a^2+2 \cos(\pi/p)ab+b^2$ where $a$ and $b$ are the cluster monomials participating in the exchange relation. 
Our ``$\SL_k$-{\sl higher orbifold Ptolemy relation}''  $\sum_{s=0}^k {k \brack s}_{q = e^{\frac{2\pi i}{p}}} a^{s}b^{k-s}$ specializes to the above orbifold Ptolemy relation when $k=2$. We interpret the specialized $q$-binomial coefficients appearing in this relation as certain ratios of Pl\"ucker coordinates of the unique point in $\Gr(k,n)^{\rho}_{>0}$. 

Another viewpoint on the generalized cluster algebra $\bbc[\mcd_n(k,\ell)]$ is that it is is a ``folding'' of the Grassmannian cluster algebra along the $\rho^\ell$ symmetry. The naive approach to folding a cluster algebra 
along a finite group of symmetries is poorly behaved \cite[Chapter 4]{CABook}. Our constructions for cyclic symmetry loci suggest a recipe for overcoming these obstacles in examples (cf.~especially Lemma~\ref{lem:vanillaDT}). 
%and give a perspective on the ``origin'' of generalized cluster algebras. 
%It would be interesting to find other examples of cluster algebras from geometry or representation theory where this approach works. 
A different approach to quotienting cluster algebras by group actions is given in \cite{PaqSchiff}. 

\medskip

{\it Organization.} Section \ref{secn:loci} defines the $\ell$-fixed locus. Section~\ref{secn:background} collects background on $\Gr(k,n)_{\geq 0}$, $\bbc[\Gr(k,n)]$ as a cluster algebra, and generalized cluster algebras (including a definition of quasi-homomorphisms). Section~\ref{secn:leapweak} introduces a partially ordered set (the {\sl bridge order}) used in our inductive proofs. Section~\ref{secn:components} describes the $\ell$-fixed locus as a projective variety. Section~\ref{secn:cells} studies the TNN part of the cyclic symmetry locus. Section~\ref{secn:TPtests} constructs efficient TP tests. Section~\ref{secn:clusters} gives our main theorem and conjecture concerning clusters and summarizes partial results. Section~\ref{secn:quantum} discusses connections with band matrices and quantum affine algebras. Section~\ref{secn:folding} discusses our approach to folding. Section~\ref{secn:examples} studies the finite type examples, including $p<k$ examples. It exhibits proper containment of cluster algebra in upper cluster algebra, and shows that $\rho$ need not be a cluster automorphism of $\bbc[\mcd]$. Section~\ref{secn:proofs} collects some lengthier proofs.

\section*{Acknowledgements}
We thank P.~Pylyavksyy for encouraging us to investigate cyclic symmetry loci and generalized cluster structures. We thank F.~Bergeron, S.~Fomin, M.~Gekhtman, S.~Hopkins, S.~Karp, J.~Levinson, V.~Reiner, M.~Sherman-Bennett, and K.~Trampel for helpful conversations. This work was supported by the NSF grant DMS-1745638.

\section{Cyclic symmetry loci}\label{secn:loci}
We collect background on the Grassmannian and its cyclic shift automorphism. The subset of points fixed by a given iterate of the cyclic shift map is our main object of study. 

Let $\Gr(k,n)$ denote the Grassmann manifold of $k$-dimensional subspaces in $\bbc^n$. We view $\Gr(k,n) \subset \bbp^{\binom n k -1}$ as a closed subvariety of projective space in the following way. A choice of ordered basis for $X \in \Gr(k,n)$ determines a $k \times n$ matrix $M$ whose rows are the basis vectors. Then the {\sl Pl\"ucker embedding} $\Gr(k,n) \hookrightarrow \bbp^{\binom n k -1}$
sends
$$X \mapsto (\Delta_I(X))_{I \in \binom {[n]}k}:= (\Delta_I(M))_{I \in \binom {[n]}k}.$$
The map is injective and does not depend on the choice of basis for $X$. The numbers $\Delta_I(X)$ are the {\sl Pl\"ucker coordinates} of $X$. They are homogeneous coordinates: they are well-defined only up to simultaneous rescaling. The image of Pl\"ucker embedding is defined by well known quadratic relations in Pl\"ucker coordinates known as the {\sl Pl\"ucker relations}. 

We denote by $\bbc[\Gr(k,n)]$ the homogeneous coordinate ring of the Grassmannian in its Pl\"ucker embedding. Concretely, this is the $\bbc$-algebra generated by the $\binom n k$ symbols $\Delta_I$, subject to the Pl\"ucker relations. For $\mcc \subset \binom{[n]}k$ we write $\Delta(\mcc)$ for $\{\Delta(I) \colon I \in \mcc\}$. We often abbreviate $\Delta_I$ to $I$, writing e.g. $124$ in place of $\Delta_{124}$.  

Linear automorphisms of $\bbc^n$ induce automorphisms of $\Gr(k,n)$. (These are the {\sl only} automorphisms of $\Gr(k,n)$ when $n \neq 2k$, and when $n=2k$, there is an
``extra'' automorphism induced by {\sl Grassmann duality} $\Gr(k,n) \cong \Gr(n-k,n)$.)

\begin{defn}
For fixed $1 \leq k \leq n$, let $\rho$ denote the (signed) circulant matrix 
\begin{equation}\label{eq:rhomatrix}
\rho = \begin{pmatrix}
0 & 0 & \cdots & 0 & (-1)^{k-1}\\
1 & 0 & \cdots & 0 & 0\\
0 & 1 & \cdots & 0 & 0\\
\vdots & \vdots & \ddots& \vdots & \vdots\\
0& 0& \cdots& 1& 0\\
\end{pmatrix} \in \GL(\bbc^n).
\end{equation}
The {\sl cyclic shift map} $\rho \in \Aut(\Gr(k,n))$ is the automorphism induced by $\rho \in \GL(\bbc^n)$. 
\end{defn}

Since $\rho^n \in \GL(\bbc^n)$ is a scalar multiple of the identity matrix, $\rho^n =1 \in {\rm Aut}(\Gr(k,n))$.

%If $X \in \Gr(k,n)$ has a matrix representative $M$ with column vectors $v_1,\dots,v_n$, then $\rho(X)$ is represented by the matrix with column vectors $v_2,\dots,v_n,(-1)^{k-1}v_1$. 

We denote by the same symbol $\rho$ the cyclic shift $i \mapsto (i+1 \mod n)$ on $[n]$, and also the induced map on $k$-subsets $\rho \colon \binom{[n]}k \to \binom{[n]}k$. Due to the choice of sign $(-1)^{k-1}$ in \eqref{eq:rhomatrix}, the 
pullback $\rho^* \in \Aut(\bbc[\Gr(k,n)])$ acts on Pl\"ucker coordinate by $$\rho^*(\Delta_{I}) = \Delta_{\rho(I)} .$$

\begin{defn}\label{defn:locus}
The {\sl cyclic symmetry locus} is the subset 
\begin{equation}\label{eq:locus}
\Gr(k,n)^{\rho^\ell} := \{X \in \Gr(k,n) \colon \rho^\ell(X) = X\}
\end{equation}
of $\rho^\ell$-fixed points. We call it the {\sl $\ell$-fixed locus} when emphasizing a particular value of $\ell$.

%We also define $\Gr(k,n)_{\geq 0}^{\rho^\ell}:= \Gr(k,n)^{\rho^\ell} \cap \Gr(k,n)_{\geq 0}$, the TNN part of the $\ell$-fixed locus, and 
%${\rm cell}(\mcm)^{\rho^\ell}:= \Gr(k,n)^{\rho^\ell} \cap {\rm cell}(\mcm)$ the $\rho^\ell$-fixed points in a positroid cell. 
\end{defn}

With $n$ fixed in our minds, we always denote by $p = \frac{n}{\gcd(\ell,n)}$ the order of $\rho^\ell \in {\rm Aut}(\Gr(k,n))$. We refer to $p$ as the {\sl order of the orbifold point} in anticipated analogy with \cite{ChekShap}. It is often convenient to assume that $n=p\ell$, which represents no loss in generality because 
$\Gr(k,n)^{\rho^\ell} = \Gr(k,n)^{\rho^{\ell'}}$ where 
$\ell' := \gcd(\ell,n)$.

Because Pl\"ucker coordinates are homogeneous coordinates, the condition that $X$ be $\ell$-fixed is the condition that for some $\zeta \in \bbc^*$, 
\begin{equation}\label{eq:character}
\Delta_I(X) = \zeta\Delta_I(\rho^\ell(X)) = \zeta \Delta_{\rho^\ell(I)}(X) \text{ for all } I \in \binom{[n]}k.
\end{equation}
Since $\rho^\ell$ has order $p$ when acting on $\binom{[n]}k$, we 
conclude that $\zeta$ must be a $p$th root of unity.

The linear equations on Pl\"ucker coordinates \eqref{eq:character}, for a fixed $p$th root $\zeta$, determine a subvariety of $\Gr(k,n)$. The 
$\ell$-fixed locus is a disjoint union of these subvarieties. Each such subvariety is typically nonempty, disconnected (in particular, reducible as an algebraic variety), and non equidimensional (cf.~Section~\ref{secn:components}).

For most of the paper, we are interested in points in the Grassmannian with nonnegative Pl\"ucker coordinates. For these, the scalar 
$\zeta$ in \eqref{eq:character} must equal~1.

The following theorem of Karp is our starting point. 
\begin{thm}[{\cite{Karp}}]\label{thm:Karpfinitelymany}
The 1-fixed locus in $\Gr(k,n)$ consists of exactly $\binom n k $ points. Exactly one of these points has all of its Pl\"ucker coordinates real and nonnegative.
\end{thm}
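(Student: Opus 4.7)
The plan is to exploit the fact that the cyclic shift matrix $\rho$ is diagonalizable with distinct eigenvalues over $\bbc$, so that $\rho$-fixed subspaces of $\bbc^n$ correspond bijectively to $k$-subsets of eigenvalues. This will immediately yield the count $\binom n k$, and reduce the positivity question to an explicit computation with Vandermonde determinants and Schur polynomials.

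First I would note that the matrix $\rho$ of \eqref{eq:rhomatrix} satisfies $\rho^n = (-1)^{k-1}I$, so its minimal polynomial divides $X^n - (-1)^{k-1}$, which has $n$ distinct complex roots. Hence $\rho$ is diagonalizable with $n$ distinct eigenvalues $\lambda_1,\dots,\lambda_n$ (the $n$th roots of $(-1)^{k-1}$), each with a one-dimensional eigenspace spanned by an explicit Vandermonde vector $v_{\lambda_i} = (\lambda_i,\lambda_i^2,\dots,\lambda_i^n)^T$ (up to rescaling and a choice of sign convention matching \eqref{eq:rhomatrix}). A subspace $X \in \Gr(k,n)$ is $\rho$-invariant iff it is a direct sum of eigenlines of $\rho$; since each eigenline is one-dimensional, this gives exactly $\binom n k$ invariant $k$-subspaces, one for each $S \in \binom{\{\lambda_1,\dots,\lambda_n\}}{k}$.

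Next I would compute the Pl\"ucker coordinates. Writing $S = \{\mu_1,\dots,\mu_k\}$ and forming the $k \times n$ matrix $M_S$ with rows $v_{\mu_1},\dots,v_{\mu_k}$, the $J$-th minor for $J = \{j_1 < \cdots < j_k\}$ is $\det(\mu_a^{j_b})$. The bialternant (Weyl character) formula factors this as
\[
\Delta_J(X_S) \;=\; s_{\lambda(J)}(\mu_1,\dots,\mu_k) \cdot \prod_{a<b}(\mu_b - \mu_a),
\]
where $\lambda(J)$ is the partition $(j_k{-}k,\, j_{k-1}{-}(k{-}1),\dots,j_1{-}1)$ fitting in a $k\times(n-k)$ box. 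The Vandermonde factor is independent of $J$ and so is absorbed into the projective rescaling of the Pl\"ucker vector; modulo this rescaling, positivity of $\Delta_J(X_S)$ for all $J$ is equivalent to positivity of $s_{\lambda(J)}(\mu_1,\dots,\mu_k)$ for all $\lambda$ fitting in a $k\times(n-k)$ rectangle.

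Finally I would single out the TNN point. Since Pl\"ucker coordinates of a TNN point are real, $S$ must be closed under complex conjugation, and among such $S$ there is a distinguished ``balanced'' choice: the $k$ eigenvalues of $\rho$ whose arguments are closest to $0$ on the unit circle (if $k$ is odd, $\{1,\omega,\omega^{-1},\dots,\omega^{\pm(k-1)/2}\}$ with $\omega = e^{2\pi i/n}$; if $k$ is even, the analogous symmetric set of shifted roots). For this choice, Weyl's denominator and character formulas specialize the Schur polynomials $s_{\lambda(J)}(\mu_1,\dots,\mu_k)$ to ratios of sines of the form $\prod \sin(\pi a_i / n)/\sin(\pi b_i/n)$ with all factors positive, so every Pl\"ucker coordinate is positive. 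For any other conjugation-closed $S$, I would show that some Schur evaluation changes sign by using the same $q$-number formulas: moving a pair of nodes away from the balanced configuration flips the sign of at least one $s_{\lambda(J)}$ by producing a negative sine factor. The main obstacle is this last sign analysis; it is where the combinatorics of ``balanced'' eigenvalue configurations and the positivity of principally-specialized Schur functions interlock, and it is the content of Karp's argument in \cite{Karp}.
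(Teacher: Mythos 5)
The paper does not prove this statement; it is quoted verbatim from Karp's paper and used as a black box, so your proposal has to stand on its own. Your first two steps do: since $\rho$ has $n$ distinct eigenvalues, every $\rho$-invariant subspace is a sum of eigenlines, which gives exactly $\binom nk$ fixed points; and the bialternant factorization $\Delta_J(X_S)=s_{\lambda(J)}(\mu)\cdot\prod_{a<b}(\mu_b-\mu_a)$ correctly reduces total nonnegativity of $X_S$ to simultaneous nonnegativity of the Schur evaluations $s_{\lambda(J)}(\mu)$ (the normalization is even pinned down, since $s_{\emptyset}=1$ forces the rescaling to be positive real). The existence half is also fine: for the balanced $S$ the $\mu_i$ form a string of consecutive powers of $e^{2\pi i/n}$, so the principal-specialization/Weyl dimension formula expresses each $s_{\lambda(J)}(\mu)$ as a product of ratios $\sin(\pi a/n)/\sin(\pi b/n)$ with $0<a,b<\pi$, all positive; this is exactly the formula $\Delta_{i_1,\dots,i_k}=\prod_{j<s}\sin\frac{(i_s-i_j)\pi}{n}$ that the paper later quotes from Karp.

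The gap is the uniqueness step, and your proposed mechanism for it does not work as stated. The sine-product formula you invoke is available only because the balanced $S$ is a geometric progression $q^{-(k-1)/2},\dots,q^{(k-1)/2}$; for a general conjugation-closed but unbalanced $S$ (e.g.\ $\{1,\omega^2,\omega^{-2}\}$ for $k=3$) the evaluation points are not consecutive powers, the hook/content product formula does not apply, and there is no ``negative sine factor'' to read off. So ``moving a pair of nodes away from the balanced configuration flips the sign of at least one $s_{\lambda(J)}$'' is precisely the assertion that needs proof, and you have correctly identified it as the missing content. For what it is worth, Karp's own uniqueness argument avoids Schur-positivity entirely: he uses the Gantmakher--Krein-type characterization that every nonzero vector in a totally nonnegative $k$-subspace has at most $k-1$ sign changes, and observes that if $S$ contained an eigenvalue far from $1$, then the real part of the corresponding eigenvector would lie in the (real, conjugation-stable) subspace $X_S$ and oscillate too many times. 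If you want to complete your write-up you either need that sign-variation input or an honest proof that some $s_{\lambda(J)}(\mu)<0$ for every unbalanced conjugation-closed $S$.
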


That is, the 1-fixed locus is a zero-dimensional space. On the other hand, the $n$-fixed locus in $\Gr(k,n)$ is the Grassmannian itself.

\begin{example}\label{eg:firstegs}
As a first example beyond 1-fixed and $n$-fixed loci, 
consider $\Gr(2,4)^{\rho^2}$, which has $p = 2$. 
Points in $\Gr(2,4)$ have six Pl\"ucker coordinates constrained by the Pl\"ucker relation $\Delta_{13}\Delta_{24} = \Delta_{12}\Delta_{34}+\Delta_{14}\Delta_{23}$. If $X \in \Gr(2,4)^{\rho^2}$, then $\rho^2$ acts on the Pl\"ucker coordinates by the scalar $\zeta \in \{\pm 1\}$ as in \eqref{eq:character}.  

When $\zeta = +1$, the Pl\"ucker relation becomes $\Delta_{13}\Delta_{24} = \Delta_{12}^2+\Delta_{23}^2$, which defines the nonsingular quadric surface in $\bbp^3$. In particular $\dim \Gr(2,4)^{\rho^2= 1} = 2$.
%xy = z^2+w^2
%This is four parameters satisfying a single constraint, and defined up to a common scalar, so the space $\Gr(2,4)^{\rho^2=1}$ is 2-dimensional. 
On a Zariski-open subset $\Delta_{13}(X) \neq 0$, and $X \in \Gr(2,4)^{\rho^2}$ bears a matrix representative 
$M = \begin{pmatrix} 
1 & y & 0 & -x \\
0 & x & 1 &  y
\end{pmatrix}$ for some $x,y \in \bbc$. 
%The complement of this Zariski-open subset consists of 2 lines ($\Delta_{24}$ is free, up to rescaling $\Delta_{12} = 1 $ and $\Delta_{23} = \pm i$). 

When $\zeta =-1$, it follows that $\Delta_{13} = \Delta_{24} = 0$. The Pl\"ucker relation becomes $\Delta_{12}^2+\Delta_{23}^2 = 0$. 
The space $\Gr(2,4)^{\rho^2=-1}$ consists of two points represented by the matrices 
$\begin{pmatrix} 
1 & 0 & \pm i & 0 \\
0 & 1 & 0 & \pm i 
\end{pmatrix}$. 

Thus $\Gr(2,4)^{\rho^2} \subset \bbp^5$ is the disjoint union of a complex surface and two points. 
\end{example}

\section{Background}\label{secn:background}
We summarize the basics of the TNN Grassmannian, (generalized) cluster algebras, and Grassmannians. We introduce quasi-homomorphisms of generalized cluster algebras. 

\subsection{TNN background}\label{subsecn:bgcells}

\begin{defn}
The {\sl matroid} of $X \in \Gr(k,n)$ is its collection of non-vanishing Pl\"ucker coordinates: ${\rm matroid}(X) := \{I \in \binom{[n]}k \colon \Delta_I(X) \neq 0\}$. For $\mcm \subset \binom{[n]}k$, the {\sl matroid stratum} is the quasiprojective subvariety ${\rm GGMS}(\mcm) := \{X \in \Gr(k,n) \colon {\rm matroid}(X) = \Gr(k,n)\}$.
\end{defn}

The letters GGMS stand for Gelfand, Goresky, MacPherson, and Serganova.  
The concept of matroid has an abstract definition (a collection of $k$-subsets satisfying the {\sl exchange condition}), and the matroids arising from points in the Grassmannian are called {\sl realizable } (over $\bbc$). We have a decomposition $\Gr(k,n) = \coprod_\mcm {\rm GGMS}(\mcm)$ indexed by realizable matroids $\mcm \subset \binom{[n]}k$.

Let $\bbr \bbp_{\geq 0}^N$ denote the image of $(\bbr_{\geq 0}^{N+1} \setminus 0) \subset (\bbc^{N+1} \setminus 0) \twoheadrightarrow \bbp^{N}$ with the latter the usual quotient map. 
Define similarly $\bbr \bbp_{> 0}^N$.

\begin{defn}
The TNN Grassmannian is $\Gr(k,n)_{\geq 0} := \Gr(k,n) \cap \bbr \bbp_{\geq 0}^{\binom n k -1}$. The TP Grassmannian is 
  $\Gr(k,n)_{>0} := \Gr(k,n) \cap \bbr \bbp_{> 0}^{\binom n k -1}$. 
For $\mcm \subset \binom{[n]}k$, we denote by $\Gr(\mcm)_{>0} := {\rm GGMS}(\mcm) \cap \Gr(k,n)_{\geq 0}$ and denote by $\Gr(\mcm)_{\geq 0}$ its closure in the Hausdorff topology. 
\end{defn}

The abbrevations TNN and TP stand for {\sl totally nonnegative} and {\sl totally positive}. The matroids $\mcm$ for which $\Gr(\mcm)_{>0}$ is nonempty are the {\sl positroids}. %(short for {\sl positive matroids}).  
Any positroid $\mcm$ has an associated {\sl positroid variety} $\Gr(\mcm) \subset \Gr(k,n)$, i.e. the projective subvariety of $\Gr(k,n)$ defined by the vanishing of $\Delta_I \colon I \notin \mcm$. As a special case, the {\sl uniform matroid} $\mcm = \binom{[n]}k$ is a positroid, for whom 
the notions $\Gr(\mcm)$, $\Gr(\mcm)_{>0}$, and $\Gr(\mcm)_{\geq 0}$ recover  $\Gr(k,n)$, $\Gr(k,n)_{>0}$, and $\Gr(k,n)_{\geq 0}$ respectively. (The last of these claims is nontrivial but true.) 
% since in principle the closure of $\Gr(k,n)_{>0}$ might be properly contained in $\Gr(k,n)_{\geq 0}$.)

A {\sl cell decomposition} of a topological space $X$ is a decomposition $X = \coprod_{\aa \in \mca} X_\aa$ of $X$ into subspaces $X_\aa$ (called {\sl cells}). Each cell must be homeomorphic to an open ball of some dimension, and the boundary ${\rm cl}(X_\aa) \setminus X_\aa \subset X$ of any cell must equal a union of smaller-dimensional cells. The {\sl face poset} of such a cell decomposition is $(\mca,\leq)$ where the relation is $\bb \leq \aa$ if $X_\bb \subset {\rm cl}(X_\aa)$. We view it as a ranked poset with rank function  ${\rm rk}(\aa) = \dim X_\aa$. The {\sl dual} of the face poset is the poset obtained by reversing all inequalities and replacing cell dimension by cell codimension. 

A cell decomposition is a {\sl CW complex} if the topology of $X$ is the weak topology with respect to the cellular inclusions $X_\aa \subset X$. A CW complex is {\sl regular} if these inclusions extend to homeomorphisms of ${\rm cl}(X_\aa)$ with a closed ball. 

Postnikov \cite{Postnikov} proved that the decomposition $\Gr(k,n)_{\geq 0} = \coprod_\mcm \Gr(\mcm)_{>0}$ into {\sl positroid cells} is a cell decomposition. Moreover, it is a CW complex \cite{PSW} and is regular \cite{GKLIII}. 
%(cf.~also earlier related work \cite{GKLI,RietschWilliams,WilliamsShelling}).)
The combinatorics underlying the cell decomposition of $\Gr(k,n)_{\geq 0}$ is very rich \cite{Postnikov, KLS}. The cells (i.e., the positroids $\mcm \subset \binom{[n]}k$) are labeled by several elegant families of combinatorial objects, related to each other by known bijections. These objects include bounded affine permutations, Grassmann necklaces, decorated permutations, plabic graphs modulo move equivalence, and Le diagrams. The first of these is most important to our presentation.

\begin{defn}
Let $\tilde{S}_\ell$ the group of bijections $f \colon \bbz \to \bbz$ which are $\ell$-{\sl periodic}: $f(i+\ell) = f(i)+\ell$ for all $i \in \bbz$. The recipe $f \mapsto \frac{1}{\ell}\sum_{i=1}^\ell(f(i)-i)$ is a group homomorphism ${\rm av} \colon \tilde{S}_\ell \twoheadrightarrow \bbz$ whose fibers we denote by $\tilde{S}^k_\ell:= {\rm av}^{-1}(k) \subset \tilde{S}_\ell$.   

We say that $f \in \tilde{S}_\ell$ is $n$-{\sl bounded} if $i \leq f(i) \leq i+n$ for all $i \in \bbz$. We let $\Bound_n(k,\ell) \subset \tilde{S}^\ell_k$ denote the set of $n$-bounded $\ell$-periodic affine permutations with ${\rm av}(f) = k$. %Let $\widetilde{\Bound}_n(k,\ell)$ be the poset obtained from adding a maximal element $\hat{1}$ to the leap-weak order on $\Bound_n(k,\ell)$. 
\end{defn}

We typically denote $f \in \Bound_n(k,\ell)$ by its {\sl window notation} $[f(1),\dots,f(\ell)]$. This data specifies $f$ by $\ell$-periodicity.

If $f \in \tilde{S}_\ell$ then $f(1),\dots,f(\ell)$ is a permutation modulo $\ell$, so av is indeed a homomorphism to $\bbz$, not to $\bbq$. 
Its kernel $\tilde{S}^0_\ell$ is the {\sl affine symmetric group}, which is a Coxeter group of affine type $\tilde{A}_{\ell-1}$. The Coxeter generators are the simple transpositions $(s_i)_{i \in [\ell]}$ switching the values $i+a\ell \leftrightarrow i+1+a\ell$ for all $a \in \bbz$. The reflections in the Coxeter group are the transpositions $t_{i,j+s\ell}$ switching the values $i+a\ell \leftrightarrow j+(a+s)\ell$ for $a \in \bbz$.

We denote by $\ell(w)$ the Coxeter length of $w \in \tilde{S}^0_\ell$. We denote by $\leq_R$ (resp. $\leq_B$) the {\sl right weak order} (resp. {\sl Bruhat order}) on $\tilde{S}^0_\ell$, whose cover relations are of the form $w \lessdot ws_i$ whenever $\ell(ws_i) = \ell(w)+1$ (resp. $w \lessdot wt_{i,j+s\ell}$ whenever $\ell(wt_{i,j+s\ell}) = \ell(w)+1$).

Let ${\rm id}_k \in \tilde{S}^k_\ell$ be the bijection $i \mapsto i+k$ for $i \in \bbz$. We extend $\ell$, $\leq_B$, and $\leq_R$ to $\tilde{S}^k$ by defining $\ell(f) := \ell({\rm id}_k^{-1}f) \in \tilde{S}^0_\ell$,  etc. We define these three notions for $\Bound_n(k,\ell) \subset \tilde{S}^k_\ell$ by restriction. Both partial orders are ranked by the length function.

Positroids $\mcm \subset \binom {[n]} k$ and affine permutations $f \in \Bound_n(k,n)$ are in bijection as follows. Let $X \in \Gr(\mcm)_{>0}$ with $k\times n$ matrix representative $M$. Suppose $M$ has column vectors $v_1,\dots,v_n$. Extend the matrix $M$ 
$n$-periodically in both directions to a $k \times \infty$ matrix.  Then the affine permutation $f = f(\mcm)$ determined by the positroid $\mcm$ is the affine permutation with values 
\begin{equation}\label{eq:matroidtoperm}
f(i) = \min\{j \geq i \colon \, v_i \in {\rm span} (v_{i+1},\dots,v_j)\}.
\end{equation}
We use subscripts to denote this bijection, writing e.g. $\mcm_f$ for the positroid corresponding to $f \in \Bound_n(k,n)$. We also write $\Gr(f)_{>0} = := \Gr(\mcm_f)_{>0}.$ A rephrasing 

%Let $P$ be a ranked poset whose element of largest rank has rank $M$. The {\sl dual} ranked poset is obtained by reversing all inequalities in $P$ and renaming elements of rank $x$ to have rank $M-x$. 
\begin{thm}[{\cite{Postnikov,KLS}}]\label{thm:cellsusual}
The face poset of $\Gr(k,n)_{\geq 0} = \cup_{f \in \Bound_n(k,n)} \Gr(\mcm_f)_{>0}$ is {\sl dual to} the Bruhat order on $Bound_n(k,n)$ (as ranked posets).  
\end{thm}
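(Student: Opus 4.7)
The plan is to combine two well-known ingredients from the literature. First, Postnikov's theorem parametrizing positroid cells: for each $f \in \Bound_n(k,n)$ one builds a reduced plabic graph $G_f$ whose trip permutation is $f$, and the boundary-measurement map from positive edge weights on $G_f$ (modulo gauge) gives a homeomorphism with $\Gr(\mcm_f)_{>0}$, showing these cells partition $\Gr(k,n)_{\geq 0}$. Second, the identification of the Zariski closure $\Pi_f := \overline{\Gr(\mcm_f)_{>0}}$ in $\Gr(k,n)$ with a projected open Richardson variety in the affine flag variety (Knutson--Lam--Speyer), which transports the Bruhat closure order from $\tilde{S}_n$ down to the cells.

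To verify the rank match, I would count: the number of bounded faces of any reduced $G_f$ equals $k(n-k)-\ell_B(f)+1$, where $\ell_B(f) := \ell(\mathrm{id}_k^{-1} f)$ is the Coxeter length as defined in the paper. Equivalently, in the Le-diagram model the cell dimension equals the number of $+$ entries, and Bruhat length equals the number of $0$ entries in the appropriate staircase. Either way,
\[
\dim \Gr(\mcm_f)_{>0} \;=\; k(n-k) - \ell_B(f),
\]
so codimension in the top cell matches Coxeter length, matching the rank of the dual face poset with the rank of $\leq_B$.

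The main step is the closure relation $\Gr(\mcm_g)_{>0} \subset \overline{\Gr(\mcm_f)_{>0}}$ iff $f \leq_B g$. I would deduce it from the KLS description: Bruhat-order closure of Richardson varieties in the affine flag variety projects compatibly to $\Gr(k,n)$, giving $\Pi_g \subset \Pi_f$ iff $f \leq_B g$; intersecting with $\Gr(k,n)_{\geq 0}$ and using that the TNN locus of $\Pi_f$ coincides with the Hausdorff closure of $\Gr(\mcm_f)_{>0}$ transfers this to the cell decomposition. Reversing the inclusion gives precisely the duality in the statement.

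The hard part is this closure statement. Both the KLS projection result and the identification of the TNN part of $\Pi_f$ with $\overline{\Gr(\mcm_f)_{>0}}$ are nontrivial inputs. An alternative, more combinatorial route is to verify cover relations directly: a cover $f \lessdot_B g = f t_{i,j+sn}$ corresponds to a specific degeneration (bridge removal / uncrossing move) on a reduced plabic graph for $g$ yielding one for $f$, producing a codimension-one degeneration of cells. This route is self-contained but is delicate because Bruhat covers can involve non-simple reflections $t_{i,j+sn}$, so one must handle all such local moves; for the present expository theorem I would simply cite Postnikov and KLS.
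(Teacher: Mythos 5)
Your proposal is correct as a derivation of the stated theorem from the literature, and indeed the theorem is attributed to Postnikov and Knutson--Lam--Speyer in the paper. But the paper does not actually take this route: in Section~\ref{secn:Bruhat} it explicitly declines the standard boundary-measurement/plabic-graph and projected-Richardson arguments and instead gives a direct, self-contained proof of the closure relation (Lemma~\ref{lem:warmup}). Given $X \in \Gr(g)_{>0}$ and a Bruhat cover $f \lessdot g = f t_{ij}$, the paper factors the reflection $t_{ij}$ into a sequence of simple transpositions of two types (length-decreasing swaps $bs \mapsto sb$ and length-increasing swaps $sb \mapsto bs$), acts by the corresponding Chevalley generators $\eps_{i}(\cdot)$ as column operations --- using the forward homeomorphism of Lemma~\ref{lem:homeomorphism} for one type and the inverse formula \eqref{eq:aformula} for the other --- and then shows by an order-of-vanishing analysis in the parameter $a$ that the resulting curve $Y(a) \in \Gr(f)_{>0}$ satisfies $\lim_{a\to 0} Y(a) = X$. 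The point of this extra work is precisely what your citation-based route does not provide: the column operations can be performed $\rho^\ell$-equivariantly, so the same argument immediately yields the closure order for the cyclic symmetry loci in Theorem~\ref{thm:cells}, whereas it is not clear how to make the boundary-measurement or projected-Richardson arguments equivariant. Your sketch of the ``alternative combinatorial route'' via cover relations correctly identifies the delicate point (non-simple reflections $t_{i,j+sn}$), which is exactly the difficulty the paper's two-type factorization and the key order-one claim are designed to handle; your route buys brevity for the classical statement, the paper's buys generalizability.
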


%Reiterating, positroid cells $\Gr(\mcm_f)_{>0} \subset \Gr(k,n)$ can be labeled by bounded affine permutations $f \in \Bound_n(k,n)$ in such a way that the codimension of each cell is its $\tilde{S}_n$-Coxeter length, and the cell closure order on cells is the dual of $\tilde{S}_n$-Bruhat order. 

\begin{rmk}
In light of Theorem~\ref{thm:cellsusual}, almost all previous attention has been on the special case of $\Bound_n(k,n)$. We need the generalization $\Bound_n(k,\ell)$ for describing cells in cyclic symmetry loci. Galashin and Lam previously considered an even more general subset of affine permutations in the context of triangulations of amplituhedra, namely those whose values satisfy $i-a \leq f(i) \leq f(i)+b$ for all $i \in \bbz$, where $a,b \in \bbn$ are fixed numbers \cite{GalashinLamParity}. 
\end{rmk}

A third type of object in bijection with positroids $\mcm \subset \binom{[n]}{k}$ is a {\sl Grassmann necklace}. These have a simple intrinsic definition, which we omit for brevity. It suffices for our purposes us to say that a Grassmann necklace is a certain type of $n$-tuple $\rmci = (I_1,\dots,I_n) \subset \binom{[n]}k$, and that if $\rmci = \rmci_f$ is the Grassmann necklace corresponding to the positroid $\mcm_f$, then $I_{i+1} = I_i \setminus i \cup \{f(i) \mod n\}$. Conversely, one has $\mcm_f = \{I \in \binom{[n]}k \colon \, I \geq_i I_i \text{ for all } i  \in [n]\}$. Here, $<_i$ denotes the total order $i<_i \cdots<_i n <_i 1<_i\cdots<_ii-1$ on $[n]$, and the comparison $\geq_i$ on $k$-subsets is done lexicographically.

\subsection{Grassmannian cluster algebras}
We assume familiarity with cluster algebras of geometric type as defined by Fomin and Zelevinsky \cite{CAI}. A seed in such a cluster algebra is a pair $({\bf x},\tilde{B})$. Here, ${\bf x}$ is an {\sl extended cluster} consisting of $N$ mutable and $\ell$ frozen variables, and $\tilde{B}$ is an {\sl extended exchange matrix}, an $(N+\ell) \times N$ integer matrix. The first $N$ rows of $\tilde{B}$, namely the {\sl exchange matrix} $B$, must be skew-symmetrizable. When $B$ is skew-symmetric, $\tilde{B}$ can be encoded by an {\sl extended quiver} $\tilde{Q}$ (a quiver with a choice of frozen vertices).

The homogeneous coordinate ring $\bbc[\Gr(k,n)]$ is a prototypical example  of such a cluster algebra \cite{Scott}. We assume familiarity with the concept of a {\sl maximal weakly separated collection} $\mcc \subset \binom{[n]}k$. Such a collection gives rise to an extended quiver $\tQ(\mcc)$ (the dual quiver of the {\sl plabic tiling} determined by $\mcc$), and to a seed $\Sigma(\mcc) = (\Delta(\mcc),\tQ(\mcc))$ whose cluster algebra $\mca(\Sigma(\mcc)) = \bbc[\Gr(k,n)]$. The seeds $\Sigma(\mcc)$ indexed by weakly separated collections are the {\sl only} seeds in $\bbc[\Gr(k,n)]$ consisting entirely of Pl\"ucker coordinates. They comprise a finite subset of seeds in the Grassmannian cluster algebra, and this finite subset is connected by mutations. Most Grassmannians, however, have infinitely many clusters, so these are not all seeds.

\subsection{Generalized cluster algebras}\label{subsecn:bgclusters}
Generalized cluster algebras were introduced by Chekhov and Shapiro \cite{ChekShap} (cf.~also \cite[Lemma 1.5]{CAPG}). We use the prefix CS- as an adjective indicating generalized cluster algebras, and use the prefix FZ- to indicate Fomin-Zelevinsky cluster algebras.

There are various convention choices involved in defining a CS-cluster algebra. Our next definition is the simplest possible version, and suffices for constructing seeds in cyclic symmetry loci. 
\begin{defn}\label{defn:CSseed}
A CS-seed is a triple $(\tilde{\bf x},\tilde{B},{\bf z})$ where $(\tilde{\bf x},\tilde{B})$ is a FZ-seed of geometric type and ${\bf z}$ is an $N$-tuple of {\sl coefficient strings}. The $k$th coefficient string  $(z_{k;0},\dots,z_{k;d_k})$ is a collection of indeterminates subject to the conditions $d_k \geq 1$, $z_{k;0} = z_{k;d_k} = 1$, and the palindromicity condition $z_{k;s} = z_{k;d_k-s}$. We refer to $z_{k;s}$ as a {\sl coefficient string variable} and treat it is an extended cluster variable. Each coefficient string encodes an {\sl exchange polynomial} $Z_k(u,v) = \sum_{s=0}^{d_k}z_{k;s}u^sv^{d_k-s} \in \bbc[u,v]$. A mutable index $k \in [N]$ is {\sl special} if $d_k > 1$. We denote by $D = (d_k \delta_{k,j})_{i,j \in [N]}$ the diagonal matrix with entries $d_k$. 

Associate to a CS-seed the monomials $M^\pm_k = \prod_{i\in [N+\ell]} x_i^{ [\pm b_{ik}]_+}.$ To mutate a CS-seed in direction $k \in [N]$, replace the extended exchange matrix $\tilde{B}$ by the matrix $\mu_k(\tilde{B}D)D^{-1}$, where $\mu_k$ is FZ- matrix mutation. Leave the coefficient strings unchanged. Finally, replace $x_k$ by $x_k' = \frac{1}{x_k}Z_k(M^+_k,M^-_k)$ (and do nothing to the other cluster variables). 
\end{defn}

For non-special variables, the exchange polynomial is the usual binomial appearing on the right hand side of a FZ- exchange relation.

The usual definitions for FZ-cluster algebras make sense for CS-algebras. One has the {\sl cluster algebra}, {\sl upper cluster algebra}, {\sl lower bound algebra}, and {\sl upper bound algebra}, associated to any seed. Sometimes, it is more to convenient to localize these algebras at the frozen variables. We denote such localizations by $^\circ$. For example, if $\mca$ is the cluster algebra (the $\bbc$-algebra generated by all extended cluster variables), then $\mca^\circ$ denotes the algebra generated by extended cluster variables and the inverses of frozen variables. A {\sl cluster monomial} is a product of cluster variables from any cluster. 

The following operation is important to our main result (Theorem~\ref{thm:onestep}). Given a collection of complex numbers $(\eta_{k;s})_{k \in {N}, \, s \in [d_k-1]}$ satisfying palindromicity $\eta_{k;s} = \eta_{k;d_k-s}$, we have a {\sl specialization} of the CS-cluster algebra $\mca$ defined by the substitutions $z_{k;s} \mapsto \eta_{k;s}$. We still use the terminology cluster variable, cluster algebra, upper cluster algebra, etc., for the same notions but defined after performing such a specialization.

The {\sl right companion cluster algebra} of a generalized cluster algebra is the FZ-cluster algebra with initial exchange matrix $\tilde{B}D$ \cite{NakRup}. Equivalently, it is obtained by the algebra specialization $z_{k;s} \mapsto 0$. Chekhov and Shapiro showed that a generalized cluster algebra has finitely many CS-seeds if and only if its  right companion cluster algebra is an FZ- cluster algebra of finite type. %(They did not phrase the result this way.) 
More generally, Cao and Li proved that the exchange graph of a generalized cluster algebra coincides with the exchange graph of its right companion \cite[Theorem 3.7]{CaoLi}.

There is also a {\sl left companion cluster algebra} \cite{NakRup} of a CS-cluster algebra, an FZ-cluster algebra with initial exchange matrix $DB$. A cluster variable $x_k$ in the left companion can be obtained from a corresponding cluster variable in its CS-cluster algebra $\mca(\Sigma)$ by specializing all coefficient strings $z_{k,s} = \binom {d_k}s$, and then raising the corresponding CS-cluster variable to the $\frac{1}{d_k}$th power.

\subsection{Quasi-homomorphisms of generalized cluster algebras}
Later on, we would like to compare our generalized cluster structure on cyclic symmetry loci with a generalized cluster structure on periodic band matrices. To make such a comparison, we extend here 
the notion of {\sl quasi-homomorphism} of cluster algebras \cite{FraserQH} to the setting of generalized cluster algebras. A reader not interested in this comparison, or in quasi-homomorphisms, could safely skip this section. 

\begin{defn}\label{defn:nnCSseed}
Let $\bbp$ be a fixed abelian group. A {\sl non-normalized CS-seed} is a triple $({\bf x}, B, {\bf p})$, where $({\bf x},B)$ is a FZ- seed with no frozen variables. The data ${\bf p}$ is an $N$-tuple of coefficient strings $(p_{k;s})_{s \in [0,d_k]}$ indexed by $k \in [N]$, with $p_{k;s} \in \bbp$. Each coefficient string encodes an exchange polynomial $Z_k(u,v) = \sum_{s=0}^{d_k}p_{k;s}u^sv^{k-s}$. 

For $k \in [N]$ define monomials $M^{\pm}_k = \prod_{i \in [N]}x_i^{[\pm b_{ik}]+}$. %The latter is the ratio of the $s$th and $0$th term in the polynomial $Z_k(M^+,M^-)$.
Non-normalized seed mutation in direction $k$ replaces the $k$th cluster variable  $x_k \mapsto \frac{1}{x_k}Z(M^+_k,M^-_k)$. It replaces the exchange matrix by $B \mapsto \mu_k(BD)D^{-1}$. It replaces the coefficient strings by the (non-deterministic) rule 
\begin{equation}\label{eq:nnpmutation}
p_{k;s} \mapsto p_{k;d_k-s} \text{ and } \frac{p_{j;s}}{p_{j;0}} \mapsto \frac{p_{j;s}}{p_{j;0}}p_{k;dk}^{[sb_{kj}]_+}p_{k;d_0}^{-[-sb_{kj}]_+} \text{ for $j \neq k$.}
\end{equation}
\end{defn}

For $k \in [N]$ and $s \in [d_k]$ define monomials $\hat{y}_{k;s}:= \frac{p_{k;s}}{p_{k;0}}\left(\frac{M_k^+}{M_k^-}\right)^s$ for $s \in [d_k]$. As a special case, we have $\hat{y}_{k;0} = 1$ for any $k \in [N]$. And when $d_k=1$,k $\hat{y}_{k;1}$ is the usual Fomin-Zelevinsky $\hat{y}_k$ Laurent monomial. 

The following lemma is proved by direct calculation, which we omit.
\begin{lem}
When performing $\mu_k$ to a non-normalized seed, the quantities $\hat{y}_{j;s}$ evolve by the rules 
\begin{align}\label{eq:nnyhatmutation}
\hat{y}_{k;s} &\mapsto \hat{y}_{k;d_k-s}\hat{y}_{k;d_k}^{-1}, \text{ and}\\
\hat{y}_{j;s} &\mapsto \hat{y}_{j;s}\hat{y}_{k;d_k}^{[sb_{kj}]_+}\left(\sum_{t = 0}^{d_k}\hat{y}_{k;t}\right)^{-sb_{kj}} \text{ for $j \neq k$.}
\end{align}
\end{lem}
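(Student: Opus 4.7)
The plan is to directly verify both identities by substitution; the proof is a bookkeeping exercise with no conceptual leaps. Before starting, I would record the preliminary identity
\[
Z_k(M_k^+,M_k^-) \;=\; p_{k;0}(M_k^-)^{d_k}\sum_{t=0}^{d_k}\hat{y}_{k;t},
\]
which follows immediately from the definitions of $Z_k$ and $\hat{y}_{k;t}$ and which makes transparent where the factor $\sum_{t}\hat{y}_{k;t}$ in \eqref{eq:nnyhatmutation} comes from.

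For the self-index formula, the key observation is that column $k$ of the exchange matrix is negated under $\mu_k$ and that $M_k^\pm$ do not involve $x_k$, so $(M_k')^\pm = M_k^\mp$ in the new seed. Combining this with the coefficient replacement $p_{k;s}\mapsto p_{k;d_k-s}$ reduces the identity to the elementary check
\[
\hat{y}_{k;s}' \;=\; \frac{p_{k;d_k-s}}{p_{k;d_k}}\Bigl(\frac{M_k^+}{M_k^-}\Bigr)^{-s} \;=\; \hat{y}_{k;d_k-s}\,\hat{y}_{k;d_k}^{-1}.
\]

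For the cross-index formula ($j\neq k$), I would expand $(M_j')^+/(M_j')^-$ using the mutation rule $B\mapsto \mu_k(BD)D^{-1}$. Splitting the product over indices $i\neq k$ and $i=k$: the $i\neq k$ factor differs from the original $M_j^+/M_j^-$ by a correction $(M_k^+)^{d_k[b_{kj}]_+}(M_k^-)^{-d_k[-b_{kj}]_+}$, obtained by grouping $\tfrac{d_k}{2}(|b_{ik}|b_{kj}+b_{ik}|b_{kj}|)$ according to the sign of $b_{kj}$. The $i=k$ factor contributes $(x_k')^{-b_{kj}}$, whose $x_k^{b_{kj}}$ portion cancels the $x_k^{-b_{kj}}$ that comes from writing $\prod_{i\neq k}x_i^{b_{ij}} = (M_j^+/M_j^-)\, x_k^{-b_{kj}}$. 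The remaining $Z_k(M_k^+,M_k^-)^{-b_{kj}}$ factor converts, via the preliminary identity and the identity $(M_k^+/M_k^-)^{d_k} = \hat{y}_{k;d_k}\, p_{k;0}/p_{k;d_k}$, into
\[
\hat{y}_{k;d_k}^{[b_{kj}]_+}\, p_{k;d_k}^{-[b_{kj}]_+}\, p_{k;0}^{[-b_{kj}]_+}\Bigl(\sum_{t=0}^{d_k}\hat{y}_{k;t}\Bigr)^{-b_{kj}}.
\]
Raising to the $s$th power and multiplying by the updated $p_{j;s}/p_{j;0}$ from \eqref{eq:nnpmutation}, the asymmetric $p_{k;d_k}$ and $p_{k;0}$ factors cancel perfectly, leaving $\hat{y}_{j;s}\hat{y}_{k;d_k}^{[sb_{kj}]_+}(\sum_{t}\hat{y}_{k;t})^{-sb_{kj}}$, where one uses $[sb_{kj}]_+ = s[b_{kj}]_+$ for $s\geq 0$.

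The main obstacle will be the sign bookkeeping with the $[\cdot]_+$ operations, compounded by the fact that mutation acts on the $D$-twisted matrix $BD$ rather than on $B$ itself, which sprinkles factors of $d_k$ through the intermediate expressions. The cleanest route is probably to verify the factorization of $(M_j')^+/(M_j')^-$ by splitting into the cases $b_{kj}>0$, $b_{kj}<0$, and $b_{kj}=0$, and only then repackaging the results with $[\cdot]_+$ notation. One then sees that the asymmetric factors $p_{k;d_k}^{[sb_{kj}]_+}p_{k;0}^{-[-sb_{kj}]_+}$ appearing in the coefficient string rule \eqref{eq:nnpmutation} are precisely designed to absorb the parasitic $p$-powers arising from the $Z_k$ factor, leaving only the $\hat{y}$-dependent contributions asserted by the lemma.
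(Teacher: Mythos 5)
Your proposal is correct and is exactly the direct calculation that the paper declares it omits: the self-index case follows from $(M'_k)^\pm = M_k^\mp$ together with $p_{k;s}\mapsto p_{k;d_k-s}$, and in the cross-index case the asymmetric factors $p_{k;d_k}^{[sb_{kj}]_+}p_{k;0}^{-[-sb_{kj}]_+}$ from the coefficient mutation rule cancel the parasitic $p$-powers produced by $Z_k^{-b_{kj}}$, precisely as you describe. (As a side benefit, your preliminary identity $Z_k(M_k^+,M_k^-)=p_{k;0}(M_k^-)^{d_k}\sum_t\hat{y}_{k;t}$ is the correct form; the version displayed later in the paper's proof of Lemma~\ref{lem:nnseedsandsim} has $p_{k;0}^{-1}$ where it should have $p_{k;0}$.)
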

%(We do not think this particular formula has appeared in the literature previously.)

Definition~\ref{defn:nnCSseed} includes Definition~\ref{defn:CSseed} as a special case. A CS-seed of the form 
$(\tilde{\bf x},\tilde{B},{\bf z})$ can be viewed as a non-normalized seed by restricting $\tilde{\bf x}$ and $\tilde{B}$ to the mutable variables and defining $p_{k;s} := z_{k;s}\prod_{i=N+1}^{N+\ell} \ov{x}_{i}^{b_{ki}}$. The coefficient group $\bbp$ is the abelian group of Laurent monomials in the both the frozen variables $\ov{x}_{N+1},\dots,\ov{x}_{N+\ell}$ and the coefficient string variables $z_{k;s}$. The mutation of $(\tilde{\bf x},\tilde{B},{\bf z})$ is an instance of the mutation rules in Definition~\ref{defn:nnCSseed}.

\begin{defn}\label{defn:seedorbit}
Let $\Sigma = ({\bf x}, B, {\bf p})$ and $\Sigma' = ({\bf x}', B', {\bf p}')$ be non-normalized CS- seeds. We write $\Sigma \sim \Sigma'$ if:  
i) $\frac{x_i}{x'_i} \in \bbp$ for all $i \in [N]$, and ii)  $\hat{y}_{k;s} = \hat{y}_{k;s}'$ for all $k \in [N]$ and $s \in [d_k]$. 
\end{defn}

As part of this definition, we are implicitly requiring that the exchange degrees $(d_k)_{k \in [N]}$ coincide in the two seeds.

\begin{lem}\label{lem:nnseedsandsim}
If non-normalized seeds $\Sigma,\Sigma'$ satisfy $\Sigma \sim \Sigma'$, then $\mu_k(\Sigma) \sim \mu_k(\Sigma')$ for any mutable $k \in [N]$.
\end{lem}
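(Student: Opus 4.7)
The proof strategy is to verify both conditions of Definition~\ref{defn:seedorbit} directly for $\mu_k(\Sigma)$ and $\mu_k(\Sigma')$. A key preliminary observation, used throughout, is that condition~(ii) at $s=1$ combined with condition~(i) forces the two exchange matrices to agree, i.e.~$B=B'$: substituting $x'_i = c_i x_i$ (with $c_i \in \bbp$) into the equation $\hat{y}_{k;1} = \hat{y}'_{k;1}$ and matching $x_i$-degrees on either side yields $b_{ik} = b'_{ik}$ for all $i,k$. Thus $\mu_k$ acts on the exchange matrices of $\Sigma$ and $\Sigma'$ in parallel.

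For condition~(i) in the mutated seeds: mutation leaves $x_i$ ($i \neq k$) untouched, so those ratios stay in $\bbp$. For $i=k$, factor the exchange polynomial as
\[\tilde{x}_k \;=\; \frac{1}{x_k}\sum_{s=0}^{d_k} p_{k;s}(M^+_k)^{s}(M^-_k)^{d_k-s} \;=\; \frac{p_{k;0}(M^-_k)^{d_k}}{x_k}\sum_{s=0}^{d_k}\hat{y}_{k;s},\]
and write the analogous identity for $\tilde{x}'_k$. Taking the ratio $\tilde{x}_k/\tilde{x}'_k$, the sums $\sum_s \hat{y}_{k;s}$ in numerator and denominator cancel by hypothesis~(ii), and what remains is a product of $p_{k;0}/p'_{k;0} \in \bbp$, $x'_k/x_k \in \bbp$, and a Laurent monomial in the ratios $x_i/x'_i$, all of which lie in $\bbp$. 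Hence $\tilde{x}_k/\tilde{x}'_k \in \bbp$, as required.

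For condition~(ii) in the mutated seeds, the rule~\eqref{eq:nnyhatmutation} expresses each new $\hat{y}^{\rm new}_{j;s}$ purely in terms of the old $\hat{y}_{k;t}$'s (which agree in $\Sigma$ and $\Sigma'$ by hypothesis) and the integer entries $b_{kj} = b'_{kj}$ (which agree by the observation above). So the new $\hat{y}$-variables coincide as well. The main obstacle I anticipate is the bookkeeping around the non-determinism in~\eqref{eq:nnpmutation}: only the ratios $p_{j;s}/p_{j;0}$, and not the individual $p_{j;s}$, are pinned down by the mutation rule, so in principle distinct choices of representative coefficient strings could produce distinct mutated seeds. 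This is resolved by the observation that condition~(i) is insensitive to rescaling any $x_i$ by an element of $\bbp$, while the $\hat{y}$-variables depend only on the ratios $p_{j;s}/p_{j;0}$ that \emph{are} tracked by~\eqref{eq:nnpmutation}; hence both conditions defining~$\sim$ are choice-independent, and the lemma follows.
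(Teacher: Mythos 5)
Your proposal is correct and follows essentially the same route as the paper: deduce $B=B'$ from $\hat{y}_{k;1}=\hat{y}'_{k;1}$, factor the exchange polynomial as $p_{k;0}(M^-_k)^{d_k}\sum_s \hat{y}_{k;s}$ so that the sums cancel in the ratio $\tilde{x}_k/\tilde{x}'_k$, and observe that \eqref{eq:nnyhatmutation} closes on the $\hat{y}$-data. Your closing remark on the choice-independence under the non-deterministic rule \eqref{eq:nnpmutation} is a worthwhile clarification the paper leaves implicit.
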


\begin{proof}
The equality $\hat{y}_{k;1} = \hat{y}'_{k;1}$  implies that the $k$th columns of $B$ and $B'$ agree; thus the $B$-matrices themselves agree. 
We check that condition i) is still satisfied after mutating cluster variables in direction $k$. Writing 
$$Z(M^+_k,M^-_k) = \left(M_k^-\right)^{d_k}\sum_s p_{k;s}\left(\frac{M^+_k}{M^-_k}\right)^s = \frac{1}{p_{k;0}}\left(M_k^-\right)^{d_k}\sum_s \hat{y}_{k;s},$$
and noting that the ratio $\frac{p_{k;0}'x_k'}{p_{k;0}x_k}\left(\frac{M_k^-}{(M^-_k)'}\right)^{d_k} \in \bbp$, we see that i) now follows. Condition ii) follows from the formulas \eqref{eq:nnyhatmutation}, which express mutation of $\hat{y}_{k;s}$'s variables purely in terms of these variables, the exchange matrix $B$, and the degree matrix $D$. 
\end{proof}

\begin{defn}
Consider non-normalized seeds $\Sigma$ and $\ov{\Sigma}$, possibly over different coefficient groups $\bbp$ and $\ov{\bbp}$. Let $\mcf_{>0}$ (resp. $\ov{\mcf_{>0}}$) denote the semfield of subtraction-free rational expressions in ${\bf x}$ (resp. $\ov{\bf x}$) with coefficients in $\bbp$ (resp. $\ov{\bbp}$). 

Let $\psi \colon \mcf_{>0} \to \ov{\mcf}_{>0}$ be a homomorphism of semifields satisfying $\psi(\bbp) \subset \ov{\bbp}$ and satisfying 
$\psi(\Sigma) \sim \ov{\Sigma}$. (In particular, it follows that $\psi(\Sigma)$ must be a seed.) Then $\psi$ is a {\sl quasi-homomorphism} of the generalized cluster algebras $\mca(\Sigma)$ and 
$\mca(\ov{\Sigma})$.
\end{defn}

The image of a non-normalized CS-seed pattern under such a semifield map will again be a non-normalized CS-seed pattern. It follows then from Lemma~\ref{lem:nnseedsandsim} that any quasi-homomorphism satisfies $\psi(\Sigma) \sim \ov{\Sigma}$ for {\sl all} seeds. Since every cluster variable $x \in \mca(\Sigma)$ is a subtraction-free expression in the initial cluster variables, we can in particular evaluate $\psi$ on any element of $\mca(\Sigma)$. The evaluation $\psi(x)$ will be a cluster variable of $\mca(\ov{\Sigma})$, perhaps multiplied by an element of $\ov{\bbp}$. Any quasi-homomorphism restricts in this way to a map of cluster algebras $\mca^\circ(\Sigma)$ and 
$\mca^\circ(\ov{\Sigma})$ (localized at their respective frozen variables). In nice cases, it further restricts to a map between the cluster algebras themselves.

\section{Bridge order}\label{secn:leapweak}
Recall the set $\Bound_n(k,\ell)$ of $n$-bounded, $\ell$-periodic, bounded affine permutations $f$ satisfying ${\rm av}(f) = k$. We discuss in this section a partial order on this set, the {\sl bridge order}, a ranked poset intermediate between $\leq_R$ and $\leq_B$. It is spiritually close to 
$(\Bound_n(k,\ell),\leq_R)$, but is better behaved because every maximal element has maximal rank. In the standard case of $\Bound_n(k,n)$, this partial order underlies the BCFW recursion for scattering amplitudes~\cite{AHBC}, and relatedly, of certain plabic graphs known as {\sl bridge graphs} \cite{Karpman,WilliamsBridge}. The material in this section is a convenient tool in inductive proofs, but is not needed to understand most theorem statements in the rest of the paper. 

We begin with a lemma concerning the Bruhat order rather than the bridge order. 
\begin{lem}\label{lem:orderideal}
For any $n \in \bbn$, $\Bound_n(k,\ell)$ is a finite order ideal in $(\tilde{S}^k_\ell,\leq_B)$. 
%In particular, $(\Bound_n(k,\ell),\leq_B)$ is Eulerian and EL-shellable. 
%The system of inclusions $\Bound_n(k,\ell) \subset \Bound_{n+1}(k,\ell)$ stabilizes once $n = k\ell$, i.e. $\cup_{n \in \bbn}\Bound_n(k,\ell) = \Bound_{k\ell}(k,\ell)$.
\end{lem}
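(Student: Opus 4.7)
The plan is to verify the two assertions separately. Finiteness is immediate: any $f \in \Bound_n(k,\ell)$ is determined by the window $(f(1),\dots,f(\ell))$, and $n$-boundedness confines each $f(i)$ to the interval $[i, i+n]$ of size $n+1$, so $|\Bound_n(k,\ell)| \leq (n+1)^\ell$.

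For the order ideal property, by transitivity of $\leq_B$ it suffices to handle a single Bruhat cover $g \lessdot f$ with $f \in \Bound_n(k,\ell)$ and show $g \in \Bound_n(k,\ell)$. Write $g = ft$ for an affine reflection $t = t_{i,j+s\ell}$ with $\ell(g) = \ell(f) - 1$. The reflection $t$ acts on $\bbz$ by the two-cycles $\{i + m\ell,\, j + (m+s)\ell\}$ for $m \in \bbz$, and $\ell$-periodicity of $f$ and $t$ forces the $f$-inversion status on each two-cycle to be independent of $m$. The length-decreasing condition pins each two-cycle down as an inversion of $f$; choosing a representative pair $a < b$ with $t(a) = b$, one has $f(a) > f(b)$.

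The proof then reduces to a short inequality check. The permutation $g$ agrees with $f$ away from the $t$-orbit, and satisfies $g(a) = f(b)$, $g(b) = f(a)$. Using the four inequalities $a \leq f(a) \leq a+n$, $b \leq f(b) \leq b+n$, $a < b$, and $f(b) < f(a)$, one obtains $g(a) = f(b) \geq b > a$ and $g(a) = f(b) < f(a) \leq a+n$; the symmetric manipulations give $b \leq g(b) \leq b+n$. By $\ell$-periodicity these bounds extend to all positions. Affine reflections have av-value zero, so ${\rm av}(g) = {\rm av}(f) = k$, confirming $g \in \tilde{S}^k_\ell$.

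I do not anticipate any serious obstacle. The one point meriting care is that the length-decreasing condition must translate into the descent inequality $f(a) > f(b)$ at the chosen representative pair; this is the standard description of right descents in the affine symmetric group, rendered consistent across representative choices by the $\ell$-periodicity invoked above.
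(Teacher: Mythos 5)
Your proof is correct. The paper itself does not write out an argument: it simply states that the lemma ``follows by appropriately modifying the proof of [KLS, Lemma 3.6],'' which treats the case $\ell = n$. What you have done is supply the self-contained verification that this citation gestures at, and the substance matches what one would expect from that modification: reduce to a single length-decreasing multiplication $g = f t_{i,j+s\ell}$, use the standard characterization that this forces $f(a) > f(b)$ on a representative pair $a<b$ of the reflection's orbit, and then the four inequalities $a \le f(a) \le a+n$, $b \le f(b) \le b+n$, $a<b$, $f(b)<f(a)$ chain together to give $n$-boundedness of $g$ at $a$ and $b$, with $\ell$-periodicity propagating the bounds to the rest of the orbit and ${\rm av}(t)=0$ preserving membership in $\tilde{S}^k_\ell$. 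All steps check out, including the observation that the inversion status of the two-cycles $\{i+m\ell,\, j+(m+s)\ell\}$ is independent of $m$, so the descent condition is unambiguous. The only payoff of writing it out as you did, versus the paper's citation, is that your version makes explicit exactly which inequalities are used and why the argument is insensitive to the periodicity parameter $\ell$ --- which is precisely the ``appropriate modification'' the paper leaves to the reader.
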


\begin{proof}
This follows by appropriately modifying the proof of \cite[Lemma 3.6]{KLS}.
%Likewise, the Eulerian and shellability assertions follows by the same reasoning as \cite[Corollary 3.17]{KLS}. 
%The claimed stabilization in $n$ holds because once $n \geq k\ell$, the upper bound 
%$f(i) \leq i+n$ is superfluous. Indeed, if $\frac{1}{\ell}\sum_{i=1}^\ell(f(i)-i) = k$ and $i \leq f(i)$ for all $i$, then necessarily $f(i) \leq i+n$ for $i \in \bbz$; otherwise $\frac{1}{\ell}\sum_{i=1}^\ell(f(i)-i) > \frac{n}{\ell} \geq k$. 
\end{proof}

%\begin{defn}\label{defn:decoratedposns}
%For $f \in \Bound_n(k,\ell)$, define the {\sl decorated positions} of $f$ as 
%\begin{equation}\label{eq:decoratedposns}
%{\rm Dec}(f) = \{i\in \bbz \colon f(i) = i \textnormal{ or } f(i) = i+n \}.
%\end{equation}
%\end{defn}

\medskip

\begin{defn}[Bridge order]\label{defn:leapweak}
Let $f \lessdot ft_{i,j+s\ell}$ be a cover in $(\Bound_n(k,\ell),\leq_B)$ with $i < j+s\ell$. Then it is a {\sl bridge cover} if $f(a) \in \{a,a+n\}$ for each $a \in (i,j+s\ell)$. The {\sl bridge order} $(\Bound_n(k,\ell),\leq_{b})$ is the partial order on $\Bound_n(k,\ell)$ whose cover relations are the bridge covers.\footnote{Typographically, $\leq_B$ is distinct from $\leq_b$ because Bruhat is capitalized, but bridge is not.}\end{defn}

Unlike $\leq_R$ and $\leq_B$, the bridge order $\leq_b$ can only be defined when a value of $n$ has been specified. (That is: $\leq_b$ is not defined by restricting a partial order on $\tilde{S}^k_\ell$.) Like $\leq_R$ and $\leq_B$, the bridge order is graded by the length function (cover relations increase length by~1) and has unique minimal element~${\rm id}_k$.

\begin{rmk}\label{rmk:goodcontrol}
The definition of $\leq_b$ is natural for the following reason. 
Whenever $a \in [n]$ satisfies $f(a) \in \{a,a+n\}$, we have good control on what happens when we remove $a$ from the ground set $[n] \mapsto [n] \setminus \{a\}$. For example, if $f(a) = a$, then $\mcm_f \subset \binom{[n] \setminus a}k$. Geometrically, any $k\times n$ matrix representative for $X \in {\rm GGMS}(\mcm_f)$ will have the zero vector in columns $a,a+\ell,\dots,a+n-\ell$. On the other hand, if $f(a) = a+n$, then $I \in \mcm_f$ implies that $a \in I$. 
\end{rmk}

\begin{example}
The bounded affine permutation $[9,2,11,4] \in \Bound_{12}(4,4)$ is maximal in $\leq_R$ but is not maximal in $\leq_b$. Indeed, we have $[9,2,11,4] \leq_b [11,2,9,4] \leq_b [5,2,15,4]$, with the last of these elements maximal in $\leq_B$ (hence in $\leq_b$). 
\end{example}

\begin{defn}
For $s \leq t$ in bridge order on $\Bound_n(k,\ell)$, denote by 
${\rm Chains}(s,t)$ the set of saturated chains from $s$ to~$t$. We say that ${\bf f,f'} \in {\rm Chains}(s,t)$ differ by a 2-{\sl move} if they differ in a single element (thus, they differ by replacing $z \lessdot x \lessdot v$ by $z \lessdot y \lessdot v$ with $y \neq x$).  Similarly, a 3-{\sl move} replaces a portion
$z \lessdot x \lessdot x' \lessdot v$ by 
$z \lessdot y \lessdot y' \lessdot v$, with $x \neq y$ and $x' \neq y'$. 
\end{defn}

\begin{thm}\label{thm:titsmatsumoto}
Let $n = p\ell$. Any maximum chain chain in $(\Bound_n(k,\ell),\leq_b)$ has length $\frac{k(n-k)-\bb(p-\bb)}{p}$. If $f \in (\Bound_n(k,\ell),\leq_b)$ is a maximal element, and $g \leq f$, then any two chains in ${\rm Chains}(g,f)$ are related by a finite sequence of 2- or 3- moves. 
\end{thm}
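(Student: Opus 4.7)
The plan is to establish both claims by a simultaneous induction on the common length of the chains involved, using the structural control of Remark~\ref{rmk:goodcontrol} to set up a ``column removal'' reduction whenever some index $a$ satisfies $f(a)\in\{a,a+n\}$. This is exactly the situation guaranteed by the bridge cover condition, so it dovetails well with $\leq_b$.

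To obtain the length formula I would first exhibit a specific maximal element $\hat{f}\in\Bound_n(k,\ell)$ and compute its length directly. Writing $k=qp+\beta$ with $0\leq \beta<p$, a natural candidate for $\hat f$ is the element whose window notation takes the value $\hat f(i) = i+n$ on $q$ positions of $[\ell]$ (together with $\beta$ additional positions receiving a smaller boost) and $\hat f(i)=i$ on the remainder, arranged so that the values form a permutation modulo $\ell$. Applying the standard affine length formula $\ell(f) = \#\{(i,j):\, 1\leq i\leq \ell,\, i<j,\, g(i)>g(j)\}$ to $g={\rm id}_k^{-1}\hat f$ should then yield, by a direct count of inversions, the desired value $\tfrac{k(n-k) - \beta(p-\beta)}{p}$. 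To see that \emph{every} maximal element in $\leq_b$ has this same length, I would argue contrapositively: if $\ell(f)$ is strictly smaller than the formula, then some index $a$ must satisfy $f(a)\notin\{a,a+n\}$, and this should produce an explicit transposition realizing a bridge cover above $f$, contradicting the assumed maximality.

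For the 2-move/3-move claim I would induct on the common length $m$ of the chains. Given $\mathbf f, \mathbf f'\in{\rm Chains}(g,f)$, compare their penultimate elements $x=f_{m-1}$ and $y=f'_{m-1}$. If $x=y$, apply the inductive hypothesis to ${\rm Chains}(g,x)$. Otherwise, let $z$ be the meet of $x$ and $y$ in $\leq_b$: if $\ell(z) = \ell(x) - 1$, then $z \lessdot x$, $y \lessdot f$ is a diamond and one rewrites $\mathbf f$ to agree with $\mathbf f'$ at the top by a single 2-move; if $\ell(z) = \ell(x) - 2$, the rank-$3$ interval $[z,f]$ is combinatorially a braid-triangle (the bridge-order analog of $s_i s_{i+1} s_i = s_{i+1} s_i s_{i+1}$ in $\tilde S_\ell$), yielding a 3-move. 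Either way, one reduces to a shorter pair of chains ending at $z$, to which induction applies.

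The main obstacle is verifying that the bridge cover condition is preserved under these local rewrites: a 2- or 3-move swapping a portion $z\lessdot x\lessdot f$ for $z\lessdot y\lessdot f$ requires that the new covers be bridge, not merely Bruhat. Maximality of $f$ in $\leq_b$ is exactly what forces this: it forces $f$ to have enough indices $a$ with $f(a)\in\{a,a+n\}$ that every rank-$2$ or rank-$3$ Bruhat interval below $f$ is automatically a bridge interval. Making this precise will require a case analysis of which transpositions $t_{i,j+s\ell}$ can appear as bridge covers terminating at $f$; once this is done, the classical Matsumoto argument for affine type $\tilde A_{\ell-1}$ provides the required sequence of 2- and 3-moves.
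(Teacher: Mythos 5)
Your outline of the move-connectedness argument has a genuine gap at its core step. You propose a top-down induction: compare the penultimate elements $x,y\lessdot f$, take their \emph{meet} $z$ in $\leq_b$, and assert that $\ell(z)\in\{\ell(x)-1,\ell(x)-2\}$ so that $[z,f]$ is a diamond or a braid-triangle. Nothing guarantees either claim. The bridge order is not the weak order of a Coxeter group --- its covers are general transpositions $t_{i,j+s\ell}$ subject to the decoration condition, and the poset depends on $n$ --- so the dihedral-parabolic argument underlying the classical Tits/Matsumoto proof does not transfer; indeed the paper only \emph{conjectures} that $\Bound_{p\ell}(k,\ell)$ is a meet semi-lattice (Remark~\ref{rmk:lattice}) and never uses meets. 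Your fallback claim, that maximality of $f$ forces every rank-$2$ or rank-$3$ Bruhat interval below $f$ to consist of bridge covers, is false: bridge covers require \emph{all} intermediate positions to be decorated, and generic Bruhat covers below a maximal element fail this. What is actually needed is the bottom-up confluence statement (Lemma~\ref{lem:bothcover} in the paper): given $z\lessdot x,y$ with $x,y$ below a common maximal element $t$, one constructs an explicit upper bound $x\vee y\leq t$ by a case analysis of which \emph{pairs} of transpositions can both give bridge covers of $z$ (three configurations, plus one excluded configuration where no common upper bound exists --- which is exactly why the hypothesis of a common maximal element is needed and why the statement fails for $\widetilde{\Bound}$). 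The induction then runs on $\ell(f)-\ell(g)$, splitting the two chains at the \emph{earliest} point of disagreement, not the latest.

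For the length formula, your strategy is closer to the paper's but still under-justified at the contrapositive step. The existence of a single position $a$ with $f(a)\notin\{a,a+n\}$ does not by itself produce a bridge cover above $f$: you must exhibit a transposition whose entire intermediate range is decorated. The paper's argument lists the undecorated positions $i_1<\cdots<i_h$ in a window, shows that maximality together with $\ell$-periodicity forces $h\leq 1$ (if $h\geq 2$ the values must strictly decrease along $i_1,\dots,i_h$ and then wrap around to a contradiction), and thereby \emph{classifies} the maximal elements as the $t_S$ or $t_{S,s}$ of Definition~\ref{defn:maximalelements}; the common length is then an explicit inversion count for each of these. Without that classification (or an equivalent uniform lower bound on the length of every maximal element), "all maximal chains have the same length" does not follow from computing the length of one candidate.
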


We prove Theorem~\ref{thm:titsmatsumoto} in Section~\ref{subsecn:bridgeorder}.

In the case of $\Bound_n(k,n)$, this move-connectedness statement was proved in \cite[Theorem 5.1]{WilliamsBridge}, which describes the possible 2-dimensional faces of {\sl bridge polytopes}, whose edge graphs can be identified with the Hasse diagrams beneath maximal elements in $\Bound_n(k,n)$. The analogous bridge-order ideals for $\Bound_n(k,\ell)$ need not be the edge graph of a polytope. 
% as an example, this fails for $Gr(2,6)^{\rho^3}$ and the order ideal beneath [7,2,3], which has degree 2 vertices but is not the edge graph of a polygon.   

\begin{rmk}\label{rmk:lattice}
Let $\widetilde{\Bound}_{p\ell}(k,\ell) = \hat{1} \cup \Bound_{p\ell}(k,\ell)$ be the result of adding a maximal element $\hat{1}$ to $\Bound_{p\ell}(k,\ell)$, so that any pair of elements admit an upper bound. It appears to us that $\widetilde{\Bound}_{p\ell}(k,\ell)$ is a lattice, and that $\Bound_{p\ell}(k,\ell)$ is a meet semi-lattice. Our proof of Theorem~\ref{thm:titsmatsumoto} mimics the standard proof that any two reduced words for an element of a Coxeter group are connected by Coxeter moves, using the lattice property of the weak order. We do not need the lattice property for our intended application in 
Section~\ref{secn:clusters}, and consequently have not sorted out the details required to prove it.

The analogue of Theorem~\ref{thm:titsmatsumoto} fails for $\widetilde{\Bound}_{p\ell}(k,\ell)$. For example, the Hasse diagram of $\widetilde{\Bound}_{2k}(k,2)$ is a $2k+2$-gon (a union of two chains of length $k$ intersecting only in their bottom and top elements). 
\end{rmk}

It will be convenient to have the following explicit description of the maximal elements in $(\Bound_n(k,\ell),\leq_b)$, which are in fact maximal elements in $(\Bound_n(k,\ell),\leq_B)$.

\begin{defn}[Maximal elements]\label{defn:maximalelements}
Given $n = p\ell$, write by long division $k = \aa p+\bb$ with $\bb \in [0,p)$. If $p|k$ and $S \in \binom{[\ell]}\aa$, we define an affine permutation $t_S$ via 
$t_S(i) = i+n$ (resp. $t_S(i) = i$) when $i \mod \ell \in S$ (resp. when $i \mod \ell \notin S$). If $p$ does not divide $k$, $S \in \binom{[\ell]}\aa$ and $s \in [\ell] \setminus S$, then we define an an affine permutation $t_{S,s} \in \Bound_n(k,\ell)$
via $t_{S,s}(i) = i+\bb\ell$ when $i \equiv s \mod \ell$,  
$t_{S,s}(i) = i+n$ when $i\mod \ell \in S$, and $t(i) = i$ when $i\mod \ell \notin S \cup \{s\}$.
\end{defn}

We prove the following in Section~\ref{subsecn:bridgeorder}.

\begin{prop}\label{prop:maximalelts}
Let $n = p\ell$. The maximal elements in $\Bound_{n}(k,\ell)$ are the affine permutations $t_S$ (resp. $t_{S,s}$) in the case that $p | k$ (resp. $p$ does not divide $k$). 
\end{prop}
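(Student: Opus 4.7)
The plan is to prove (i) $t_S$ and $t_{S,s}$ lie in $\Bound_n(k,\ell)$, (ii) they are maximal in the bridge order, and (iii) every other $f \in \Bound_n(k,\ell)$ admits a bridge cover above.

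For (i), I would verify directly from Definition~\ref{defn:maximalelements} that each of $t_S$ and $t_{S,s}$ is $\ell$-periodic with $i \leq f(i) \leq i+n$, and compute $\mathrm{av}(t_S) = \alpha n/\ell = \alpha p = k$ and $\mathrm{av}(t_{S,s}) = (\alpha n + \beta\ell)/\ell = \alpha p + \beta = k$.

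For (ii), I would suppose a bridge cover $t_S \lessdot_b t_S \cdot t_{i,j+s\ell}$ with $i < j+s\ell$ exists. Since every value of $t_S$ is of the form $a$ or $a+n$, the interior bridge condition $t_S(a) \in \{a,a+n\}$ on $a \in (i,j+s\ell)$ is automatic. A short case analysis on $(t_S(i), t_S(j+s\ell)) \in \{i,i+n\} \times \{j+s\ell, j+s\ell+n\}$ produces a contradiction in every case: the delicate case is $t_S(i) = i+n$ and $t_S(j+s\ell) = j+s\ell$, which demands simultaneously $j+s\ell \leq i+n$ (from $n$-boundedness of the swapped permutation) and $i+n < j+s\ell$ (from the Bruhat cover inequality $f(i) < f(j+s\ell)$), which is impossible. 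The remaining three cases collapse to $j + s\ell \leq i$, also impossible. The analysis for $t_{S,s}$ is parallel, with one additional subcase when $i$ or $j+s\ell$ equals the residue $\equiv s \pmod \ell$ with value $a + \beta\ell$, handled the same way.

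For (iii), call $a \in [\ell]$ \emph{movable} for $f$ if $f(a) \notin \{a,a+n\}$. The $\mathrm{av}$ constraint together with integrality shows that zero movable residues forces $p \mid k$ and $f = t_S$, and exactly one movable residue forces $p \nmid k$ together with the lone movable value equal to $a + \beta\ell$, giving $f = t_{S,s}$. Hence any $f$ not of the listed form has at least two movable residues. I would pick two consecutive movable positions $m_1 < m_2$ in $\bbz$ (so $m_2 - m_1 < \ell$ and all $a \in (m_1,m_2)$ satisfy $f(a) \in \{a,a+n\}$) and consider the two candidate bridge covers given by right multiplication by $t_{m_1,m_2}$ and by $t_{m_2,m_1+\ell}$. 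Writing $v_i := f(m_i) - m_i \in (0,n)$, the cover conditions reduce respectively to $v_1 \geq m_2-m_1,\ v_2 \leq n - (m_2-m_1),\ v_1 < v_2 + (m_2-m_1)$ and to the complementary inequalities $v_2 \geq \ell - (m_2-m_1),\ v_1 \leq (p-1)\ell + (m_2-m_1),\ v_2 < v_1 + \ell - (m_2-m_1)$. I would then invoke the fact that $f$ permutes the movable residues mod $\ell$ (so $v_i$ lies in the coset $\pi(m_i) - m_i + \ell\bbz$ for the induced permutation $\pi$ of movable residues) to check case-by-case that at least one of the two candidates satisfies its full set of cover conditions.

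The main obstacle is the case analysis in (iii). The bijection-mod-$\ell$ property is essential: without it, there are $(v_1,v_2) \in (0,n)^2$ that lie outside both bridges' inequality regions, but those configurations cannot arise as genuine affine permutations. When $|M| = 2$, I would split into the subcases where $\pi$ is the identity on $M$ (forcing $v_i \in \ell\bbz$, reducing to an inequality on $v_i/\ell$) and where $\pi$ swaps the two movable residues (forcing $v_1 \in d + \ell\bbz$ and $v_2 \in -d + \ell\bbz$), and in each the complementary inequalities dispose of all cases. For $|M| > 2$, the same pairwise analysis applies to any consecutive movable pair in cyclic order, with additional flexibility making the argument easier rather than harder.
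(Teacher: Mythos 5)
Your proposal is correct and follows essentially the same route as the paper: isolate the positions $a$ with $f(a)\notin\{a,a+n\}$, show that a maximal element has at most one such residue mod $\ell$ via a cyclic chain of inequalities among consecutive ``movable'' values (your summing of the negated conditions $v_i\geq v_{i+1}+d_i$ around the cycle is exactly the paper's $f(i_1)>f(i_2)>\cdots>f(i_h)>f(i_1)+\ell$ contradiction), and then pin down the form with the $\mathrm{av}$ constraint. If anything you are more explicit than the paper at the one delicate step --- the paper simply asserts that $f(i_j)<f(i_{j+1})$ permits a transposition that ``stays $n$-bounded and raises us in bridge order,'' whereas you verify the $n$-boundedness and covering conditions from the residue classes of the $v_i$, which is where the real content lies.
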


We end this section by recalling the concept of a {\sl bridge graph}, which we will use in our proof of Theorem~\ref{thm:TPtests}. Let $f_0 \in \Bound_n(k,n)$ be a maximal element and ${\bf f} = f_h \lessdot f_{h-1} \lessdot \cdots \lessdot f_0$ be a saturated chain in the bridge order on $\Bound_n(k,n)$. A construction in \cite{AHBC} associates to ${\bf f}$ a {\sl bridge graph} $G({\bf f})$, a reduced plabic graph whose trip permutation is $f_h$. For a careful description of this recipe see \cite[Section 2.5]{Karpman}. Informally, one starts with a plabic graph consisting entirely of edges connected to boundary vertices (``lollipops'') and then builds the graph by successively adding ``bridges'' which encode the cover relations $f_{i-1} \lessdot f_i$.  We illustrate the construction in an example in Figure~\ref{fig:BridgeGraphs}.

\section{Components of \texorpdfstring{$\Gr(k,n)^{\rho^\ell}$}{the cyclic shift locus}}\label{secn:components}
\subsection{Descriptions of $\ell$-fixed loci}
We give a few different descriptions of $\Gr(k,n)^{\rho^\ell}$, generalizing the description of the 1-shift locus given in Theorem~\ref{thm:Karpfinitelymany}.

\begin{prop}\label{prop:components}
For fixed $k,\ell,n$, let $\ell' = \gcd(\ell,n)$ and let $p = \frac{n}{\ell'}$. Then the cyclic symmetry locus is a disjoint union 
\begin{equation}\label{eq:components}
\Gr(k,n)^{\rho^\ell} = \coprod_{\substack{m_1,\dots,m_p \in [0,\dots,\ell'] \\ \sum m_i = k}} \Gr(m_1,\ell') \times \Gr(m_2,\ell') \times \cdots \times \Gr(m_p,\ell'). 
\end{equation}
\end{prop}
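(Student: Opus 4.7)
The plan is to diagonalize $\rho^\ell \in \GL(\bbc^n)$ and translate the condition defining the $\ell$-fixed locus into a direct-sum condition on subspaces of $\bbc^n$.

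First, I would verify from the definition \eqref{eq:rhomatrix} that $\rho^n = (-1)^{k-1} I$ and that the characteristic polynomial of $\rho$ equals $x^n - (-1)^{k-1}$. Hence the $n$ eigenvalues of $\rho$ are distinct and form a single coset $\omega \langle \xi \rangle \subset \bbc^*$, where $\omega$ is any fixed $n$-th root of $(-1)^{k-1}$ and $\xi$ is a primitive $n$-th root of unity. Taking $\ell$-th powers, the eigenvalues of $\rho^\ell$ are the $p$ elements of the coset $\omega^\ell \langle \xi^\ell \rangle$, each occurring with multiplicity $\ell'$. Consequently $\bbc^n$ decomposes as an internal direct sum $\bbc^n = V_1 \oplus \cdots \oplus V_p$ of eigenspaces for $\rho^\ell$, each of dimension $\ell'$.

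Next, I would observe that $\rho^\ell(X) = X$ in $\Gr(k,n)$ is literally the condition that $X \subset \bbc^n$ is $\rho^\ell$-invariant as a linear subspace. Since $\rho^\ell$ is diagonalizable, a $k$-dimensional subspace $X$ is $\rho^\ell$-invariant if and only if it splits as $X = X_1 \oplus \cdots \oplus X_p$ with $X_i := X \cap V_i$. Letting $m_i := \dim X_i$, the tuple $(m_1,\ldots,m_p)$ satisfies $0 \leq m_i \leq \ell'$ and $\sum m_i = k$, and conversely every such choice of $X_i \in \Gr(m_i, V_i)$ determines a $\rho^\ell$-invariant $k$-plane. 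Choosing bases of the eigenspaces identifies $\Gr(m_i, V_i) \cong \Gr(m_i, \ell')$ as projective varieties, yielding the decomposition \eqref{eq:components}.

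The heart of the argument is the eigenvalue computation for the signed circulant matrix $\rho$; once the diagonalization and eigenspace dimensions are in hand, the decomposition follows at once from the standard fact that an invariant subspace of a diagonalizable operator is the direct sum of its intersections with the eigenspaces. I do not anticipate any serious obstacle beyond careful sign tracking to confirm that the $n$ eigenvalues of $\rho$ are genuinely distinct and that each eigenspace of $\rho^\ell$ has dimension exactly $\ell'$.
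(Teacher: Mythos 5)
Your proof is correct and follows essentially the same route as the paper: diagonalize $\rho^\ell$, note that its eigenspaces each have dimension $\ell'$, and use the standard fact that a $\rho^\ell$-invariant subspace is the direct sum of its intersections with the eigenspaces. The only cosmetic difference is that you argue via the characteristic polynomial $x^n-(-1)^{k-1}$ where the paper exhibits the explicit eigenvectors $\omega_i=(1,\lam_i,\dots,\lam_i^{n-1})$; both yield the same eigenspace decomposition.
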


The data $(m_1,\dots,m_p)$ indexing the connected components is a weak composition of $k$ of length $p$ whose parts are bounded by $\ell'$. Equivalently, it is the data of a $k$-subset $1^{m_1}2^{m_2} \cdots p^{m_p}$ of the multiset $\{1^{\ell'},\dots,p^{\ell'}\}$. 

The argument for Proposition~\ref{prop:components} is simple, and has appeared previously.\footnote{One reference is Ben Webster's answer in \\ {\tt https://mathoverflow.net/questions/266274/fixed-points-of-an-involution}} It relies only on an understanding of the eigenspaces of 
$\rho \in \GL_n$. 
Fix an enumeration 
\begin{equation}\lam_j = 
\begin{cases}
\exp(2\pi \sqrt{-1}\frac{i}{n})  &\textnormal{ when $k$ is odd} \\
\exp(2\pi \sqrt{-1}\frac{2i+1}{n})&\textnormal{ when $k$ is even}
\end{cases}
\end{equation}
of the $n$th roots of $(-1)^{k-1}$, for $i = 0,\dots,n-1$.  
Then the eigenvalues of $\rho$ are the $\lam_i$ with corresponding eigenvector $\omega_i := (1,\lam_i,\dots,\lam_i^{n-1})$ %\in \bbc^n$. 
The sequence of $\ell$th powers $\lam_0^\ell,\dots,\lam_{n-1}^\ell$ is $p$-periodic. 
%and is a complete list of the $r$th roots of unity. 
Thus $\rho^\ell \in \GL_n$ has $\ell'$-dimensional eigenspaces $(E_i)_{i = 1,\dots,p}$ of the form $E_{i} = {\rm span}(\omega_{s} \colon \, s \equiv i \mod \ell)$. 

With these facts in hand, we prove Proposition~\ref{prop:components}. 
\begin{proof}
For a vector space $W$ let $\Gr(k,W)$ denote the Grassmannian of $k$-subspaces of $W$. Let $x \in \Gr(k,n)^{\rho^\ell}$. Then $x \subset \bbc^n$ is a $\rho^\ell$-invariant subspace, hence is spanned by $\rho^\ell$-eigenvectors. Let $m_i := \dim (x \cap E_{i}) \in [0,\ell']$. Clearly $\sum_i m_i = k$. Then 
$(x \cap E_{1},\dots,x\cap E_{p})$
determines a point in $\prod_{i=1}^p \Gr(m_i,E_i)$, and conversely any point $(W_1,\dots,W_p)$ in 
this product of Grassmannians determines a $\rho^\ell$-fixed point $W_1 \oplus \cdots \oplus W_p \in \Gr(k,n)^{\rho^\ell}$. The disjointness of this union is clear. 
\end{proof}

\begin{rmk}
Choosing a $k \times n$ matrix representative for $X \in \Gr(k,n)^{\rho^\ell}$ consisting of eigenvectors, we see that $\rho^\ell$ acts by $\prod_{i=1}^{p}\lam_j^{m_i\ell} \in \bbc$ on the component $\prod_i \Gr(m_i,\ell') \subset \Gr(k,n)^{\rho^\ell}$. 
\end{rmk}

\begin{example}
When $\ell = 1$ and $p=n$, each $m_i \in \{0,1\}$, and the components of \eqref{eq:components} are indexed by $k$-subsets of $[n]$. Since $\Gr(0,1) = \Gr(1,1) = {\rm pt}$, this is Karp's result Theorem~\ref{thm:Karpfinitelymany}. When $\ell=n$ then $p=1$ and  \eqref{eq:components} has a single term $m_1=k$, recovering $\Gr(k,n)^{\rho^n} = \Gr(k,n)$. In the case of $\Gr(2,4)^{\rho^2}$, we get
$$\Gr(2,2) \times \Gr(0,2) \, \cup \, \Gr(0,2) \times \Gr(2,2) \, \cup \, \Gr(1,2) \times \Gr(1,2) = {\rm pt} \cup{\rm pt} \, \cup \,  \bbp^1 \times \bbp^1, $$
maching the calculation we did with our bare hands in Example~\ref{eg:firstegs}. 
\end{example}

We now collect a few other perspectives on $\Gr(k,n)^{\rho^\ell}$ as a geometric space. 

Consider the unitary matrix $\frac{1}{\sqrt{n}}{\rm Vand}(\lam_0,\dots,\lam_{n-1}) \in \GL_n$ whose $j$th row is the eigenvector $\frac{1}{\sqrt{n}}\omega_j$. For odd $k$, multiplication by this matrix is the {\sl discrete Fourier transform} on $\bbc^n$. 
% and the unitarity assertion is the {\sl Fourier inversion formula}. %The reader may verify the unitarity when $k$ is even.)
Let $T_p \subset \GL_n$ denote the rank $p$ algebraic torus consisting of $p$-periodic diagonal matrices ${\rm diag}(a_1,\dots,a_n)$ with $a_{i+p} = a_i \in \bbc^*$. 

\begin{prop}\label{prop:dft}
Right multiplication by ${\rm Vand}(z_0,\dots,z_{n-1})$ is an isomorphism of varieties $\Gr(k,n)^{T_p} \to \Gr(k,n)^{\rho^\ell}.$
\end{prop}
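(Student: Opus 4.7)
The plan is to interpret the map in the proposition as a change of basis on $\bbc^n$ that diagonalizes $\rho$, and to identify the fixed loci on either side by comparing the resulting weight-space decompositions of $\bbc^n$. The main obstacle is the need to bridge the gap between fixed points of the finite cyclic group $\langle \rho^\ell \rangle$ and those of the continuous torus $T_p$; this turns out to be automatic from the eigenvalue structure of $\rho^\ell$.

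First, I would observe that right multiplication by any element $g \in \GL_n$ on $k \times n$ matrix representatives induces a biregular automorphism of $\Gr(k,n)$, with inverse given by right multiplication by $g^{-1}$. So the map in question is automatically an isomorphism $\Gr(k,n) \to \Gr(k,n)$; what remains is to check that it restricts as claimed.

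Next, I would establish the conjugation identity $V \rho = D V$, where $D = \mathrm{diag}(\lambda_0, \dots, \lambda_{n-1})$ and $V = \mathrm{Vand}(\lambda_0, \dots, \lambda_{n-1})$. This is a direct row-by-row calculation: the $j$th row of $V$ is $\omega_j = (1, \lambda_j, \dots, \lambda_j^{n-1})$, and $\omega_j \rho = \lambda_j \omega_j$ follows from $\lambda_j^n = (-1)^{k-1}$. Raising to the $\ell$-th power gives $V \rho^\ell V^{-1} = D_\ell := \mathrm{diag}(\lambda_0^\ell, \dots, \lambda_{n-1}^\ell)$, and this diagonal sequence is $p$-periodic in the index, so $D_\ell \in T_p$.

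For the main step, I would reduce to showing $\Gr(k,n)^{T_p} = \Gr(k,n)^{\langle D_\ell \rangle}$ as subvarieties of $\Gr(k,n)$. The general observation is that, for any subgroup $G \subset \GL_n$ acting on matrix representatives on the right, $X \cdot V$ is $G$-fixed iff $X$ is $(V G V^{-1})$-fixed. Applied to $G = \langle \rho^\ell \rangle$, this converts the proposition to the reduction above. Both $T_p$ and $\langle D_\ell \rangle$ lie inside the standard maximal torus of diagonal matrices, so a subspace of $\bbc^n$ is fixed by either one iff it decomposes as a direct sum of its intersections with the respective weight spaces. The $T_p$-weight spaces are $W_i = \mathrm{span}(e_j : j \equiv i \pmod p)$ for $i = 0, \dots, p-1$, each of dimension $\ell'$. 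The element $D_\ell$ has exactly $p$ distinct eigenvalues $\lambda_0^\ell, \dots, \lambda_{p-1}^\ell$ (distinct because any two differ by a nontrivial $p$-th root of unity), and the corresponding eigenspaces are precisely the $W_i$. So both fixed loci are characterized by the same direct-sum decomposition condition and therefore coincide.

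The main obstacle, as indicated, is the transition from the cyclic group $\langle \rho^\ell \rangle$ to the full torus $T_p$: one must verify that the single element $D_\ell$ already separates all $p$ weight spaces, which amounts to the distinctness of $\lambda_0^\ell, \dots, \lambda_{p-1}^\ell$. This in turn is a restatement of the definition $p = n/\gcd(\ell,n)$ together with the enumeration of the $\lambda_j$ as $n$-th roots of $(-1)^{k-1}$. Once this is in hand, everything else is a matter of unpacking the torus-fixed-point and $\rho^\ell$-fixed-point conditions in terms of the same decomposition of $\bbc^n$.
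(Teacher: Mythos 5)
Your proof is correct and follows essentially the same route as the paper: conjugating $\rho^\ell$ to the diagonal matrix $D_\ell\in T_p$ via the Vandermonde matrix, and then observing that the fixed locus of the single semisimple element $D_\ell$ already coincides with that of the whole torus $T_p$ because its eigenspaces are exactly the $T_p$-weight spaces. The paper phrases the converse direction by choosing a $\rho^\ell$-eigenbasis of a fixed subspace and noting each eigenvector lands in a single weight space under $V^{-1}$, which is the same argument in different packaging.
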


Thus, the cyclic symmetry locus differs from a locus of torus fixed points by a unitary transformation of the ambient space, implementing the passage from the standard basis of $\bbc^n$ to the $\rho$-eigenbasis. 
 
\begin{proof}
Abbreviate $V = {\rm Vand}(\lam_0,\dots,\lam_{n-1})$. We have 
$V \rho^\ell = {\rm diag}(\lam_0^\ell,\lam_1^\ell,\dots,\lam_{n-1}^\ell) V \in \GL_n$ with the diagonal matrix on the right hand side in $T_p$. If $x \in \Gr(k,n)^{T_p}$, it follows that 
$$x \, V \rho^\ell = x \, {\rm diag}(z_0^\ell,z_1^\ell,\dots,z_{n-1}^\ell) V= x \, V, $$
so that $x \, V\in \Gr(k,n)^{\rho^\ell}$. 

Conversely, if $x \in \Gr(k,n)^{\rho^\ell}$ then we can choose a basis $w_1,\dots,w_k$ consisting of $\rho^\ell$-eigenvectors. If $w_i$ has $\rho^\ell$-eigenvalue $z_j^\ell$, then $w_i V^{-1}$ is a linear combination of vectors of the form $\{e_{s}\}_{s \equiv  j \mod p}$. Therefore, the line through $w_i V^{-1}$ is $T_p$-stable, hence $x \, V^{-1} = {\rm span}\{w_1V^{-1},\dots,w_kV^{-1}\}$ is $T_p$-stable as claimed. 
\end{proof}
 
\begin{rmk}
In the special case that $\ell=1$ and $p = n$, the torus $T_p \subset \GL_n$ is the standard maximal torus $T$ of diagonal matrices. Combining Proposition~\ref{prop:dft} with Theorem~\ref{thm:Karpfinitelymany} we have that $\Gr(k,n)^T$ consists 
of $\binom n k$ many points. These $T$-fixed points are well known: they are the preimages of the vertices of the moment polytope for $\Gr(k,n)$. For each $I\in \binom {[n]} k$, there is a torus fixed point $x_I \in \Gr(k,n)$ whose unique nonzero Pl\"ucker coordinate is $\Delta_I$. By Proposition~\ref{prop:dft}, the $\rho$-fixed points in $\Gr(k,n)$ take the form  $x_I {\rm Vand}(\lam_0,\dots,\lam_{n-1})$ for $I \in \binom{[n]}k$, which is a way of phrasing 
\cite[Theorem 1.1]{Karp}.
\end{rmk}

\begin{rmk}[Hilbert functions multiply]
The inclusion $\prod_{i=1}^p \Gr(m_i,E_i) \hookrightarrow \Gr(k,n)^{\rho^\ell} \subset \bbp^{\binom n k-1}$ from \eqref{eq:components} is a composition of the following more familiar maps:
$$\prod_{i} \Gr(m_i,E_i) \hookrightarrow \prod_{i} \bbp (\bigwedge^{m_i}E_i) \hookrightarrow \bbp (\bigotimes_i \bigwedge^{m_i}E_i) \subset \bbp (\bigwedge^k (\bigoplus_i E_i)) \cong  \bbp(\bigwedge^k \bbc^n).$$
The first of these maps is the product of Pl\"ucker embeddings, the second is the Segre embedding, the third is inclusion defined by the vanishing of certain Pl\"ucker coordinates, and the last is a linear automorphism of $\bbp^{\binom n k -1}$ implementing the change of basis from the eigenbasis to the standard basis. 

From this description, we deduce the following multiplicativity of Hilbert functions $\dim (\bbc[\prod_i\Gr(m_i,\ell')]_{(d)}) =  \prod_i \dim \bbc[\Gr(m_i,\ell')]_{(d)}$. Here, the first homogeneous coordinate ring refers to the embedding $\prod_{i=1}^p \Gr(m_i,E_i) \subset \bbp^{\binom n k-1}$ from \eqref{eq:components}, and the 
second refers to the Pl\"ucker embedding $\Gr(m_i,E_i) \subset \bbp^{\binom {\ell'} {m_i}-1}$. The dimension of the latter counts the semistandard Young tableaux with $m_i$ rows, with $d$ columns, and with entries in~$[\ell']$.
\end{rmk}

\begin{rmk}[Each component is defined by linear equations]
Let us permute the $\rho^\ell$ eigenvectors $\omega_0,\dots,\omega_{n-1}$ so that the first $\ell$ vectors span the eigenspace $E_1$, the next $\ell$ span $E_2$, and so on. 

Let $X \in \prod_i \Gr(m_i,E_i) \subset \Gr(k,n)^{\rho^\ell}$ with a choice of basis $X = {\rm span}(w_1,\dots,w_k)$. Consider the $k \times n$ matrix whose $M$ whose $j$th row is the coordinate vector of $w_j$ with respect to the permuted eigenbasis. Then the initial $\ell'$ columns of $M$ will have rank $m_1$, the next $\ell'$ columns will have rank $m_2$, and so on. It is equivalent to require that the initial $a\ell'$ columns have rank $m_1 + \cdots m_a$ for $a = 1,\dots,p$, and also to specify that the final $b\ell$ columns have rank $m_{p-b+1}+ \cdots +m_p$ for $b = 1,\dots,p$. Imposing such rank conditions on the initial submatrices amounts to imposing a Schubert condition on $x$ with respect to the flag whose $i$th step is spanned by the first $i$ vectors in the permuted eigenbasis. Imposing rank conditions on the terminal submatrices amounts to imposing a Schubert condition on $X$ with respect to the opposite flag whose $i$th step is spanned by the final $i$ vectors in the permuted eigenbasis.  Thus, the component $\prod_i \Gr(m_i,E_i)$ is the intersection of a Schubert variety and an opposite Schubert variety, i.e. a Richardson variety in the Grassmannian. (Keeping in mind that this Richardson variety is computed with respect to the $\rho^\ell$-eigenbasis, not the standard basis for $\bbc^n$.) In particular, it is cut out from $\Gr(k,n)$ by the vanishing of certain linear equations in Pl\"ucker coordinates. 
\end{rmk}

\subsection{The distinguished component}
We denote by $\Gr(k,n)^{\rho^\ell}_{\geq 0} \subset \Gr(k,n)$ the subset of points which are both TNN and $\ell$-fixed. 

\begin{lemdef}\label{defn:distinguished} Amongst the components of $\Gr(k,n)^{\rho^\ell}$, there is a {\sl distinguished component} $\mcd := \mcd_n(k,\ell) \subset \Gr(k,n)^{\rho^\ell}$, defined by the containment $\Gr(k,n)^{\rho^\ell}_{\geq 0} \subset \mcd$. 

If $k = \aa p + \bb$ with $\bb \in [0,p)$, then $\mcd \cong \prod_{j=1}^{p-\bb}\Gr(\aa,\ell') \times \prod_{j=1}^{\bb}\Gr(\aa+1,\ell')$.

This component is top-dimensional in $\Gr(k,n)^{\rho^\ell}$, and its dimension is given by the formula $\frac{k(n-k)-\bb(p-\bb)}{p}$, i.e. the rank of the poset $\Bound_n(k,\ell')$ (cf.~Proposition~\ref{prop:maximalelts}).  
\end{lemdef}

The placement of parameters in the notation $\mcd_n(k,\ell)$ is intended to be parallel with the notation $\Bound_n(k,\ell)$. 

The isomorphism $\mcd \cong \prod_{j=1}^{p-\bb}\Gr(\aa,\ell') \times \prod_{j=1}^{\bb}\Gr(\aa+1,\ell')$ in the above definition/lemma says that the distinguished component is indexed by a $k$-multiset of $\{1^{\ell'},\dots,p^{\ell'}\}$ that is as ``equi-distributed as possible.'' 

In the special case $\ell=1$, the above definition/lemma  is the statement there is a unique TNN $\rho$--fixed point \cite[Theorem 1.1]{Karp}. (In fact, this point is TP.)

\begin{proof}
By \cite[Theorem 1.1]{Karp}, there is a unique TP $\rho$-fixed point $X_0 \in \Gr(k,n)^\rho_{>0}$. Then $X_0 \in \Gr(k,n)^{\rho^\ell}$, and we can define the component $\mcd$ by requiring that $X_0 \in \mcd$. We prove (independently) below that $\Gr(k,n)^{\rho^\ell}_{\geq 0}$ is homeomorphic to a closed ball, in particular is a connected space. So we have $\Gr(k,n)^{\rho^\ell}_{\geq 0} \subset \mcd$. 

Amongst the roots $\lam_0,\dots,\lam_{n-1}$ of $(-1)^{k-1}$, let 
$\lam_{i_1},\dots,\lam_{i_k}$ be the $k$ roots closest to $1 \in \bbc$ along the unit circle. By \cite[Theorem 1.1]{Karp}, the subspace  
$X_0$ is spanned by the corresponding eigenvectors $\omega_{i_1},\dots,\omega_{i_k}$. It is simple to see that in the enumeration of the $\lam_i$'s given above, the numbers $\{i_1,\dots,i_k\}$ form a cyclic $k$-interval inside $[n]$. Thus, the dimensions $m_i := \dim X_0 \cap E_i$ as in the proof of Proposition~\ref{prop:components} are as equi-distributed as possible. This identifies the numbers $m_1,\dots,m_p$ in the decomposition $\mcd \cong \prod_j \Gr(m_j,\ell')$.

Since $\dim \Gr(m_j,\ell') = m_j(\ell'-m_j)$, we have
$\dim \mcd_0 = \bb(\aa+1)(\ell'-\aa-1)+(p-\bb)\aa(\ell'-\aa) = k(\ell'-a)-\bb(k-\bb+p)$. Multiplying by $p$ and simplifying yields the claimed dimension formula. 

To see that the component is top-dimensional, we seek to minimize $\sum_i m_i^2$ subject to the constraint $\sum m_i = k$ and $m_i \geq 0$. The minimum is attained when the $m_i$'s are as equi-distributed as possible.\footnote{E.g., we claim that the minimum is attained when all $|m_i - m_j| \in \{0,1\}$. If not, one can replace $m_i \mapsto m_i-1$ and $m_j \mapsto m_j+1$ and decrease the value of $\sum m_i^2$. The remaining argument is a calculation.}
%\footnote{Letting $\eta_i := m_i - \aa \in \bbz$, it is equivalent to minimize $\sum_i \eta_i^2$ subject to $\sum \eta_i= \bb$. The minimum is attained when $\eta = 1^{\bb}0^{p-\bb}$. PROOF?}.
\end{proof}

The rest of the paper concerns TNN cells, total positivity tests, and clusters. Thus the distinguished component $\mcd_n(k,\ell)$, and not the entire cyclic symmetry locus $\Gr(k,n)^{\rho^\ell}$, should be considered the ``ambient variety'' for the constructions that follow. 

The following {\sl stability of the distinguished component} is important once we begin thinking about cluster structures. Fix $k$ and $\ell$ and set $n=p\ell$, letting $p$ vary. Then by the above Lemma/Definition, we have 
\begin{equation}\label{eq:stabilityofD}
\mcd \cong (\bbp^{\ell-1})^k \text{ for } p \geq k.
\end{equation}
However, this isomorphism is nontrivial when written in terms of Pl\"ucker coordinates on $\Gr(k,p\ell)$. 
%\textcolor{blue}{Do an example?}

\section{\texorpdfstring{Cell decomposition of $\Gr(k,n)^{\rho^\ell}_{\geq 0}$}{Cyclically symmetric cells}}\label{secn:cells}
We generalize the positroid cell decomposition of $\Gr(k,n)_{\geq 0}$ in the presence of cyclic symmetry, proving analogues of Postnikov's results.  

Denote by $\Gr(\mcm)^{\rho^\ell}_{>0}$ the set of 
$\rho^\ell$-fixed points in a positroid cell $\Gr(\mcm)_{>0}$.

It is clear from the definitions that whenever $\ell|n$, 
we have $ \Bound_n(k,\ell) \subset \Bound_n(k,n)$. Moreover, for a positroid $\mcm_f \subset \binom{[n]}k$, we have 
\begin{equation}\label{eq:periodicity}
f \in \Bound_n(k,\ell)\subset \Bound_n(k,n) \text{ if and only if } \mcm_f \text{ is  $\rho^\ell$-invariant.}
\end{equation}

%\textcolor{blue}{double check: this inclusion is a homomorphism of Bruhat order and also of weak order.}

Our main theorem in this section is the following. 
\begin{thm}\label{thm:cells}
Let $\ell' = \gcd(\ell,n)$. The space $\Gr(k,n)^{\rho^\ell}_{\geq 0}$ is homeomorphic to a closed ball, full-dimensional in the ambient variety $\Gr(k,n)^{\rho^\ell}$. 
%(Or equivalently, in the ambient variety $\mcd_n(k,\ell)$.) 
It bears a cell decomposition 
\begin{equation}\label{eq:cells}
\Gr(k,n)_{\geq 0}^{\rho^\ell} = \coprod_{f \in \Bound_n(k,\ell') \subset \Bound_n(k,n)} \Gr(\mcm_f)_{>0}^{\rho^\ell}
\end{equation}
whose cell closure order is the {\sl dual of} Bruhat order on $\Bound_n(k,\ell')$ as ranked posets. 
\end{thm}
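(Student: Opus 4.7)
The plan is as follows. First, I would establish the decomposition \eqref{eq:cells} set-theoretically. The cyclic shift $\rho$ is a cellular self-homeomorphism of $\Gr(k,n)_{\geq 0}$ permuting positroid cells via the natural shift action on $\Bound_n(k,n)$; by \eqref{eq:periodicity} a positroid cell is $\rho^\ell$-invariant if and only if its bounded affine permutation lies in $\Bound_n(k,\ell') \subset \Bound_n(k,n)$. Any $\rho^\ell$-fixed point has $\rho^\ell$-invariant matroid and thus lies in such a cell, yielding \eqref{eq:cells} as a disjoint union of (possibly empty) subsets.

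Next, I would realize each $\Gr(\mcm_f)^{\rho^\ell}_{>0}$ as an open ball via a $\rho^\ell$-equivariant bridge graph. Given $f \in \Bound_n(k,\ell')$, choose a saturated chain in bridge order on $\Bound_n(k,\ell')$ from a maximal element down to $f$; by Theorem~\ref{thm:titsmatsumoto} such chains exist. Its $p$-fold periodic lift is a saturated chain ${\bf f}$ in bridge order on $\Bound_n(k,n)$ whose cover relations come in $\rho^\ell$-orbits of size $p$, and the associated bridge graph $G({\bf f})$ is rotationally $\rho^\ell$-symmetric with trip permutation $f$. Postnikov's edge-weight parametrization identifies $\Gr(\mcm_f)_{>0}$ with positive edge weights on $G({\bf f})$ modulo gauge; restricting to $\rho^\ell$-invariant weights modulo $\rho^\ell$-invariant gauge identifies $\Gr(\mcm_f)^{\rho^\ell}_{>0}$ with $\bbr_{>0}^d$, where $d$ counts $\rho^\ell$-orbits of bridges in ${\bf f}$ and therefore equals $\dim \mcd_n(k,\ell) - \ell_{\tilde S_{\ell'}}(f)$. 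The main obstacle lies here: one must arrange the $\rho^\ell$-equivariant bridge graph globally and verify the dimension count survives the gauge quotient in the symmetric setting, ruling out any unexpected $\rho^\ell$-fixed edges or faces that would alter the count.

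Finally, I would deduce the closure order and the ball property. The ambient cell closure $\overline{\Gr(\mcm_f)_{>0}} = \bigsqcup_{g \leq_B f} \Gr(\mcm_g)_{>0}$ intersects $\Gr(k,n)^{\rho^\ell}_{\geq 0}$ in $\bigsqcup_{g \in \Bound_n(k,\ell'),\,g \leq_B f} \Gr(\mcm_g)^{\rho^\ell}_{>0}$. A separate comparison (matching rank functions and subword characterizations) shows that Bruhat order on $\tilde S_n$ restricted to $\ell'$-periodic elements agrees with intrinsic Bruhat order on $\tilde S_{\ell'}$, so the face poset is dual to Bruhat on $\Bound_n(k,\ell')$. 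The top cell is $\Gr(\mcm_{{\rm id}_k})^{\rho^\ell}_{>0}$ of dimension $\dim \mcd_n(k,\ell)$ by Proposition~\ref{prop:maximalelts}, giving full-dimensionality inside the distinguished component. The closed-ball property then follows by applying the contractive flow of \cite{GKLI} equivariantly: the flow is $\rho$-equivariant by construction, hence restricts to a flow on the compact $\Gr(k,n)^{\rho^\ell}_{\geq 0}$ whose unique attractor is the (a fortiori $\rho^\ell$-fixed) TP $\rho$-fixed point of Theorem~\ref{thm:Karpfinitelymany}; their criterion then produces the homeomorphism with a closed ball.
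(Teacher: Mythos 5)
Your set-theoretic decomposition and your treatment of the closed-ball property are essentially the paper's: the contractive flow of \cite{GKLI} is $\rho$-equivariant, so it preserves the (compact, full-dimensional) symmetric TNN locus and their Lemma 2.3 applies verbatim. Your parametrization of the individual strata differs in packaging from the paper's: you propose symmetric edge weights on a $\rho^\ell$-symmetric bridge graph modulo symmetric gauge, whereas the paper runs a downward induction in the bridge order, adding one $\rho^\ell$-orbit of bridges at a time via the explicit homeomorphisms $\Gr(ft_{ij})_{>0}\times\bbr_{>0}\to\Gr(f)_{>0}$ of Lemma~\ref{lem:homeomorphism}, done equivariantly. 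The paper's recursion sidesteps exactly the ``gauge quotient'' issue you flag as your main obstacle: each bridge-orbit contributes one positive parameter and no quotient ever has to be analyzed. (Your obstacle is surmountable --- a $\rho^\ell$-fixed point of $\bbr_{>0}^E/\text{gauge}$ lifts to an invariant weighting because the gauge torus $(\bbr_{>0})^V$ is uniquely divisible, so the relevant $H^1$ vanishes --- but you would need to say this, and you would still need the equivariant bridge graph to exist, which is the content of Theorem~\ref{thm:titsmatsumoto} and Proposition~\ref{prop:maximalelts}.)

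The genuine gap is in your closure-order step. You deduce the face poset by intersecting the ambient closure relation with the symmetric locus. That only yields one containment: since $\Gr(\mcm_f)^{\rho^\ell}_{>0}\subset\Gr(\mcm_f)_{>0}$, certainly $\overline{\Gr(\mcm_f)^{\rho^\ell}_{>0}}\subseteq\overline{\Gr(\mcm_f)_{>0}}\cap\Gr(k,n)^{\rho^\ell}_{\geq 0}=\bigsqcup_{g\geq_B f}\Gr(\mcm_g)^{\rho^\ell}_{>0}$. The hard direction is the reverse: given $f\leq_B g$ in $\Bound_n(k,\ell')$ and a \emph{symmetric} point $X\in\Gr(\mcm_g)^{\rho^\ell}_{>0}$, you must exhibit $X$ as a limit of \emph{symmetric} points of $\Gr(\mcm_f)_{>0}$; knowing $X$ is a limit of arbitrary points of $\Gr(\mcm_f)_{>0}$ does not suffice (compare the paper's example of a $\rho^2$-invariant matroid stratum containing no $\rho^2$-fixed points). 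The paper explicitly notes that the standard boundary-measurement argument does not obviously adapt, and instead proves Lemma~\ref{lem:warmup}: for each Bruhat cover $f\lessdot g=ft_{ij}$ it factors $t_{ij}$ into simple transpositions of two controlled types, applies the corresponding Chevalley generators to build a curve $Y(a)\in\Gr(\mcm_f)_{>0}$, and carries out an order-of-vanishing analysis to show $\lim_{a\to 0}Y(a)=X$; for $\ell'>1$ this whole construction can be performed $\rho^\ell$-equivariantly, staying inside the fixed locus. Some such equivariant degeneration argument is missing from your proposal and cannot be replaced by the ambient closure relation alone.
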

In particular, this implies that $\overline{\Gr(\mcm_f)_{>0}^{\rho^\ell}} = \Gr(\mcm_f)_{\geq 0}^{\rho^\ell}$. i.e. that taking $\rho^\ell$ fixed points commutes with taking closure.

The inclusion $\subseteq$ asserted in \eqref{eq:cells} follows from  \eqref{eq:periodicity}, and the reverse inclusion $\supseteq$ is trivial. The nontrivial statements in Theorem~\ref{thm:TPtests} are that 1) the TNN locus $\Gr(k,n)_{\geq 0}^{\rho^\ell}$ is a closed ball, 2)
if $\mcm$ is a $\rho^\ell$-invariant positroid, then $\mcm \cap \Gr(k,n)^{\rho^\ell}$ is a cell (in particular, it is nonempty) whose codimension is its $\tilde{S}_{\ell'}$-Coxeter length, and 3) the closure of each cell is a union of cells, and the closure relation is dual to 
$\tilde{S}_{\ell'}$-Bruhat order. We prove 1) using the techniques developed in \cite{GKLI} (with no modifications). We prove 2) by downward induction in the bridge order; the ideas are similar to those in a standard proof of Theorem~\ref{thm:cellsusual}. Our proof of 3) requires some constructions which we think have not appeared previously.

Assertion 2) is subtle: the analogous statement can fail for a realizable matroid that is not a positroid. 

\begin{example}[Symmetrical matroids need not have symmetrical points]
Consider the matroid $\mcm = \{12,23,34,14\}$. This matroid is $\rho^2$-invariant (indeed, it is $\rho$-invariant). It is a realizable matroid over $\bbr$ but is not a positroid. Points in the matroid stratum ${\rm GGMS}_{\bbr}(\mcm)$
have Pl\"ucker coordinates satisfying $0 = \Delta_{12}\Delta_{34}+\Delta_{14}\Delta_{23}$, and no other constraints. There are infinitely many such points, but {\sl none} of these points are $\rho^2$-invariant. Indeed, regardless of whether $\rho^2$ acts by $+ 1$ or $-1$, such points would have Pl\"ucker coordinates satisfying $0 = \Delta_{12}^2+\Delta_{23}^2$, and over $\bbr$ this implies that all Pl\"ucker coordinates are zero. Thus, ${\rm GGMS}_{\bbr}(\mcm) \cap \Gr(2,4)^{\rho^2}$ is empty. 
%By a similar argument, ${\rm GGMS}_{\bbr}(\mcm) \cap \Gr(2,4)^{\rho} = \emptyset$ as well. 
\end{example}

Theorem~\ref{thm:cells} and Lemma~\ref{lem:orderideal} together imply the following. 
\begin{cor}[Stability of TNN cells]\label{cor:cellsarestable}
Let $k \leq p \leq p'$. Then the cell closure orders on $\Gr(k,p\ell)^{\rho^\ell}_{\geq 0}$ and $\Gr(k,p'\ell)^{\rho^\ell}_{\geq 0}$ coincide. 
\end{cor}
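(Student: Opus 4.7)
The plan is to identify both cell closure posets with the same abstract poset, using the combinatorial description from Theorem~\ref{thm:cells} together with the order-ideal property in Lemma~\ref{lem:orderideal}.

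First, observe that $\gcd(\ell,p\ell)=\gcd(\ell,p'\ell)=\ell$, so Theorem~\ref{thm:cells} identifies the cell closure order on $\Gr(k,p\ell)^{\rho^\ell}_{\geq 0}$ (respectively $\Gr(k,p'\ell)^{\rho^\ell}_{\geq 0}$) with the dual of Bruhat order on $\Bound_{p\ell}(k,\ell)$ (respectively $\Bound_{p'\ell}(k,\ell)$). By Lemma~\ref{lem:orderideal}, both of these sets are order ideals in the common ambient ranked poset $(\tilde{S}^k_\ell,\leq_B)$, so to conclude that the two cell closure posets agree it suffices to show the underlying subsets $\Bound_{p\ell}(k,\ell)$ and $\Bound_{p'\ell}(k,\ell)$ of $\tilde{S}^k_\ell$ coincide.

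Second, I would verify that $\Bound_{p\ell}(k,\ell)=\Bound_{p'\ell}(k,\ell)$ whenever $k\leq p\leq p'$. The inclusion $\Bound_{p\ell}(k,\ell)\subseteq \Bound_{p'\ell}(k,\ell)$ is immediate from the definition of $n$-boundedness, since $p\leq p'$ means the upper constraint $f(i)\leq i+p\ell$ is stronger than $f(i)\leq i+p'\ell$. For the reverse containment, note that the hypothesis $f \in \tilde{S}^k_\ell$ with $f(i)\geq i$ for all $i$ gives $\sum_{i=1}^{\ell}(f(i)-i)=k\ell$, a sum of $\ell$ nonnegative integers. Hence each individual summand satisfies $f(i)-i\leq k\ell$, and using $k\leq p$ we obtain $f(i)\leq i+k\ell \leq i+p\ell$. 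Thus both $\Bound_{p\ell}(k,\ell)$ and $\Bound_{p'\ell}(k,\ell)$ equal the set $\{f\in \tilde{S}^k_\ell : f(i)\geq i\text{ for all }i\in\bbz\}$, independent of the choice of $p\geq k$.

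Finally, combining the two steps: the index sets coincide, the Bruhat orders they inherit from $\tilde{S}^k_\ell$ coincide by the order-ideal property, and hence their duals coincide, so the cell closure orders of $\Gr(k,p\ell)^{\rho^\ell}_{\geq 0}$ and $\Gr(k,p'\ell)^{\rho^\ell}_{\geq 0}$ are isomorphic as ranked posets. There is no genuine obstacle here; the entire content is the observation that the hypothesis $p\geq k$ renders the $p\ell$-boundedness upper constraint automatic, after which Theorem~\ref{thm:cells} and Lemma~\ref{lem:orderideal} immediately yield the stability.
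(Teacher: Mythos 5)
Your proof is correct and follows the same route the paper intends: Corollary~\ref{cor:cellsarestable} is stated there as an immediate consequence of Theorem~\ref{thm:cells} and Lemma~\ref{lem:orderideal}, and the only detail to supply is exactly the one you supply, namely that for $p\geq k$ the upper bound $f(i)\leq i+p\ell$ is automatic from $f(i)\geq i$ and $\sum_{i=1}^{\ell}(f(i)-i)=k\ell$, so $\Bound_{p\ell}(k,\ell)=\Bound_{p'\ell}(k,\ell)$ as subsets of $\tilde{S}^k_\ell$ and the restricted (dual) Bruhat orders coincide. No gaps.
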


This matches the corresponding stability of the variety $\mcd_n(k,\ell)$~\eqref{eq:stabilityofD}. 

%It would be nice to identify globally a cellular homeomorphism witnessing Corollary~\ref{cor:cellsarestable}. 

\begin{example}[0-cells]\label{eg:maximalelts}
The maximal elements in $\Bound_n(k,\ell)$, described in Proposition~\ref{prop:maximalelts}, correspond to the 0-cells in $\Gr(k,n)^{\rho^\ell}_{\geq 0}$. When $p|k$ we have 0-cells $\Gr(\mcm_{t_S})^{\rho^\ell}_{>0}$ whose unique nonzero Pl\"ucker coordinate is the $\rho^\ell$-invariant subset $\{i \colon i \mod \ell \in S\}$. 
%\aa things times p copies means $\aa p = k$ things
When $p$ does not divide $k$, we carry out this same construction to get a $(k -\bb) \times n $ matrix representative for the 0-cell 
$\Gr(\mcm_{t_{S}})^{\rho^\ell} \subset \Gr(k-\bb,n)^{\rho^\ell}$. We extend this to a $k \times n$ matrix representative for the cell $\Gr(\mcm_{t_{S,s}})_{>0}^{\rho^\ell} \subset \Gr(k,n)^{\rho^\ell}$ by appending a $\bb \times n$ matrix in the bottom $\bb$ rows. The matrix we append has a matrix representative for the unique point in $\Gr(\bb,p)^\rho_{>0}$ occupying columns 
$s,s+\ell,\dots,n-\ell+s$, and has zero vectors in all its other columns.  
\end{example}

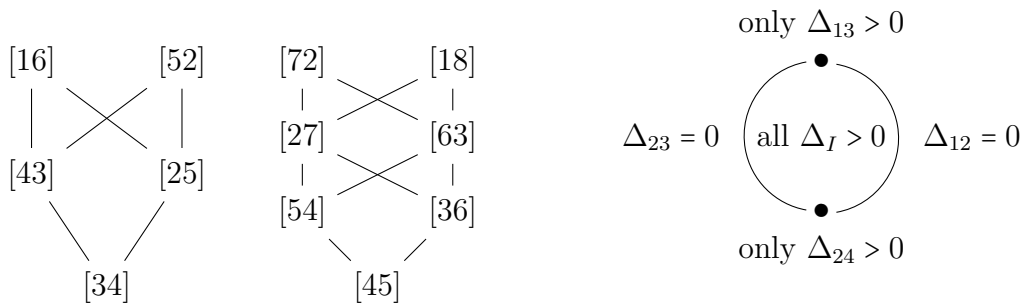
\begin{figure}
\begin{tikzpicture}
\node (v1) at (0.5,-2) {[34]};
\node (v2) at (-0.5,-0.5) {[43]};
\node (v3) at (1.5,-0.5) {[25]};
\node (v4) at (-0.5,1) {[16]};
\node (v5) at (1.5,1) {[52]};
\draw  (v1) edge (v2);
\draw  (v1) edge (v3);
\draw  (v2) edge (v4);
\draw  (v3) edge (v5);
\draw  (v2) edge (v5);
\draw  (v3) edge (v4);

\begin{scope}[xshift = 3.6cm]
\node (vv1) at (0.5,-2) {[45]};
\node (vv2) at (-0.5,-1) {[54]};
\node (vv3) at (1.5,-1) {[36]};
\node (vv4) at (-0.5,0) {[27]};
\node (vv5) at (1.5,0) {[63]};
\node (vv6) at (-0.5,1) {[72]};
\node (vv7) at (1.5,1) {[18]};
\draw  (vv1) edge (vv2);
\draw  (vv1) edge (vv3);
\draw  (vv2) edge (vv4);
\draw  (vv3) edge (vv5);
\draw  (vv2) edge (vv5);
\draw  (vv3) edge (vv4);
\draw  (vv7) edge (vv4);
\draw  (vv7) edge (vv5);
\draw  (vv6) edge (vv5);
\draw  (vv6) edge (vv4);
\end{scope}

\begin{scope}[xshift = 10cm]
\draw (-.2,1) arc (100:260:1);
\draw (.2,-1) arc (-80:80:1);
\node at (0,1) {\large $\bullet$};
\node at (0,-1) {\large $\bullet$};
\node at (0,0) {all $\Delta_I>0$};
\node at (2.0,0) {$\Delta_{12} = 0$};
\node at (-2.0,0) {$\Delta_{23} = 0$};
\node at (0,1.5) {only $\Delta_{13}>0$};
\node at (0,-1.5) {only $\Delta_{24}> 0$};
\end{scope}
\end{tikzpicture}
\caption{The posets $(\Bound_n(2,2),\leq_B)$ and $(\Bound_n(3,2),\leq_B)$. A similar picture (with $k+1$ levels) describes 
$(\Bound_{n}(k,2),\leq_B)$. As $k \to \infty$, the resulting poset is Bruhat order on $\tilde{S}_2$. On the right, we depict the cell decomposition of $\Gr(2,4)_{\geq 0}\overset{\rm homeo}{\cong} \mathbb{D}^2$, whose face poset is opposite to 
$(\Bound_4(2,2),\leq_B)$.}\label{fig:TNNposets}
\end{figure}

\begin{rmk}[TNN cells exhaust affine Bruhat order]
The subset ${\rm id}_k^{-1}\Bound_n(k,\ell) \subset \tilde{S}^0_n$ consists of those $w$ satisfying the condition $w(i) \in [i-k,i+n-k]$ for all $i \in \bbz$. It follows, that ${\rm id}_k^{-1}\Bound_{k\ell}(k,\ell) \subset {\rm id}_{k'}^{-1}\Bound_{k'\ell}(k',\ell)$ whenever $k \leq k'$, and that the ascending union  
$\cup_k {\rm id}_k^{-1}\Bound_{k\ell}(k,\ell)$ equals $\tilde{S}^0_\ell$ as a set. In the limit that $k \to \infty$, one recovers arbitrarily large order ideals in $\tilde{S}^0_\ell$ as (duals of) face posets of $\Gr(k,n)^{\rho^\ell}_{\geq 0}$. One does not see this behavior in the ordinary, i.e. $\Gr(k,n)_{\geq 0}$, setting. We would be interested to see a relation between $\Gr(k,n){\rho^\ell}_{\geq 0}$ and the affine loop group of type $\tilde{A}_{\ell-1}$, but this might be unnatural since our cellular spaces are indexed by {\sl dual} affine Bruhat order. 
\end{rmk}

\begin{example}[$\ell=2$ cell structure]\label{eg:2shiftegs} 
By Theorem~\ref{thm:Karpfinitelymany}, the TNN part of the 1-fixed locus is a point. The next simplest case to study is the TNN part of 2-fixed loci in $\Gr(k,n)$ when $n$ is even. By Grassmann duality we may assume $n \geq 2k$ so that by Corollary~\ref{cor:cellsarestable}, the topology only depends on $k$. 
%By Remark~\ref{rmk:matsumotofails}, the bridge order on $\Bound_{2k}(k,2)$ is a union of two chains of length $k+1$, intersecting only at their bottom element. 
The Bruhat order on $\Bound_{n}(k,2)$, as well as the cell decomposition of $\Gr(2,4)^{\rho^2}_{\geq 0}$, are depicted in Figure~\ref{fig:TNNposets}. The face poset of $\Gr(k,n)^{\rho^2}_{\geq 0}$ describes a prototypical regular CW structure on a $k$-dimensional closed ball $\mathbb{D}^k$. All $i$-cells are attached via the standard identifications $\partial \mathbb{D}^i \cong \mathbb{S}^{i-1}$, with the latter an $i-1$-sphere. For $i <k$, we attach two cells along their common boundary and the $i$-skeleton is an $i$-sphere. (This is the construction of spheres via iterated suspensions.)
%(This is the construction of spheres via iterated suspensions $\Sigma S^{i-1} \cong S^i$.) 
When $i=k$, we attach a single cell to get $\mathbb{D}^k$.

The cells in $\Gr(k,2k)^{\rho^2}_{\geq 0}$ can be given the following uniform description. %(valid for even $n \geq 2k$). 
Let $v_1,\dots,v_n$ be the column vectors of a $k \times n$ matrix representative. Then we obtain an $i$-dimensional cell by requiring that 
${\rm rank}(v_j,\dots,v_{i+j-1}) = i-1$ for any odd $j \in [n]$ (with indices treated modulo $n$). The other $i$-cell is its cyclic shift, with the same rank condition imposed for even $j$. \end{example}

\begin{rmk}\label{rmk:countcells}
It would be interesting to find the cardinality (and the length-generating function) of $\Bound_n(k,\ell)$, generalizing this count for $\Bound_n(k,n)$~\cite{WilliamsCount}.  
%If we add a maximal element to $\Bound_{3k}(k,3)$, we get a palindromic sequence $1,3,6,9,\dots,3k,3k,\dots,1$. 
\end{rmk}

\subsection{Homeomorphisms from bridge order}
We prepare some ingredients in the proof of Theorem~\ref{thm:cells}.

Define ${\rm sgn}(i,j) \in \pm 1$ to be $(-1)^{k-1}$ if $j < i$, and otherwise equal to $1$. This sign is a technicality in what follows, coming from columns wrapping around modulo~$n$ and the extra sign $(-1)^{k-1}$ in the definition of the cyclic shift. %The reader is encouraged to ignore this sign in what follows. 
\begin{defn}
Let $E_{i,j}$ be the $n\times n$ matrix whose only nonzero entry is a $1$ in row $i$ and column $j$. For $a \in \bbc$, define
\begin{equation}
\eps_{ij}(a) = {\rm Id}+a \, {\rm sgn}(i,j)E_{i,j} \in \GL_n.
\end{equation}
As a special case we abbreviate $\eps_i(a) = \eps_{i,i+1}(a)$ for $i \in [n]$ (with indices taken modulo $n$). %Define also $\eps_n(a) = {\rm Id}+(-1)^{k-1}aE_{n,1} \in \GL_n$.  
\end{defn}

The matrices $\eps_1(a),\dots,\eps_{n-1}(a)$ are the {\sl Chevalley generators} of the subgroup of upper triangular matrices in $\GL_n$. We will be most interested in these together with $\eps_n$, but because we work with $\leq_b$ rather than $\leq_R$, we sometimes need the $\eps_{ij}$. 

The matrix $\eps_{ij}(a)$ determines an automorphism of $\Gr(k,n)$. Suppose for concretness that $i<j$. When working with $k \times n$ matrix representatives, this automorphism amounts to a column operation. For example, if $v_1,\dots,v_n$ are column vectors describing $X \in \Gr(k,n)$, and $i<j$, then $X \cdot \eps_{ij}(a)$ is represented by the $k \times n$ matrix whose $j$th column is $v_j+av_i$ (the other columns are not affected). 
%The case that $j >i$ is similar with the columns wrapping around, and a sign of $(-1)^{k-1}$ is introduced. 

\begin{lem}\label{lem:homeomorphism}
Let $f \lessdot ft_{ij}$ be a cover in $(\Bound_n(k,n),\leq_b)$. Set $u = \#\{s \in (i,j) \colon \, f(s) = s+n\}$. Then there is a homeomorphism
\begin{align}
\Gr(ft_{ij})_{>0} \times \bbr_{>0} &\to \Gr(f)_{>0} \label{eq:homeomorphism}\\
(X',a)  &\mapsto X'\cdot \eps_{ij}((-1)^{u}a),
 \end{align}
with the subscripts $i,j$ of $\eps_{ij}$ treated modulo $n$. 

The inverse map is as follows. Let $\rmci_f = (\rightI_1,\dots,\rightI_n)$ be the Grassmann necklace for $f$, and suppose that $(X',a)$ map to $X \in \Gr(f)_{>0}$ under \eqref{eq:homeomorphism}. One recovers the parameter~$a$ as the ratio 
\begin{equation}\label{eq:aformula}
a= \frac{\Delta(\rightI_{j})(X)}{\Delta(\rightI_{j} \cup \{i\} \setminus \{j\})(X)} \in \bbr_{>0},
\end{equation}
and then recovers $X' = X \cdot \eps_{ij}((-1)^{u+1}a)$.  
\end{lem}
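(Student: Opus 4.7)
The plan is to adapt the classical \emph{bridge parametrization} of positroid cells to the bridge-order setting. When $j = i+1$ (an adjacent cover, recovering a step in the right weak order), this is a well-known ingredient in Postnikov's theory: the column operation $\eps_{i,i+1}(a)$ adds a new edge weight $a$ to a plabic graph and parametrizes the extra dimension gained when moving from the lower cell $\Gr(ft_{i,i+1})_{>0}$ up to $\Gr(f)_{>0}$. The bridge-cover hypothesis that $f(s) \in \{s, s+n\}$ for every $s \in (i,j)$ is exactly what will allow us to reduce the general case to the adjacent case: by Remark~\ref{rmk:goodcontrol}, the intervening columns are either zero vectors (when $f(s) = s$) or ``lollipops'' (when $f(s) = s+n$) that do not interact with the bridge being added.

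First I would check the forward direction. Given $X' \in \Gr(ft_{ij})_{>0}$ with matrix representative $M'$, form $M := M' \cdot \eps_{ij}((-1)^u a)$; this only modifies the $j$th column, so the Pl\"ucker coordinates satisfy
\begin{equation*}
\Delta_I(M) = \begin{cases} \Delta_I(M') & \text{if } j \notin I, \\ \Delta_I(M') & \text{if } \{i,j\} \subset I, \\ \Delta_I(M') + (-1)^u a \cdot {\rm sgn}(i,j) \cdot \Delta_{(I \setminus j) \cup i}(M') & \text{if } j \in I,\, i \notin I. \end{cases}
\end{equation*}
Passing from the matroid $\mcm_{ft_{ij}}$ to $\mcm_f$ introduces new basis sets that are exactly of the form $I = J \cup \{j\}$ with $i \notin J$ and $J \cup \{i\} \in \mcm_{ft_{ij}}$. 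Using the bridge hypothesis, I would show that for every such newly allowed $I$ the original term $\Delta_I(M')$ vanishes, so $\Delta_I(M)$ becomes a positive multiple of $a$ times $\Delta_{(I \setminus j) \cup i}(M')$, the latter being a positive Pl\"ucker coordinate of $X'$. The sign $(-1)^u$ together with ${\rm sgn}(i,j)$ is precisely what is needed for positivity, the intuition being that each intervening index $s \in (i,j)$ with $f(s) = s+n$ contributes a sign when the bridge is ``transported across'' column $s$, while ${\rm sgn}(i,j)$ accounts for the wrap-around sign $(-1)^{k-1}$ built into $\rho$.

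Second, for the inverse direction I would argue using the Grassmann necklace $\rmci_f = (\rightI_1,\dots,\rightI_n)$. The bridge hypothesis forces $j \in \rightI_j$ and $i \notin \rightI_j$ (from the necklace recursion $\rightI_{i+1} = \rightI_i \setminus \{i\} \cup \{f(i)\bmod n\}$ together with $f(s) \in \{s, s+n\}$ on $(i,j)$), so $\rightI_j' := \rightI_j \cup \{i\} \setminus \{j\}$ is a well-defined $k$-subset. Moreover $\rightI_j' \in \mcm_{ft_{ij}}$ is the corresponding necklace element for $ft_{ij}$, and the pair $(\rightI_j, \rightI_j')$ is the unique place where the matroids disagree at the position labeled~$j$. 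Substituting $I = \rightI_j$ into the Pl\"ucker expansion above and solving for $a$ recovers formula~\eqref{eq:aformula}; both numerator and denominator are strictly positive on $\Gr(f)_{>0}$, so $a \in \bbr_{>0}$ is uniquely determined. The reconstructed $X' := X \cdot \eps_{ij}((-1)^{u+1}a)$ satisfies $\Delta(\rightI_j)(X') = 0$ by construction, and one verifies that it lies in $\Gr(ft_{ij})_{>0}$ by reversing the forward argument.

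The main obstacle will be bookkeeping the sign $(-1)^u$ correctly: the interplay between ${\rm sgn}(i,j)$ (which encodes wrap-around modulo $n$ and the $(-1)^{k-1}$ in the definition of $\rho$) and the $u$ intervening lollipop columns is delicate, and one must verify the net sign is exactly what makes every new Pl\"ucker coordinate positive. Bijectivity and continuity are then routine, since the forward map is polynomial in the entries of $M'$ and $a$, while the inverse extracts $a$ as a continuous ratio of Pl\"ucker coordinates and then applies another column operation. A cleaner packaging is to induct on $j-i$, reducing to the adjacent case $j=i+1$ (which is the classical bridge construction) by factoring $\eps_{ij}$ through commutators with Chevalley generators corresponding to each intervening column, and tracking the sign produced at each step.
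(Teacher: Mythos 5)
Your proposal is correct and follows essentially the same route as the paper: reduce the general bridge cover to the adjacent case $j=i+1$ (the classical ``adding a bridge'' construction, which the paper outsources to Lam's CDM lecture notes), using the bridge hypothesis to see that each intervening column is either a zero vector or appears in every nonzero maximal minor, with the sign $(-1)^u$ absorbed by transporting $v_i$ past the $u$ full columns, and with the inverse recovered from the Grassmann necklace ratio exactly as in \eqref{eq:aformula}. The paper implements the reduction by projecting away the intervening columns to a smaller Grassmannian rather than by your commutator factorization, but this is a cosmetic difference.
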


\begin{proof}
For weak order coverings $t_{i,i+1}$, the homeomorphism $(X',a) \to X$ just described corresponds to {\sl adding a bridge} (white at $i$ and black at $i+1$) in the lingo of \cite[Section 7]{LamCDM}. The inverse map is well-defined by \cite[Proposition 7.10]{LamCDM}, and the well-definedness of the forward map is implicit in \cite[Lemma 7.6 and Theorem 7.12]{LamCDM}. One can deduce the same results for coverings $t_{i,j}$ in the weak order by applying projections 
\cite[Lemmas 7.8 and 7.9]{LamCDM} until the bridge cover is a cover in the weak order (this weak order cover will take place in a smaller Grassmannian). These projections affect the bounded affine permutation and positroid in a very straightforward manner, which allows one to deduce that \eqref{eq:homeomorphism} is a well-defined homeomorphism in the larger Grassmnanian from the corresponding statement in the smaller Grassmannian. As a small wrinkle, let us remind 
that if $s$ satisfies $i < s < j$ and $f(s) = s$, then $v_s$ is a zero column in any matrix representative for $X$. And on the other hand, 
if $f(s) = s+n$, then $v_s$ participates in every nonzero $k \times k$ minor of $X$ . Since the $j$th column of $X'$ is $v_j+(-1)^uav_i$, we have that 
$\Delta(I)(X') = \Delta(I)(X)+a \Delta(I \cup i \setminus j )(X)$
for any $I \in \binom{[n]}k$. The sign $(-1)^u$ is absorbed when we swap the $v_i$ in the $j$th column of $X'$ past those vectors $v_s$ with $f(s) = s+n$.
\end{proof}

\begin{example}\label{eg:leapweakcover}
Consider the affine permutation $f = [7,6,5,10,9,8] \in \Bound_6(4,6)$. We have a covering $f \lessdot ft_{35}=:g$ in the bridge order, with $u=1$.
%$ft_{35} = [7,6,9,10,5,8]$. Then $u=1$. 
The Grassmann necklaces are
\begin{align*}
\rmci_{g} &= (1234,1234,1346,1346,1346,1346)   \\ %\text{distilled: } 1234,1346\\
\rmci_f &= (1234,1234,1346,1456,1456,1346).      %\text{distilled: } 1234,1346,1456.
\end{align*}
Suppose $X \in \Gr(g)_{>0}$ is represented by column vectors 
$(v_1,v_2,v_3,v_4,0,v_6)$ whose only nonzero Pl\"ucker coordinates are $1234$ and $1346$.

For any $a \in \bbr_{>0}$, the matrix $X' = X  \eps_{35}(a)$ has columns $(v_1,v_2,v_3,v_4,(-1)^u av_3,v_6)$. Such a matrix is TNN: for example one has $\Delta(1456)(X')  = a\, \Delta(1346)(X) >0$. (Note that the sign $(-1)^u$ is necessary for this positivity to hold.) A simple check shows such a matrix has Grassmann necklace $\rmci_f$ when $a>0$, so that $X\eps_{35}(a) \in\Gr(f)_{>0}$ as Lemma~\ref{lem:homeomorphism} asserts. To see the inverse map, note that we can recover the parameter $a$ from the matrix 
$X' = (v_1,v_2,v_3,v_4,(-1)^u av_3,v_6)$ by taking the ratio of Pl\"ucker coordinates
$\frac{\Delta(1456)(X')}{\Delta(1346)(X')}$.
\end{example}

\subsection{The TNN part is a ball}
We prove the assertion 1) listed below the statement of Theorem~\ref{thm:cells}. That is, we show that $\Gr(k,n)^{\rho^\ell}_{\geq 0}$ is homeomorphic to a closed ball. The proof is identical to the one $\Gr(k,n)_{\geq 0}$ given in \cite{GKLI}. 
\begin{proof}
Let $f \colon \bbr \times \bbr^{k(n-k)} \to \bbr^{k(n-k)}$ be the contractive flow \cite[Equation 3.9]{GKLI}. Let $\phi \colon {\rm Mat}(k,n-k) \to \Gr(k,n)$ be the smooth embedding defined in \cite[Equation 3.3]{GKLI}; the image is the big Schubert cell taken with respect to ordered basis $\omega_0,\dots,\omega_{n-1}$ of eigenvectors for $\rho \in \GL_n$ from Section~\ref{secn:components}. By \cite[Proposition 3.4]{GKLI}, $\Gr(k,n)_{\geq 0} \subset \phi({\rm Mat}(k,n-k))$, and the resulting map $\phi^{-1} \colon \Gr(k,n)_{>0} \to {\rm Mat}(k,n-k)$ is also smooth. We set $Q:= \phi^{-1}(\Gr(k,n)^{\rho^\ell}_{>0})$, which is therefore a smooth embedded manifold of dimension  $\frac{k(n-k)-r(\bb-r)}{r}$ as proved above. We showed that ${\rm cl}(\Gr(k,n)^{\rho^\ell}_{>0})$ equals $\Gr(k,n)_{\geq 0}^{\rho^\ell}$ in Theorem~\ref{thm:cells}, and it follows that ${\rm cl}(Q) = \phi^{-1}(\Gr(k,n)^{\rho^\ell}_{\geq 0})$ and that ${\rm cl}(Q)$ is compact. By \cite[Corollary 3.8]{GKLI} the contractive flow $f(t,x)$ has the property that if $\phi(x)$ is TNN, then $\phi(f(t,x))$ is TP for $t >0$. On the other hand, since the contractive flow is defined using (the exponential of) $\rho$ and $\rho^{-1}$, it is clear that $\rho^\ell(\phi(f(t,x))) = f(t,\rho^\ell(\phi(x)))$ for any $t \in \bbr, x \in \Gr(k,n)$. It follows that the contractive flow preserves $\phi^{-1}(\Gr(k,n)^{\rho^\ell})$, and in particular maps ${\rm cl}(Q)$ into itself. So all hypotheses of \cite[Lemma 2.3]{GKLI} hold. We have that ${\rm cl}(Q)$, thus $\Gr(k,n)^{\rho^\ell}_{\geq 0}$, is homeomorphic to a closed ball. 
\end{proof}

\subsection{The strata are indeed cells}
Now we prove the assertion (2) outlined below Theorem~\ref{thm:cells}. That is, for $f \in \Bound_n(k,\ell') \subset \Bound_n(k,n)$, we show that $\Gr(\mcm)_{>0}^{\rho^\ell}$ is a cell of specific dimension.

\begin{proof}
Proposition~\ref{prop:maximalelts} describes the maximal elements in $(\Bound_n(k,\ell),\leq_b)$. When $p|k$, the maximal elements $f$ have the property that $\Gr(\mcm_f)_{>0}$ is already a point, and this point is $\rho^\ell$-fixed. When $p$ does not divide $k$, projecting away the 
columns corresponding values of $f$ for which $f(i) \in \{i,i+n\}$, we get a homeomorphism from $\Gr(\mcm_f)^{\rho^\ell}_{>0} \subset \Gr(k,n)$ to the space  $\Gr(\bb,r)^{\rho}_{> 0}$, which is a point by Theorem~\ref{thm:cellsusual}. (We have justified now the description of 0-cells given in Example~\ref{eg:maximalelts}, which was stated without proof.)

By the length formula for maximal elements given in Proposition~\ref{prop:maximalelts}, we see that the claimed dimension formula \eqref{eq:cells} holds for these maximal~$f$. It holds then for arbitrary $f \in \Bound_n(k,\ell)$ by downwards induction in $\leq_b$ using the homeomorphsms \eqref{eq:homeomorphism}.
\end{proof}

\subsection{Cell closure order is Bruhat order}\label{secn:Bruhat}
Now we prove assertion 3) stated below Theorem~\ref{thm:cells}. Standard proofs that the closure partial order on cells is the Bruhat order \cite{LamCDM,Postnikov} emply the boundary measurement map and plabic graphs. We do not currently see how to generalize this argument, so we give an alternative, more direct argument. Thus, we start by giving a direct proof of Theorem~\ref{thm:cellsusual}, which immediately generalizes to the cyclically symmetric setting.

\begin{lem}\label{lem:warmup}
Let $f \lessdot g$ in $(\Bound_n(k,n),\leq_B)$. 
Then $\Gr(g)_{>0} \subset \Gr(f)_{>0}$.
\end{lem}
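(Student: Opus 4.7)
The plan is to construct, for each $Y \in \Gr(g)_{>0}$, a one-parameter family inside $\Gr(f)_{>0}$ accumulating to $Y$, thereby establishing $\Gr(g)_{>0} \subseteq \ov{\Gr(f)_{>0}}$. Writing the Bruhat cover as $g = f t_{i,j+s\ell}$ with $i < j+s\ell$ and $\ell(g) = \ell(f)+1$, the natural candidate is
\begin{equation*}
Y(t) := Y \cdot \eps_{i,j+s\ell}((-1)^u t), \qquad t > 0,
\end{equation*}
with the sign exponent $u$ chosen as in Lemma~\ref{lem:homeomorphism}. Continuity of the $\GL_n$-action on $\Gr(k,n)$ gives $Y(t) \to Y$ as $t \to 0^+$, so it remains only to verify that $Y(t) \in \Gr(f)_{>0}$ for small $t > 0$.

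When $f \lessdot g$ happens to be a cover in the bridge order (Definition~\ref{defn:leapweak}), Lemma~\ref{lem:homeomorphism} already supplies the desired homeomorphism $\Gr(g)_{>0} \times \bbr_{>0} \to \Gr(f)_{>0}$ and the containment is immediate. For a general Bruhat cover, the bridge condition $f(a) \in \{a,a+n\}$ for $a \in (i,j+s\ell)$ may fail, although the Bruhat cover hypothesis always forces the weaker condition $f(a) \notin (f(i), f(j+s\ell))$ at each such ``bystander'' index. I would argue by induction on the number of bystanders: project away columns indexed by the ``trivial'' entries with $f(a) \in \{a, a+n\}$ to reduce to a smaller Grassmannian where the cover becomes a bridge cover, invoking the standard projection lemmas (\cite[Lemmas 7.8 and 7.9]{LamCDM}) to lift the conclusion back.

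The key computation underlying both cases is the multilinearity expansion
\begin{equation*}
\Delta_I(Y(t)) \;=\; \Delta_I(Y) \;+\; (-1)^u\, t\, \sgn(i,j+s\ell)\, \Delta_{I \cup \{i\} \setminus \{j+s\ell\}}(Y),
\end{equation*}
valid for each $I \in \binom{[n]}{k}$. The hardest step is to check that as $I$ ranges over $\binom{[n]}{k}$, the subsets making at least one of these two terms nonzero are exactly the elements of $\mcm_f$, and that the combined contribution is strictly positive on $\mcm_f$ for small $t > 0$. This is a combinatorial verification comparing the Grassmann necklaces $\rmci_f$ and $\rmci_g$ under the Bruhat cover $f \lessdot g$, and it is precisely the point where the sign convention $(-1)^u \sgn(i,j+s\ell)$ must be matched. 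Once this check is in place, positivity of $\Delta_I(Y(t))$ for $I \in \mcm_f$ and $t > 0$ is automatic from the positivity of $Y$ on $\mcm_g$, while vanishing of $\Delta_I(Y(t))$ for $I \notin \mcm_f$ is automatic since both terms in the expansion vanish. Iterating over Bruhat covers then extends the containment to arbitrary pairs $f \leq_B g$ by transitivity of closure.
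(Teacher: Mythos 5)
Your strategy works only when $f \lessdot g$ is a \emph{bridge} cover, and the reduction you propose for general Bruhat covers does not close the gap. The obstruction to a Bruhat cover $f \lessdot g = ft_{ij}$ being a bridge cover is the presence of ``bystanders'' $a \in (i,j)$ with $f(a) \notin \{a, a+n\}$ (they only satisfy the weaker condition $f(a) \notin [f(i),f(j)]$, as you note). Projecting away the columns with $f(a) \in \{a,a+n\}$ removes exactly the \emph{harmless} bystanders and leaves the problematic ones in place, so the cover does not become a bridge cover in the smaller Grassmannian. Moreover the ``hardest step'' you flag is not merely hard --- it is false. Structurally: the transvection $\eps_{ij}$ modifies only column $j$, hence only Pl\"ucker coordinates $\Delta_I$ with $j \in I$; but for a non-bridge Bruhat cover, $\mcm_f \setminus \mcm_g$ can contain subsets $I$ with $j \notin I$, and for those $\Delta_I(Y(t)) = \Delta_I(Y) = 0$ identically, so $Y(t)$ never lands in $\Gr(f)_{>0}$.

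Concretely, take $n=4$, $k=2$, $f = [4,3,5,6] \lessdot g = ft_{13} = [5,3,4,6]$ in $(\Bound_4(2,4),\leq_B)$ (lengths $1$ and $2$; the bystander is position $2$ with $f(2)=3 \notin \{2,6\}$). Here $\mcm_g = \{12,13,14\}$ and $\mcm_f = \{12,13,14,24,34\}$. A point $Y \in \Gr(g)_{>0}$ has columns $v_1=(1,0)$, $v_2=(0,1)$, $v_3=(0,b)$, $v_4=(0,c)$ with $b,c>0$. Applying $\eps_{13}(\pm t)$ gives $v_3' = (\pm t, b)$, whence $\Delta_{24} = 0$ even though $24 \in \mcm_f$, and $\Delta_{23} = \mp t$, $\Delta_{34} = \pm tc$ have opposite signs, so $Y(t)$ is not even TNN for either choice of sign. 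This is exactly why the paper's proof does \emph{not} use a single transvection: it factors $t_{ij}$ into simple transpositions, applies the forward map of Lemma~\ref{lem:homeomorphism} with a free parameter $a$ only for the length-decreasing steps, and for the length-increasing steps applies the \emph{inverse} map with the specific parameter \eqref{eq:aformula} (a ratio of Pl\"ucker coordinates of the partially built point), then shows by an order-of-vanishing analysis in $a$ that the resulting point of $\Gr(f)_{>0}$ tends to $Y$ as $a \to 0$. Some multi-step construction of this kind, with interdependent parameters, is unavoidable; the one-parameter, one-column deformation cannot produce all of the new nonvanishing Pl\"ucker coordinates required by $\mcm_f$.
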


As a bit of terminology needed in the proof, let $\varphi(a) \in \bbc(a)$ be a rational function in the variable $a$. One can uniquely express express $\varphi(a) = a^m \frac{P(a)}{Q(a)}$ for an integer $m \in \bbz$ and polynomials $P(a),Q(a) \in \bbc[a]$ with nonzero constant term. Then we define the {\sl order} of $\varphi(a)$ to be the integer $m$.

%\begin{example}
%\textcolor{blue}{Do an example that illustrates the proof.} 
%\end{example}

\begin{proof}
Let $X \in \Gr(g)_{>0}$ be given. We construct a family of points $Y(a) \in \Gr{f}_{>0}$ for $a \in \bbr_{>0}$, satisfying $\lim_{a \to 0}Y(a) = X$.

Since $f \lessdot g$, we have $f = gt_{ij}$ for some values $i< j \in \bbz$ satisfying 
\begin{equation}\label{eq:bruhatcover}
i < g(j) < g(i) \leq i+n \text{ and } \{g(a) \colon a \in (i,j)\} \cap [g(j),g(i)] = \emptyset.
\end{equation}
%i.e. $f$ is obtained from $g$ by swapping the values $g(i),g(j)$ ($n$-periodically). 
To simplify notation, let us assume that $i=1$. 

Let $S = \{g(t) \colon \, t \in [j], \, g(t) \leq g(j)\} $ and $B = \{g(t) \colon \, t \in [j] \, g(t) \geq g(1)\} $. ($S$ and $B$ are meant to stand for ``small'' and ``big'' respectively.) Using \eqref{eq:bruhatcover} we have $\{g(1),\dots,g(j)\} = S \coprod B$.  

Multiplying $g$ by the reflection $t_{ij}$ has the effect of swapping values 
$g(1),g(j)$. Such a reflection is a composition of simple transpositions. Specifically, we can compute the reflection as a composition of simple transpositions of the following two types: 
\begin{enumerate}
\item Length-decreasing swaps of the form $bs \mapsto sb$ with $s \in S, b \in B$. 
\item Length-increasing swaps of the form $sb \mapsto bs$ with $s \in S, b \in B$, provided the current location of $b$ is strictly right of the starting location of $b$. 
\end{enumerate}
In other words, when we perform swaps of type (2), the element $b$ that is participating has already swapped past (possibly several) elements of $S$. Note that we do not allow swaps involving two elements of $S$ or $B$. 

A greedy argument shows that swapping values $g(1),g(j)$ can be realized as a sequence of swaps of the two above types $(1)$ and $(2)$ only. Let $s_{i_1},\dots,s_{i_\ell}$ be such a list of simple transpositions, so that $f = gt_{ij} = g s_{i_1} \cdots s_{i_\ell}$. Each partial product $gs_{i_1} \cdots s_{i_t}$ determines an element of $\Bound_n(k,n)$, hence a Grassmann necklace 
$\rmci(t) = (\rightI_1(t),\dots,\rightI_n(t))$. 

Starting with $X:= X_{0,a} \in \Gr(g)_{>0}$, we inductively define elements $X_{t+1,a} \in \Gr(g s_{i_1} \cdots s_{i_t})_{>0}$ by the action of Chevalley generators:
\begin{equation}\label{eq:bruhatrecursion}
X_{t+1,a} =
\begin{cases}
X_{t,a} \eps_{i_{t}}(a) & \text{ for swaps of type (1)} \\
X_{t,a} \eps_{i_{t}}\left(\frac{-\Delta(\rightI_{i_t+1}(t))(X_{t,a})}{\Delta(\rightI_{i_t+1}\cup \{i_t\} \setminus \{i_t+1\})(X_{t,a})}\right) & \text{ for swaps of type (2)}.
\end{cases} 
\end{equation}

In other words, we use the forward homeomorphism \eqref{eq:homeomorphism} with the chosen value of $a$ for length-decreasing swaps, and use the inverse map \eqref{eq:aformula} for length-increasing swaps. Thus, $X_{t+1,a} \in \Gr(g s_{i_1}\cdots s_{i_t})_{>0}$ for any $t \in [0,\ell]$, and for any $a>0$.

Since $X_{t,a}$ is obtained from our initial point $X$ by performing column operations, both the numerator and denominator of the ratio of Pl\"ucker coordinates in \eqref{eq:bruhatrecursion} can be expressed in terms of $a$ and the Pl\"ucker coordinates of $X$. Thus, viewing the Pl\"ucker coordinates of $X$ as constants, this ratio is a rational function of $a$. Our key claim is that each time we perform a type (2) swap, this rational function has order $1$ in $a$. Assuming  this key claim, we see that $\lim_{a \to 0} X_{t,a} = X$  for all $t$. Applying this when $t = \ell$ and setting $Y(a) = X_{\ell,a} \in \Gr(f)_{>0}$ we conclude that $\lim_{a \to 0}Y(a) = X$, as desired. 

The key claim follows by a slightly more refined analysis. Let $v_1,\dots,v_n$ be the columns of a $k\times n$ matrix representing $X$. Each time we perform a swap of type (1) or (2), we add a scalar multiple of column $i_t$ to column $i_t+1$. For swaps of type (1), this scalar multiple equals $a$, and for swaps of type (2), we claim inductively that this scalar is a rational function of order one in $a$. 
By induction, we can assume that the $s$th column of $X_t$ is a linear combination of the columns $v_1,\dots,v_s$ of $X$, that the coefficients of this linear combination rational functions in $a$, and that the coefficient of 
$v_{s'}$ in column $s$ has order $s-s'$. For example, if we perform swaps of type (1) in columns 1, then 2, then 3, then the 4th column looks like $v_4+av_{3}+a^2v_{2}+a^3v_1$. 

Suppose we use multilinearity to expand the numerator of \eqref{eq:bruhatrecursion} as $\sum_{I'} c_{I'}\Delta(I')(X)$ and the denominator as $\sum_{I''} c_{I''}\Delta(I'')(X)$ with $c_{I'},c_{I''} \in \bbc(a)$. Since column $i_{t}+1$ expands in terms of columns $1,\dots,i_{t}+1$, the $I''$ which appear in the denominator coincide with those in the numerator, with the exception that those $I'$ for whom $i_t+1 \in I'$ do not appear in the denominator. For the common terms, by the homogeneity statement in the previous paragraph, the order of $a$ dividing $c_{I''}$ is one less than the power dividing $c_{I'}$. So we need to show that the extra $I'$ terms that appear in the numerator and do not appear in the numerator vanish on $X$. 

The argument for this vanishing is as follows. We will prove that each such term is $<_{i_t+1} \rightI_{i_t+1}(t)$, from which the claim follows. 
%It suffices to prove this for $\rightI_{i_t+1}(x_t)$. 
We make two more observations. First, multiplying by Chevalley generators as in \eqref{eq:bruhatrecursion} does not change the span of the initial columns, so that $\rightI_1(X_{t,a}) = \rightI_1(X)$ for all $t,a$. Second, letting $g_t$ denote the partial product $gs_{i_1} \cdots s_{i_t}$, then we can compute 
$\rightI_{i_t+1}(t)$ as $\rightI_1(X) \setminus \{1,\dots,i_t\} \cup \{g_t(1),\dots,g_t(t)\}$. From the way the swaps (1),(2) are defined, the elements $\{g_t(1),\dots,g_t(i_t)\}$ are lexicographically smaller than the elements $\{g(1),\dots,g(i_t)\}$, so that $\rightI_{i_t+1}(t)<{i_{t+1}} \rightI_{i_t+1}(X)$, hence vanishes on $X$.  
\end{proof}

\begin{proof}[Proof of (2)]
One of the two containments follows softly from the corresponding statement for $\Gr(k,n)_{\geq 0}$: if $X \in \Gr(g)_{>0}^{\rho^\ell}$ is in the closure of $\Gr(f)_{>0}^{\rho^\ell}$, then $X$ is in the closure of $\Gr(f)_{>0}$, and thus $f \leq g$ in $(\Bound_n(k,n),\leq_B)$, hence in $(\Bound_n(k,\ell),\leq_B)$.

For the other containment, note that provided $\ell >1$, the column operations in the proof Lemma~\ref{lem:warmup} can be done $\rho^\ell$-equivariantly. 
%\textcolor{blue}{Say more, perhaps}.
%That is, if $X \in {\rm cell}(g)$, we can simulate multiplying by $t_{ij}$ and each of its $\ell$

\end{proof}

\section{TP tests}\label{secn:TPtests}
In the usual setting, i.e. for $\Gr(k,n)_{>0}$, one naively expects that verifying that a given $X \in \Gr(k,n)$ is TP requires checking $\binom n k $ many inequalities. In fact, the positivity of $k(n-k)+1$ judiciously chosen Pl\"ucker coordinates implies the positivity of {\sl all} Pl\"ucker coordinates. The ``magic number'' of Pl\"ucker coordinates required in such a TP test is $1 + \dim \Gr(k,n)_{>0} =1+ \dim \Gr(k,n)$. (The first is a cell and the second is a variety.)

We prove analogous statements in this section, with  the distinguished component $\mcd_n(k,\ell) \subset \Gr(k,n)^{\rho^\ell}$, rather than $\Gr(k,n)$, playing the role of the ambient variety. 

%In particular, each TP test gives a homeomorphism of $\Gr(k,n)_{>0}$ with $\bbr_{>0}^{k(n-k)}$.

Restricting attention to the cyclic symmetry locus implies equalities amongst Pl\"ucker coordinates~\eqref{eq:character}. Consequently, one expects that a minimal TP test for $\mcd_n(k,\ell)$ should be even smaller than a minimal TP test for $\Gr(k,n)$. We confirm this expectation in this section and investigate the algebraic relationships between our TP tests (in the language of cluster algebras) in Section~\ref{secn:clusters}.

%Recall that there is a disinguished component $\mcd_0$ of $\Gr(k,n)^{\rho^\ell}$ containing the TNN points. 

Recall our notation $\Gr(\mcm_f) \subset \Gr(k,n)$ for the positroid variety to $f \in\Bound_n(k,n)$. When $f \in  \Bound_n(k,\ell) \subset  \Bound_n(k,n)$, we will have a nonempty subset of $\rho^\ell$ fixed points $\Gr(\mcm_f)^{\rho^\ell}$.

\begin{defn}\label{defn:TPtests}
Let $f \in \Bound_n(k,\ell)$. Functions $\{\varphi_1,\dots,\varphi_{t}\} \subset \bbc[\Gr(k,n)]$ are a {\sl TP test} for $\Gr(\mcm_f)^{\rho^\ell}$
if the following holds: for $X \in \mcd_n(k,\ell) \cap \Gr(\mcm_f)$, we have $X \in \Gr(\mcm)_{>0}$ if and only if $\varphi_i(X) \in\bbr_{>0}$ for $i \in [t]$. 
The test is {\sl efficient} if $t = \dim \Gr(\mcm_f)_{>0}^{\rho^\ell}+1$.
\end{defn}

The requirement $\varphi_i(X)>0$ should be interpreted as saying that all of the numbers $\varphi_i(X)$ after simultaneously rescaling by an appropriate complex number. (Or equivalently, that $X$ bears a $k \times n$ matrix representative for whom each of the $\varphi_i$ evaluate positively.) 

This definition is the most salient when $f = {\rm id}_k \in \Bound_n(k,\ell)$. In this case we have $\mcd \cap \Gr(\mcm_f) = \mcd$, and we say that $\varphi_1,\dots,\varphi_t$ is a TP {\sl test for  $\mcd$.} 

%The proof we give below is inductive, so it is convenient to deal with total positivity tests for all cyclic symmetry cells. 

\begin{rmk} We require $X \in \mcd_n(k,\ell)$ as part of our definition, rather than requiring merely that $X \in \Gr(k,n)^{\rho^\ell}$. This is a natural imposition: if $X$ is not in the distinguished component, than it is certainly not TP. This is also consistent with cluster algebras philosophy: a cluster structure on a variety provides the variety with a notion of TP part. But any cluster variety is irreducible. 
%(its algebra of functions, a subring of a field of rational functions, is an integral domain). 
So the cluster structure, hence the notion of TP part, should be associated to $\mcd_n(k,\ell)$, not to $\Gr(k,n)^{\rho^\ell}.$

As further motivation, if we hold on to the philosophy that the size of an efficient TP test should exceed the dimension of the ambient variety by one,  then an efficient TP test for $\Gr(k,n)^{\rho}$ would consist of a single Pl\"ucker coordinate. But a single Pl\"ucker coordinate is certainly not able to detect that a given point $X \in \Gr(k,n)^{\rho}$ is the unique TP $\rho$-fixed point, and not one of the other $\binom n k -1$ points in $\Gr(k,n)^{\rho}$. 
\end{rmk}

We will demonstrate the existence of efficient TP tests by constructing certain highly symmetrical TP tests for $\Gr(k,n)_{>0}$.

The following terminology is useful both here and in Section~\ref{secn:clusters}.

\begin{defn}\label{defn:optimal}
An (extended) cluster variable $x \in \bbc[\Gr(k,n)]$ is an {\sl $\ell$-cluster variable} if its orbit $\{\rho^{a\ell}(x) \colon a \in [p]\}$ is contained in a cluster for $\Gr(k,n)$. An {\sl $\ell$-optimal cluster} is a $\rho^\ell$-invariant subset of an extended cluster in $\bbc[\Gr(k,n)]$  which is an efficient TP test for $\mcd$. We reserve the terminology $\ell$-{\sl cluster} for those $\ell$-optimal clusters which are moreover extended clusters for $\Gr(k,n)$. An {\sl $\ell$-cluster monomial} is a monomial in the variables of any $\ell$-optimal cluster. %(We typically view this monomial in $\bbc[\mcd]$ rather than in $\bbc[\Gr(k,n)]$.) 
A collection $\mcc \subset  \binom{[n]}k$is an $\ell$-{\sl optimal collection}  if $\Delta(\mcc)$ is an $\ell$-optimal cluster. 
\end{defn}

Any two functions in a $\rho^\ell$-orbit determine the same element of $\bbc[\mcd_n(k,\ell)]$. Therefore, the size of the TP test from an $\ell$-optimal cluster, i.e., the number $t$ from Definition~\ref{defn:TPtests}, is the number of  $\rho^\ell$-orbits, not the number of variables.

Depending on the parameters $k,n,\ell$, it can happen that 
$\bbc[\Gr(k,n)]$ admits no $\ell$-clusters. 
As a small example, one knows that clusters in $\Gr(2,8)$ correspond to triangulations of an octagon, and it is easy to see that the octagon admits no $\rho^2$-invariant triangulations, hence no $2$-clusters. Necessary and sufficient conditions for the existence of $k$-clusters in $\Gr(k,n)$ were given in \cite{PTZ}. The conditions depend on the value of $k \mod p$. On the other hand, we have the following.

\begin{thm}\label{thm:TPtests}
The Grassmannian $\Gr(k,n)$ admits $\ell$-optimal collections (for any $\ell$). 

Let $\ell' = \gcd(\ell,n)$ and let ${\bf f} = f_h \lessdot f_{h-1} \lessdot \cdots \lessdot f_0$ be a saturated chain ending at a maximal element $f_0 \in (\Bound_n(k,\ell'),\leq_b)$. Then more explicitly, the union of Grassmann necklaces
\begin{equation}\label{eq:chaintocup}
\mcc({\bf f}) = \displaystyle \cup_{i=0}^h \rmci_{f_i} \subset \binom{[n]}k
\end{equation}
is a $\rho^\ell$-invariant weakly separated collection and an efficient TP test for $\Gr(f)^{\rho^\ell}$. 

If ${\bf f,f'} \in {\rm Chains}(f_h,f_0)$ are two such chains, then the weakly separated collections $\mcc({\bf f})$ and $\mcc({\bf f'})$ are related by a finite sequence of $\rho^\ell$-symmetrical square moves. 
\end{thm}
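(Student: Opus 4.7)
The plan is to walk down a saturated chain ${\bf f} = f_h \lessdot \cdots \lessdot f_0$ in $(\Bound_n(k,\ell'),\leq_b)$ from any fixed $f_h$ up to a maximal element $f_0$, and use Lemma~\ref{lem:homeomorphism} at each cover to parametrize $\Gr(f_i)_{>0}$ in terms of $\Gr(f_{i-1})_{>0}$ together with one new positive real $a$ recovered as the ratio \eqref{eq:aformula} of two Pl\"ucker coordinates from $\rmci_{f_i}$. By induction this realizes $\Gr(\mcm_{f_h})_{>0}$ as a product of positive reals indexed by the chain, and the Pl\"ucker coordinates indexed by $\mcc({\bf f})$ are precisely what is needed to invert the homeomorphism. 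Because each $f_i \in \Bound_n(k,\ell')$ is $\ell'$-periodic, each $\rmci_{f_i}$ is $\rho^{\ell'}$-invariant as a subset of $\binom{[n]}k$; and since $\gcd(\ell/\ell',p) = 1$, the subgroups generated by $\rho^\ell$ and $\rho^{\ell'}$ coincide, so $\mcc({\bf f})$ is $\rho^\ell$-invariant. The existence of chains from $\mathrm{id}_k$ up to a maximal element is part of Theorem~\ref{thm:titsmatsumoto}, which also supplies the first assertion of the theorem (take $f_h = \mathrm{id}_k$).

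To promote $\mcc({\bf f})$ from a mere parametrizing set to a cluster, I would identify it with the set of face labels of the bridge graph $G({\bf f})$ recalled at the end of Section~\ref{secn:leapweak}: the bridge construction introduces one bounded face per cover relation, labeled by the unique element of $\rmci_{f_i}$ not present in $\rmci_{f_{i-1}}$. By Oh--Postnikov--Speyer, the face labels of a reduced plabic graph form a maximal weakly separated collection, so $\mcc({\bf f})$ is weakly separated and $\Delta(\mcc({\bf f}))$ is an extended cluster of $\bbc[\Gr(\mcm_{f_h})]$. The Laurent phenomenon then implies that every Pl\"ucker coordinate of $\mcm_{f_h}$ is a subtraction-free rational expression in $\Delta(\mcc({\bf f}))$, confirming the TP test property. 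Efficiency is an orbit count: each cover in $(\Bound_n(k,\ell'),\leq_b)$ lifts to one $\rho^\ell$-orbit of covers in the ambient $(\Bound_n(k,n),\leq_b)$ and contributes exactly one new $\rho^\ell$-orbit to $\mcc$, so combined with the length formula in Theorem~\ref{thm:titsmatsumoto} and the cell-dimension formula in Theorem~\ref{thm:cells}, the total number of $\rho^\ell$-orbits in $\mcc({\bf f})$ is $h+1 = \dim \Gr(\mcm_{f_h})_{>0}^{\rho^\ell}+1$.

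Finally, the move-connectedness statement follows from Theorem~\ref{thm:titsmatsumoto}: any two chains in ${\rm Chains}(f_h,f_0)$ are related by a finite sequence of $2$- and $3$-moves. A $2$-move swaps two commuting bridges in the chain and leaves both $G({\bf f})$ and $\mcc({\bf f})$ unchanged, while a $3$-move swaps three consecutive bridges for a different triple and translates into the standard plabic square move at the square face of $G({\bf f})$ containing those bridges; this exchanges exactly one face label, hence one element of $\mcc({\bf f})$. Because we have constructed the chain $\rho^\ell$-equivariantly, each $3$-move is applied simultaneously across every member of a $\rho^\ell$-orbit of square faces, producing a $\rho^\ell$-symmetric square move. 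I expect the main technical obstacle to be the bookkeeping in the bridge-graph picture: checking inductively that each added bridge contributes a face labeled by the new necklace entry, and that a $3$-move corresponds cleanly to a plabic square move. These reductions should follow from a local calculation near three adjacent bridges along the lines of \cite[Section 2.5]{Karpman}.
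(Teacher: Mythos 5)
Your treatment of $\rho^\ell$-invariance, the efficiency count by orbits, the weak-separation mechanism via bridge graphs, and the square-move connectedness (via Theorem~\ref{thm:titsmatsumoto} and Williams' correspondence between $2$-/$3$-moves and trivial/square moves) all match the paper's argument. The gap is in the TP-test step. You assert that $\mcc({\bf f})$ is the full set of face labels of a reduced bridge graph, hence a maximal weakly separated collection and an extended cluster of $\bbc[\Gr(\mcm_{f_h})]$, and then invoke the Laurent phenomenon. This is false: $f_0$ is maximal in $(\Bound_n(k,\ell'),\leq_b)$ but generally \emph{not} in $(\Bound_n(k,n),\leq_b)$, so to obtain an honest bridge graph one must append a further chain ${\bf f''}$ from $f_0$ up to a $\Bound_n(k,n)$-maximal element; the resulting graph has $\dim\Gr(\mcm_{f_h})_{>0}+1$ faces, strictly more than $|\mcc({\bf f})|$ in general (see Figure~\ref{fig:BridgeGraphs}, where the face label $15$ lies outside the $2$-optimal collection and must be deleted). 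Thus $\mcc({\bf f})$ is only a \emph{proper} subcollection of an extended cluster $\mcc^\dagger$, and positivity of $\Delta(\mcc({\bf f}))$ at a point of $\mcd\cap\Gr(\mcm_{f_h})$ does not formally yield positivity of all of $\Delta(\mcc^\dagger)$, which is what the Galashin--Lam cluster TP test requires.

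The missing ingredient is the paper's Lemma~\ref{lem:superfluous}: on the distinguished component $\mcd$, each superfluous variable $\Delta_{I^\dagger}$ with $I^\dagger\in\mcc^\dagger\setminus\mcc({\bf f})$ equals a \emph{positive real constant} times $\Delta_I$ for some $I\in\rmci_{f_0}\subset\mcc({\bf f})$, so its positivity comes for free. The proof of that lemma is genuinely geometric, not combinatorial: one projects rationally onto the columns in a congruence class modulo $\ell$, notes that the image of the irreducible, connected $\mcd$ inside $\Gr(k,p)^{\rho}$ must be the single TP point of Karp's theorem, and reads off the ratio $\Delta_{I^\dagger}/\Delta_I$ as a positive constant; the cases $p<k$ need an extra Grassmann-duality step. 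Your first paragraph's plan of inverting the bridge parametrization via \eqref{eq:aformula} could in principle substitute, but the ratios in \eqref{eq:aformula} along the appended chain ${\bf f''}$ involve exactly the Pl\"ucker coordinates you do not control, so it runs into the same obstruction. Without Lemma~\ref{lem:superfluous} or an equivalent, the claim that $\mcc({\bf f})$ is a TP test is unproved.
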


When ${\bf f}$ is a {\sl maximal} chain in $(\Bound_n(k,\ell),\leq_b)$, the construction \eqref{eq:chaintocup} yields an efficient TP test for $\mcd_n(k,\ell)$.
 
\begin{rmk}
Taking $\ell=n$, the weakly separated collections of the form \eqref{eq:chaintocup} are the sets of face labels associated to 
{\sl bridge graphs}. Not every reduced plabic graph admits a bridge decomposition, so we obtain in this way a proper susbet of the set of weakly separated collections. In the same way, when $\ell<n$, the construction \eqref{eq:chaintocup} produces those $\ell$-optimal collections which admit ``$\rho^\ell$-invariant bridge decompositions,'' and in general not every $\ell$-optimal collection has this form.  
\end{rmk} 

\begin{example}\label{eg:integraldomain}
A 2-optimal collection in $\Gr(2,8)$ consists of the of the 8 frozen Pl\"ucker coordinates together with the interior arcs $\{13,35,57,17\}$. We extend this partial triangulation to a full triangulation by adding either $15$ or $37$, but neither of these choices yields a $\rho^2$-invariant triangulation. 

In the quotient algebra $\bbc[\Gr(2,8)]^{\rho^2}$, which is not an integral domain, we have the three-term Pl\"ucker relation 
$$0 = \Delta_{15}\Delta_{37}-\Delta_{13}\Delta_{57}-\Delta_{17}\Delta_{35} =  \Delta_{15}^2 - 2 \Delta_{13}^2 = (\Delta_{15}-\sqrt{2}\Delta_{13})(\Delta_{15}+\sqrt{2}\Delta_{13}).$$
In the further quotient $\bbc[\mcd_8(2,2)]$, 
which {\sl is} an integral domain, one of the two factors on the right must vanish. Total positivity considerations imply that $\Delta_{15}-\sqrt{2}\Delta_{13} = 0\in \bbc[\mcd_8(2,2)]$. So the ``extra'' variable  $\Delta_{15} = \sqrt{2} \Delta_{13}$ is superfluous to any total positivity test. The proof of Theorem~\ref{thm:TPtests} proceeds along similar lines. \end{example}

\begin{example}\label{eg:domainaschain}
To illustrate \eqref{eq:chaintocup}, consider the element $[5,2] \in (\Bound_8(2,2),\leq_b)$. It is contained in a unique maximal chain 
${\bf f}:= [34] \overset{s_0}{\lessdot} [25] \overset{s_1}{\lessdot} [52]\in (\Bound_8(2,2),\leq_b)$ (cf.~Remark~\ref{rmk:lattice} and Figure~\ref{fig:TNNposets}). We have $\rho^2$-equivariant Grassmann necklaces: 
\begin{align*}
\rmci_{[52]} &= (13,35,35,57,57,17,17,13) \hspace{.75cm} \rmci_{[25]} = (13,23,35,45,57,67,17,18) \\
\rmci_{[34]} &= (12,23,34,45,56,67,78,18).
\end{align*}
The set $\Delta(\rmci_{[52]}) = \{\Delta_{13}\}\subset \bbc[\mcd_8(2,2)]$ consists of a single Pl\"ucker coordinate, and is an efficient TP test for the 0-cell $\Gr(\mcm_{[52]})_{>0}^{\rho^2}$. This 0-cell is the North pole 
in Figure~\ref{fig:TNNposets}. 
%and is defined by requiring 
%that columns $v_2 = v_4 = v_6 = v_8 = 0$. 
The union $\Delta(\rmci_{[52]} \cup \rmci_{[25]}) = \{\Delta_{13},\Delta_{23}\} \subset \bbc[\mcd]$ is an efficient TP test for the right 1-cell in Figure~\ref{fig:TNNposets}.   The union $\Delta(\rmci_{[52]} \cup \rmci_{[25]} \cup \rmci_{[34]}) = \{\Delta_{13},\Delta_{23},\Delta_{12}\} \subset \bbc[\mcd]$ is an efficient TP test for the 2-cell $\Gr(\mcm_{[34]})^{\rho^2}_{>0} = \Gr(2,8)^{\rho^2}_{>0}$, as we have argued directly in Example~\ref{eg:integraldomain}.  
\end{example}

\begin{example}\label{eg:chaintobridgegraph}
Continuing Example~\ref{eg:domainaschain}, note that $[5,2] \in {\rm B}_8(2,2)$ is no longer a maximal element once viewed in $({\rm B}_8(2,8),\leq_b)$. We choose arbitrarily the following chain ${\bf f'} \subset ({\rm B}_8(2,8),\leq_b)$ from $[5,2]$ to a maximal element: 
\begin{align}\label{eq:choiceofchain}
{\bf f'}:= [5,2,7,4,9,6,11,8] &\overset{t_{35}}{<} [5,2,9,4,7,6,11,8] \overset{t_{13}}{<} [9,2,5,4,7,6,11,8] \\&\overset{t_{35}}{<} [9,2,7,4,5,6,11,8]  \overset{t_{37}}{<} [9,2,11,4,5,6,7,8].
\end{align}
In Figure~\ref{fig:BridgeGraphs}, we draw the bridge graph encoded by ${\bf f \cup f''}$ where ${\bf f}$ is the chain from Example~\ref{eg:domainaschain}, viewed inside $({\rm B}_8(2,8),\leq_b)$. (Each ${\rm B}_8(2,2)$-cover can be implemented as 4 ${\rm B}_8(2,8)$-covers.) Its cluster is not $\rho^2$-invariant, but it becomes the 2-optimal collection from Example~\ref{eg:integraldomain} once we delete the non-symmetrical variable $\Delta_{15}$. If we were to change the chain ${\bf f'}$, then the five inner faces of the plabic graph in Figure~\ref{fig:BridgeGraphs} might change, but the variables $13$, $35$, $57$, and $17$ will always be present. So the choice of ${\bf f'}$ does  not matter. 
\end{example}

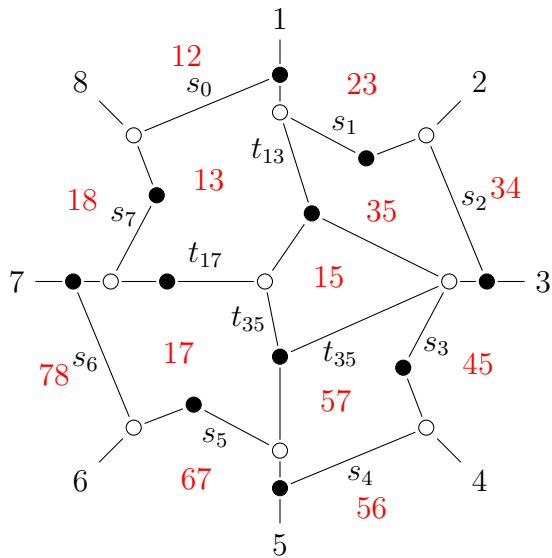
\begin{figure}
\begin{tikzpicture}
\node (v1) at (90:3.5cm) {1};
\node (v8) at (135:3.75cm) {8};
\node (v7) at (180:3.5cm) {7};
\node (v6) at (225:3.75cm) {6};
\node (v5) at (270:3.5cm) {5};
\node (v4) at (315:3.75cm) {4};
\node (v3) at (0:3.5cm) {3};
\node (v2) at (45:3.75cm) {2};
\node (a1) at (90:2.75cm) {};
\node (a8) at (135:2.75cm) {};
\node (a7) at (180:2.75cm) {};
\node (a6) at (225:2.75cm) {};
\node (a5) at (270:2.75cm) {};
\node (a4) at (315:2.75cm) {};
\node (a3) at (0:2.75cm) {};
\node (a2) at (45:2.75cm) {};
\node (b1) at (90:2.25cm) {};
\node (b8) at (145:2.0cm) {};
\node (b7) at (180:2.25cm) {};
\node (b6) at (235:2.0cm) {};
\node (b5) at (270:2.25cm) {};
\node (b4) at (325:2.0cm) {};
\node (b3) at (0:2.25cm) {};
\node (b2) at (55:2.0cm) {};

\node (d1) at (180:.2cm) {};
\node (c7) at (180:1.5cm) {};
\node (c5) at (270:1.0cm) {};
\node (c3) at (65:1.0cm) {};

\node at (112.5:2.8cm) {$s_0$};
\node at (112.5+90:2.8cm) {$s_6$};
\node at (112.5+180:2.8cm) {$s_4$};
\node at (112.5+270:2.8cm) {$s_2$};
\node at (157.5:2.25cm) {$s_7$};
\node at (157.5+90:2.25cm) {$s_5$};
\node at (157.5+180:2.25cm) {$s_3$};
\node at (157.5+270:2.25cm) {$s_1$};
\node at (-50:1.25cm) {$t_{35}$};
\node at (230:0.65cm) {$t_{35}$};
\node at (95:1.75cm) {$t_{13}$};
\node at (160:1.05cm) {$t_{17}$};
\node at (5:.65cm) {\textcolor{red}{15}};
\node at (215:1.65cm) {\textcolor{red}{17}};
\node at (215-90:1.65cm) {\textcolor{red}{13}};
\node at (215-180:1.65cm) {\textcolor{red}{35}};
\node at (215-280:1.75cm) {\textcolor{red}{57}};
\node at (-22.5:2.85cm) {\textcolor{red}{45}};
\node at (-22.5-90.5:2.85cm) {\textcolor{red}{67}};
\node at (-22.5-180:2.85cm) {\textcolor{red}{18}};
\node at (-22.5-270:2.85cm) {\textcolor{red}{23}};
\node at (22.5:3.25cm) {\textcolor{red}{34}};
\node at (22.5-90.5:3.25cm) {\textcolor{red}{56}};
\node at (22.5-180:3.25cm) {\textcolor{red}{78}};
\node at (22.5-270:3.25cm) {\textcolor{red}{12}};

\draw (v1)--(a1)--(a8)--(v8);
\draw (v3)--(a3)--(a2)--(v2);
\draw (v5)--(a5)--(a4)--(v4);
\draw (v7)--(a7)--(a6)--(v6);
\draw (a1)--(b1)--(b2)--(a2);
\draw (a3)--(b3)--(b4)--(a4);
\draw (a5)--(b5)--(b6)--(a6);
\draw (a7)--(b7)--(b8)--(a8);
\draw (b7)--(c7)--(d1)--(c3)--(b1);
\draw (c3)--(b3)--(c5)--(b5);
\draw (c5)--(d1);

\draw [fill= black] (c3) circle [radius = .1];
\draw [fill= black] (c5) circle [radius = .1];
\draw [fill= black] (c7) circle [radius = .1];
\draw [fill= white] (d1) circle [radius = .1];
\draw [fill= white] (b1) circle [radius = .1];
\draw [fill= black] (b8) circle [radius = .1];
\draw [fill= white] (b7) circle [radius = .1];
\draw [fill= black] (b6) circle [radius = .1];
\draw [fill= white] (b5) circle [radius = .1];
\draw [fill= black] (b4) circle [radius = .1];
\draw [fill= white] (b3) circle [radius = .1];
\draw [fill= black] (b2) circle [radius = .1];

\draw [fill= black] (90:2.75cm) circle [radius = .1];
\draw [fill= white] (135:2.75cm) circle [radius = .1];
\draw [fill= black] (180:2.75cm) circle [radius = .1];
\draw [fill= white] (225:2.75cm) circle [radius = .1];
\draw [fill= black] (270:2.75cm) circle [radius = .1];
\draw [fill= white] (315:2.75cm) circle [radius = .1];
\draw [fill= black] (0:2.75cm) circle [radius = .1];
\draw [fill= white] (45:2.75cm) circle [radius = .1];
\end{tikzpicture}
\caption{The $\Gr(2,8)$ bridge graph encoded by the maximal chain in $B_8(2,8)$ described in Examples~\ref{eg:domainaschain} and \ref{eg:chaintobridgegraph}. The bridge edges are labeled $t_{17},t_{35},\dots,$ by the corresponding transposition in $\tilde{S}^0_8$. The innermost $t_{17}$ edge is added first, then the vertical $t_{35}$ edge, then the vertical $t_{13}$ edge, and so on, ending with the $s_0$ edges. The Pl\"ucker coordinates corresponding to faces are in red. Each time a bridge edge is added, we add a Pl\"ucker coordinate to the collection. %The resulting bridge graph is not $\rho^2$-invariant, but it is the optimal 2-collection from Example~\ref{eg:integraldomain} once 
$15$ is deleted. 
} 
\label{fig:BridgeGraphs}
\end{figure}

We break the proof of Theorem~\ref{thm:TPtests} into two steps. The first is the following.

\begin{proof}[Proof that \eqref{eq:chaintocup} is 
efficient, weakly separated, and square-move connected.]
%Let $f_0 \in \Bound_n(k,\ell)$ be a maximal affine permutation indexing the 0-cell in $\Gr(k,n)^{\rho^\ell}_{\geq 0}$ and $C  = (f_h,\dots,f_0) \in {\rm Chains}(t_{S,s})$ be a saturated chain in leap-weak order on $\Bound_n(k,\ell)$. 
It is clear that \eqref{eq:chaintocup} is $\rho^\ell$-invariant since each of its constituent necklaces is. It will be an $\ell$-optimal collection if we show that it has size $\dim \Pi_{f_h}+1$, is weakly separated, and is a TP test. We show the first two statements now. 

The collection \eqref{eq:chaintocup} has the right size to be an efficient TP test by induction on $h$; the base case is that $\dim \widetilde{\Pi}_{f_0} = 1$ and in this case $\rmci_{f_0}$ consists of a single $\rho^\ell$-orbit. Each time we increment $h$ in \eqref{eq:chaintocup}, we change the necklace in exactly $p$ terms in a $\rho^\ell$-equivariant way, i.e. we add one Pl\"ucker coordinate to the TP test. On the other hand, $\dim \widetilde{\Pi}_{f_h}$ increases by one each time we increment $h$, so the collection \eqref{eq:chaintocup} is efficient by induction. 

We show that the collection \eqref{eq:chaintocup} is weakly separated by realizing it a subset of face labels of a reduced plabic graph (cf. Figure~\ref{fig:BridgeGraphs} for an example). We can view the saturated chain ${\bf f} = f_h,\dots,f_0$ in $(\Bound_n(k,\ell),\leq_b)$ as a chain ${\bf f'}$ of length $ph$ in $(\Bound_n(k,n),\leq_b)$: each cover in the form gives rise to $p$ commuting covers in the latter. Choose any saturated chain ${\bf f''}$ starting at $f_0$ and ending at a maximal element in $\Bound_n(k,n)$. Then ${\bf f' \cup f''}$ corresponds to a bridge graph $G = G({\bf f' \cup f''})$. Each time we take a downward step in the chain ${\bf f' \cup f''}$, we add a bridge to $G$, and  this has the effect of adding a boundary face. Since the Pl\"ucker coordinates in the boundary faces of a plabic graph are the elements of the Grassmann necklace, adding a bridge changes the Grassman necklace in exactly one term. 

When we have added all the bridges corresponding to ${\bf f''}$, the Grassmann necklace is~$\rmci_{f_0}$. Each cover in ${\bf f}$, corresponding to $p$ covers in ${\bf f'}$, changes exactly $p$ terms in the Grassmann necklace, in a $\rho^\ell$-invariant way, as in \eqref{eq:chaintocup}. Thus, the union \eqref{eq:chaintocup} is a subset of the face labels of the bridge graph $G$ as claimed.

Finally, we show the square-move connectedness statement. By Theorem~~\ref{thm:titsmatsumoto} all saturated chains in $\Bound_n(k,\ell)$ are connected by $2$-moves or $3$-moves. These correspond to sequences of (symmetrically performed) $2$-moves or $3$-moves in $\Bound_n(k,n)$. By \cite[Theorem 5.3]{WilliamsBridge}, performing a 2-move does not affect the set of face labels of a bridge graph, and performing a 3-move amounts to a square move on bridge graphs. 
\end{proof}

Before proving that \eqref{eq:chaintocup} is a TP test, we have a definition and a lemma. 
\begin{defn}\label{defn:superfluous}
Suppose that $p  \in [k,\infty)$ and let $\mcc = \mcc({\bf f})$ for a chain ${\bf f}$ as in \eqref{eq:chaintocup}. Extend $\mcc$ to a maximal weakly seprated collection $\mcc^\dagger$ inside the positroid $\mcm_{f_h} \subset \binom{[n]}k$. Then the elements of $\mcc^\dagger \setminus \mcc$ are {\sl superfluous variables}.
\end{defn}

\begin{lem}\label{lem:superfluous}
In the setting of Definition~\ref{defn:superfluous}, let $I \in \rmci_{f_0} \subset \mcc$ and let $I^\dagger$ be a superfluous variable. Then 
%$\Delta_I$ and $\Delta_{I^\dagger}$ are linearly related as elements of $\bbc[\mcd_n(k,\ell)]$. More specifically, we have 
$\Delta_I = \aa \Delta_{I^\dagger} \in \bbc[\mcd]$ for a positive real number $\aa$. 
\end{lem}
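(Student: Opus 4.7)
The plan is to use an orbit argument to reduce the lemma to a single pair $(I, I^\dagger)$, and then to compute Pl\"ucker coordinates on $\mcd$ explicitly via the eigenspace decomposition \eqref{eq:stabilityofD}.

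First I would observe that, in the range $p \geq k$, the description of maximal elements in Definition~\ref{defn:maximalelements} forces $f_0$ to be either of the form $t_S$ with $|S| = 1$ (when $p = k$) or $t_{\emptyset, s}$ (when $p > k$); in either case the columns of every $I \in \mcm_{f_0}$ lie in a single residue class $\sigma \pmod{\ell'}$, and $\rmci_{f_0}$ consists of a single $\rho^\ell$-orbit. Since $\rho^\ell$ acts trivially on $\bbc[\mcd]$, all $\Delta_I$ with $I \in \rmci_{f_0}$ coincide in $\bbc[\mcd]$, reducing the claim to one chosen $I$. I would separately argue that every superfluous $I^\dagger$ also lies in $\mcm_{f_0}$: the union $\cup_i \rmci_{f_i}$ captures the ``boundary'' face labels of the bridge plabic graph underlying $\mcc({\bf f})$ (cf.~Figure~\ref{fig:BridgeGraphs}), and any weakly separated extension inside $\mcm_{f_h}$ must draw its new labels from the remaining ``central'' region, which is precisely $\mcm_{f_0}$.

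The computational core is the following. Parametrize $X \in \mcd \cong (\bbp^{\ell'-1})^k$ by $(a_{j, t})_{j \in [k],\, t \in [0, \ell'-1]}$, with $j$-th matrix row $\sum_t a_{j,t}\,\omega_{r_j + tp}$, where $\lam_{r_1}, \ldots, \lam_{r_k}$ are the $k$ $\rho^\ell$-eigenvalues nearest $1$. The identity $\lam_{r_j + tp} = \lam_{r_j}\,\zeta_{\ell'}^{t}$ with $\zeta_{\ell'} := e^{2\pi i/\ell'}$, combined with the single-residue condition $m_b \equiv \sigma \pmod{\ell'}$ valid for $I = \{m_1 < \cdots < m_k\} \in \mcm_{f_0}$, makes the entry factor $\zeta_{\ell'}^{t_j(m_b - 1)} = \zeta_{\ell'}^{t_j(\sigma - 1)}$ depend on $j$ alone. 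Pulling it out of row $j$ of the Pl\"ucker determinant and summing over $(t_1, \ldots, t_k) \in [0,\ell'-1]^k$ yields
\begin{equation*}
\Delta_I|_\mcd \;=\; \det\!\bigl(\lam_{r_j}^{m_b - 1}\bigr)_{j, b} \cdot \prod_{j=1}^{k} \Bigl(\sum_{t=0}^{\ell'-1} a_{j, t}\, \zeta_{\ell'}^{t(\sigma - 1)}\Bigr).
\end{equation*}
The variable factor is independent of $I$ (given the common residue $\sigma$), so $\Delta_I / \Delta_{I^\dagger}$ equals the constant ratio of two generalized Vandermonde determinants. Positivity of this ratio follows from evaluating at the unique TP point $Y_0 \in \Gr(\mcm_{f_0})^{\rho^\ell}_{>0} \subset \mcd$: since $Y_0$ admits a matrix representative for which all of $\Delta(\mcm_{f_0})(Y_0)$ are positive reals, the ratio is a quotient of positive reals, hence real and positive.

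The main obstacle is the combinatorial step confirming that every superfluous variable lies in $\mcm_{f_0}$. I would approach it either by direct face-counting on bridge plabic graphs --- tracking which labels are added when the bridge graph encoding $\mcc({\bf f})$ is completed to a reduced plabic graph for $\mcm_{f_h}$ --- or by an induction up the chain ${\bf f}$, carrying the statement through each bridge cover and using that covers within $(\Bound_n(k, \ell'), \leq_b)$ correspond to $\rho^\ell$-symmetric additions of bridges.
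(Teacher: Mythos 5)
Your proposal is correct in substance and runs parallel to the paper's proof, with the two halves executed differently enough to be worth comparing. For the analytic half, the paper does not compute in the eigenbasis: it observes that projection onto the columns $\mathscr{A}_s = \{j \colon j \equiv s \bmod \ell'\}$ gives a rational map $P \colon \mcd \to \Gr(k,p)^{\rho}$ which, since $\mcd$ is irreducible and $\Gr(k,p)^{\rho}$ is finite, has image the single TP point $Y_0$; the ratio $\Delta_{I^\dagger}/\Delta_I$ is then the constant $\Delta_{(I^\dagger)'}(Y_0)/\Delta_{I'}(Y_0) > 0$. Your factorization $\Delta_I|_{\mcd} = \det(\lam_{r_j}^{m_b-1})_{j,b}\cdot \prod_j c_j(\sigma)$ is a correct hands-on re-derivation of why $P$ is constant on $\mcd$, so this part is a legitimate (slightly more computational) alternative. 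For the combinatorial half --- that $I^\dagger \subset \mathscr{A}_s$, equivalently $I^\dagger \in \mcm_{f_0} = \binom{\mathscr{A}_s}{k}$ --- you are right that this is the load-bearing step, and your first suggested route is essentially what the paper does: $\mcc^\dagger$ is taken to be the set of face labels of the bridge graph obtained by completing ${\bf f}$ with a chain ${\bf f''}$ from $f_0$ up to a maximal element of $(\Bound_n(k,n),\leq_b)$, and the faces contributed by ${\bf f''}$ are Grassmann necklace elements of positroids $\mcm_g$ with $g \geq f_0$, hence contained in $\mcm_{f_0}$. Note this fixes a particular extension $\mcc^\dagger$ rather than handling an arbitrary one (which is all the application needs); proving the claim for every weakly separated extension would require an additional connectivity argument that neither you nor the paper supplies. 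Your opening orbit reduction is harmless but unnecessary, since the lemma is asserted for each $I \in \rmci_{f_0}$ separately.
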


Thus, the positivity of superfluous variables is implied by the positivity of $I \in \rmci_{f_0}$. 

\begin{proof}
The case $p=k$ is easily handled directly. So we assume $p > k$, so that $\bb = k$ as in Definition~\ref{defn:maximalelements}. Thus $f_0 = t_s$ for some $s \in [\ell]$, and the positroid $\mcm_{f_0}$ is drawn from the ground set $\mathscr{A}_s := \{j \in [n] \colon j \equiv s \mod \ell\}$ (cf. Example~\ref{eg:maximalelts}). We have a rational projection $P  \colon \Gr(k,n) \to \Gr(\bb,r) = \Gr(k,r)$ onto the corresponding columns $\{e_j \colon j  \in \mathscr{A}_s\}$. The domain of definition of $P$ is given by the non-vanishing of any Pl\"ucker coordinate in $\binom{\mathscr{A}_s}k$. In particular, any totally positive point is in the domain of definition. The projection $P$ sends $\rho^\ell$-fixed points to $\rho$-fixed points, and sends totally positive points to totally positive points. 

Since $\mcd = \mcd_n(k,\ell)$ is irreducible, restricting to $\mcd$ gives a rational map $P \colon \mcd \to \Gr(k,p)^{\rho}$ whose domain of definition is a connected topological space. The image of this map is therefore a point. Letting $X_0 \in \Gr(k,n)^{\rho}_{>0}$ and $Y_0 \in \Gr(k,p)^{\rho}_{>0}$ denote the unique totally positive points, we clearly have that $P(X_0) = Y_0$, so that $P$ is a rational map $\mcd \to \{Y_0\}$.

Because $\mcm_{f_0} \subset \binom {\mca_s}k$, we can choose a maximal weakly separated collection $\mcc' \subset \binom{[p]}k$ such that 
$P^*(\mcc') = \mcc_0$. If $J \in \binom {\mca_s}k$ we denote by $J' \in \binom{[r]}k$ the variable for whom $P^*(\Delta_{J'}) = \Delta_J$. Suppose that $X \in \mcd$ and that $\Delta_I(X) \neq 0$. Pl\"ucker coordinates of $X$ are only defined up to scale, but the ratio $\frac{\Delta_{I^\dagger}(X)}{\Delta_I(X)}$ is a well-defined number. 

We have
\begin{align}\label{eq:whatisalpha}
\frac{\Delta_{I^\dagger}(X)}{\Delta_{I}(X)} &= P^*\left(   \frac{\Delta_{(I^\dagger)'}}{\Delta_{I'}}\right)(X) \\
&= \frac{\Delta_{(I^\dagger)'}(Y_0)}{\Delta_{I'}(Y_0)} \in \bbr_{>0}
\end{align}
since $Y_0$ is TP. 
\end{proof}

\begin{proof}[The collection is a TP test]
In the previous step of the proof, we showed that $\mcc = \mcc({\bf f})$ is efficient provided we check it is a TP test. We also showed that $\mcc$ is a weakly separated collection in $\mcm_{f_h}$, so we can choose an extension $\mcc \subset \mcc^\dagger$ of this collection to a maximal weakly separated collection in $\mcm_{f_h}$. Then $\mcc^\dagger$ is a TP test for $\Gr(M_{f_h})$ \cite[Corollary 4.4]{GalashinLam}. If $p > k $ then Lemma~\ref{lem:superfluous} says that the positivity of $\mcc$ implies the positivity of $\mcc^\dagger$, so $\mcc$ is an efficient TP test. When $p=k$ it is not hard to see that $\mcc$ is already maximal. 

Now we address the cases $p< k$, so that $f_0 = t_{S,s}$ as in Proposition~\ref{prop:maximalelts}. We have a rational projection $\Gr(k,n) \to \Gr(k,k+p-\bb) \cong \Gr(p-\bb,(\aa+1) p)$ by projecting on to the $\rho^\ell$-orbit of columns $\{e_j \colon j \mod \ell \in S \cup s\}$, and then applying Grassmann duality. 
It sends $\rho^\ell$-fixed points to $\rho^{\aa+1}$-fixed points. From the proof of Lemma~\ref{lem:superfluous} we get a map $\Gr(p-\bb,(\aa+1) p)^{\rho^{\aa+1}} \to \Gr(p-\bb,p)^{\rho}$, thus altogether a map $\Gr(k,n) \to \Gr(p-\bb,p)^{\rho}$. The argument now concludes as in Lemma~\ref{lem:superfluous}.
\end{proof}

\section{Generalized clusters and cyclic symmetry loci}\label{secn:clusters}
Let $\mcd_n(k,\ell) \subset \Gr(k,n)^{\rho^\ell}$ be the distinguished component. We define certain seeds in $\bbc(\mcd_n(k,\ell)$ when the order of the orbifold point is at least $k$. Recall \eqref{eq:stabilityofD} that in these cases we have $\mcd_n(k,\ell) \cong (\bbp^{\ell-1})^k$. 

We conjecture that our seeds determine an upper generalized cluster algebra structure on $\bbc[\mcd_n(k,\ell)]$. Our main result in this direction is that each one-step mutation out of the initial seed yields an element of the coordinate ring. We discuss approaches to verifying the conjecture in general in subsequent sections.

\subsection{Initial seed}
\begin{defn}[Initial cluster]
\label{defn:initPluckers}
Given $k,\ell,n \geq 2$, we set $\ell' = \gcd(\ell,n)$ and set $p = \frac{n}{\ell'}$. We assume that $\ell' \geq 2$, and that $p \in [k,\infty)$. Set $N := (k-1)(\ell'-1)$. We define a sequence $I_1,\dots,I_N \in \binom{[n]}k$ as follows. For $j \in [k-1]$ and $i \in [\ell'-1]$, set 
\begin{equation}\label{eq:Ilist}
I_{(j-1)(\ell'-1)+i} = [i,i+j-1] \cup \{1+j\ell',1+(j+1)\ell',\dots,1+(k-1)\ell'\}
\end{equation}
In other words, we have that $I_i = \{i,\ell'+1,2\ell'+1,\dots,(k-1)\ell'+1\}$ for $i \in [\ell-1]$, that $I_i = \{i,i+1,2\ell'+1,\dots,(k-1)\ell'+1\}$ for $i \in [\ell',2(\ell'-1)]$, and so on. 
Define also $I_{N+i} = [-k+1,\dots,i] \in \binom{[n]}k$ for $i \in [\ell']$. 

We write $\mcc_n(k,\ell) = \{I_i \colon i \in [N+\ell']\}$ and write $\ov{\mcc_n(k,\ell)} = \{\rho^{a\ell}(I_i) \colon i \in [N+\ell'], a \in [p]\}$ for the $\rho^\ell$-invariant collection containing $\mcc_n(k,\ell)$.  
\end{defn}

%Note that the subset defined in \eqref{eq:Ilist} does not wrap around modulo $n$ in our current case of interest $ p\geq k$. 
An example of the sequence $I_1,\dots,I_{N+\ell'}$ is in Figure~\ref{fig:onestep}. Note that the variables $I_{N+1},\dots,I_{N+\ell'}$ are $\rho^\ell$-orbit representatives for the frozen variables in $\bbc[\Gr(k,n)]$. Note also that the number of elements $N+\ell' = k(\ell'-1)+1$ is the dimension of homogeneous coordinate ring $\bbc[\mcd] \cong \bbc[(\bbp^{\ell'-1})^{k}]$. 

Let $q$ be an indeterminate. For $k,n \in \bbn$ set 
\begin{equation}
[k]_q = \frac{q^{\frac{k}{2}}-q^{-\frac{k}{2}}}{q^{\frac{1}{2}}-q^{-\frac{1}{2}}} \hspace{.5cm}  [n]!_q = \prod_{k=1}^n[k]_q \hspace{.5cm} {n \brack k}_q = \frac{[n]!_q}{[k]!_q[n-k]!_q}, 
\end{equation}
the $q$-analogs of $k \in \bbn$, $n!$, and $\binom nk$, respectively.

\begin{defn}\label{defn:abstractCSseed}
For arbitrary $k,\ell \geq 2$ and $N = (k-1)(\ell-1)$, we have a CS- seed $\Sigma_{\rm cyc}(k,\ell) = (\ov{\tilde{x}},\tilde{B},{\bf z})$ are defined as follows. The initial variables $(\ov{x}_i)_{i \in [N+\ell]}$ are indeterminates with the last $\ell$ of these frozen. The exchange matrix is skew-symmetric, so that $\tilde{B}$ can be defined by an extended quiver with arrows 
$x_{i+1} \to x_i \to x_{i+\ell} \to x_{i+1}$ for $i=1,\dots,N$. 
The exchange degrees  are $d_1 = k$ and $d_i = 1$ for $i \in [2,N]$. We
abbreviate the coefficient string variables $z_s := z_{1;s}$.  We denote the resulting CS- cluster algebra by $\mca_{\rm cyc}(k,\ell)$, upper CS- cluster alebra by $\mca^{\rm up}_{\rm cyc}(k,\ell)$, etc. 

Given $n$, set $p := \frac{n}{\gcd(\ell,n)}$ as usual and assume that $p \geq k$. Define 
\begin{equation}\label{eq:trigcoeffs}
\eta_{s} := {k \brack s}_q \text{ evaluated at } q = e^{\frac{2\pi i}{p}}.
\end{equation}
We define $\mca_n(k,\ell) \subset \bbc(\mcd_n(k,\ell))$ the result of identifying $\ov{x}_i \in \mca_{\rm cyc}(k,\ell')$ with $\Delta_{I_i} \in \bbc[\mcd_n(k,\ell)]$, and performing the coefficient specialization $z_s \mapsto \eta_s$. We denote by $\Sigma_n(k,\ell)$ its initial seed. 
\end{defn}

\begin{figure}
\label{fig:onestep}
\begin{center}
\begin{tikzcd}
I_1 \arrow[rd,]           &                       \\
I_2 \arrow[u,] \arrow[rd] & I_5 \arrow[l]         \\
I_3 \arrow[u] \arrow[rd]            & I_6 \arrow[l]         \\
I_4 \arrow[u] \arrow[rd]            &\boxed{I_7} \arrow[l] \\
I_5 \arrow[u] \arrow[rd]            & \boxed{I_8} \arrow[l] \\
I_6 \arrow[u] \arrow[rd]            & \boxed{I_9} \arrow[l] \\
\boxed{I_7} \arrow[u]               & \boxed{I_{10}}       
\end{tikzcd}
\hspace{1.5cm}
\begin{tikzcd}
 159 \arrow[dddd, "{}"', bend right=60]           &                                   \\
259 \arrow[u ] \arrow[dddd, bend right=60] &                                   \\
 359 \arrow[u] \arrow[dddd, bend right=60]             &                                   \\
459 \arrow[u] \arrow[rd]                              &                                   \\
569 \arrow[u] \arrow[uuu, bend left=49] \arrow[rd]    &  \boxed{8,9,10} \arrow[l]   \\
679 \arrow[u] \arrow[uuu, bend left=49] \arrow[rd]    & \boxed{9,10,11} \arrow[l]  \\
\boxed{789} \arrow[uuu, bend left=49] \arrow[u]       & \boxed{10,11, 12} \arrow[l]
\end{tikzcd}
\end{center}
\caption{Two drawings of the initial seed $\Sigma_{12}(3,4)$. The first drawing has repeated variables but is easier to look at. The second removes the repetitions and replaces $I_i$ with the subset $I_i \in \binom{[12]}3$ as in Definition~\ref{defn:initPluckers}.}
%\begin{tikzcd}
%x_1 \arrow[dddd, "{(1,4)}"', bend right=60]           &                          \%\
%x_2 \arrow[u, "{(4,1)}"'] \arrow[dddd, bend right=60] &                          \%\
%x_3 \arrow[u] \arrow[dddd, bend right=60]             &                          \%\
%x_4 \arrow[u] \arrow[rd]                              &                          \%\
%x_5 \arrow[u] \arrow[uuu, bend left=49] \arrow[rd]    & \boxed{x_8} \arrow[l]    \%\
%x_6 \arrow[u] \arrow[uuu, bend left=49] \arrow[rd]    & \boxed{x_9} \arrow[l]    \%\
%\boxed{x_7} \arrow[uuu, bend left=49] \arrow[u]       & \boxed{x_{10}} \arrow[l]
%\end{tikzcd}
\end{figure}
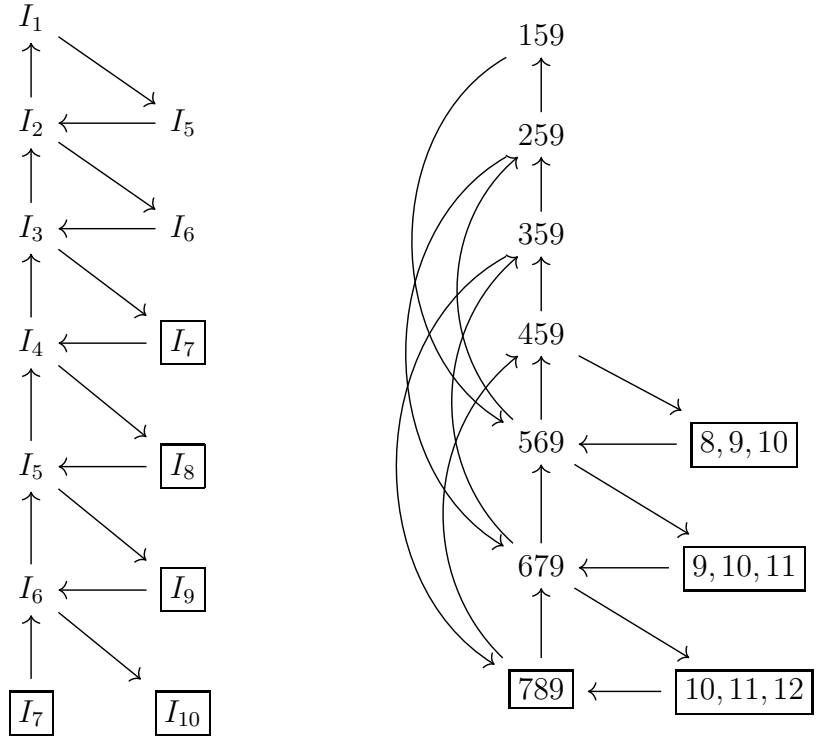

\begin{rmk}
The specialization \eqref{eq:trigcoeffs} is palindromic and is compatible with the convention $z_{0} = z_{k} = 1$. Expressing $e^{i\theta} = \cos \theta + i \sin \theta$, we can rewrite
\begin{equation}\label{eq:triggercoeffs}
\eta_s = \prod_{j=1}^{s} \frac{\sin \frac{(k+1-j) \pi}{p}}{\sin \frac{j\pi}{p}} \in \bbr_{\geq 0}.
\end{equation}
We have the following special cases of the numbers $(\eta_s)_{s \neq 0,k}$, depending on the value of~$p$:
\begin{itemize}
\item When $p=k$, then each $\eta_s = 0$. This is the specialization corresponding to the right companion cluster algebra. So $\mca_{k\ell}(k,\ell)$ {\sl is} the right companion cluster algebra, and in particular is an FZ-cluster algebra.
\item As $p \to \infty$, $e^{\frac{2\pi i}{p}}$ limits to~$1$, so that each $\eta_s$ limits to $\binom k s$. This is the specialization corresponding to the left companion cluster algebra.
\item When $p=k+1$, each $\eta_s=1$. %(The numerator and denominator of each term in the product \eqref{eq:triggercoeffs} are equal.)
\end{itemize}    
\end{rmk}
 
\begin{rmk}\label{rmk:qbinomialthm}
By the $q$-binomial theorem, the exchange polynomial $\sum_{s=0}^k \eta_su^sv^{k-s}$ factors as $\prod_{j=}^{k-1} (u-q^{j-\frac{k-1}{2}}v)$ where $q = e^{\frac{2 \pi i}{p}}$.\end{rmk}

Let $\mcl(\Sigma_n(k,\ell))$ denote the lower bound algebra associated to the seed $\Sigma_n(k,\ell) \subset \bbc(\mcd_n(k,\ell)$ and let $\bbc[\Delta(\mcc_n(k,\ell))^{\pm 1}]$ denote the algebra of Laurent polynomials in this seed (thought of as functions on $\mcd$).

Our main result is: 
\begin{thm}\label{thm:onestep} We have the containment of algebras 
\begin{equation}\label{eq:easycontain}
\mcl_{n}(k,\ell) \subseteq \bbc[\mcd] \subseteq \bbc[\Delta(\mcc_n(k,\ell))^{\pm 1}]
\end{equation}
for any $p \in [k,\infty)$. 
\end{thm}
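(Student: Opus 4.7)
The strategy is to prove the two containments separately, with most effort on the first.

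For the right-hand containment $\bbc[\mcd] \subseteq \bbc[\Delta(\mcc_n(k,\ell))^{\pm 1}]$, three ingredients combine. First, by Lemma/Definition~\ref{defn:distinguished} together with the assumption $p \in [k,\infty)$ (which forces $\aa = 0$ and $\bb = k$), we have $\mcd \cong (\bbp^{\ell'-1})^k$, so $\dim \mcd = k(\ell'-1) = |\mcc_n(k,\ell)| - 1$. Second, one checks that $\mcc_n(k,\ell)$ is among the collections arising from the construction~\eqref{eq:chaintocup} at the top-dimensional cell (corresponding to $f = {\rm id}_k$), and is therefore an efficient TP test for $\mcd$ by Theorem~\ref{thm:TPtests}. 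Third, the resulting rational map $\varphi \colon \mcd \to \bbp^{|\mcc_n(k,\ell)|-1}$, $X \mapsto (\Delta_{I_i}(X))_i$, has image of the same dimension as the source, so it is birational onto its image. Dehomogenizing by any single $\Delta_{I_i}$ realizes $\bbc[\mcd]$ as a subring of the Laurent ring, which yields the containment.

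For the left-hand containment $\mcl_n(k,\ell) \subseteq \bbc[\mcd]$, the lower bound algebra is generated by the initial extended cluster $\Delta(\mcc_n(k,\ell))$ together with the $N$ one-step mutated variables $\ov{x}_1', \ldots, \ov{x}_N'$. The initial variables are Pl\"ucker coordinates, hence manifestly in $\bbc[\mcd]$. For the non-special mutations at $i \in [2, N]$, the exchange relation is a binomial, and by inspection of the quiver in Definition~\ref{defn:abstractCSseed} each such exchange matches a standard three-term Pl\"ucker relation among the subsets $I_j$ in~\eqref{eq:Ilist}. Thus each $\ov{x}_i'$ is (a scalar multiple of) a Pl\"ucker coordinate on $\mcd$, hence regular.

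The hardest step is the special mutation at $i = 1$, whose exchange polynomial is
\begin{equation*}
Z_1(u,v) = \sum_{s=0}^k \eta_s u^s v^{k-s} = \prod_{j=0}^{k-1}(u - q^{j-(k-1)/2} v), \qquad q = e^{2\pi i /p},
\end{equation*}
by Remark~\ref{rmk:qbinomialthm}. My plan is to work in the $\rho^\ell$-eigenbasis of $\bbc^n$ (Proposition~\ref{prop:dft}), in which every $X \in \mcd$ has a $k \times n$ matrix representative with exactly one row supported in each of $k$ distinguished eigenspaces $E_{j_1}, \dots, E_{j_k}$ of dimension $\ell'$. In these coordinates, every Pl\"ucker coordinate of $\mcd$ factors as a Vandermonde-weighted product of $1 \times \ell'$ minors across the factors. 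The specialized coefficients $\eta_s = {k \brack s}_q$ should then appear naturally as ratios of Pl\"ucker coordinates of the unique TP $\rho$-fixed point $X_0 \in \Gr(k,n)^{\rho}_{>0}$ (as hinted in the introduction), and the $k$ linear factors of $Z_1$ should correspond to $k$ linear dependencies among the eigenspace-components of a general $X \in \mcd$. Assembling these ingredients yields an identity
\begin{equation*}
\Delta_{I_1} \cdot \ov{x}_1' \;=\; \sum_{s=0}^k \eta_s\, (M_1^+)^s (M_1^-)^{k-s}
\end{equation*}
for an explicit regular function $\ov{x}_1'$ on $\mcd$ (which is an integral domain by irreducibility of $\mcd$). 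Verifying this identity on $\mcd$ is where the main technical work resides, and it completes the proof.
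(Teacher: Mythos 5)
Your proposal does not close either containment, and the two gaps are of different natures.

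For the right-hand containment, your dimension-count/birationality argument proves too little. Birationality of $X \mapsto (\Delta_{I_i}(X))_i$ onto its image only shows that the \emph{function field} $\bbc(\mcd)$ is generated by ratios of the $\Delta_{I_i}$; it does not show that every element of the homogeneous coordinate ring $\bbc[\mcd]$ is a \emph{Laurent polynomial} in $\Delta(\mcc_n(k,\ell))$, which is the actual claim. A rational function of the $\Delta_{I_i}$ need not lie in $\bbc[\Delta(\mcc_n(k,\ell))^{\pm 1}]$. The paper's proof (Lemma~\ref{lem:seedandtest}) instead identifies $\ov{\mcc_n(k,\ell)}$ with a collection $\mcc({\bf f})$ coming from a maximal chain in the bridge order, extends it to a genuine maximal weakly separated collection $\mcc^\dagger$ (hence a cluster in $\bbc[\Gr(k,n)]$), invokes the Laurent phenomenon there to write each $\Delta_I$ as a Laurent polynomial in $\Delta(\mcc^\dagger)$, and then uses Lemma~\ref{lem:superfluous} to replace each superfluous variable of $\mcc^\dagger \setminus \ov{\mcc}$ by a positive scalar multiple of a variable in $\mcc_n(k,\ell)$ upon restriction to $\mcd$. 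That last substitution step, together with the Grassmannian Laurent phenomenon, is what actually yields the Laurent containment; neither appears in your argument.

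For the left-hand containment, your treatment of the non-special mutations agrees with the paper (they unfold to three-term Pl\"ucker relations). But the special mutation at $I_1$ is the entire content of the theorem, and your proposal explicitly defers it: you never construct the candidate regular function $\ov{x}_1'$, and you concede that ``verifying this identity on $\mcd$ is where the main technical work resides.'' The paper supplies exactly that missing work: it defines $L \in \bbc[\Gr(k,n)]_{(k-1)}$ via generalized cross products (equivalently, as a pullback of $\Delta_{I_1}$ under a Muller--Speyer twist map), proves the determinantal identity $\Delta_{I_1}L = \det(\Delta_{i,j+1,j+\ell,\dots,j+(k-2)\ell})_{i,j\in I_1}$, shows that on $\mcd$ this matrix is Toeplitz with entries linear in $\Delta_{I_2}$ and $\Delta_{I_{\ell+1}}$ (with coefficients $\eta_{[s]}$ realized as Pl\"ucker ratios of the TP $\rho$-fixed point, computed from Karp's sine formula), and evaluates the determinant by explicit row reduction to obtain $\sum_s \eta_s \Delta_{I_{\ell+1}}^s\Delta_{I_2}^{k-s}$. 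Your suggested route through the eigenbasis and the factorization of the exchange polynomial into linear factors is not obviously wrong as a heuristic, but note that in the eigenbasis a Pl\"ucker coordinate of $X\in\mcd$ is a Cauchy--Binet \emph{sum} of Vandermonde-weighted products, not a single product, so even the first step of your sketch needs repair. As written, the proposal is a plan rather than a proof.
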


\begin{conj}\label{conj:containments}
The strengthened inclusions 
\begin{equation}\label{eq:hardcontain}
\mca_n(k,\ell) \subset \bbc[\mcd_n(k,\ell)] \subset \mca^{\rm up}_n(k,\ell)
\end{equation}
hold for any $\ell$ and any $p \geq k$. 
\end{conj}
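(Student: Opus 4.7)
The plan is to upgrade Theorem~\ref{thm:onestep} by propagating its inclusions from the initial seed to every seed reachable by mutation. Let $\mathcal{S}$ denote the set of seeds $\Sigma$ of $\mca_n(k,\ell)$ satisfying both (a) every cluster variable of $\Sigma$ lies in $\bbc[\mcd]$, and (b) $\bbc[\mcd] \subseteq \bbc[\Delta(\mcc(\Sigma))^{\pm 1}]$. Theorem~\ref{thm:onestep}, after a mild strengthening from the lower bound algebra to the initial cluster itself, places $\Sigma_n(k,\ell) \in \mathcal{S}$. Since the exchange graph of $\mca_n(k,\ell)$ agrees with that of its right companion by the Cao--Li theorem cited in Section~\ref{subsecn:bgclusters}, once we verify that $\mathcal{S}$ is closed under mutation in every direction from every seed it contains, $\mathcal{S}$ will exhaust all seeds. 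Then (a) yields $\mca_n(k,\ell) \subseteq \bbc[\mcd]$, and (b) yields $\bbc[\mcd] \subseteq \mca^{\rm up}_n(k,\ell) = \bigcap_{\Sigma} \bbc[\Delta(\mcc(\Sigma))^{\pm 1}]$.

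Propagation across binomial mutations, i.e.\ directions $j$ with exchange degree $d_j=1$, should follow classical arguments. The exchange relation $x_j x_j' = M_j^+ + M_j^-$ is a standard FZ--exchange, and condition (a) for $\mu_j(\Sigma)$ reduces to showing $x_j'$ is regular on $\mcd$ via suitable three-term Pl\"ucker-type relations restricted to the cyclic symmetry locus, as in the case analysis used in Theorem~\ref{thm:onestep}. Normality of $\bbc[\mcd]$ (since $\mcd \cong (\bbp^{\ell'-1})^k$ is smooth when $p \geq k$) upgrades this: regularity of $x_j'$ follows from rationality together with integrality of $x_j'$ over the other cluster variables. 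Condition (b) for $\mu_j(\Sigma)$ is automatic because $\mu_j$ acts as an invertible birational change of coordinates on $\mcd$, so the Laurent ring is preserved.

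The main obstacle is propagation across the special (degree-$k$) mutation. The exchange polynomial $\sum_{s=0}^k \eta_s (M_j^+)^s (M_j^-)^{k-s}$ is nonstandard, but factors by Remark~\ref{rmk:qbinomialthm} as $\prod_{i=0}^{k-1}\bigl(M_j^+ - q^{i-(k-1)/2} M_j^-\bigr)$ with $q = e^{2\pi i /p}$. The approach is to leverage the $\rho^\ell$-eigenspace decomposition of Proposition~\ref{prop:components}, which realizes each Pl\"ucker coordinate on $\mcd$ as a product of $k$ linear forms (one from each projective factor of the Segre-like decomposition $\mcd \cong (\bbp^{\ell'-1})^k$), and to match each linear factor of the exchange polynomial with a concrete Pl\"ucker-like regular function on $\mcd$ built from the eigenbasis. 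This would present $x_j$ as a divisor of the product in $\bbc[\mcd]$, making $x_j'$ regular. The principal technical difficulty is that this factor-by-factor matching works cleanly for the initial seed but must be shown to remain compatible with arbitrary prior binomial and special mutations; controlling how the eigenbasis-aligned Segre structure transforms is the crux.

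If the direct geometric route proves unwieldy, a fallback is to invoke Conjecture~\ref{conj:quantumaffine} and realize $\bbc[\mcd]$ as a specialization of a quantum affine $\mathfrak{sl}_k$ cluster algebra at a $p$-th root of unity, identifying cluster variables with canonical basis elements that are regular by construction; combined with Nakanishi--Rupel-style companion arguments, this would give both inclusions in a single representation-theoretic stroke, at the cost of relying on a separate (also currently conjectural) identification.
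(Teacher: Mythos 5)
You are attempting to prove something the paper itself states only as Conjecture~\ref{conj:containments}: the paper proves just the weaker Theorem~\ref{thm:onestep} (regularity of the one-step mutations out of the initial seed, plus the Laurent property for the initial cluster only) together with special cases via the band-matrix quasi-homomorphism and via unfolding. Your proposal is a strategy outline rather than a proof, and you concede as much at the decisive point (compatibility of the factor-matching with arbitrary prior mutations is "the crux"). So there is a genuine gap, and it sits exactly where you place it.

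Beyond the acknowledged gap, several steps you treat as routine are precisely where the paper explains that the standard machinery breaks down. Condition (b) is not ``automatic'' under mutation: a regular function that is Laurent in one cluster need not be Laurent in the adjacent cluster, since $\mu_j$ is only a birational change of coordinates; proving $\bbc[\mcd] \subseteq \bbc[\Delta(\mcc(\Sigma))^{\pm1}]$ for \emph{every} seed is the entire content of the inclusion into the upper cluster algebra, and the usual route (upper bound equals upper cluster algebra under coprimality, i.e.\ the Starfish Lemma) requires that $x_j$ and $\mu_j(x_j)$ be coprime in $\bbc[\mcd]$ --- a condition the paper explicitly says it cannot verify because $\bbc[\mcd]$ is not a unique factorization domain. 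The same coprimality obstruction undermines your ``normality plus integrality'' upgrade for condition (a), which needs regularity of $x_j'$ away from a codimension-two locus. Your key geometric input is also incorrect as stated: a Pl\"ucker coordinate restricted to $\mcd \cong (\bbp^{\ell'-1})^k$ is a determinant in one eigenvector from each factor, hence a multilinear form of multidegree $(1,\dots,1)$ --- a \emph{sum} of Segre monomials, not a product of $k$ linear forms. Finally, the fallback via Conjecture~\ref{conj:quantumaffine} trades one open conjecture for another. For comparison, the paper's two (also incomplete) routes are: a quasi-homomorphism onto the Gekhtman--Shapiro--Vainshtein structure on periodic band matrices, whose upper cluster algebra is known to equal the coordinate ring but which hinges on an unproved isospectrality statement and requires $k<\ell$; and unfolding every mutation to a $\rho^\ell$-symmetric mutation sequence in $\bbc[\Gr(k,n)]$ (Conjecture~\ref{conj:uniformity}), whose well-definedness is verified only in finite (mutation) type.
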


We expect in fact that $\bbc[\mcd] = \mca^{\rm up}_n(k,\ell)$ always and that $\mca_n(k,\ell) \subsetneq \bbc[\mcd]$ typically. Example~\ref{eg:AnotU} establishes the proper containment 
$$\mca_8(4,2) \subsetneq \bbc[\mcd_8(4,2)] \subseteq \mca^{\rm up}_8(4,2).$$ 

In the subsequent sections, we discuss two different approaches to proving Conjecture~\ref{conj:containments}.  Our first approach is to compare with a generalized cluster algebra whose upper generalized cluster algebra is known. Specifically, we use a generalized cluster structure on a space of infnite, periodic, band matrices due to Gekhtman, Shapiro, and Vainshtein. We carry out this comparison in Section~\ref{secn:quantum}. A second approach is to compare the generalized cluster algebra $\mca_n(k,\ell)$ with its ``unfolding'' $\bbc[\Gr(k,n)]$. We carry out this approach in Section~\ref{secn:folding}. Using these methods, we can prove:
\begin{itemize}
\item In the finite cluster type cases, we have $\mca_n(k,\ell) =\bbc[\mcd]$. In the finite mutation type cases $\mcd_9(3,3)$, $\mcd_{12}(3,3)$, $\mcd_8(4,2)$, and $\mcd_{8}(4,3)$, we have \eqref{eq:hardcontain}. Both statements are proved by unfolding. 
\item When $k=3$, the containments \eqref{eq:hardcontain} hold 
for the versions of these algebras in which we localize at frozen variables. This is proved by comparison with band matrices. By filling in a gap (an {\sl isospectrality conjecture} which we have not yet proved), this strategy should work for arbitrary $k$ and arbitrary $\ell>k$.
\end{itemize}

\begin{rmk}
A third approach to proving the containments \eqref{eq:hardcontain}
is via the ``usual'' commutative algebra methods. One typically proves an inclusion of the (upper) cluster algebra in $\bbc[\mcd]$ via the Starfish Lemma \cite[Proposition 3.6]{tensors}.  
To apply this lemma, one must check that i) $\bbc[\mcd]$ is a normal domain,  ii) the inclusion $\mcl \subset \bbc[\mcd]$ holds, iii) all initial cluster variables $\Delta_{I_i}$ and $\Delta_{I_j}$ are coprime in $\bbc[\mcd]$, and iv) the variables $\Delta_{I_i}$ and $\mu_i(\Delta_{I_i})$ are coprime for each initial mutable variable. Assertion i) holds, and ii) holds by \eqref{eq:easycontain}. We are unable to check the coprimeness conditions iii) and iv). The main difficulty is that $\bbc[\mcd]$ is not a unique factorization domain. 

It is not true that every Pl\"ucker coordinate is a cluster variable. (This would be the easiset way of proving the reverse inclusion $\bbc[\mcd] \subset \mca^{\rm up}$). One typically uses an argument with upper bound algebras to prove the reverse inclusion \cite[Theorem 3.11]{GSVDouble}). One would need to check that these arguments go through after specializing the coefficient string variables $z_s \mapsto \eta_s$. 
\end{rmk}

\begin{lem}\label{lem:seedandtest}
The collection $\ov{\mcc}_n(k,\ell)$ is an efficient TP test for $\mcd$. %and the set of Pl\"ucker coordinates $\Delta(\mcc_n(k,\ell)) \subset \bbc(\mcd)$ is a transcendance basis.
\end{lem}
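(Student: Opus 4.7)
Plan. The strategy is to exhibit $\ov{\mcc}_n(k,\ell)$ as the union-of-necklaces collection $\mcc(\mathbf{f})$ attached by Theorem~\ref{thm:TPtests} to a saturated chain $\mathbf{f} = f_h \lessdot \cdots \lessdot f_0$ in $(\Bound_n(k,\ell'),\leq_b)$ with $f_h = \mathrm{id}_k$. Since $\mcm_{\mathrm{id}_k} = \binom{[n]}k$, Theorem~\ref{thm:TPtests} will then give an efficient $\rho^\ell$-invariant TP test for $\mcd \cap \Gr(\mcm_{\mathrm{id}_k}) = \mcd$, which is the content of the lemma.

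The count is immediate: a direct inspection of \eqref{eq:Ilist} shows that the representatives $I_1, \ldots, I_{N+\ell'}$ lie in pairwise distinct $\rho^\ell$-orbits, so $\ov{\mcc}_n(k,\ell)$ consists of exactly $N + \ell' = (k-1)(\ell'-1) + \ell' = k(\ell'-1)+1$ many $\rho^\ell$-orbits. By Lemma/Definition~\ref{defn:distinguished} and the hypothesis $p \geq k$, we have $\dim \mcd = k(\ell'-1)$, so the orbit count matches $\dim \mcd + 1$, which is the cardinality required by efficiency. By Theorem~\ref{thm:titsmatsumoto}, this number is also one more than the length of a maximum chain in $(\Bound_n(k,\ell'),\leq_b)$, consistent with what the chain construction in Theorem~\ref{thm:TPtests} would produce.

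To construct the chain, select $f_0$ to be the maximal element from Proposition~\ref{prop:maximalelts} whose Grassmann necklace has $\rho^\ell$-orbit equal to the orbit of $I_1$: namely $f_0 = t_{\{1\}}$ when $p=k$ (so $\rmci_{f_0}$ is constantly $I_1$) and $f_0 = t_{\emptyset, 1}$ when $p > k$ (so $\rmci_{f_0}$ traverses the entire orbit of $I_1 = \{1, 1+\ell', \ldots, 1+(k-1)\ell'\}$, which is exactly the set of cyclic $k$-intervals inside the support). Build the chain downward from $f_0$ by successive bridge covers, chosen so that each step appends exactly one new $\rho^\ell$-orbit of Pl\"ucker subsets; by the time one reaches $f_h = \mathrm{id}_k$, the union $\cup_i \rmci_{f_i}$ covers all $N+\ell'$ orbits of $\ov{\mcc}_n(k,\ell)$: the $\ell'$ frozen orbits (cyclic intervals) appear in $\rmci_{f_h}$ itself, while the $N$ mutable orbits of $I_2, \ldots, I_{N+1}$ each appear in some intermediate $\rmci_{f_i}$. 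Because consecutive $I_j, I_{j+1}$ in \eqref{eq:Ilist} differ by a single-element swap, the necklace recursion $I_{j+1}' = I_j' \setminus \{j\} \cup \{f(j) \bmod n\}$ prescribes a unique candidate reflection $t_{a, b+s\ell'}$ at each step.

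The main obstacle is verifying that each prescribed reflection is genuinely a \emph{bridge} cover in the sense of Definition~\ref{defn:leapweak}, i.e.\ that every intermediate index $c \in (a, b+s\ell')$ satisfies $f(c) \in \{c, c+n\}$. This amounts to tracking, along the chain, which columns of a matrix representative are zero columns and which are ``full'' columns (Remark~\ref{rmk:goodcontrol}); the block structure of \eqref{eq:Ilist} controls this, but the bookkeeping is notationally heavy. Once the chain is verified, Theorem~\ref{thm:TPtests} immediately concludes that $\mcc(\mathbf{f}) = \ov{\mcc}_n(k,\ell)$ is an efficient TP test for $\mcd$.
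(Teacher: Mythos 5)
Your reduction is the same as the paper's: realize $\ov{\mcc_n(k,\ell)}$ as $\mcc({\bf f})$ for a maximal chain ${\bf f}$ in $(\Bound_n(k,\ell'),\leq_b)$ ending at ${\rm id}_k$, then invoke Theorem~\ref{thm:TPtests}. Your choice of top element $f_0$ (window notation $[1+k\ell',2,\dots,\ell']$, i.e.\ $t_{\{1\}}$ or $t_{\emptyset,1}$ according to whether $p=k$ or $p>k$) and your orbit count $N+\ell'=k(\ell'-1)+1=\dim\mcd+1$ both match the paper. However, the step you label ``the main obstacle'' and leave undone --- verifying that the reflections prescribed by the necklace recursion really are bridge covers, and that the resulting chain sweeps out exactly the orbits of $I_1,\dots,I_{N+\ell'}$ --- is the entire substance of the proof once the reduction is made, so as written your argument has a genuine gap.

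The gap closes at once if you specify the chain directly instead of reverse-engineering it from the necklaces. The paper takes ${\bf f} = f_0 \gtrdot f_0s_1 \gtrdot \cdots \gtrdot f_0 s_1\cdots s_{N+\ell'-1} = {\rm id}_k$: one sorts the value $1+k\ell'$, initially in window position $1$ of $f_0$, rightward one position at a time until it reaches position $1+n$; by $\ell'$-periodicity this takes $k(\ell'-1)=N+\ell'-1$ adjacent swaps. Every cover in this chain is by a simple transposition $t_{i,i+1}$, and for such a reflection the bridge condition of Definition~\ref{defn:leapweak} is vacuous, since the open interval $(i,i+1)$ contains no integers. So the column-by-column bookkeeping you were dreading never arises. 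What remains is only the routine check that the $i$th step changes the Grassmann necklace by inserting the orbit of $I_i$, which follows from the recursion $I_{i+1}=I_i\setminus i \cup\{f(i)\bmod n\}$ together with the explicit list \eqref{eq:Ilist}. I would replace your ``unique candidate reflection $t_{a,b+s\ell'}$'' framing with this explicit chain of simple transpositions; otherwise you are left proving a bridge condition for general reflections that the correct chain never uses.
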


Thus, the notion of positivity coming from the cluster structure $\mca_n(k,\ell)$ (positivity of cluster variables) coincides with the notion of total positivity (positivity of Pl\"ucker coordinates).

\begin{rmk}\label{rmk:doitforpositroidvarieties}
Our construction of seeds makes sense for the $\rho^\ell$-fixed points in any positroid variety (i.e., not only for the top-dimensional positroid variety $\Gr(k,n)$). It seems natural to expect analogous containments of algebras  in this setting, but we have not checked this is in any examples. 
\end{rmk}

The most nontrivial assertion in Theorem~\ref{thm:onestep} is the {\sl regularity } of neighboring variables, i.e. the inclusion $\mcl_n(k,\ell) \subset \bbc[\mcd]$. We prove this at the end of this section. For the special variable $I_1$, it requires proving that the following polynomial in Pl\"ucker coordinates
%\begin{equation}\label{eq:CSreln}
%\sum_{s=0}^k c_s\Delta(I_2)^s\Delta(I_{\ell+1})^{k-s} = \sum_{s=0}^k \left(\prod_{j=1}^{s} \frac{\sin \frac{(k+1-j) \pi}{p}}{\sin \frac{j\pi}{p}}\right)\Delta_{2,\ell+1,\dots,(k-1)\ell+1}^s\Delta_{\ell+1,\ell+2,2\ell+1,\dots,(k-1)\ell+1}^{k-s}
%\end{equation}
\begin{equation}\label{eq:CSreln}
\displaystyle \sum_{s=0}^k \,  {k \brack s}_{q = \exp(\frac{2\pi i}{p})}\, \Delta_{I_2}^s\, \Delta_{I_{\ell+1}}^{k-s}  \in \bbc[\mcd_{p\ell}(k,\ell)]_{(k)}
\end{equation}
is divisible by $\Delta_{I_1}$.
%$\Delta_{I_1}% = \Delta_{1,\ell+1,\dots,%(k-1)\ell+1}$ 
We refer to the resulting algebraic identity as the {\sl $\SL_k$-higher 
orbifold Ptolemy relation}, in the spirit of \cite{ChekShap,FGMod}.

\begin{example}\label{eg:orbifoldexamples}
The right hand sides of the $\SL_2$, $\SL_3$, and $\SL_4$ orbifold-Ptolemy relations describing mutation out of the initial seed are 
\begin{align*}
&\Delta_{I_2}^2+\frac{\sin \frac{2 \pi}{p}}{\frac{\sin \pi}{p}}\Delta_{I_2}\Delta_{I_{\ell+1}}+ \Delta_{I_{\ell+1}}^2, \\
&\Delta_{I_2}^3+\, 
\frac{\sin \frac{3\pi}p}{\sin \frac \pi p}\Delta_{I_2}^2\Delta_{I_{\ell+1}}
+ \, \frac{\sin \frac{3\pi}p}{\sin \frac \pi p}\Delta_{I_2}\Delta_{I_{\ell+1}}^2
+\, \Delta_{I_{\ell+1}}^3, \\
&\Delta_{I_2}^4+\, 
\frac{\sin \frac{4\pi}p}{\sin \frac \pi p}\Delta_{I_2}^3\Delta_{I_{\ell+1}}
+\, \frac{\sin \frac{4\pi}p}{\sin \frac \pi p}\frac{\sin \frac{3\pi}p}{\sin \frac {2\pi} p}\Delta_{I_2}^2\Delta_{I_{\ell+1}}^2
+\, \frac{\sin \frac{4\pi}p}{\sin \frac \pi p}\Delta_{I_2}\Delta_{I_{\ell+1}}^3
+\, \Delta_{I_{\ell+1}}^4.
\end{align*}
In the $\SL_2$ case, the variables on the left hand side are $I_1 = \Delta_{1,\ell+1}$ and $\Delta_{2,\ell+2}$. In the $\SL_3$ case, $I_1 = \Delta_{1,\ell+1,2\ell+1}$ and the neighboring variable is a quadratic expresion in Pl\"ucker coordinates $\Delta_{2,\ell+2,2\ell+1}\Delta_{\ell+1,2\ell+2,3\ell+1}-\Delta_{\ell+1,\ell+2,2\ell+1}\Delta_{2,2\ell+2,3\ell+1}$, the simplest non-Pl\"ucker cluster variable. In the $\SL_4$ case, $I_1 = \Delta_{1,\ell+1,2\ell+1,3\ell+1}$, and the neighboring variable is a cubic expression in Pl\"ucker coordinates. For example, when $\ell=2$, this expression is $$\Delta_{2457}(\Delta_{3679}\Delta_{589,11}-\Delta_{5679}\Delta_{389,11})-\Delta_{3457}(\Delta_{2679}\Delta_{589,11}-\Delta_{5679}\Delta_{289,11}).$$
\end{example}

Now we prove Lemma~\ref{lem:seedandtest} and the right inclusion from \eqref{eq:easycontain}. 

\begin{proof}[Proof of Lemma~\ref{lem:seedandtest}]
First we argue that $\ov{\mcc_n(k,\ell)}$ is of the form $\mcc({\bf f})$ as in \eqref{eq:chaintocup}, for the maximal chain
\begin{equation}\label{eq:whichchain}
{\bf f} = f_0 \gtrdot f_0s_1 \gtrdot f_0s_1s_2 \cdots \gtrdot f_0s_1 \cdots s_{N+\ell-1} = {\rm id}_k \in \Bound_n(k,\ell),
\end{equation}
where $f_0 \in \Bound_n(k,\ell)$ is a certain maximal element, and $s_i \in \tilde{S}^0_\ell$ denote simple transpositions. More specifically, $f_0$ has window notation $[1+k\ell,2,3,\dots,\ell]$, which is $t_1$ in the notation of Definition~\ref{defn:maximalelements}. The corresponding Grassmann necklace $\rmci_{f_0}$ consists of $I_1$ from Definition~\ref{defn:initPluckers} and its $\rho^\ell$ shifts. 

Performing the sequence of covers above has the effect of moving the value $1+k\ell$, which is initially in the first position of the window notation, rightwards until it is in position $1+n$. By $\ell$-periodicity, when we move $k\ell+1$ rightwards, we simultaneously move $(k-1)\ell+1,(k-2)\ell+1$, etc., rightwards. Altogether, we move the value $k\ell+1$ a total of $k$ windows rightwards. And in each window, this requires $\ell-1$ swaps, for a total of $k(\ell-1) = N+\ell-1$ swaps. Straightforward bookkeeping shows that the $i$th element in the Grassmann necklace $I_i \in \rmci_{f_i} \setminus \rmci_{f_{i-1}}$ where $I_i$ is as in Definition~\ref{defn:initPluckers}. So $\ov{\mcc_n(k,\ell)} = \mcc({\bf f})$.

Let $\mcc^\dagger$ be an extension of $\ov{\mcc_{n}(k,\ell)}$ to a maximal weakly separated collection. So $\Delta(\mcc^\dagger)$ is a cluster in $\bbc[\Gr(k,n)]$. By Lemma~\ref{lem:superfluous}, each element of $\mcc^\dagger$ is linearly related to an element of $\mcc_{n}(k,\ell)$ inside $\bbc[\mcd_n(k,\ell)]$. By the Laurent phenomenon for $\bbc[\Gr(k,n)]$ we can express $\Delta_I$ as a Laurent polynomial in $\Delta(\mcc')$, hence as a Laurent polynomial in the elements of $\Delta(\mcc_n(k,\ell))$ once we restrict functions to $\mcd$. 
%If ${\bf f'}$ is a different maximal chain ending at $f_0$, then by Theorem~\ref{thm:TPtests}, the collections $\mcc({\bf f})$ and $\mcc({\bf f'})$ are related by a finite sequence of $\rho^\ell$-symmetrical square moves (and these square moves do not take place at $I_1$, since $I_1 \in \mcc({\bf f})\cap \mcc({\bf f'})$. It is well known that performing a square move amounts to a mutation in $\bbc[\Gr(k,n)]$, and a sequence of (commuting) $\rho^\ell$-symmetrical square moves therefore folds to a mutation in $\Sigma_n(k,\ell)$. 
\end{proof}

Next, we establish the left inclusion $\mcl_n(k,\ell) \subset \bbc[\mcd_n(k,\ell)]$ of \eqref{eq:easycontain}. That is, we show that each mutation out of the initial cluster yields a regular function on $\mcd_n(k,\ell)$. The exchange relations for $\Delta_{I_i}$ when $i \geq 2$ unfold to exchange relations in $\bbc[\Gr(k,n)]$. Each of these relations is a three-term Pl\"ucker relation. Thus, 
$\mu_i(\Delta_{I_i})$ will be a Pl\"ucker coordinate, and therefore $\mu_i(\Delta_{I_i}) \in \bbc[\mcd_n(k,\ell)]$. So we immediately reduce to checking that the CS-exchange relation \eqref{eq:CSreln} is divisible by $\Delta_{I_1}$ inside $\bbc[\mcd]$. The next several results work up to a proof of this. We conclude this section with several examples illustrating the various steps.

\begin{defn}\label{defn:Weylsidentity}
Let $n = p\ell$ with $p \geq k$. Fix the standard Hermitian inner product $( ,)$ on $\bbc^k$ in which the standard basis vectors $e_1,\dots,e_k$ are orthonormal. The {\sl generalized cross product} of vectors $v_1,\dots,v_{k-1} \in \bbc^k$ is the unique vector $v_1 \times v_2 \times \cdots \times v_{k-1} \in \bbc^k$ satisfying 
$(v,v_1 \times \cdots \times v_{k-1}) = \det(v,v_1,\dots,v_{k-1})$ for all $v \in \bbc^k$. Suppose that $X \in \tGr(k,n)$ is represented by a $k \times n$ matrix with column vectors $v_1,\dots,v_n$. For $i \equiv 1 \mod \ell$, set $v_i':= v_{i+1} \times v_{i+\ell} \times v_{i+2\ell} \times \cdots \times v_{i+(k-2)\ell}$. We obtain a function 
\begin{equation}\label{eq:L}
X = (v_1,\dots,v_n) \overset{L}{\mapsto} \det(v_1',v'_{1+\ell},\cdots,v'_{1+(k-1) \ell}) \in \bbc[\Gr(k,n)].\end{equation}
\end{defn}

It is possible to give an explicit expression for the function \eqref{eq:L}
as a homogeneous polynomial of degree $k-1$ in the Pl\"ucker coordinates of $X$. So the function \eqref{eq:L} indeed lies in $\bbc[\Gr(k,n)]$ as claimed. For $a \in \bbn$ let $\bbc[\Gr(k,n)]_{(a)}$ denote the subspace spanned by degree $a$ monomials in Pl\"ucker coordinates. Recall $I_1 \in \binom{[n]}k$ as in Definition~\ref{defn:initPluckers}.

\begin{lem}\label{lem:Weylsidentity}
There exists an element $L  \in \bbc[\Gr(k,n)]_{(k-1)}$ satisfying 
\begin{equation}\label{eq:Weylsidentity}
\Delta_{I_1} L = \det(\Delta_{i,j+1,j+\ell,j+2\ell,\dots,j+(k-2)\ell})_{i,j \in I_1} \in \bbc[\Gr(k,n)].
\end{equation}
Specifically, $L$ is the function \eqref{eq:L}.
\end{lem}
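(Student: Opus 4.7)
My plan is to prove \eqref{eq:Weylsidentity} by exhibiting the matrix on the right hand side as a product of two $k \times k$ matrices whose determinants can be identified with $\Delta_{I_1}$ and $L$ respectively. The key observation is that the generalized cross product directly encodes the $k\times k$ determinants appearing in the right hand side.

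More precisely: for $i, j \in I_1$, the Pl\"ucker coordinate $\Delta_{i, j+1, j+\ell, \dots, j+(k-2)\ell}(X)$ is exactly $\det[\, v_i \mid v_{j+1} \mid v_{j+\ell} \mid \cdots \mid v_{j+(k-2)\ell}\,]$. By the defining property of the generalized cross product, this determinant equals $(v_i, v'_j)$ where $v'_j = v_{j+1} \times v_{j+\ell} \times \cdots \times v_{j+(k-2)\ell}$ as in Definition~\ref{defn:Weylsidentity} (note that for each $j \in I_1$, we indeed have $j \equiv 1 \pmod \ell$, so $v'_j$ is defined). First I would package this into the matrix identity $A = V^T V'$, where $V$ is the $k \times k$ matrix with columns $\{v_i\}_{i \in I_1}$, $V'$ is the $k \times k$ matrix with columns $\{v'_j\}_{j \in I_1}$, and $A$ is the matrix on the right of \eqref{eq:Weylsidentity}. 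Taking determinants yields $\det A = \det V \cdot \det V' = \Delta_{I_1}(X) \cdot L(X)$, which is the desired identity.

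The remaining step is to check that $L = \det V'$, which is visibly a polynomial of degree $k(k-1)$ in the entries of the matrix representative $M$, descends to a well-defined function in $\bbc[\Gr(k,n)]_{(k-1)}$. For this, I would verify directly how $L$ transforms under left multiplication $M \mapsto gM$ by $g \in \GL_k$: a short computation with the defining property of the cross product gives $g v_1 \times \cdots \times g v_{k-1} = \det(g)\, g^{-T}(v_1 \times \cdots \times v_{k-1})$, hence each $v'_j \mapsto \det(g)\, g^{-T} v'_j$ and $V' \mapsto \det(g)\, g^{-T} V'$, so $\det V' \mapsto \det(g)^{k-1} \det V'$. Thus $L$ is $\SL_k$-invariant and transforms with weight $k-1$ under the $\GL_k$-action, so by the first fundamental theorem of invariant theory it is a polynomial of degree $k-1$ in the maximal minors of $M$, i.e., in the Pl\"ucker coordinates of $X$. (As a consistency check, this matches the fact that $\det A \in \bbc[\Gr(k,n)]_{(k)}$ and $\Delta_{I_1} \in \bbc[\Gr(k,n)]_{(1)}$.)

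I do not expect a major obstacle: the matrix factorization $A = V^T V'$ is immediate from the definition of the generalized cross product, and the invariant-theoretic verification that $L$ lies in $\bbc[\Gr(k,n)]_{(k-1)}$ is routine. The only point requiring any care is tracking signs and verifying that the column indices $\{j+1, j+\ell, \dots, j+(k-2)\ell\}$ appearing in the Pl\"ucker coordinate $\Delta_{i, j+1, j+\ell, \dots, j+(k-2)\ell}$ line up with the factors of the generalized cross product defining $v'_j$, which they do by construction.
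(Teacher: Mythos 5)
Your proof is correct and is essentially the paper's argument: the identity is exactly the factorization of the matrix $(\Delta_{i,j+1,j+\ell,\dots,j+(k-2)\ell})_{i,j\in I_1}$ as $V^TV'$ with $V$ the matrix of columns $(v_i)_{i\in I_1}$ and $V'$ the matrix of cross products $(v'_j)_{j\in I_1}$, i.e.\ ``the product of two determinants is the determinant of the product.'' Your invariant-theoretic verification that $\det V'$ descends to an element of $\bbc[\Gr(k,n)]_{(k-1)}$ supplies the detail the paper only asserts in the remark preceding the lemma, and is a welcome addition.
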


\begin{proof} In light of Definition~\ref{defn:Weylsidentity}, this is the assertion that the product of two determinants is the determinant of the product. 
\end{proof}

\begin{rmk}\label{rmk:twistmap}
Recall from the proof of Lemma~\ref{lem:seedandtest} that 
$\ov{\mcc_n(k,\ell)}$ is of the form $\mcc({\bf f})$ as in \eqref{eq:chaintocup}. Consider the necklace $\rmci_{f_{\ell+1}}$ which that the property that $I_2,I_{\ell+1} \in \rmci_{f_{\ell+1}}$ where 
$I_i$ are as defined in Definition~\ref{defn:initPluckers}.

Muller and Speyer associated to {\sl any} $f \in \Bound_n(k,n)$ a {\sl twist automorphism} of the positroid variety labeled by $f$. Although Muller and Speyer studied this map as an automorphism of the positroid variety $\Gr(\mcm_f)$, the definition makes sense as a rational endomorphism of $\Gr(k,n)$. One checks that the regular function $L$ from \eqref{eq:L} is the pullback of $\Delta_{I_1}$ along the right twist map associated to $f_{i+1}$.  The identity \eqref{eq:Weylsidentity} is an instance of \cite[Lemma 6.5]{MSTwist}.

We make the guess that there is a similar story for each of the clusters $\mcc({\bf f})$: mutation at $I_1$ should correspond to pulling back $\Delta_{I_1}$ along an appropriate Grassmann necklace $\rmci_{f_i}$ with $f_i \in {\bf f}$.  %This seems consonant with the assertion in Remark~\ref{rmk:methodcanwork}.
\end{rmk}

The remainder of this section is devoted to establishing that the determinantal identity \eqref{eq:Weylsidentity} becomes the CS-relation \eqref{eq:CSreln} once we restrict functions to the distinguished component.

The first observation is immediate. 
\begin{lem}\label{lem:Toep}
When viewed in $\bbc[\Gr(k,n)]^{\rho^\ell}$, the matrix on the right hand side of \eqref{eq:Weylsidentity} is a Toeplitz matrix. The first column of this Toeplitz matrix is $(\Delta_{I_2},0,\dots,\Delta_{I_{\ell+1}})$. The $s$th entry in the first row is the Pl\"ucker coordinate $\Delta_{1,2+(s-1)\ell,1+(s+1)\ell,\dots,1+(s+k-3)\ell}$. 
%Note in particular that the first entry is $J$. 
%The $s$th entry in the first column is $\Delta_{(s-1)\ell+1,2,1+\ell,1+2\ell,\dots,1+(k-2)\ell}$. Upon simpligying, this column is $J,0,\dots,0,K$. 
\end{lem}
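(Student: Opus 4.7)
The plan is to verify each claim by direct inspection of matrix entries, relying on $\rho^\ell$-invariance of Pl\"ucker coordinates on the distinguished component (where the character $\zeta$ appearing in \eqref{eq:character} is trivial). First I will reindex rows and columns of the matrix by $a, b \in \{0, 1, \ldots, k-1\}$ via $i = 1 + a\ell$ and $j = 1 + b\ell$, so the $(a, b)$-entry becomes $\Delta_{S(a,b)}$ with
\[
S(a,b) := \{1+a\ell\} \cup \{2+b\ell,\, 1+(b+1)\ell,\, \ldots,\, 1+(b+k-2)\ell\},
\]
sorted into an element of $\binom{[n]}{k}$.

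The Toeplitz property is immediate once I observe that $S(a+1, b+1) = \rho^\ell(S(a,b))$, so $\rho^\ell$-invariance on $\mcd$ gives $M_{a,b} = M_{a+1, b+1}$, and each entry depends only on $a - b$. For the first column I fix $b = 0$: the set $S(a, 0) = \{1+a\ell\} \cup \{2, 1+\ell, 1+2\ell, \ldots, 1+(k-2)\ell\}$ contains a repeated element precisely when $a \in \{1, 2, \ldots, k-2\}$, so $M_{a, 0} = 0$ in those middle cases. Only the endpoints survive: sorting $S(k-1, 0)$ yields $I_2$ exactly, giving $M_{k-1, 0} = \Delta_{I_2}$, while the remaining endpoint $S(0, 0) = \{1, 2, 1+\ell, \ldots, 1+(k-2)\ell\}$ must be identified with $I_{\ell+1}$ on $\mcd$. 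For the first row I fix $a = 0$ and vary $b$: reading off $S(0, b) = \{1,\, 2+b\ell,\, 1+(b+1)\ell,\, \ldots,\, 1+(b+k-2)\ell\}$ directly recovers the stated formula.

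The main obstacle is the identification $\Delta_{\{1, 2, 1+\ell, \ldots, 1+(k-2)\ell\}} = \Delta_{I_{\ell+1}}$ in $\bbc[\mcd]$: in some parameter ranges the two subsets lie in a common $\rho^\ell$-orbit inside $\binom{[n]}{k}$ and the identification is immediate from invariance, but in general one must also invoke further Pl\"ucker-type relations available on $\mcd$ (which is cut out from $\Gr(k,n)$ as a Richardson variety in the $\rho^\ell$-eigenbasis, and so carries substantially more linear relations among Pl\"ucker coordinates than the character equations alone). Alongside this, the signs arising from sorting columns within each determinant entry must be tracked, but these do not alter the structural shape described by the lemma and amount to routine bookkeeping.
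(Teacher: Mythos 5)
The paper offers no argument for this lemma at all --- it is introduced with ``The first observation is immediate,'' and the subsection titled ``Proof of Lemma~\ref{lem:Toep}'' in Section~\ref{secn:proofToep} actually proves the determinant evaluation of Lemma~\ref{lem:principalminors}. Your direct index computation (reindex by $a,b\in[0,k-1]$, observe $S(a+1,b+1)=\rho^\ell(S(a,b))$, read off the first row and column) is exactly the verification the authors are suppressing, and it is correct; your computed answers agree with the worked $\SL_2$ and $\SL_3$ examples in Section~\ref{secn:clusters}. (Those examples also show that the literal statement of the lemma reverses the order of the first column --- the $(1,1)$ entry is $\Delta_{I_{\ell+1}}$ and the $(k,1)$ entry is $\pm\Delta_{I_2}$, as you found --- and omits the index $1+s\ell$ from the first-row formula; your $S(0,b)$ is the corrected version, consistent with Lemma~\ref{lem:firstlinear}.)

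The one genuine loose end is the step you defer as ``the main obstacle'': the identification $\Delta_{\{1,2,1+\ell,\dots,1+(k-2)\ell\}}=\Delta_{I_{\ell+1}}$. You suggest this may require Pl\"ucker-type relations beyond the character equations, and you do not close it. In fact nothing extra is needed: applying $\rho^\ell$ to $\{1,2,1+\ell,\dots,1+(k-2)\ell\}$ adds $\ell$ to every element and produces $\{1+\ell,2+\ell,1+2\ell,\dots,1+(k-1)\ell\}$, which \emph{is} the set $I_{\ell+1}$ for every choice of parameters (check against $I_5=\{5,6,9\}$ in Figure~\ref{fig:onestep}, or $I_3=\{3,4,5\}$ in the $\SL_3$ example). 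So the identification holds already in $\bbc[\Gr(k,n)]^{\rho^\ell}$ --- the ring in which the lemma is stated --- by the same invariance you used for the Toeplitz property; invoking relations special to $\mcd$ would in any case prove a weaker statement than the lemma claims. With that one line supplied, and the sign $(-1)^{k-1}$ from reordering the bottom-left entry tracked as you indicate (the paper itself records it as $-K$ in the $k=2$ example), your argument is complete.
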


Our next short term goal is to simplify the first row of this Toeplitz matrix. This will take some preparations. 

For a set $S$ of positive integers, let $T_S:= \{1,1+\ell,\dots,1+(k+|S|-1)\ell\} \setminus \{1+s\ell \colon s \in S\}$. Thus, $T_S$ consists of the first several numbers equivalent to 1 modulo $\ell$, omitting certain ``gaps'' which are encoded by~$S$. As a special case, note that $T_\emptyset = I_1$.

\begin{defn}
We denote by $\eta_S := \frac{\Delta_{T_S}}{\Delta_{T_\emptyset}} \in \bbc[\mcd]$. By the argument in Lemma~\ref{lem:superfluous},  $\eta_S$ is a positive real number. \end{defn}

\begin{lem}When $S = \{s\}$ is a singleton, then the number $\eta_s$ is given in 
\eqref{eq:trigcoeffs}. \end{lem}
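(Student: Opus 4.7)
The plan is to evaluate $\eta_{\{s\}} = \Delta_{T_{\{s\}}}/\Delta_{I_1}$ at one explicitly known $\rho^\ell$-fixed point, and to recognize the answer via the $q$-binomial theorem. Following the strategy in Lemma~\ref{lem:superfluous}, the rational projection $P \colon \mcd \to \Gr(k,p)^{\rho}$ retaining only the columns indexed by $\{1, 1+\ell, \dots, 1+(p-1)\ell\}$ has constant image $\{Y_0\}$, where $Y_0$ is the unique TP $\rho$-fixed point in $\Gr(k,p)$. Assuming $p > k$ (the borderline case $p = k$ is degenerate: $T_{\{s\}}$ has a wraparound collision, so $\Delta_{T_{\{s\}}}$ vanishes on $\mcd$, matching $\binom{k}{s}_q = 0$ at a primitive $k$-th root of unity), the sets $I_1$ and $T_{\{s\}}$ project under $P$ to $[k]$ and $[k+1]\setminus\{s+1\}$ respectively, so $\eta_{\{s\}} = \Delta_{[k+1]\setminus\{s+1\}}(Y_0)/\Delta_{[k]}(Y_0)$.

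I would next combine Karp's explicit description of $Y_0$ with the bialternant formula for Schur polynomials. The subspace $Y_0$ is spanned by the $k$ eigenvectors of $\rho \in \GL_p$ whose eigenvalues are the $p$-th roots of $(-1)^{k-1}$ closest to $1$ on the unit circle (as recalled in the proof of Lemma/Definition~\ref{defn:distinguished}); a uniform description valid for both parities of $k$ is $\lambda_a = q^{a-(k-1)/2}$ for $a = 0, 1, \dots, k-1$, with $q = e^{2\pi i/p}$. Taking the Vandermonde matrix whose $a$-th row is $(1, \lambda_a, \dots, \lambda_a^{p-1})$ as matrix representative of $Y_0$, the bialternant formula $s_\mu = a_{\mu+\delta}/a_\delta$ with $\delta = (k-1,\dots,0)$ gives $\Delta_J(Y_0)/\Delta_{[k]}(Y_0) = s_\mu(\lambda_0, \dots, \lambda_{k-1})$, where $\mu$ is the partition obtained from $J$ by subtracting off the staircase $\delta$. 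A direct check shows $\mu = (1^{k-s})$ when $J = [k+1]\setminus\{s+1\}$, so $s_\mu = e_{k-s}$ and $\eta_{\{s\}} = e_{k-s}(\lambda_0, \dots, \lambda_{k-1})$.

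Finally, applying the $q$-binomial theorem to $\prod_{a=0}^{k-1}(1+tq^{a-(k-1)/2}) = \sum_{m}e_m(\lambda)\,t^m$ (cf.~the factorization already noted in Remark~\ref{rmk:qbinomialthm}), one identifies $e_m(\lambda_0,\dots,\lambda_{k-1}) = \binom{k}{m}_q$ in the symmetric convention used in the excerpt: the $q^{\binom{m}{2}}$ factor that appears in the asymmetric version of the $q$-binomial theorem is cancelled by the $q^{-m(k-1)/2}$ factor coming from pulling out $q^{-(k-1)/2}$ from each $\lambda_a$. Setting $m = k-s$ and using the palindrome symmetry $\binom{k}{k-s}_q = \binom{k}{s}_q$ gives $\eta_{\{s\}} = \binom{k}{s}_q = \eta_s$, as asserted. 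The main obstacle is purely bookkeeping: uniformly tracking the eigenvalue enumeration across the two parities of $k$ (the sign $(-1)^{k-1}$ in the cyclic shift matrix is exactly what makes the uniform formula for $\lambda_a$ work), and matching the symmetric and asymmetric $q$-binomial conventions; neither step is conceptually deep.
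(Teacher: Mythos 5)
Your argument is correct, and it shares only its first step with the paper's: both proofs project onto the columns congruent to $1$ modulo $\ell$ and reduce to computing $\Delta_{[k+1]\setminus\{s+1\}}(Y_0)/\Delta_{[k]}(Y_0)$ at the unique TP $\rho$-fixed point $Y_0\in\Gr(k,p)^{\rho}_{>0}$, exactly as in Lemma~\ref{lem:superfluous}. From there the routes diverge. The paper substitutes Karp's closed sine-product formula for the Pl\"ucker coordinates of $Y_0$ and cancels the ``gaps'' common to the two index sets, landing on the trigonometric form \eqref{eq:triggercoeffs} of $\eta_s$ and relying on the separately stated identity between \eqref{eq:trigcoeffs} and \eqref{eq:triggercoeffs}. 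You instead use Karp's eigenvector description of $Y_0$, take the generalized Vandermonde matrix in the eigenvalues $\lambda_a=q^{a-(k-1)/2}$ as a representative, and apply the bialternant formula to identify the ratio with $s_{(1^{k-s})}(\lambda)=e_{k-s}(\lambda)$, which the $q$-binomial theorem converts to ${k \brack s}_q$ in the symmetric normalization (your bookkeeping here checks out: the exponent $-m(k-1)/2+\binom{m}{2}=-m(k-m)/2$ is exactly the shift between the Gauss and symmetric conventions). Your route lands directly on the $q$-binomial form \eqref{eq:trigcoeffs} and explains conceptually why a $q$-binomial appears at all --- it is an elementary symmetric function of the $\rho$-eigenvalues spanning $Y_0$, consistent with the factorization in Remark~\ref{rmk:qbinomialthm} --- whereas the paper's computation is shorter but purely formulaic. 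Your explicit treatment of the boundary case $p=k$ (where $T_{\{s\}}$ wraps around, the Pl\"ucker coordinate vanishes, and ${k\brack s}_q=0$ at a primitive $k$-th root of unity) addresses a degeneracy the paper's proof passes over silently.
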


That is, we have not overloaded our notation. 

\begin{proof}
Karp \cite[Theorem 1]{Karp} gives a formula for the Pl\"ucker coordinates of the unique point $X_0 \in \Gr(k,r)^{\rho}_{>0}$.
$$\Delta_{i_1,\dots,i_k}(X_0) = \prod_{j < s}\sin \frac{(i_s-i_j)\pi}{n}.$$ The right hand side of this formula  only depends on the multiset of ``gaps'' between numbers in~$I$. Using this formula, we can check that the ratio $\frac{\Delta_{T_s}}{\Delta_{T_0}}$ is given by the formula \eqref{eq:trigcoeffs}. After projecting onto columns $1,\ell+1,\dots$, the numerator of this ratio depends on the multiset of gaps in $[0,k] \setminus s$ while the denominator corresponds to the multiset of gaps in $[0,k-1]$. We may ignore the contributions of pairs taken from $[0,k-1] \setminus s$, since both numerator and denominator has these. The gaps for the numerator, then, have size $k,k-1,\dots,k-s+1,k-s-1,\dots,1$, whereas the gaps for the denominator have size $s,s-1,\dots,1,1,\dots,k-s-1$, establishing the formula. \end{proof}

Our next lemma expresses the first row of the Toeplitz matrix in Lemma~\ref{lem:Toep} in terms of $\Delta_{I_2}$ and $\Delta_{I_{\ell+1}}$.

\begin{lem}\label{lem:firstlinear}
In  $\bbc[\Gr(k,r \ell)]^{\rho^\ell}$, we have the following linear relation: 
\begin{equation}\label{eq:firstlinear}
\Delta_{1,2+(s-1)\ell,1+s\ell,1+(s+1)\ell,\dots,(s+k-3)\ell} = \eta_{[s-2]}\Delta_{I_{2}}+\eta_{[s-1]}\Delta_{I_{\ell+1}}.
\end{equation}
We also have the following equality of positive real number: 
\begin{equation}\label{eq:straightenrs}
\eta_j\eta_{[s]} = \eta_{[s-1] \cup \{j+s\}} + \eta_{[s] \cup \{j+s\}}.
\end{equation}
\end{lem}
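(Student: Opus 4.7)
The plan is to derive both identities from three-term Pl\"ucker relations, exploiting the cyclic invariances available: for (1), the $\rho^\ell$-invariance on $\mcd$; for (2), the $\rho$-invariance of the totally positive $\rho$-fixed point $Y_0 \in \Gr(k,p)^\rho_{>0}$ implicit in the definition of the $\eta_S$ values.

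For (1), first rewrite $\eta_{[s-2]} = \Delta_{T_{[s-2]}}/\Delta_{I_1}$ and $\eta_{[s-1]} = \Delta_{T_{[s-1]}}/\Delta_{I_1}$; these are positive real constants on $\mcd$ by Lemma~\ref{lem:superfluous}. Clearing the denominator $\Delta_{I_1}$, it suffices to show
\[
\Delta_{I_1}\cdot \Delta_{\{1,\,2+(s-1)\ell,\,1+s\ell,\,\ldots,\,1+(s+k-3)\ell\}} = \Delta_{T_{[s-2]}}\Delta_{I_2} + \Delta_{T_{[s-1]}}\Delta_{I_{\ell+1}}
\]
holds in $\bbc[\mcd]$. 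This follows by applying the three-term Pl\"ucker relation $\Delta_{K\cup\{a,c\}}\Delta_{K\cup\{b,d\}} = \Delta_{K\cup\{a,b\}}\Delta_{K\cup\{c,d\}} + \Delta_{K\cup\{a,d\}}\Delta_{K\cup\{b,c\}}$ in $\bbc[\Gr(k,n)]$ with pivot $K := \{1+s\ell,\,\ldots,\,1+(s+k-3)\ell\}$ of size $k-2$ and the four elements $a=1 < b=1+(s-1)\ell < c=2+(s-1)\ell < d=1+(s+k-2)\ell$. Direct inspection matches the six resulting subsets, up to $\rho^\ell$-shift, with the six terms in the displayed equation: $K\cup\{a,c\}$ is exactly the subset on the LHS; $K\cup\{a,b\}=T_{[s-2]}$ and $K\cup\{a,d\}=T_{[s-1]}$; while $K\cup\{b,d\}$, $K\cup\{c,d\}$, and $K\cup\{b,c\}$ lie in the $\rho^\ell$-orbits of $I_1$, $I_2$, and $I_{\ell+1}$ respectively (the last using $I_{\ell+1} = \rho^\ell(\{1,2,1+\ell,\ldots,1+(k-2)\ell\})$). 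Passing to $\bbc[\mcd]$, these $\rho^\ell$-shifted Pl\"uckers are identified, yielding the claim.

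For (2), the numbers $\eta_S$ equal $\Delta_{P_S}(Y_0)/\Delta_{P_\emptyset}(Y_0)$, where $P_S \subset [p]$ denotes the image of $T_S$ under projection onto the residue-class-$1$ columns (cf.\ the proof of Lemma~\ref{lem:superfluous}). Thus (2) is the numerical identity
\[
\Delta_{P_{\{j\}}}(Y_0)\,\Delta_{P_{[s]}}(Y_0) = \Delta_{P_\emptyset}(Y_0)\,\bigl(\Delta_{P_{[s-1]\cup\{j+s\}}}(Y_0) + \Delta_{P_{[s]\cup\{j+s\}}}(Y_0)\bigr).
\]
Apply the three-term Pl\"ucker relation in $\bbc[\Gr(k,p)]$ with pivot $N := \{2,\,\ldots,\,k\}\setminus\{j+1\}$ of size $k-2$ and the four elements $1 < j+1 < k+1 < p+1-s$ (ordering valid when $p\ge k+s+1$). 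Observe that $N\cup\{1,k+1\} = P_{\{j\}}$ and $N\cup\{1,j+1\} = P_\emptyset$, while each of the remaining three subsets $N\cup\{\cdot,p+1-s\}$ is a $\rho^{-s}$-shift (mod $p$) of one of $P_{[s]}$, $P_{[s-1]\cup\{j+s\}}$, $P_{[s]\cup\{j+s\}}$; $\rho$-invariance of $Y_0$ equates their Pl\"ucker values. Finally, the extra factor $\Delta_{N\cup\{j+1,k+1\}}(Y_0) = \Delta_{[2,k+1]}(Y_0) = \Delta_{[k]}(Y_0) = \Delta_{P_\emptyset}(Y_0)$, once more by $\rho$-invariance of $Y_0$, permits factoring $\Delta_{P_\emptyset}(Y_0)$ out of the right-hand side.

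The main obstacle is careful bookkeeping of cyclic shifts and index orderings to confirm the six subsets appearing in each Pl\"ucker three-term identity match the corresponding terms in (1) and (2). Sign issues from wrap-around (when $d=1+(s+k-2)\ell$ or $p+1-s$ exceeds the ambient size) do not intervene, since the sign $(-1)^{k-1}$ in \eqref{eq:rhomatrix} is built in so that $\rho^*\Delta_I = \Delta_{\rho(I)}$ without extra signs. Degenerate cases---e.g.\ $s=1$ in (1), where $[s-2]$ is empty and $\eta_{[s-2]}$ is interpreted as zero so that (1) collapses to a tautology; or small $p$ in (2) where the four chosen elements fail to be distinct---must be handled separately, but in each such case the identity reduces to a tautology or a simpler relation.
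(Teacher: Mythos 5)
Your proof is correct and follows essentially the same route as the paper: both identities come from a single three-term Pl\"ucker relation combined with cyclic invariance --- for \eqref{eq:firstlinear} the paper likewise multiplies by $\rho^{(s-1)\ell}(\Delta_{I_1})$ and identifies $\rho^\ell$-shifts, and for \eqref{eq:straightenrs}, which the paper only says is ``proved similarly,'' your explicit choice of pivot $N=[2,k]\setminus\{j+1\}$ with elements $1<j+1<k+1<p+1-s$ checks out (all six terms match after a $\rho^{-s}$-shift and $\rho$-invariance of $Y_0$). The wrap-around and degenerate cases you flag (e.g.\ $p\le k+s$, where $T_{[s]\cup\{j+s\}}$ itself wraps around $[n]$) are equally untreated in the paper's own proof, so deferring them is not a gap relative to the original argument.
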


\begin{proof}
The relation \eqref{eq:firstlinear} is empty when $s=1$. For $s>1$, we multiply the left hand side of \eqref{eq:firstlinear} by $\rho^{(s-1)\ell}(\Delta_{I_1})$ and apply a three-term Pl\"ucker relation to get two terms on the right hand side. Dividing both sides by $\Delta_{I_1}$, and working in $\bbc[\Gr(k,n)]^{\rho^\ell}$, the relation \eqref{eq:firstlinear} results. The second relation \eqref{eq:straightenrs} is proved similarly.
\end{proof}

Combining Lemma~\ref{lem:Toep} with~\eqref{eq:firstlinear}, we have 
\begin{align}
\Delta_{I_1} L &= \det(\Delta_{i,j+1,j+\ell,j+2\ell,\dots,j+(k-2)\ell})_{i,j \in I}  \\
&= \det((\eta_{[j-i]}\Delta_{I_{2}}+\eta_{[j-i+1]}\Delta_{I_{\ell+1}})_{i,j \in [k]}) \in \bbc[\mcd_0]. \label{eq:WeylsinXo}
\end{align}
This latter determinant \eqref{eq:WeylsinXo} is manifestly a homogeneous polynomial of degree $k$ in the Pl\"ucker coordinates $\Delta_{I_2}$ and $\Delta_{I_{\ell+1}}$. So \eqref{eq:WeylsinXo} is already a CS- exchange relation.

As a final step, we directly evaluate this determinant to obtain the Higher orbifold Ptolemy formula \eqref{eq:CSreln}. The proof is based an explicit sequence of row reductions, combined with judicious use of \eqref{eq:straightenrs}. The details are in Section~\ref{secn:proofToep}. 
\begin{lem}\label{lem:principalminors}
Consider the infinite Toeplitz matrix $M = (\eta_{[j-i]}\Delta_{I_{2}}+\eta_{[j-i+1]}\Delta_{I_{\ell+1}})_{i,j \in \bbn}$, with rows $R_1,R_2,\dots$. Then the $t \times t$ principal minor is given by 
\begin{equation}\label{eq:principal}
\det(M_{i,j \in [t]}) = \sum_{s = 0}^t \eta_s\Delta_{I_{\ell+1}}^s\Delta_{I_{2}}^{t-s}.
\end{equation}
Moreover, let $M'$ be the matrix obtained from $M$ by the row operation $R_1 \mapsto \eta_1R_2-R_1$. Then, 
\begin{equation}\label{eq:offprincipal}
\det(M')_{i \in 1 \cup [3,t], j\in [2,t]} = \eta_t\Delta_{I_{\ell+1}}^{t-1}. 
\end{equation}
\end{lem}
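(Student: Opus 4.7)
Let me abbreviate $a := \Delta_{I_2}$, $b := \Delta_{I_{\ell+1}}$, and $f_t := \det(M_{[t]\times[t]})$. Using the convention $\eta_{[s]} = 0$ for $s < 0$, the matrix $M$ is banded: $M_{i,j} = 0$ for $j < i-1$, $M_{i,i-1} = b$, and $M_{i,i} = a + \eta_1 b$. In particular the first column of the $t \times t$ principal submatrix is $(a + \eta_1 b,\, b,\, 0,\dots,0)^T$, so cofactor expansion along that column yields the recursion
\[
f_t = (a+\eta_1 b)\, f_{t-1} \;-\; b\, g^{(1)}_{t-1},
\]
where I introduce the family of shifted off-principal minors
\[
g^{(u)}_m := \det\!\bigl(M_{\{1\}\cup [u+2,\,u+m],\; [u+1,\,u+m]}\bigr), \qquad u \geq 1,\ m \geq 1.
\]
The same first-column cofactor expansion applied to $g^{(u)}_{m+1}$ produces the parallel recursion
\[
g^{(u)}_{m+1} = (\eta_{[u]} a + \eta_{[u+1]} b)\, f_m \;-\; b\, g^{(u+1)}_m.
\]
Finally, since $M'_{1,1} = \eta_1 M_{2,1} - M_{1,1} = \eta_1 b - (a + \eta_1 b) = -a$, the first column of $M'_{[t]\times[t]}$ is $(-a, b, 0, \ldots, 0)^T$; expanding $\det(M'_{[t]\times[t]}) = -f_t$ along this column (the other rows of $M'$ agree with those of $M$) gives
\[
R_t := \det(M')_{\{1\}\cup[3,t],\,[2,t]} \;=\; \eta_1 f_{t-1} - g^{(1)}_{t-1}.
\]

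My plan is to prove both halves of the lemma by a joint induction on $m$, establishing the coupled closed forms
\[
f_m = \sum_{s=0}^{m} \eta_s\, b^s a^{m-s}, \qquad g^{(u)}_m = \eta_{[u]}\, f_m \;-\; \eta_{[u-1]\cup\{u+m\}}\, b^m \quad (u \geq 1).
\]
The base case $m = 1$ is immediate from $f_1 = a + \eta_1 b$ and $g^{(u)}_1 = M_{1, u+1} = \eta_{[u]} a + \eta_{[u+1]} b$, using the instance $(j, s) = (1, u)$ of the three-term identity \eqref{eq:straightenrs}. For the inductive step, substituting the $u=1$ specialization $g^{(1)}_m = \eta_1 f_m - \eta_{m+1} b^m$ into the $f_{m+1}$ recursion collapses it to $f_{m+1} = a f_m + \eta_{m+1} b^{m+1}$, which at once produces the claimed closed form for $f_{m+1}$. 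For the $g^{(u)}_{m+1}$ identity, plug the inductive formula for $g^{(u+1)}_m$ into the recursion for $g^{(u)}_{m+1}$ and then replace $a f_m$ using $a f_m = f_{m+1} - \eta_{m+1} b^{m+1}$; the verification reduces to the algebraic identity $\eta_{[u]} \eta_{m+1} - \eta_{[u]\cup\{u+m+1\}} = \eta_{[u-1]\cup\{u+m+1\}}$, which is exactly \eqref{eq:straightenrs} applied at $(j, s) = (m+1, u)$.

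With both identities established, equation \eqref{eq:principal} is the closed form for $f_t$, and equation \eqref{eq:offprincipal} follows by specializing the $g^{(u)}_m$ identity to $u = 1$: this gives $g^{(1)}_{t-1} = \eta_1 f_{t-1} - \eta_t b^{t-1}$, so $R_t = \eta_1 f_{t-1} - g^{(1)}_{t-1} = \eta_t b^{t-1}$.

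The main subtlety will be finding the right ansatz for $g^{(u)}_m$. The cofactor recursion for $g^{(1)}$ refers to $g^{(2)}$, which refers to $g^{(3)}$, and so on, so one cannot close the induction at any finite level of $u$ without a uniform closed form valid for all $u \geq 1$. Once the correct ansatz is in hand, every algebraic step of the induction reduces to a single application of the three-term identity \eqref{eq:straightenrs} already established in Lemma~\ref{lem:firstlinear}.
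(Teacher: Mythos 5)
Your argument is correct; I checked the cofactor expansions, the base case, and both inductive steps, and each use of \eqref{eq:straightenrs} (at $(j,s)=(1,u)$ for the base case and $(j,s)=(m+1,u)$ for the step) is the right instance. The route differs from the paper's in how the off-principal minors are handled. The paper also begins by expanding $\det(M_{[t]\times[t]})$ along the first column, which reduces \eqref{eq:principal} to \eqref{eq:offprincipal}; but it then proves \eqref{eq:offprincipal} by an explicit row reduction, replacing the first row $\eta_1R_2-R_1$ of $M'$ by $\eta_1R_2-R_1-(\eta_2R_3+\cdots+\eta_{t-1}R_t)$ and using \eqref{eq:straightenrs} to show this annihilates the first $t-1$ entries and leaves $(-1)^{t-1}\eta_t\Delta_{I_{\ell+1}}$ in the last, after which the determinant is read off from a triangular matrix. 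You instead close the recursion by introducing the full two-parameter family $g^{(u)}_m$ of shifted off-principal minors and proving a uniform closed form $g^{(u)}_m=\eta_{[u]}f_m-\eta_{[u-1]\cup\{u+m\}}b^m$ by joint induction. Your version is arguably more systematic --- the ansatz makes every step a single application of \eqref{eq:straightenrs} and there is no bookkeeping of signs accumulating through successive row operations --- at the cost of having to guess the closed form for $g^{(u)}_m$, whereas the paper's row reduction is shorter to state once one sees which combination of rows to subtract. Both proofs use exactly the same algebraic input, namely \eqref{eq:straightenrs}, and either one completes the derivation of the exchange relation \eqref{eq:CSreln}.
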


We only need \eqref{eq:principal} in the sequel, but we will conclude it from \eqref{eq:offprincipal}, which we prove directly. 

%Assuming \eqref{eq:principal}, \eqref{eq:offprincipal} asserts that $\det(M)_{i \in [2,j],j \in 1 \cup [3,t]} = r_1\sum_{s = 0}^t K^sJ^{t-s}r_s-r_tK^{t-1}$. 

The harder inclusion $\mcl_n(k,\ell) \subset \bbc[\mcd]$ asserted in Theorem~\ref{thm:onestep} now follows. We summarize the steps below:
\begin{proof}[Proof of the left inclusion \eqref{eq:easycontain}]
By the discussion we started with, we merely need to show that $\mu_1(\Delta_{I_1}) \in \bbc[\mcd]$. Let  $L \in \bbc[\Gr(k,n)]$ be the regular function from Definition~\ref{defn:Weylsidentity}. We claim that $L = \mu_1(\Delta_{I_1})$ completing the proof since  $L\in \bbc[\mcd]$. Indeed, in $\bbc[\Gr(k,n)]$ the product $\Delta(I_1)L$ is equal to the right hand side of \eqref{eq:Weylsidentity}, which equals the determinant \eqref{eq:WeylsinXo} in $\bbc[\mcd]$, which equals \eqref{eq:CSreln} by Lemma~\ref{lem:principalminors}. 
\end{proof} 

\begin{rmk} Each step in this proof relies only on Pl\"ucker relations, so does not have any dependence on the order $p$ of the orbifold point. The dependence on $p$ in \eqref{eq:CSreln} arises when we replace the ratio of Pl\"ucker coordinates $\frac{\Delta_{T_s}}{\Delta_{T_\emptyset}}$ by the positive real number \eqref{eq:triggercoeffs}, and ultimately depends on the formulas for the Pl\"ucker coordinates of the TP $\rho$-fixed point.
\end{rmk}

We close this section by illustrating these calculations when $k = 2,3,4$ and $\ell=2$. We use the notation $J = \Delta_{I_{\ell+1}}$ and $K = \Delta_{I_2}$ as in the proof of Lemma~\ref{lem:Toep}. 
\begin{example}[$\SL_2$ calculation]
Consider $\mcd_{2p}(2,2)$. We have $L = \Delta_{24}$ and 
\begin{align*}
\Delta_{13}L &= \det \begin{pmatrix}\Delta_{12} & \Delta_{14} \\ -\Delta_{23} & \Delta_{34}\end{pmatrix}  \text{ (by \eqref{eq:Weylsidentity})}\\
&= \det \begin{pmatrix}J & \Delta_{14} \\ -K & J\end{pmatrix} \text{ (by Lemma~\ref{lem:Toep})}\\ 
%&= \det \begin{pmatrix}J & \frac{\Delta_{13}}{\Delta_{35}}\Delta_{45}+ \frac{\Delta_{15}}{\Delta_{35}}\Delta_{34}\\ -K & J\end{pmatrix} \\
&= \det \begin{pmatrix}J & \eta_\emptyset K+ \eta_1 J\\ -K & J\end{pmatrix} \text{ (by Lemma~\ref{lem:firstlinear})}\\
&=J^2+\eta_1JK+K^2 \\
&=J^2+\frac{\sin \frac{2\pi}{p}}{\sin \frac{\pi}{p}}JK+K^2. 
%\text{ (in agreement with \eqref{eq:SL2orbifold}).}
\end{align*}
\end{example}

\begin{example}[$\SL_3$ calculation]
For $\mcd_{2p}(3,2)$, the regular function $L$ is the quadratic expression in Pl\"ucker coordinates from Example~\ref{eg:orbifoldexamples}. We have
%namely 23 cap 45 cap 67
\begin{align*}
\Delta_{135} L&= \det \begin{pmatrix}\Delta_{123} & \Delta_{145} & \Delta_{167} \\ 0 & \Delta_{345} & \Delta_{367} \\ \Delta_{235} & 0 & \Delta_{567} \end{pmatrix} \text{ (by \eqref{eq:Weylsidentity} ) }\\
& = \det \begin{pmatrix}\Delta_{123} & \Delta_{145} & \Delta_{167} \\ 0 & \Delta_{123} & \Delta_{145} \\
\Delta_{235} & 0 & \Delta_{123} \end{pmatrix} \text{ (by Lemma~\ref{lem:Toep})}\\
&= \det \begin{pmatrix}J & K+\eta_1J & \eta_1K+\eta_{[2]}J\\ 0 & J& K+\eta_1J \\
K & 0 & J \end{pmatrix} \text{ (by Lemma~\ref{lem:firstlinear})}\\
&= J^3+K(K^2+(\eta_1^2-\eta_{[2]})J^2+\eta_1JK) \\
&= K^3+\eta_1K^2J+\eta_2KJ^2+J^3  \text{ (by \eqref{eq:straightenrs})}\\
&= K^3+\frac{\sin \frac{3 \pi}{p}}{\sin \frac{\pi}{p}}K^2J+\frac{\sin \frac{3 \pi}{p}}{\sin \frac{\pi}{p}}KJ^2+J^3. 
%\text{ (in agreement with \eqref{eq:SL3orbifold}).}%\label{eq:SL3orbifold}
\end{align*}
\end{example}

\section{Connections with band matrices and \texorpdfstring{$U_q(\hat{\mathfrak{sl}}_k)$}{quantum affine algebras}}
\label{secn:quantum}
\subsection{CS-cluster structure on periodic band matrices}
Let ${\rm Band}(k,\ell)$ denote the space of matrices $M = (M_{ij})_{i,j \in \bbz}$ satisfying 
$$M_{i,j} = 0 \text{ unless } j-i \in \{0,\dots,k\}, \text{ and } M_{i+\ell,j+\ell} = M_{i,j} \text{ for all } i,j.$$
Thus, $M$ is supported on $k+1$ diagonals, each of which is an $\ell$-periodic sequence. Let us assume moreover, as in \cite{GSVStaircase}, that the parameters satisfy $k<\ell$. Then we can repackage $M$ as the following pair of $\ell \times \ell$ matrices 
\begin{equation}\label{eq:characteristicpolyXY}
A = 
\begin{pmatrix}
 0 & \cdots & 0 & M_{11} & M_{12} & \cdots & M_{1k} \\
 0 & \cdots  & \cdots  & 0 & M_{32} & \cdots & M_{2k} \\
 0& \cdots& \cdots  & \cdots & \ddots & \ddots & \vdots \\
0& \cdots& \cdots  & \cdots & \cdots & 0& M_{k,k} \\ 
0& \cdots& \cdots  & \cdots & \cdots & \cdots & 0 \\  
0& \cdots& \cdots& 0& 0 & \cdots & 0
\end{pmatrix} \text{ and }
B = 
\begin{pmatrix}
M_{1,k+1} & 0 & \cdots & \cdots & 0  \\
M_{2,k+1} & M_{2,k+2} & 0  & \cdots  &\vdots \\
\vdots & \vdots & \ddots& \ddots  & \vdots \\
\vdots & \vdots & \ddots& \ddots  & \vdots  \\ 
\vdots & \vdots & \ddots& \ddots  & 0   \\  
M_{\ell,k+1}& \cdots & \cdots& \cdots& M_{\ell,k+\ell+1} 
\end{pmatrix}.
\end{equation}
Define 
\begin{equation}\label{eq:characteristicpoly}
c(A,B) = \det(tB+A) = \prod_{i=1}^{\ell}M_{i,i+k}\det(tI+B^{-1}A), \text{ and } c_s = [t^{\ell-s}]c(A,B) \in \bbc[{\rm Band}(k,\ell)]. \end{equation}
The notation $[t^{\ell-s}]$ stands for ``coefficient of $t^{\ell-s}$.''
The second equality above only makes sense when $B$ is invertible, in which case the factor  $\det(tI+B^{-1}A)$ is a characteristic polynomial. 

Let $M_I^J \in \bbc[{\rm Band}(k,\ell)]$ denote the matrix minor with row set $I$ and column set $J$, thought of as an element of the coordinate ring  $\bbc[{\rm Band}(k,\ell)]$. The latter is a polynomial ring in the $\ell(k+1)$ matrix entries. Gekhtman, Shapiro, and Vainshtein defined a CS-seed (in the sense of Definition~\ref{defn:nnCSseed}) 
in the field of functions $\bbc({\rm Band}(k,\ell))$ and proved that the upper cluster algebra associated to this initial seed coincides with $\bbc[{\rm Band}(k,\ell)]$. We denote the resulting upper CS-cluster structure by $\mca_{\rm GSV} = \mca_{\rm GSV}(k,\ell)$.

%We use interchangably $M_{\{i\}}^{\{j\}} = M_{ij}$. 
We recall the initial seed for $\mca_{\rm GSV}$ given in {\sl loc. cit.}, Let $N:= (k-1)(\ell-1)$. For $i \in [N]$, we have an initial mutable variable $x_i := M_{R_i}^{C_i}$, a minor of size $N-i+1$. The column  set $C_i$ is the interval $[k+i,(k-1)\ell+1]$. The row set $R_i$ consists of the $N-i+1$ largest elements of $\{j \in [(k-1)\ell] \colon j \neq 1 \mod \ell\}$. The unique special variable is the largest of these minors, namely $x_1$.

The $2\ell+k-1$ frozen variables for $\mca_{\rm GSV}$ are the $2\ell$ entries $M_{i,i}$ and $M_{i,i+k}$ on outer diagonals, and also the regular functions $(-1)^{i(\ell-i)}c_s(A,B)$. The latter variables are coefficient string variables. We denote the tropical semifield in these variables by $\bbp_{\rm GSV}$. The initial exchange relations are encoded by a certain quiver which we do not spell out here, cf.~\cite[Figure 2]{GSVStaircase}.  

%\begin{example}
%Consider ${\rm Band}(2,3)$. Then $\mca_{\rm GSV}$ has mutable variables $x_1 = M_{34}^{23}$ and $x_2 = M_4^3$. The exchange polynomial for mutation at $x_1$ is 
%$M_{11}M_{33}M_{24}M_{35}+c_1M_{34}+M_{22}M_{13}M_{34}^2$
%where $c_1 = M_{12}M_{23}M_{34}
%-M_{13}M_{22}M_{34}
%-M_{11}M_{23}M_{35}
%-M_{12}M_{24}M_{33}$. Performing this mutation yields the neighboring variable $x_1' = M_{34}^{45}$. 
%The claim is that the Laurent polynomial $\frac{c_1(X,Y)}{M_{13}M_{24}M_{34}}$ pulls back to a constant-valued  function on $\mcd$. In this case, one can check directly that this Laurent polynomial pulls back to the ratio $\frac{\Delta_{17}}{\Delta_{14}}$, so that the claim follows from Lemma~\ref{lem:superfluous}.
%\end{example}

\begin{thm}[{\cite[Theorem 5.1]{GSVComplete}}]\label{thm:GSV}
The upper generalized  cluster algebra $\mca^{\rm up}_{\rm GSV}(k,\ell)$ coincides with the coordinate ring $\bbc[{\rm Band}(k,\ell)]$. 
\end{thm}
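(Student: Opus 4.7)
\textbf{Proof proposal for Theorem~\ref{thm:GSV}.} Since $\mathrm{Band}(k,\ell)$ is an affine space of dimension $(k+1)\ell$, its coordinate ring is simply the polynomial ring in the matrix entries $M_{i,j}$ with $j-i \in [0,k]$ (indices taken modulo~$\ell$). The plan is to prove the two inclusions $\mca^{\rm up}_{\rm GSV}(k,\ell)\subseteq \bbc[\mathrm{Band}(k,\ell)]$ and $\bbc[\mathrm{Band}(k,\ell)]\subseteq \mca^{\rm up}_{\rm GSV}(k,\ell)$ separately. The harder direction will be the first.

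For $\mca^{\rm up}_{\rm GSV}\subseteq \bbc[\mathrm{Band}]$, I would invoke the Starfish-type lemma (\cite[Proposition 3.6]{tensors}, adapted to the CS- setting, noting that a generalized cluster algebra has the same exchange graph as its right companion by Cao--Li). The variety $\mathrm{Band}(k,\ell)$ is affine space, hence normal and a UFD, so normality and coprimality reduce to polynomial divisibility checks. The initial mutable variables $x_i = M_{R_i}^{C_i}$ are matrix minors, which are irreducible polynomials in the matrix entries that involve disjoint pivots, so pairwise coprimality follows by comparing supports. What remains is to verify that each one-step mutation out of the initial seed lands in $\bbc[\mathrm{Band}]$. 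For the $N-1$ non-special mutable variables the binomial exchange relation is a Desnanot--Jacobi / short Pl\"ucker identity among submaximal minors of an $\ell\times 2\ell$-type block layout extracted from \eqref{eq:characteristicpolyXY}, so divisibility follows formally from such an identity.

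The main obstacle is verifying that the generalized exchange relation at the special variable $x_1$ is divisible by $x_1$ inside $\bbc[\mathrm{Band}]$. Concretely, one must show that
\[
\sum_{s=0}^{\ell}(-1)^{s(\ell-s)} c_s(A,B)\cdot (M^+_1)^{\ell-s}(M^-_1)^{s}
\]
is divisible by $x_1 = M_{R_1}^{C_1}$ as a polynomial in the $M_{i,j}$, where $M^{\pm}_1$ are the usual exchange monomials in the quiver of \cite[Figure~2]{GSVStaircase}. The right strategy is to reinterpret $c_s$ via $\det(tB+A)$ and recognize the sum as the expansion of a single determinant of size $\ell \times \ell$ whose block structure encodes the neighbors of $x_1$, so that dividing by $x_1$ corresponds to a determinantal identity of the Sylvester/Weyl type — in spirit this is exactly the identity used to prove Lemma~\ref{lem:Weylsidentity} here, only now stated for band matrices rather than for points of $\mathrm{Gr}(k,n)^{\rho^\ell}$. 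This is where the careful choice of signs $(-1)^{i(\ell-i)}$ in the frozen coefficient strings becomes necessary.

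For the reverse inclusion $\bbc[\mathrm{Band}]\subseteq \mca^{\rm up}_{\rm GSV}$, it suffices to exhibit every matrix entry $M_{i,j}$ as a cluster or frozen variable in \emph{some} seed reachable by mutations from the initial seed; then the Laurent phenomenon places it in $\mca^{\rm up}$. The corner entries $M_{i,i}$ and $M_{i,i+k}$ are already frozen. For interior entries, I would proceed diagonally: perform a sequence of mutations that successively ``peels off'' minors, so that after a finite cascade the remaining submaximal minor appearing in the cluster coincides with a single matrix entry $M_{i,j}$. Concretely, mutations at the mutable variables of smallest minor size near the $(i,j)$-corner turn $x_r$'s into smaller and smaller minors by repeated Desnanot--Jacobi; after at most $\mathrm{codim}$-many steps one obtains $M_{i,j}$ itself. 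Putting the two inclusions together gives the theorem.
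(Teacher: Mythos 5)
The first thing to say is that the paper does not prove this statement: Theorem~\ref{thm:GSV} is imported verbatim from Gekhtman--Shapiro--Vainshtein \cite[Theorem 5.1]{GSVComplete} and is used here as a black box (it is one of the inputs to the band-matrix comparison in Section~\ref{secn:quantum}). So there is no internal proof to measure your argument against; a genuine proof would have to recapitulate a substantial part of \cite{GSVStaircase,GSVComplete}. Judged on its own terms, your sketch has the right overall architecture (regularity and coprimality of one-step mutations for one inclusion, a Laurent/upper-bound argument for the other), but several steps would fail as written.

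Concretely: (i) the hypothesis $k<\ell$ never enters your argument, yet the paper stresses immediately after the theorem that the result is only claimed for $k<\ell$ --- the repackaging \eqref{eq:characteristicpolyXY} of a band matrix into the pair $(A,B)$, hence the very definition of the frozen coefficient strings $c_s$, already requires it. (ii) Your generalized exchange relation at the special variable has the wrong degree: since $A$ has only $k$ nonzero columns, $\det(tB+A)$ contributes coefficients $c_0,\dots,c_k$ only, and the exchange polynomial has degree $d_1=k$ (compare the displayed $Z_{1,\rm GSV}$ in Section~\ref{secn:quantum}), not degree $\ell$ as in your sum $\sum_{s=0}^{\ell}$. (iii) Most seriously, your proof of $\bbc[{\rm Band}(k,\ell)]\subseteq\mca^{\rm up}_{\rm GSV}$ rests on exhibiting every matrix entry $M_{i,j}$ as a cluster variable of some seed. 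There is no reason to expect this, and the paper explicitly cautions against the analogous strategy for $\mcd$ (``It is not true that every Pl\"ucker coordinate is a cluster variable''); the actual argument in \cite{GSVComplete} is an induction with upper bounds in the style of Berenstein--Fomin--Zelevinsky, cf.\ the paper's reference to \cite[Theorem 3.11]{GSVDouble}. (iv) The divisibility of the special exchange polynomial by $x_1$, which you correctly identify as the crux, is asserted to follow from ``a determinantal identity of Sylvester/Weyl type'' without producing the identity; this is precisely the hard computation (the ``master identity'' \cite[Equation (3.7)]{GSVStaircase}) and cannot be waved through. A smaller caveat: the initial variables are minors of a \emph{band} matrix, whose many forced zero entries can make such minors reducible, so pairwise coprimality is not automatic from ``irreducibility of minors of a generic matrix.''
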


We emphasize that above theorem only holds when $k < \ell$: no candidate seed is given in the $k \geq \ell$ cases. 

We now describe a quasi-isomorphism of $\mca_{\rm GSV}(k,\ell)$ with $\mca_{\rm cyc}(k,\ell)$. Let $\mcf_{>0}$ denote the algebra of subtraction-free rational functions in the matrix minors $M_{R_I}^{C_I}$ defined above, with coefficients in $\bbp_{\rm GSV}$. We let $\ov{x}_i \in \mca_{\rm cyc}$ denote the initial extended cluster variable and $\ov{Z}_i \in \mca_{\rm cyc}$ the stable variables in the exchange relation for $\ov{x}_1$.

Consider the ambient semifields $\mcf_{>0, {\rm GSV}}$ and 
$\mcf_{>0, {\rm cyc}}$ corresponding to $\mca_{\rm GSV}$ and $\mca_{\rm cyc}$. 

We declare a semifield map $\psi^* \colon \mcf_{>0,  \text{ GSV}} \to \mcf_{>0, \textnormal{ cyc}}$ 
by the following formulas:
\begin{align}\label{eq:abstractGSVmap}
\psi^*(x_i) &= \ov{x}_i\prod_{b=i}^{N-1} \ov{x}_{N+(b+k \text{ mod } \ell)}, \, 
\psi^*(M_{i,i}) = \psi^*(M_{i-k-1,i-1}) =  \ov{x}_{N+(i-1 \text{ mod } \ell)}, \text{ and }\\
\psi^*(\tilde{c}_s) &= Z_s\prod_{i=1}^\ell \ov{x}_{N+i}.\label{eq:abstractGSVmapii}
\end{align}

%It is clear that this map satisfies the hypotheses of Definition~\ref{defn:QH}. We will explain the inspiration for this map shortly. 

\begin{lem}
The semifield map $\psi^*$ is a quasi-homomorphism from $\mca_{\rm GSV}$ to $\mca_{\rm cyc}$. 
\end{lem}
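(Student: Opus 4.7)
The plan is to verify the three conditions required of a quasi-homomorphism from Definition of quasi-homomorphism: that $\psi^*$ is a well-defined semifield map with $\psi^*(\bbp_{\rm GSV}) \subset \bbp_{\rm cyc}$, and that $\psi^*(\Sigma_{\rm GSV}) \sim \Sigma_{\rm cyc}$ in the sense of Definition~\ref{defn:seedorbit}. The first two are essentially formal: a semifield map into $\mcf_{>0,{\rm cyc}}$ is determined by choosing positive images for the generators of $\mcf_{>0,{\rm GSV}}$, and the formulas \eqref{eq:abstractGSVmap}--\eqref{eq:abstractGSVmapii} specify such images. The coefficient-group containment holds because each $\psi^*(M_{i,i})$ and $\psi^*(M_{i-k-1,i-1})$ is a frozen variable of $\mca_{\rm cyc}$, while $\psi^*(\tilde c_s)$ is a product of the coefficient string variable $Z_s$ with frozens, so each image lies in $\bbp_{\rm cyc}$.

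The substance of the lemma is the verification of $\psi^*(\Sigma_{\rm GSV}) \sim \Sigma_{\rm cyc}$. Condition (i) of Definition~\ref{defn:seedorbit} is immediate from \eqref{eq:abstractGSVmap}: the ratio $\psi^*(x_i)/\ov{x}_i = \prod_{b=i}^{N-1}\ov{x}_{N+(b+k \bmod \ell)}$ is a monomial in frozens of $\mca_{\rm cyc}$, hence lies in $\bbp_{\rm cyc}$. Condition (ii) requires checking the equality of $\hat{y}$-variables, which I would split into two cases. For each non-special mutable index $j \in [2,N]$, the exchange degree is $d_j = 1$, and $\hat{y}_{j;1}^{\rm GSV}$ is the usual Fomin--Zelevinsky $\hat y$-monomial read off from the GSV quiver in \cite[Figure 2]{GSVStaircase}. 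One then computes $\psi^*(\hat y_{j;1}^{\rm GSV})$ using \eqref{eq:abstractGSVmap}, checks that the telescoping products of frozens cancel against the frozen-adjacent factors in the GSV $B$-matrix, and compares with $\hat y_{j;1}^{\rm cyc}$ read off from the quiver of Definition~\ref{defn:abstractCSseed}. For the special variable $x_1$, one must carry out the analogous computation for each $\hat y_{1;s}$, $s\in [k]$, and crucially verify that the coefficient strings match: after applying $\psi^*$, the ratio $\tilde c_s/\tilde c_0$ transforms into $Z_s/Z_0 = Z_s$ (since $Z_0 = 1$) up to the same frozen monomial that appears in all $\psi^*(\tilde c_s)$, which cancels out in the ratio.

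The main obstacle will be bookkeeping the exchange matrix correspondence. The GSV exchange matrix records arrows among the minors $x_i = M_{R_i}^{C_i}$, between these minors and the outer-diagonal frozens $M_{i,i}$, $M_{i,i+k}$, and between $x_1$ and the coefficient variables $\tilde c_s$. After the substitutions \eqref{eq:abstractGSVmap}--\eqref{eq:abstractGSVmapii}, the frozen factors attached to $\psi^*(x_i)$ must exactly absorb the arrows from $M_{i,i}$ and $M_{i,i+k}$ in such a way that, writing each $\hat y_{j;s}^{\rm GSV}$ in terms of $\ov{x}_i$, $\ov{x}_{N+t}$, and $Z_s$, one obtains the $\hat y_{j;s}^{\rm cyc}$ determined by the simpler cyc-quiver arrows $\ov{x}_{i+1} \to \ov{x}_i \to \ov{x}_{i+\ell} \to \ov{x}_{i+1}$.

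Concretely I would proceed by first verifying the identity on a small example (say $k=3$, $\ell=4$) to pin down the sign and index conventions, then writing a uniform argument: for each mutable index $j$, tabulate the $[b_{ij}^{\rm GSV}]_+$ and $[-b_{ij}^{\rm GSV}]_+$ contributions, apply $\psi^*$ termwise using the product decomposition of $\psi^*(x_i)$, and simplify using the telescoping identity $\prod_{b=i+1}^{N-1}\ov{x}_{N+(b+k \bmod \ell)} = \left(\prod_{b=i}^{N-1}\ov{x}_{N+(b+k \bmod \ell)}\right)/\ov{x}_{N+(i+k \bmod \ell)}$ to match the cyc-seed values exactly. Once the $B$-matrix part matches for each $j$, the coefficient-string match for $j=1$ reduces to the observation that the common factor $\prod_{i=1}^\ell \ov{x}_{N+i}$ in $\psi^*(\tilde c_s)$ is independent of $s$ and cancels in the ratio $\tilde c_s/\tilde c_0$.
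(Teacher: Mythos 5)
Your proposal matches the paper's argument: both deduce the coefficient-group containment and condition (i) directly from the formulas, observe that the exchange matrices coincide, verify condition (ii) for non-special variables by cancelling the telescoping frozen factors in the $\hat y$-monomials, and reduce the special variable to matching the coefficient strings. The only cosmetic difference is that the paper checks the special index by applying $\psi^*$ to the full exchange polynomial $Z_{1,\rm GSV}$ and showing it equals $\bb\, Z_{1,\rm cyc}$ for a frozen monomial $\bb$, rather than checking each $\hat y_{1;s}$ separately; the two verifications are equivalent.
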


\begin{proof}
We need to show that the image of the initial seed for $\mca_{\rm GSV}$ under $\psi^*$ is $\sim$ to $\Sigma_{\rm cyc}(k,\ell)$. From the formulas, $\psi^*(\bbp_{\rm GSV}) \subset \bbp_{\rm cyc}$, so these two seeds are defined over the same coefficient group. We also that i) from Definition~\ref{defn:seedorbit} is satisfied. It remains to check~ii).  

By direct inspection, the initial exchange matrices $B_{\rm cyc}$ and $B_{\rm GSV}$ coincide (cf.~\cite[Figure 2]{GSVStaircase}). This says that ii) is satisfies if we set all frozen variables equal to 1. 

%To check ii), then, we can compose with the specialization $c \colon \mcf_{>0,{\rm cyc}} \to \bbp_{\rm cyc}$ setting $\ov{x}_i$'s to 1. 

We first treat the nonspecial variables, i.e. the cases $i \geq 2$. In this cases we can only have one nontrivial Laurent monomial $\hat{y}_{1}^{(1)} = \hat{y}_{i}$. In the ``typical'' calculation, we have that 
$\hat{y}_i = \frac{x_{i-\ell+1}x_{i-1}x_{\ell+i}}{x_{i-\ell}x_{i+1}x_{\ell+i-1}} \in \mca_{\rm GSV}$. Applying $\psi^*$, all of the contributions of the $\ov{x}_{N+(b+k)}$'s cancel. And this matches $\hat{y}_i \in \mca_{\rm cyc}$, since $I_i$ is not adjacent to any frozens unless $i \in [N-\ell+1,N]$. The degenerate cases $i \in [\ell-1]$ and $i \in [N-\ell+1,N]$ are easily treated. 

%When $i \in [\ell-1]$ (so that the mutable variable is in the rightmost mutable column of {\sl loc. cit.}), the first terms in the numerator and denominator are not present in $\hat{y}_i$, but there is an extra contribution $\frac{1}{M_i^{i+k}}$ which exactly counteracts this absence. The calculation for $i \in [N-\ell+1,N]$ (i.e., for the leftmost column in {\sl loc. cit.}) is similar.

Now we treat the case $i=1$. Rather than working with the exchange monomials, we directly compute the image of the exchange polynomial for $x_1$ under $\psi^*$. The exchange polynomial for the special variable looks different depending on whether $k=2$ or $k>2$; we will focus on the $k>2$ case for simplicity. In this case, the exchange polynomial for $\varphi_1 \in \mca_{\rm GSV}$ looks like 
$$Z_{1, \text{GSV}} := \prod_{i=1}^{\ell}M_{ii}\prod_{i=2}^{\ell}M_{i,i+k}^{k-1}\varphi_{\ell+1}^k+\sum_{s=1}^{k-1}\tilde{c}_s \prod_{i=2}^{k-1}M_{i,i+k}^{k-1-s}\varphi_{\ell+1}^{k-s}\varphi_2^s+M_{1,k+1}\Delta_2^k.$$
For brevity, set $\bb := \ov{x}_{N+(k+1 \mod \ell)}\prod_{b=2}^{N-1}\ov{x}_{N+(b+k \mod \ell)}^k \in \bbp_{\rm cyc}$. When we apply $\psi^*$ to the preceding exchange polynomial, the expression simplifies considerably:
$$ \psi^*(Z_{1, \text{GSV}}) = \bb\sum z_s \ov{x}_2^s\ov{x}_{\ell}^{k-s} = \bb Z_{1, {\rm cyc}}.$$
Condition ii) when $i=1$ immediately follows. 
\end{proof}

By the lemma, we have a composite map $\mca^\circ_{\rm GSV} \to \mca^\circ_{\rm cyc} \to \mca_n(k,\ell)$, with the second of these maps the specialization map \eqref{eq:trigcoeffs}. 

We would now like to claim that the above composite map is a ``familiar'' map. Most saliently, we would like to claim that $\psi^*$ is the pullback of a map of varieties $\mcd^\circ_n(k,\ell) \to {\rm Band}^\circ(k,\ell)$, because this would allow us to prove \eqref{eq:hardcontain} in these cases. The constructions below work both for $\mcd$ and for the Zariski-open subset $\mcd^\circ$, but quasi-isomorphisms most naturally work with localized coefficients. 

\begin{defn}
Denote by $\phi$ the regular map
\begin{equation}\label{eg:GrassToBand}
\phi \colon \Gr(k,n) \to {\rm Band}(k,n) \text{ sending } X \mapsto (\Delta_{[i,i+k]\setminus j}(X)),
\end{equation}
where we treat indices of Pl\"ucker coordinates modulo $n$ and interpret a Pl\"ucker coordinate as $0 \in \bbc$ when its  index set is not a $k$-subset. When $\ell|n$, $\phi$ restricts to a map (also denote) $\phi \, \colon \, \mcd_n(k,\ell) \subset \Gr(k,n)^{\rho^\ell} \to {\rm Band}(k,\ell)$ (for any $n$). Let $\tau \in {\rm Aut(Band)}(k,\ell)$ be the {\sl transpose} automorphism on band matrices, defined by $\tau(M)_{ij} = M_{j,i+k}$. Finally, let $\phi' := \tau \circ \phi \circ \rho^{-k}$, which we can view as a regular map ${\rm Band}^\circ(k,\ell) \to \mcd_n(k,\ell)$. 
\end{defn}

The map $\phi$ was studied previously in \cite[Appendix A]{FraserQH} (cf.~also \cite{MSTwist}). It has the following interesting property. Suppose that $M_{I}^J$ is a minor whose row set is an interval $I \subset [n-k]$. Then the minor $M_I^J(\phi(X))$ factors as a product of Pl\"ucker coordinates of $X$. A specific formula is given in \cite[Lemma 10.2]{FraserQH}. This algebraic relationship between row-solid minors of band matrices and Pl\"ucker coordinates was exploited to construct a correspondence between (ordinary, FZ-) cluster structures on band matrices and Grassmannians. We are proposing here that a similar correspondence holds between $\mca_{\rm GSV}$ and $\mca_n(k,\ell)$.

\begin{conj}\label{conj:GSV}
Suppose that $k<\ell$ (so that Theorem~\ref{thm:GSV} applies). Then the pullback $(\phi')^*$ is the composite map 
$\mca^\circ_{\rm GSV} \overset{\psi^*}{\to} \mca^\circ_{\rm cyc} \to \mca^\circ_n(k,\ell)$.
\end{conj}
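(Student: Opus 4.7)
The plan is to verify the equality on an algebraic generating set for $\mca^\circ_{\rm GSV}$. Since both $(\phi')^*$ and the specialized composite $\mca^\circ_{\rm GSV} \overset{\psi^*}{\to} \mca^\circ_{\rm cyc} \to \mca^\circ_n(k,\ell)$ are semifield homomorphisms of the ambient fields of subtraction-free rational expressions, it suffices to check that they agree on the data of the initial seed for $\mca^\circ_{\rm GSV}$: the mutable minors $x_i = M_{R_i}^{C_i}$, the frozen matrix entries $M_{i,i}$ and $M_{i,i+k}$, and the coefficient string variables $\tilde c_s$. The verification breaks into three steps of increasing difficulty, all carried out in $\bbc[\Gr(k,n)]$ modulo the linear relations \eqref{eq:character} cutting out $\mcd$.

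The frozens and the mutable variables $x_i$ are routine. The matrix entries of $\phi(X)$ are themselves Pl\"ucker coordinates by definition of $\phi$, so pulling back along $\phi' = \tau \circ \phi \circ \rho^{-k}$ just permutes indices, and matching the result against \eqref{eq:abstractGSVmap} is a direct combinatorial calculation. For the mutable $x_i$, the column set $C_i$ is an interval, so after applying $\tau$ the minor becomes row-solid; the factorization formula \cite[Lemma 10.2]{FraserQH} then expresses $(\phi')^*(x_i)$ as an explicit monomial in Pl\"ucker coordinates of $\rho^{-k}(X)$. Careful bookkeeping of the indices appearing in this monomial should isolate a single ``interesting'' factor $\Delta_{I_i}$ multiplied by a monomial in frozen Pl\"uckers, matching $\psi^*(x_i) = \ov{x}_i \prod_{b=i}^{N-1} \ov{x}_{N+(b+k \bmod \ell)}$ term-by-term.

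The main obstacle, flagged in the excerpt as the ``isospectrality conjecture'', is the coefficient string variables. Under the composite, $\tilde c_s$ maps to $\eta_s \prod_{i=1}^\ell \Delta_{I_{N+i}}$ with $\eta_s = {k \brack s}_{q = e^{2\pi i / p}}$. The leading factor $\prod_{i=1}^\ell M_{i,i+k}$ of $\det(tB+A) = \prod_{i=1}^\ell M_{i,i+k}\, \det(tI + B^{-1}A)$ pulls back, by the frozen step, to $\prod_i \Delta_{I_{N+i}}$; so the content of the matching is that the characteristic polynomial of $B^{-1}A$ at any point $\phi'(X)$ with $X \in \mcd$ equals $\prod_{j=0}^{k-1}(t + q^{j - (k-1)/2})$, whose coefficients are precisely the $\eta_s$ by the $q$-binomial theorem (Remark~\ref{rmk:qbinomialthm}). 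I would establish this isospectrality by a single-point calculation: since $\mcd$ is irreducible and the characteristic polynomial is a regular function on ${\rm Band}^\circ$, it is determined by its value at the TP $\rho$-fixed point $X_0 \in \Gr(k,n)^\rho_{>0} \subset \mcd$. Karp's description of $X_0$ as the span of the $k$ eigenvectors $\omega_{i_1},\dots,\omega_{i_k}$ with eigenvalues the $k$ roots of $(-1)^{k-1}$ closest to $1$ yields an explicit matrix representative, and the action of $\rho^\ell$ on its columns then produces the desired factorization directly. Once isospectrality is proved, the equality $(\phi')^*(\tilde c_s) = \eta_s \prod_i \Delta_{I_{N+i}}$ follows, completing the verification.
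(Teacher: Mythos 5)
The statement you are proving is labeled a \emph{conjecture} in the paper, and the paper only establishes it partially (Proposition~\ref{prop:GSV}: equation \eqref{eq:abstractGSVmap} for all $k$, and \eqref{eq:abstractGSVmapii} only for $k=2,3$). Your first two steps --- checking the frozens and the mutable minors $x_i$ via the row-solid factorization \cite[Lemma 10.2]{FraserQH} --- coincide with what the paper actually does, and your reduction of the coefficient-string matching to the isospectrality statement is also the reduction the paper identifies in Remark~\ref{rmk:GSV}. The problem is your proposed proof of isospectrality itself.

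You claim that since $\mcd$ is irreducible and each coefficient of $\det(tI+B^{-1}A)$ pulls back to a regular function on $\mcd^\circ$, that function ``is determined by its value at the TP $\rho$-fixed point $X_0$.'' This is a non sequitur: a regular function on an irreducible variety is not determined by its value at one point unless you already know it is constant, and constancy is exactly the content of the isospectrality conjecture. The analogous-looking argument in Lemma~\ref{lem:superfluous} works only because there the map lands in a \emph{finite} set (the single point $\Gr(k,p)^{\rho}_{>0}$), so connectedness of the domain forces constancy; here the target is $\bbc$ and no such rigidity is available. You would need an independent argument that the spectrum of $B^{-1}A$ is confined to a finite set (e.g.\ that some fixed power of $B^{-1}A$ is the identity on all of $\phi'(\mcd^\circ)$) before the one-point evaluation at $X_0$ becomes legitimate. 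The paper states it can prove isospectrality only for $k=2$, and for $k=3$ it instead proves \eqref{eq:abstractGSVmapii} by the second route of Remark~\ref{rmk:GSV}: matching the explicit $2\times 2$ determinantal formula \eqref{eq:MishaPC} for mutation at the GSV special variable against the mutation of $\Delta_{I_1}$, then invoking algebraic independence of the initial cluster to conclude that the coefficient strings must correspond. As written, your step three does not close the gap, so the conjecture remains open under your argument as well.
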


In particular, it would follow that the composite map is (the pullback of) a regular map $\mcd^\circ_n(k,\ell) \to {\rm Band}^\circ_n(k,\ell)$. Since the composite map sends (quasi)-cluster variables to (quasi)-cluster variables, it would follow that each cluster variable in $\mca_n(k,\ell)$ is a regular function on $\mcd^\circ_n(k,\ell)$, proving the inclusion $\mca^\circ_n(k,\ell) \subset \bbc[\mcd_n^\circ(k,\ell)]$. And since the intersection of Laurent rings maps in to the intersection of Laurent rings under a coefficient specialization, we would deduce the inclusion $\bbc[\mcd_n(k,\ell)] \subset \mca^{\circ \text{ up}}_n(k,\ell)$.

To check that Conjecture~\ref{conj:GSV} holds, it suffices to check that $(\phi')^*$ satisfies the defining equations \eqref{eq:abstractGSVmap} and \eqref{eq:abstractGSVmapii}. 

\begin{prop}\label{prop:GSV}
The map $(\phi')^*$ satisfies \eqref{eq:abstractGSVmap} for any value of $k$. It satisfies \eqref{eq:abstractGSVmapii} when $k=2$ or $3$ (and conjecturally, for all $k$). 
\end{prop}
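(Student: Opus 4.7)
The plan is to verify the three families of defining equations for $\psi^*$ one by one, reducing each to a concrete calculation with matrix minors of band matrices pulled back along $\phi' = \tau \circ \phi \circ \rho^{-k}$. Throughout, I fix a generic $X \in \mcd_n^\circ(k,\ell)$ and evaluate each initial (extended) cluster variable of $\mca_{\rm GSV}$ at the band matrix $Y := \phi'(X)$, identifying the result with an element of $\mca_n(k,\ell)$.

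First I would handle \eqref{eq:abstractGSVmap} for the mutable variables. Recall that $x_i = M_{R_i}^{C_i}$ has column set the interval $C_i = [k+i,(k-1)\ell+1]$; after applying $\tau$ (which swaps rows and columns and shifts by $k$), this becomes a \emph{row-solid} minor of $\phi(\rho^{-k}(X))$. The key input is \cite[Lemma 10.2]{FraserQH}, which asserts that such row-solid minors of $\phi$-pulled-back band matrices factor as products of Pl\"ucker coordinates of $X$. Carrying out the index bookkeeping and tracking the cyclic shift $\rho^{-k}$, one checks that the resulting product matches $\Delta_{I_i} \prod_{b=i}^{N-1} \Delta_{I_{N+(b+k \bmod \ell)}}$, i.e.\ the right-hand side of \eqref{eq:abstractGSVmap} after the identification $\ov{x}_j \leftrightarrow \Delta_{I_j}$. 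Next, the frozen outer-diagonal entries $M_{i,i}$ and $M_{i-k-1,i-1}$ are literal single entries of $Y$, hence correspond under $\phi' $ to single Pl\"ucker coordinates with consecutive index sets of the form $[c,c+k-1]$; a direct index chase identifies each with $\Delta_{I_{N+(i-1 \bmod \ell)}}$. Both of these steps work for any value of $k$.

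The remaining (and substantive) content is \eqref{eq:abstractGSVmapii} for the characteristic polynomial coefficients. By definition, $\tilde{c}_s = (-1)^{i(\ell-i)}[t^{\ell-s}]\det(tB+A)$ for the matrices $A,B$ of \eqref{eq:characteristicpolyXY}. Under $(\phi')^*$, every entry of $A$ and $B$ becomes a Pl\"ucker coordinate (possibly zero), so $\det(tB+A)$ becomes a polynomial in $t$ whose coefficients are homogeneous polynomials of degree $\ell$ in Pl\"ucker coordinates of $X$. For $k=2$ the band matrix $Y$ is tridiagonal and $A,B$ become very sparse; a direct expansion of $\det(tB+A)$ along a row and repeated use of the three-term Pl\"ucker relation collects the coefficient of $t^{\ell-s}$ into the expected monomial $\ov{x}_2^s\,\ov{x}_{\ell+1}^{k-s}$ times a common frozen product $\prod_{i=1}^\ell \ov{x}_{N+i}$. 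For $k=3$ one expands via Leibniz, partitions permutations by how they interleave the support patterns of $A$ and $B$, and repeatedly straightens the result using the same Pl\"ucker relations that govern $\bbc[\mcd]$ (the relations behind Lemma~\ref{lem:firstlinear}); after grouping terms, the $t^{\ell-s}$ coefficient matches $Z_s \prod_{i=1}^\ell \ov{x}_{N+i}$.

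The main obstacle to extending \eqref{eq:abstractGSVmapii} to all $k \geq 4$ is that the brute-force Leibniz expansion of $\det(tB+A)$ produces a sum with no manifest match to the exchange polynomial, and the straightening via Pl\"ucker relations grows quickly in complexity. A cleaner conceptual route, which I would pursue before attacking the general case combinatorially, is to relate $\det(tB+A)$ to the Toeplitz determinant of Lemma~\ref{lem:principalminors} via the twist-map interpretation of Remark~\ref{rmk:twistmap}: both determinants encode the mutation of $\Delta_{I_1}$ in a canonical way, so one expects an identity $\det(tB+A) = \Delta_{I_1} \cdot (\text{Toeplitz determinant in } t)$ up to frozen factors, whence extraction of the $t^{\ell-s}$ coefficient would immediately produce $Z_s$. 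Verifying such a twist-map identity uniformly in $k$ is the content of the conjectural $k\geq 4$ case.
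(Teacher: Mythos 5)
Your treatment of \eqref{eq:abstractGSVmap} matches the paper's: both apply $\tau^*$ to turn $M_{R_i}^{C_i}$ into a row-solid minor, invoke \cite[Lemma 10.2]{FraserQH} to factor it into Pl\"ucker coordinates, and do the index bookkeeping; the frozen entries are immediate. That part is fine and works for all $k$.

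The gap is in \eqref{eq:abstractGSVmapii}. What you propose there is the \emph{first} method of Remark~\ref{rmk:GSV}: directly expand $\det(tB+A)$ at $Y=\phi'(X)$ and straighten with Pl\"ucker relations until the coefficient of $t^{\ell-s}$ is seen to be independent of $X$ (an isospectrality statement). The paper is explicit that it can only carry this computation out for $k=2$; for $k=3$ it abandons the direct expansion and instead uses the second method: it takes the explicit $2\times 2$ determinant formula \eqref{eq:MishaPC} for the mutated special variable of $\mca_{\rm GSV}$, applies $(\phi')^*$ to it via the same row-solid-minor factorization, matches the result with the cross-product description of $\mu_1(\Delta_{I_1})$ from Definition~\ref{defn:Weylsidentity}, and then deduces \eqref{eq:abstractGSVmapii} from the algebraic independence of the initial cluster variables. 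Your sentence asserting that for $k=3$ the Leibniz expansion, ``after grouping terms,'' matches $Z_s\prod_i\ov{x}_{N+i}$ is precisely the computation that is not done anywhere, and there is no indication it is routine --- it is the content of the isospectrality conjecture for $k\geq 3$. There is also a misstatement of the target: $\psi^*(\tilde c_s)$ is a coefficient string variable (specializing to the scalar $\eta_s$) times the product of \emph{frozen} variables $\prod_{i=1}^\ell\ov{x}_{N+i}$; the mutable monomial $\ov{x}_2^s\,\ov{x}_{\ell+1}^{k-s}$ you write belongs to the exchange polynomial, not to the image of $\tilde c_s$. To repair the $k=3$ case you would need either to actually perform the straightening (hard, and not done in the literature) or to switch to the paper's route via the explicit special-variable mutation formula on the band-matrix side. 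Your closing suggestion of a twist-map identity relating $\det(tB+A)$ to the Toeplitz determinant is plausible as a strategy for general $k$, but it is not established and cannot substitute for a proof of the $k=3$ case here.
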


After localizing at frozen variables, the inclusions \eqref{eq:hardcontain} now follow when $k=3$ and $\ell>3$. (They also hold when $k=2$, but we prove the stronger statement without localizations via folding below.) They also hold when $k=3$ and $\ell=2,3$ by folding arguments (see below). This establishes the localized version of \eqref{eq:hardcontain} in all $k=3$ examples.

\begin{rmk}\label{rmk:GSV}
To check \eqref{eq:abstractGSVmapii}, we see two possible approaches: the direct approach is to check that the coefficients of the characteristic polynomial $\det(tI+B^{-1}A)$ \eqref{eq:characteristicpoly} are {\sl independent} of $X \in \mcd_n(k,\ell)$, with $A,B$  associated to $M = \phi'(X)$ as in \eqref{eq:characteristicpolyXY}. Thus Conjecture~\ref{conj:GSV} is an isospectrality conjecture as alluded to above. The corresponding eigenvalues are the roots of the $\SL_k$ orbifold polynomial, which are described in Remark~\ref{rmk:qbinomialthm}. We are currently able to prove this isospectrality when $k=2$. 

A second possible approach is to check that applying $(\phi')^*$ commutes with mutation at the special variables. That is, we are claim that when $k<\ell$, then the $\SL_k$-higher orbifold Ptolemy relation is a specialization of the ``master identity'' \cite[Equation (3.7)]{GSVStaircase}. By algebraic independence of the initial cluster variables, this would imply that the coefficients of $Z_{1,\text{GSV}}$ must be sent to those for $Z_{1,\text{\rm cyc}}$. Our proof of Proposition~\ref{prop:GSV} employs this approach. The missing detail when $k>3$ is a sufficiently explicit definition of the result at mutating at the special variable in $\mca_{\rm GSV}$. 
\end{rmk}

\begin{proof}
We have $\phi^* = \psi^*$ when evaluated on frozen matrix entries directly from the definitions. Recall that $\ov{x}_i = \Delta_I$ is a Pl\"ucker coordinate defined in Definition~\ref{defn:initPluckers}. For mutable variables $x_i = M_{R_i}^{C_i} \in \mca_{\rm GSV}$, we have
\begin{align*}
\phi^*(x_i) &= (\rho^{-k})^*\phi^*\tau^*(M_{R_{N-i+1}}^{C_{N-i+1}}) \\
&= (\rho^{-k})^*\phi^*(M_{C_{N-i+1}}^{\rho^k(R_{N-i+1})}) \\
&= \Delta_{I_i}\prod_{b=i}^{N-1} \Delta_{[N+b+1,N+b+k]}.
%&\propto (\rho^{-k})^*\phi^*(M_{[n-k]}^{[(k-1)\ell+1-i] \cup \rho^k(R_i) \cup [(k-1)\ell+2+k,\dots,n]}) \\
%&\propto (\rho^{-k})^*(\Delta_{[n] \setminus [(k-1)\ell+1-i] \cup \rho^k(R_i) \cup (k-1)\ell+2+k,\dots,n}) \\
%&= \Delta_{I_i}.
\end{align*}
In the last line we have used \cite[Lemma 10.2]{FraserQH} and done some bookkeeping. Note that this calculation matches \eqref{eq:abstractGSVmap} (and indeed, was how we guessed the formulas \eqref{eq:abstractGSVmap}).  

In the special cases $k \in \{2,3\}$, we check the second assertion \eqref{eq:abstractGSVmapii} by the second method in Remark~\ref{rmk:GSV}. We focus on the $k=3$ case, which is more complicated. We gave an explicit description of the mutation at the special variable $I_1 \in \mca_n(k,\ell)$ in Definition~\ref{defn:Weylsidentity}. We need a correspondingly explicit description of the mutation at the special variable $x_1\in \mca_{\rm GSV}$. A complicated description, valid in a broader setting, is given in \cite[Equation (3.11)]{GSVStaircase}. For our case of interest (i.e., for band matrices when $k=3$), we have the simpler expression as a $2 \times 2$ determinant in matrix minors:\footnote{We thank Michael Gekhtman for explaining this to us.} 
%\begin{equation}
%M_{[2,2\ell+1] \setminus \{2,\ell+1,2\ell+1\}}^{[\ell+1,3(\ell-1)]} M_{[2,2\ell+1] \setminus \{2,\ell+2,2\ell+1\}}^{[\ell+1,3(\ell-1)]}
%-
%M_{[2,2\ell+1] \setminus \{2,\ell+1,\ell+2\}}^{[\ell+1,3(\ell-1)]} M_{[2,2\ell+1] \setminus \{\ell+1,\ell+2,2\ell+1\}}^{[\ell+1,3(\ell-1)]}.
%\end{equation}

\begin{equation}\label{eq:MishaPC}
z:= M_{[2,2\ell+1] \setminus \{2,\ell+1,2\ell+1\}}^{[5,2\ell+1]} M_{[2,2\ell+1] \setminus \{2,\ell+2,2\ell+1\}}^{[5,2\ell+1]}
-
M_{[2,2\ell+1] \setminus \{2,\ell+1,\ell+2\}}^{[5,2\ell+1]} M_{[2,2\ell+1] \setminus \{\ell+1,\ell+2,2\ell+1\}}^{[5,2\ell+1]}.
\end{equation}
By a similar calculation as above, we have 
\begin{align*}
\zeta^*(z) &\propto \Delta_{2,\ell+1,2\ell+1}\Delta_{2,\ell+2,2\ell+1}- 
\Delta_{2,\ell+1,\ell+2}\Delta_{\ell+1,\ell+2,2\ell+1}\\ &= 
\Delta_{2,\ell+1,2\ell+1}\Delta_{\ell+2,2\ell+2,3\ell+1}- 
\Delta_{2,\ell+1,\ell+2}\Delta_{2\ell+1,2\ell+2,3\ell+1}
\end{align*}
in agreement with our explicit description of mutation at $I_1$ given above. 
%MAYBE DOUBLE CHECK THE FROZENS?
\end{proof}

\begin{rmk}
While our results require that $p \geq k$, the requirement $k < \ell$ is not natural from the perspective of cyclic symmetry loci. Thus, one might be able to ``reverse engineer'' a cluster structure on $\bbc[{\rm Band(k,\ell)}]$ in the $k \geq \ell$ cases from the cluster structure on~$\bbc[\mcd_n(k,\ell)]$.
\end{rmk}

\subsection{Quantum affine algebras at roots of unity}
Consider $\eps \in \bbc^*$ such that $\eps^2$ is an $\ell$th root of unity. Let $\mcc_{\epsilon^\bbz}$ be the monoidal category associated to $\mathfrak{g} = \mathfrak{sl}_k$ as in \cite[Section 3.3]{Gleitz}. It is a monoidal subcategory of the category of representations of the quantum affine algebra when the deformation parameter $q$ has been specialized to $\eps$. In the $k=2$ cases, and in the special case $k=3$ and $\ell=2$, Gletiz showed that the (complexified) Grothendieck ring $K_0(\mcc_{\epsilon^\bbz})$ is a generalized cluster algebra structure of finite Dynkin types $C_{\ell-1}$ and $G_2$, respectively \cite[Theorem 4.1 and 5.4]{Gleitz}. Moreover, a conjectural 
initial seed (consisting of classes of simple modules) for all $k=3$ cases is given in \cite[Conjecture 5.5]{Gleitz}.

We make the following conjecture, which agrees with Gleitz's results and conjectures and extends them to arbitary $k$ and $\ell$.
\begin{conj}\label{conj:quantumaffine}
The Grothendieck group $K_0(\mcc_{\epsilon^\bbz})$ admits a generalized upper cluster algebra structure in which each cluster monomial is the class of a simple module. An initial seed for this cluster algebra can be obtained by setting all frozen variables $\ov{x}_{N+i} = 1$ in Definition~\ref{defn:abstractCSseed} and identifying the mutable variables $\ov{x}_i$ and coefficient string variables $z_s$ with the classes of appropriate simple modules. 
\end{conj}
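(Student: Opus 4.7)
The plan is to establish a ring isomorphism
\[
K_0(\mcc_{\eps^\bbz}) \cong \bbc[\mcd_n(k,\ell)]\big/\langle \ov{x}_{N+1}-1,\dots,\ov{x}_{N+\ell}-1\rangle
\]
after identifying parameters so that $\eps^2 = e^{2\pi i/p}$ matches the specialization~\eqref{eq:trigcoeffs}, and then to transport the generalized cluster structure $\mca_n(k,\ell)$ of Section~\ref{secn:clusters} across this isomorphism. The first step is to match initial data: the mutable variables $\ov{x}_i$ should be classes of specific fundamental $U_q(\hat{\mathfrak{sl}}_k)$-modules whose Drinfeld parameters are read off from the subsets $I_i \in \binom{[n]}{k}$ in Definition~\ref{defn:initPluckers}, while the coefficient-string variables $z_s$ should be identified with classes of root-of-unity analogs of Kirillov--Reshetikhin modules of weight $s\omega_1$. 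I would then verify that the $R$-matrix (denominator) quiver between these initial simples agrees with the quiver encoded by $\tilde{B}$ in Definition~\ref{defn:abstractCSseed}.

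Next, the exchange relations of $\Sigma_{\rm cyc}(k,\ell)$ must be realized inside $K_0(\mcc_{\eps^\bbz})$. For the $N-1$ non-special variables, the required binomial exchanges should follow from root-of-unity specializations of the $T$-system, or equivalently from Hernandez--Leclerc type short exact sequences of simples; via the ring isomorphism above, these become the three-term Pl\"ucker relations pulled back along the map $\phi$ of~\eqref{eg:GrassToBand}. For the single special variable $\ov{x}_1$, the heart of the conjecture is a categorified $\SL_k$-higher orbifold Ptolemy identity: a filtration of a $k$-fold tensor product of an appropriate fundamental module whose Jordan--H\"older multiplicities are exactly the specialized $q$-binomial coefficients ${k \brack s}_{q = \eps^2}$, reproducing~\eqref{eq:CSreln}. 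This is the representation-theoretic shadow of Lemma~\ref{lem:principalminors} and should recover Gleitz's identities in types $C_{\ell-1}$ and $G_2$ \cite{Gleitz} as the cases $k=2$ and $(k,\ell)=(3,2)$. Once the initial seed and these exchanges are matched, the statement that every cluster monomial is the class of a simple module follows from a monoidal categorification argument in the spirit of Hernandez--Leclerc: every simple in $\mcc_{\eps^\bbz}$ should factor uniquely as a tensor product of real prime simples lying in a common cluster.

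The main obstacle is the root-of-unity analysis, since $\mcc_{\eps^\bbz}$ is not semisimple in the naive sense and the generic monoidal categorification machinery does not apply directly. In particular, producing the precise multiplicities ${k \brack s}_{\eps^2}$ as structure constants in $K_0$, rather than as dimensions of Hom spaces in an ungraded category, is delicate. I would pursue two parallel attacks. First, a categorical induction on $k$ starting from Gleitz's $k=2$ theorem, using Frenkel--Mukhin combinatorics of normalized $R$-matrices to reduce higher-rank exchanges to iterated rank-two exchanges. Second, an indirect route: first prove Conjecture~\ref{conj:GSV} and the corresponding strengthening Conjecture~\ref{conj:containments}, and then establish the Grothendieck-ring isomorphism $K_0(\mcc_{\eps^\bbz}) \cong \bbc[\mcd_n(k,\ell)]/\langle\ov{x}_{N+i}-1\rangle$ geometrically, for instance via Nakajima quiver-variety techniques adapted to roots of unity, so that the cluster structure and the identification of cluster monomials with simple modules follow automatically from the results of Section~\ref{secn:clusters}.
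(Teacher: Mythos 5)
The statement you are addressing is Conjecture~\ref{conj:quantumaffine}; the paper does not prove it, and offers only consistency checks (agreement with Gleitz's theorems for $k=2$ and $(k,\ell)=(3,2)$, and with his conjectural $k=3$ seeds via the correspondence of \cite{CDFL}). Your text is accordingly a research program rather than a proof, and the steps you leave open are exactly the open content of the conjecture: the categorified $\SL_k$-higher orbifold Ptolemy identity (a filtration of a $k$-fold tensor product with prescribed Jordan--H\"older multiplicities), the matching of the denominator quiver with $\tilde{B}$, and the unique-factorization statement for simples into real prime simples lying in a common cluster. The last of these is the full monoidal-categorification property, which is not available at roots of unity and is not a consequence of anything in Section~\ref{secn:clusters}; asserting that it ``follows from a monoidal categorification argument in the spirit of Hernandez--Leclerc'' is where the argument would actually have to begin, not end.

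There is also a concrete misidentification of parameters in your setup. In the paper's conjecture the quantum parameter satisfies $\eps^2$ an $\ell$th root of unity, and the coefficient-string variables $z_s$ are to be identified with \emph{classes of simple modules} in $K_0(\mcc_{\eps^\bbz})$ --- they are not specialized to the complex numbers $\eta_s$ of \eqref{eq:trigcoeffs}. The paper states explicitly that the value of $p$ (equivalently $n$) plays no role in Conjecture~\ref{conj:quantumaffine}; the conjecture concerns the unspecialized algebra $\mca_{\rm cyc}(k,\ell)$ with frozen variables set to $1$, not $\mca_n(k,\ell)$. Your proposed identification $\eps^2 = e^{2\pi i/p}$ and the target isomorphism $K_0(\mcc_{\eps^\bbz}) \cong \bbc[\mcd_n(k,\ell)]/\langle \ov{x}_{N+i}-1\rangle$ therefore conflate the order of the orbifold point $p$ with the order of the root of unity governing the category (which is tied to $\ell$), and would at best address a single coefficient specialization of the conjectured structure. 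Your indirect route via Conjecture~\ref{conj:GSV} inherits the same issue, in addition to resting on statements (Conjectures~\ref{conj:containments} and \ref{conj:GSV}) that are themselves open in the paper.
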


We can make this more explicit: via the correspondence between simple modules and Pl\"ucker coordinates given in~\cite[Section 3]{CDFL}, one has a conjectural set of simple modules serving as the initial cluster. 

We are currently investigating this conjecture with Michael Gekhtman and Kurt Trampel, who independently made a related conjecture. Since the cluster algebras $\mca_{\rm cyc}(k,\ell)$ and $\mca_{\rm GSV}(k,\ell)$ are quasi-isomorphic, the above conjecture is insensitive to whether we work with $\mca_{\rm cyc}$ versus $\mca_{\rm GSV}$.

This conjecture is compatible with the $k=2$ and $k=3, \ell=2$ cases covered in \cite{Gleitz}. (See discussion of finite type cyclic symmetry loci below.) It is also compatible with the conjectural description of $k=3$ cases in {\sl loc. cit.} Specifically, consider the 
the $\rho^\ell$-invariant collection $\mcc'$ containing the Pl\"ucker coordinates 
$$\{1,\ell+1,2\ell+1\} \text{ and } \{\ell,\ell+1,\ell+2+x \colon \, x \in [\ell-1]\} \text{ and } \{\ell+1,2\ell-y,2\ell+1-y \colon \, y \in [0,\ell-3]\}.$$
One may check that this $\ell$-optimal collection arises from the construction \eqref{eq:chaintocup}, and can be obtained from our initial seed $\Sigma_n(k,\ell)$ by a sequence of mutations. Thus, we get a seed $\Sigma(\mcc') \subset \mca_n(k,\ell)$. Under a slight change of conventions, the correspondence \cite{CDFL} sends the Pl\"ucker coordinates in $\mcc'$ to the initial simple modules in {\sl loc. cit.}, and the exchange matrices coincide.

We clarify that the value of $p$, (or equivalently, of $n$), plays no role in Conjecture~\ref{conj:quantumaffine}.

\section{Folding along cyclic symmetry}\label{secn:folding}
We state in this section certain conjectures relating the CS- cluster algebra $\mca_n(k,\ell)$ to its ``unfolding'', i.e. the FZ-cluster algebra $\bbc[\Gr(k,n)]$. This was our starting perspective, which led us to the coefficient strings \eqref{eq:trigcoeffs}. We explain how these conjectures would imply~\eqref{eq:hardcontain}.

%We omit the proof of the following statement in weak separation combinatorics. 
%\begin{lem}\label{lem:foldings}
%The matrix $\tilde{B}D$ just described can be constructed as follows. Extend $\ov{\mcc}_n(k,\ell)$ to a maximal weakly separated collection $\ov{\mcc_n(k,\ell)} \subset \mcc^\dagger$ with its corresponding quiver $\tQ(\mcc^\dagger)$. Let $\tQ_1 \subset \tQ(\mcc^\dagger)$ be the quiver obtained by first taking the induced subquiver on vertices $\ov{\mcc_n(k,\ell)}$, and then deleting any arrows between variables in the $\rho^\ell$-orbit of the special variable $\Delta_{I_1}$. 
%Then $\tQ_1$ is $\rho^\ell$-symmetrical, and the matrix $\tilde{B}D$ is the result of folding $\tQ_1$ along the $\rho^\ell$-symmetry. 
%\end{lem}

%\begin{rmk}
%If we try and carry out the above folding procedure in the cases that $p<k$, the folded quiver will have oriented 2-cycles. One might think that we should merely ``cancel'' the 2-cycles: this is the procedure one uses e.g. when creating the quiver from a triangulation  in the setting of cluster algebras from surfaces. By considering finite type examples, we try and convince the reader below that this is {\sl not} the correct way of folding the cluster algebra when $p<k$. Rather, our calculation in Examples~\ref{eg:firstprestable} and \ref{eg:secondprestable} suggest that understanding $p<k$ examples requires inventing a formalism that allows for quivers with oriented two-cycles.\footnote{Perhaps, this can only be done in an artificial way in general, i.e. by working always with an appropriate unfolding of such a quiver.}
%\end{rmk}

\begin{conj}\label{conj:uniformity}
One can label each vertex $v$ in the exchange graph of $\mca_{\rm cyc}(k,\ell)$ by an $\ell$-optimal cluster ${\bf x}(v) \subset \bbc[\Gr(k,n)]$, such that once we restrict functions to $\mcd_{p\ell}(k,\ell)$, 
${\bf x}(v)$ becomes an extended cluster in $\mca_{p\ell}(k,\ell)$, for any $p \geq k$. 

Any such partial cluster ${\bf x}(v)$ is contained an extended cluster   ${\bf x}^\dagger(v)\subset \bbc[\Gr(k,n)]$ with the following property . Once we restrict functions to $\mcd$, any $x \in {\bf x}^\dagger(v) \setminus {\bf x}(v)$ is linearly related to a monomial in in the elements of ${\bf x}(v)$.
\end{conj}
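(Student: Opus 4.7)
The strategy is to construct the labeling ${\bf x}(v)$ inductively along paths in the exchange graph of $\mca_{\rm cyc}(k,\ell)$, rooted at the initial seed with ${\bf x}(v_0) := \Delta(\ov{\mcc_n(k,\ell)})$; this base case is $\ell$-optimal by Lemma~\ref{lem:seedandtest}. For each edge $w \to v$ corresponding to mutation in a direction $i$, I would construct ${\bf x}(v)$ from ${\bf x}(w)$ by replacing the $\rho^\ell$-orbit of variables in position $i$ with an explicit new orbit in $\bbc[\Gr(k,n)]$, and then verify that (a) the result is again an $\ell$-optimal collection, (b) the CS-exchange relation for $\mu_i$ specializes to the identity in $\bbc[\mcd_n(k,\ell)]$ relating the two orbits, and (c) the resulting labeling is independent of the path from $v_0$ to $v$.

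Non-special directions $i$ (with $d_i=1$) should be handled by $\rho^\ell$-equivariant square moves: the $p$ commuting mutations in a maximal weakly separated collection containing ${\bf x}(w)$ replace one $\rho^\ell$-orbit of Pl\"ucker coordinates with another. The required binomial identity in $\bbc[\mcd]$ comes from the standard three-term Pl\"ucker relation, after using the projection formula of Lemma~\ref{lem:superfluous} to identify $\rho^\ell$-related Pl\"ucker coordinates with scalar multiples of one another. Compatibility of exchange matrices under the restriction $\bbc[\Gr(k,n)] \twoheadrightarrow \bbc[\mcd]$ then reduces to the standard dictionary between reduced plabic graphs and quivers, applied $\rho^\ell$-equivariantly.

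The main obstacle is the special direction. The initial-seed mutation was identified in Theorem~\ref{thm:onestep} with the Muller--Speyer right-twisted Pl\"ucker coordinate $L$ from \eqref{eq:L}, and Remark~\ref{rmk:twistmap} suggests that \emph{every} mutation at a special vertex arises in this way, with the role of the Grassmann necklace $\rmci_{f_{\ell+1}}$ played by an appropriate necklace attached combinatorially to $v$. My plan is to assign such a necklace using the bridge-graph decomposition of $\ov{\mcc}(v)$ supplied by Theorem~\ref{thm:TPtests}, and then establish a determinantal identity generalizing \eqref{eq:Weylsidentity} that collapses, via the evaluation of ratios of Pl\"ucker coordinates at the unique TP $\rho$-fixed point as in Lemma~\ref{lem:firstlinear}, to the $\SL_k$-higher orbifold Ptolemy relation \eqref{eq:CSreln}. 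Establishing this identification stably across all sequences of non-special mutations is, I expect, the hardest step; it likely requires an intrinsic, path-independent description of the special cluster variable in $\bbc[\Gr(k,n)]$ that is manifestly preserved by symmetric square moves.

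For the well-definedness in (c) it suffices to check the pentagon and commuting-square relations in the exchange graph, each of which should follow from the corresponding identity in $\bbc[\Gr(k,n)]$ once every mutation has been realized either as a symmetric square move or as a twist pullback. The second statement, concerning the extension ${\bf x}^\dagger(v)$, follows by extending ${\bf x}(v)$ to a maximal weakly separated collection $\mcc^\dagger(v)$; each superfluous Pl\"ucker coordinate is then a positive real scalar multiple of an element of ${\bf x}(v)$ after restriction to $\mcd$ by \eqref{eq:whatisalpha}, which yields the required linear relation to a length-one monomial. Handling extensions that include non-Pl\"ucker cluster variables should follow from the same projection idea applied to the rational map $\mcd \to \Gr(k,p)^{\rho}_{>0}$, reducing to the finite-dimensional case where the relation can be checked directly.
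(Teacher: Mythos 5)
The statement you are addressing is Conjecture~\ref{conj:uniformity}: the paper does not prove it, and says so explicitly ("we have not thoroughly tested either part of this conjecture"). It offers only partial strategies — the unfolding mechanisms of Section~\ref{secn:folding} for $p=k$ and $p=k+1$, Remark~\ref{rmk:extendmethod} for $p\geq k+2$, and verifications in finite (mutation) type — so there is no proof in the paper to compare yours against. Your outline does track the paper's intended route (symmetric square moves for non-special directions, the twist/determinantal identity behind Lemma~\ref{lem:Weylsidentity} for the special direction), but it leaves open precisely the points that make this a conjecture rather than a theorem, so it should be presented as a strategy, not a proof.

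Concretely: (i) your treatment of non-special mutations via $\rho^\ell$-equivariant square moves presupposes that ${\bf x}(w)$ consists of Pl\"ucker coordinates sitting inside a weakly separated collection; already after one mutation at the special vertex the cluster contains the degree-$(k-1)$ polynomial $L$ of \eqref{eq:L} (cf.~Example~\ref{eg:orbifoldexamples}), so plabic-graph combinatorics no longer governs the subsequent exchanges and you have not said what replaces it. (ii) Even before path-independence, each inductive step requires that the unfolded $\rho^\ell$-symmetric mutation is \emph{defined}, i.e.\ that no quiver reachable by symmetric mutation acquires an arrow between two vertices of a single $\rho^\ell$-orbit; the paper names this as the central obstruction and your proposal does not address it. Your reduction of path-independence to "pentagon and commuting-square relations" is also unjustified: cycles in the exchange graph are not known to be generated by $4$- and $5$-cycles, and near the special vertex the rank-two subpattern of the right companion has $|b_{12}b_{21}|=k$, hence is of infinite type for $k\geq 4$ and admits no local polygon relation to check. (iii) The second half of the conjecture demands the linear relation for every extension ${\bf x}^\dagger(v)$, including non-Pl\"ucker superfluous variables; Lemma~\ref{lem:superfluous} and \eqref{eq:whatisalpha} give it only for Pl\"ucker coordinates, and your reduction of the general case to the projection $\mcd\to\Gr(k,p)^{\rho}$ is asserted rather than argued. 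Where your outline is sound (the base case via Lemma~\ref{lem:seedandtest}, the first special mutation via Theorem~\ref{thm:onestep}, the Pl\"ucker-only version of the superfluous-variable claim), it reproduces results already proved in the paper; the genuinely open steps remain open.
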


When we refer to this Conjecture~\ref{conj:uniformity} for a specific value of $p$, we are merely assuming that each cluster lifts to an $\ell$-cluster that satisfies the ${\bf x}^\dagger(v)$ condition. But Conjecture~\ref{conj:uniformity} asserts moreover that the lifting can be done uniformly in $p$.

%Thus, we conjecture that clusters in $\mca_{p\ell}(k,\ell)$ can be described uniformly (independently of $p$); the dependence on $p$ is reflected purely in the algebraic relationships between these clusters via the coefficient string \eqref{eq:trigcoeffs}. 

We have not thoroughly tested either part of this conjecture. We state it because it is compatible with the evidence given in Lemma~\ref{lem:superfluous}, and is sufficient to prove the equality \eqref{eq:hardcontain}:

\begin{prop}
If Conjecture~\ref{conj:uniformity} holds for $k,\ell,p$, then the strengthened inclusions \eqref{eq:hardcontain} hold. 
\end{prop}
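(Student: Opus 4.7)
The plan is to exploit the surjective restriction $\pi \colon \bbc[\Gr(k,n)] \twoheadrightarrow \bbc[\mcd]$ induced by the closed embedding $\mcd = \mcd_{p\ell}(k,\ell) \hookrightarrow \Gr(k,n)$, using it to pull back information from Scott's cluster structure on $\bbc[\Gr(k,n)]$. The conjecture is calibrated precisely so that generalized clusters of $\mca_n(k,\ell)$ lift to (parts of) ordinary Scott clusters; the two inclusions will correspond to the two directions, namely that lifts of cluster variables are regular on the Grassmannian, and that Laurent expansions in Scott clusters descend.

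For the left inclusion $\mca_n(k,\ell) \subset \bbc[\mcd]$, I would argue vertex by vertex. An arbitrary cluster variable $x$ of $\mca_n(k,\ell)$ belongs to the extended cluster attached to some vertex $v$ of the exchange graph. The first part of Conjecture~\ref{conj:uniformity} identifies this extended cluster with $\pi({\bf x}(v))$ for an $\ell$-optimal cluster ${\bf x}(v) \subset \bbc[\Gr(k,n)]$, so $x = \pi(\tilde x)$ for some Scott cluster variable $\tilde x$. Scott cluster variables are regular on $\Gr(k,n)$, so $x$ is regular on $\mcd$, and since cluster variables generate the generalized cluster algebra as a $\bbc$-algebra, the inclusion follows.

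For the right inclusion $\bbc[\mcd] \subset \mca^{\rm up}_n(k,\ell)$, I would fix $f \in \bbc[\mcd]$ and an arbitrary vertex $v$, then lift $f$ to $\tilde f \in \bbc[\Gr(k,n)]$ via surjectivity of $\pi$. The second part of Conjecture~\ref{conj:uniformity} enlarges ${\bf x}(v)$ to a full Scott extended cluster ${\bf x}^\dagger(v)$ with the property that each $y \in {\bf x}^\dagger(v) \setminus {\bf x}(v)$ satisfies $\pi(y) = \aa_y m_y$ for some scalar $\aa_y$ and some monomial $m_y$ in $\pi({\bf x}(v))$. By Scott's theorem and the Laurent phenomenon, $\tilde f$ expands as a Laurent polynomial in ${\bf x}^\dagger(v)$; applying $\pi$ and substituting $\pi(y) \mapsto \aa_y m_y$ for each extra $y$ turns this into a Laurent expression for $f$ in the extended cluster $\pi({\bf x}(v))$. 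Since $v$ was arbitrary, $f$ lies in every relevant Laurent ring and thus in $\mca^{\rm up}_n(k,\ell)$.

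The step I expect to demand genuine care is the bookkeeping around frozens in the second inclusion. The substitution $\pi(y) = \aa_y m_y$ can in principle introduce negative exponents on the frozen variables of $\mca_n(k,\ell)$, because a given $y$ can appear in the denominator of a Scott Laurent monomial even though $y$ itself is mutable in $\bbc[\Gr(k,n)]$. Taken at face value, the argument therefore delivers $f \in \mca^{\rm up,\circ}_n(k,\ell)$, the upper algebra localized at frozens, rather than the unlocalized $\mca^{\rm up}_n(k,\ell)$ promised by \eqref{eq:hardcontain}. Promoting the statement will require choosing the Scott extension ${\bf x}^\dagger(v)$ so that the Grassmannian frozens are already contained in ${\bf x}(v)$ and no extra variable $y$ contributes a frozen factor in $m_y$, together with the observation that $\tilde f$ is polynomial (not merely Laurent) in the Scott frozens. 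The truly hard part, of course, is external to this proposition: Conjecture~\ref{conj:uniformity} itself, which packages both the uniform existence of $\ell$-optimal lifts and the scalar-monomial descent property, is where all the geometric content lies.
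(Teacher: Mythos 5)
Your argument is essentially the paper's own proof: the left inclusion follows because the first part of Conjecture~\ref{conj:uniformity} lets every cluster variable lift to a regular function on $\Gr(k,n)$, and the right inclusion follows by taking the Laurent expansion of a Pl\"ucker coordinate in ${\bf x}^\dagger(v)$ and using the monomial relation to eliminate the variables in ${\bf x}^\dagger(v)\setminus{\bf x}(v)$. Your closing caveat about frozen denominators is a real subtlety that the paper's terse proof does not address, but it does not change the route.
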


\begin{proof} The first part of Conjecture~\ref{conj:uniformity} implies that any cluster variable $x \in \mca_n(k,\ell)$ lifts to an element of $\bbc[\Gr(k,n)]$, hence is regular on $\mcd_n(k,\ell)$. Thus 
$\mca_n(k,\ell) \subset \bbc[\mcd_n(k,\ell)]$. For the opposite containment, by the Laurent phenomenon for $\bbc[\Gr(k,n)]$, any 
Pl\"ucker coordinate $\Delta_I$ is a Laurent polynomial in the elements of ${\bf x}^\dagger(v)$. Restricting this Laurent polynomial expression to $\mcd$ and using the assumption on ${\bf x}^\dagger(v)$, it follows that $\Delta_I$ is a Laurent polynomial in the elements of 
${\bf x}(v)$. \end{proof}

\begin{example}
Conjecture~\ref{conj:uniformity} holds when $k=2$. Every cluster lifts to a $\rho^\ell$-symmetrical partial triangulation of the $n$-gon, and any extension ${\bf x}^\dagger$ of this partial triangulation to a triangulation satisfies the second part of Conjecture~\ref{conj:uniformity} by Lemma~\ref{lem:superfluous}. A similar argument holds more generally when ${\bf x}(v)$ is of the form \eqref{eq:chaintocup}, but not every cluster has this form once $k>2$. 
\end{example}

Our next two sections briefly sketch how one {\sl might} prove Conjecture~\ref{conj:uniformity} in the special cases that $p \in \{k,k+1\}$. For these values of parameters, $\bbc[\Gr(k,n)]$ admits $\ell$-clusters (not merely $\ell$-optimal clusters), so the ${\bf x}^\dagger(v)$ condition is vacuous. The argument we sketch here rigorously works in the finite mutation type cases, which allows us to conclude \eqref{eq:hardcontain} as stated in Section~\ref{secn:clusters}.

\subsection{Unfolding when $p=k$}
In this case, $\mca_{p\ell}(k,\ell)$ is an FZ- cluster algebra (it is the right companion cluster algebra). Since the 
the ${\bf x}^\dagger(v)$ condition is vacuous in this case, what we need to prove is that every cluster in $\mca_{p\ell}(k,\ell)$ is the image of an $\ell$-cluster once we restrict functions to $\mcd$. 

To this end, we conjecture the following: every mutation in $\mca_{p\ell}(k,\ell)$ ``unfolds'' to a $\rho^\ell$-symmetrical sequence of mutations in $\bbc[|Gr(k,n)]$. It is easy to see that such a statement implies Conjecture~\ref{conj:uniformity} in the $p=k$ case. The difficulty in carrying out this argument is verifying that an arbitrary sequence of $\rho^\ell$-symmetrical mutations is well-defined. That is, one must verify that after performing a sequence of mutations at $\rho^\ell$-orbits, one cannot arrive at a a quiver that admits an arrow between two vertices in a $\rho^\ell$-orbit. This statement is purely about quiver mutation (not about algebra). We have checked it in the finite type and finite mutation type examples.

\subsection{Unfolding when $p=k+1$}
Now we explain a similar (conjectural) mechanism that would explain Conjecture~\ref{conj:uniformity} when $p=k+1$. It relies on the following lemma, which we think is of independent interest.

Let ${\bf q}_{p}$ denote the oriented $p$-cycle on vertices $u_1,\dots,u_p$. Let $\tau_p$ be the following permutation-mutation sequence (cf.~\cite[Section 7]{GoncharovShenDT}): $\tau_p = \mu_1,\circ \cdots \circ\mu_{p-1} \circ (u_{p-1},u_p) \circ \mu_{p-1} \circ \cdots \circ \mu_1$. It differs from the {\sl Donaldson-Thomas transformation} (also known as {\sl green-to-red sequence}) by a cyclic permutation of variables.  Let $\widetilde{{\bf q}_{p}}$ be the {\sl framed- and co-framed version} of this quiver: one adds to ${\bf q}_{p}$ the 
auxilliary vertices $u_i',u_i''$ and arrows $u'_i \to u_i$ and $u_i \to u''_i$ for all $i \in [p]$. For a vertex $v \in \widetilde{{\bf q}_p}$, let $x(v)$ be an indeterminate assigned to $v$, treated as an initial cluster variable. Let $x'(v) = \tau_p(x(v))$ be the image of this initial cluster variable under the permutation-mutation sequence $\tau_p$. 
\begin{lem}\label{lem:vanillaDT}
Let $a,b,c$ be indeterminates. Then under the specialization of initial variables $x(u_i) = a$, $x(u'_i) =b$, and $x(u''_i) = c$, the variable $x'(u_i)$ specializes to 
\begin{equation}\label{eq:CSfolding}
\frac{\sum_{i=1}^{p-1} b^ic^{p-1-i}}{a}.
\end{equation}
Let ${\bf q'}_p = (u'_i,u'_{i-1},\dots,u'_{i-p+1}) \circ (u''_i,u''_{i+1},\dots,u'_{i+p-1}) \circ \tau_r(\widetilde{{\bf q}_r})$ be the quiver obtained by performing $\tau_r$ followed by two cyclic permutations of variables. 
Then ${\bf q'}_p$ contains 
${\bf q}_p$ as an induced subquiver, an in addition has arrows $u''_i \to u_i$ and $u_i \to u'_i$  for all $i$, as well as arrows $u'_i \to u''_j$ for all $j \neq i+1$. 
\end{lem}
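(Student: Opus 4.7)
My plan is a direct induction on $p$, tracking the cluster variables and the quiver simultaneously, and exploiting the $\bbz/p$ rotational symmetry of both $\widetilde{{\bf q}_p}$ and the specialization $x(u_i)=a$, $x(u'_i)=b$, $x(u''_i)=c$. Although the intermediate subsequences $\mu_{p-1}\circ\cdots\circ\mu_1$ break this symmetry, the full composite $\tau_p$ followed by the two cyclic permutations in ${\bf q}'_p$ restores it, which is the main organizing principle.

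To establish the specialization formula \eqref{eq:CSfolding}, I would first analyze the initial right half $\mu_{p-1}\circ\cdots\circ\mu_1$. By induction on $k$, I expect to show that after performing $\mu_k\circ\cdots\circ\mu_1$, the value at $u_j$ is the complete homogeneous polynomial $h_j(b,c):=\sum_{i+r=j,\,i,r\geq 0}b^i c^r$ for $j\leq k$, while the remaining $x(u_j)$ stay equal to $a$. The base case is a one-line calculation: at $u_1$, the incoming vertices are $u_p,u'_1$ and the outgoing are $u_2,u''_1$, so $\mu_1$ sends $a\mapsto(ab+ac)/a=b+c=h_1(b,c)$. The inductive step requires knowing the arrows at $u_k$ after the previous mutations; the key (and verifiable) claim is that $\mu_{k-1}$ creates an arrow $u'_{k-1}\to u_k$ and reverses the arrow $u_{k-1}\to u_k$, while leaving $u'_k$ and $u''_k$ adjacent to $u_k$ as before, so that $\mu_k$ produces $(ab\cdot h_{k-1}+a c\cdot h_{k-1}+\cdots)/a$, simplifying to $h_k(b,c)$ via the recursion $h_k(b,c)=(b+c)h_{k-1}(b,c)-bc\,h_{k-2}(b,c)$.

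Once the first half is understood, the permutation $(u_{p-1},u_p)$ swaps the values $a$ and $h_{p-1}(b,c)$ at these vertices and reshuffles the arrows incident to them. The second half $\mu_{p-1}\circ\cdots\circ\mu_1$ then undoes most of the earlier cluster variable changes, but the swap produces an asymmetry whose net effect at every $u_i$ is exactly $b\cdot h_{p-2}(b,c)/a=\sum_{i=1}^{p-1}b^ic^{p-1-i}/a$. Here I would again proceed by induction, this time running from $j=1$ up to $j=p-1$, and the cyclic symmetry of the final configuration (which one can verify by induction is forced by the symmetry of the initial data) guarantees that the formula at $u_1$ already gives the answer at every $u_i$.

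The second assertion, about the arrow structure of ${\bf q}'_p$, I would prove in parallel with the first by maintaining an inductive description of the intermediate quivers. Each $\mu_k$ acts by explicit local rules on arrows between mutable vertices and between mutable and frozen vertices, so the state after any initial segment of $\tau_p$ can be encoded by a short ``profile'' listing the adjacencies of each $u_i$ to the framed/coframed vertices. After the full $\tau_p$ and the two cyclic permutations of variables, checking that the resulting quiver contains ${\bf q}_p$ as an induced subquiver, that the framing arrows have been reversed (i.e.\ $u_i\to u'_i$ and $u''_i\to u_i$), and that one has arrows $u'_i\to u''_j$ for all $j\neq i+1$, becomes a finite verification at the level of these profiles. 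I expect the main obstacle to be the sheer volume of bookkeeping: many arrows are created and destroyed at each step, and verifying that spurious $2$-cycles cancel (rather than persist) requires care. Exploiting the rotational symmetry to reduce the verification to a single ``template'' orbit should keep this manageable.
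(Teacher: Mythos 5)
Your overall strategy (direct induction with explicit tracking of variables and arrows, plus rotational symmetry) is the same one the paper uses, but the concrete inductive claims you make break down at exactly the step that produces the answer. Write $h_j(b,c)=\sum_{i=0}^{j}b^ic^{j-i}$. Your claim that after $\mu_k\circ\cdots\circ\mu_1$ the value at $u_j$ is $h_j(b,c)$ for all $j\le k$ is correct for $j\le p-2$ but fails at $j=p-1$: when one mutates at $u_{p-2}$, the newly created arrow $u_p\to u_{p-1}$ (from the path $u_p\to u_{p-2}\to u_{p-1}$) cancels against the original cycle arrow $u_{p-1}\to u_p$, so by the time one mutates at $u_{p-1}$ neither exchange monomial contains the factor $x(u_p)=a$. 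The exchange there is $\bigl(\prod_{j\le p-1}x(u'_j)+x(u''_{p-1})\,h_{p-2}\bigr)/a=\bigl(b^{p-1}+c\,h_{p-2}\bigr)/a=h_{p-1}(b,c)/a$. This is where the denominator $a$ in \eqref{eq:CSfolding} comes from; it is already produced by the \emph{first} half of $\tau_p$, and the transposition $(u_{p-1},u_p)$ then parks this value at $u_p$, which is never mutated again. Your proposed mechanism --- all first-half values equal to $h_j$, with the $1/a$ emerging from an ``asymmetry'' in the second half --- is therefore not what happens. Relatedly, your final expression $b\,h_{p-2}(b,c)/a$ is missing the $c^{p-1}$ term: the correct value is $h_{p-1}(b,c)/a=\sum_{i=0}^{p-1}b^ic^{p-1-i}/a$ (the lower limit $i=1$ printed in \eqref{eq:CSfolding} appears to be a typo, since the exchange polynomial at the special vertex carries the full coefficient string $(1,\dots,1)$ of length $p$).

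Two further signs that the computation was not actually carried out: the recursion you invoke, $h_k=(b+c)h_{k-1}-bc\,h_{k-2}$, is a true polynomial identity but cannot arise from a mutation (exchange relations have no minus signs); what the mutation actually gives is $h_k=b^k+c\,h_{k-1}$. And your description of the arrows at $u_k$ before $\mu_k$ is incomplete: $\mu_{k-1}$ creates arrows $u'_j\to u_k$ for \emph{every} $j\le k-1$ (not just $j=k-1$), together with the arrow $u_p\to u_k$ when $k\le p-2$, so the positive exchange monomial is $x(u_p)\prod_{j\le k}x(u'_j)=ab^k$, not $ab\,h_{k-1}$. For the quiver assertion, your plan of maintaining a full adjacency ``profile'' by hand is feasible but heavy; the paper instead quotes the Goncharov--Shen facts that $\tau_p$ is an involution and that $\tau_p$ composed with a cyclic permutation reverses the framing arrows while preserving ${\bf q}_p$, which reduces the by-hand check to the arrows among $\{u'_i\}\cup\{u''_j\}$, and it uses rotational symmetry of the composite to reduce the variable computation to the single vertex $u_p$, whose final value needs only the first half. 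If you repair the inductive profile as above (in particular the $u_{p-1}\leftrightarrow u_p$ cancellation), your approach becomes a correct, self-contained version of the paper's argument.
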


This lemma says that a CS-exchange relation of degree $p-1$ and coefficient string $(c_0,\dots,c_{p-1}) = (1,1,\dots,1)$ can be simulated by performing a sequence of FZ- mutations in the cyclically symmetrical quiver $\widetilde{{\bf q}_p}$. The description of ${\bf q'}_p$ says that the $B$-matrices change as expected: one reverses the 2-path $u'_i \to u_i \to u''_i$ and adds a total of $p-1$ arrows directed from variables $u'_i$ to variables $u''_j$.

\begin{proof} By Section~\cite[Section 7]{GoncharovShenDT}, $\tau_p$ is an involution, and the composition of $\tau_p$ with a cyclic permutation of variables has the effect of reversing all arrows $u'_i \to u_i$ while preserving the subquiver ${\bf q}_r$. The inverse transformation therefore would correspond to performing the inverse cyclic permutation followed by the same permutation-mutation sequence. Performing $\tau_p$ followed by the cyclic permutations in the statement of the lemma consequently preserves ${\bf q}_p$ while reversing the directions of the arrows $u'_i \to u_i$ and $u_i \to u''_i$.  What needs to be checked is the statement about arrows between the vertices of the form $\{u'_i\} \cup \{u''_j\}$. From the rotational symmetry, it suffices to study arrows involving $u'_1$ and/or $u''_1$. One can see this by doing examples in the quiver mutation applet: mutation $\mu_{p-1} \circ \cdots \circ \mu_1$ creates arrows $u'_1 \to u''_j$ for $j \in [p-1]$ and does not create any arrows $u'_1$ to $u'_i$ or $u''_1$ to $u''_j$. The remainder of the cluster transformation $\tau_p$ does not create any arrows involving $u'_1$. 

The formula for the variables $x'(u_i)$ is argued in a similar fashion. By rotational symmetry, it suffices to check the formula for $x'(u_p)$. One has $x'(u_p) = \mu_{p-1} \circ \cdots \circ \mu_1(x(u_{p-1}))$. By induction on $p$ we have that $\mu_{p-2} \circ \cdots \circ \mu_1(x(u_{p-2})$ specializes to $\sum_{j=0}^{p-2}b^jc^{p-2-j}$. By the way the quiver mutates, one sees that 
$$\mu_{p-1} \circ \cdots \circ \mu_1(x(u_{p-1})) = 
\frac{1}{x_{p-1}}(\prod_{j \leq p-1}x(u'_j)+x(u''_{p-1})\mu_{p-2} \circ \cdots \circ \mu_1(x(u_{p-2})),$$ which specializes to 
$\frac{1}{a}(b^{p-1}+c\sum_{j=0}^{p-2}b^jc^{p-2-j})$ establishing \eqref{eq:CSfolding}.
%One sees that $\mu_{i} \circ \cdots \circ \mu_1(x_i)$ equals 
%$\frac{1}{x_i}(x_p\prod_{j < i}x(u'_j)+x(u''_i)x_{i+1}\mu_{i-1} \circ \cdots \circ \mu_1(x_{i-1})$, which specializes to 
%$(b^{i-1}+c(\sum_{j=0}^{i-2}b^jc^{i-2-j})) = \sum$
\end{proof}

%1st mutation: a(b+c)/a
%2nd mutation: (ab^2+a(b+c)c)/a
%3rd mutation: (ab^3+a(b^2+bc+c^2)c)/a

In the situation $p=k+1$, the $\rho^\ell$-orbit of the special variable forms an oriented $k+1$-cycle in the extended quiver $\tQ(\ov{\mcc_n(k,\ell)})$. (This follows from the weak separation combinatorics: 
the $\rho^\ell$-orbit $\{\rho^{a\ell}(I_1) \colon a \in [k+1]\}$ of the special variable forms a black clique in the plabic tiling for $\mcc_{n}(k,\ell)$, because these subsets exhaust all $k$-element subsets drawn from the $k+1$-element subset $\{1,1+\ell,\dots,n-\ell+1\}$.)

With Lemma~\ref{lem:vanillaDT} in hand, we conjecture that any sequence of mutations in $\mca_{(k+1)\ell}(k,\ell)$ unfolds to a sequence of mutations in $\bbc[\Gr(k,n)]$. A mutation at a non-special variable should unfold to a mutation at the corresponding $\rho^\ell$-orbit in $\bbc[\Gr(k,n)]$. And mutation at the special variable should unfold to the mutation sequence $\tau_p$. The difficulty, as in the $p=k$ case, is arguing that these unfoldings remain well-defined.

\begin{rmk}[Unfolding when $p \geq k+2$]\label{rmk:extendmethod}
The above mechanism could be extended to the $p \geq k+2$ cases as follows. First, unfold the initial cluster $\mcc \subset \mca_n(k,\ell)$ to an $\ell$-cluster $\ov{\mcc}$ and extend it to a Grassmannian cluster $\mcc^\dagger$. By Lemma~\ref{lem:superfluous}, the superfluous variables (i.e. the elements of $\mcc^\dagger \setminus \ov{\mcc}$) are linearly related to elements of $\mcc$ when thought of as functions on $\mcd$. When one wishes to mutate at the special variable in the cluster algebra $\mca_n(k,\ell)$, one should restrict the cluster $\mcc^\dagger$ to the subcluster $\mcc^{\dagger}_{\rm sub} \subset \mcc^{\dagger}$ consisting of the superfluous variables together with the $\rho^\ell$-orbit of the special variable. Conjecturally, this subcluster would admit a cluster DT transformation, and one can simulate mutation at the special variable by  performing this mutation sequence. This has the effect of reversing the arrows of $\mcc^\dagger$ which point into and out of $\mcc^{\dagger}_{\rm sub}$.
%(as one would expect), and it should preserve the linear relation between cluster variables. 
The interpretation of the orbifold Ptolemy relation as a twist map, and the relations between twist maps and DT transformations, 
are further supporting evidence that this approach might work (cf.~Remark~\ref{rmk:twistmap}). 
%Moreover, it is an automorphism of the cluster algebra associated to the subcluster, so should preserve the linear relations between the variables (once viewed in $\mcd_0$). 
%Making the idea rigorous requires establishing compatibilities between 
%$\mu_{\rm DT}$, the orbifold Ptolemy relation \eqref{eq:CSreln}, and the linear relationships from Lemma~\ref{lem:superfluous}. 
\end{rmk}

\section{Gallery of examples}\label{secn:examples}
\subsection{Finite type examples}
We list the finite type Grassmannians $\Gr(k,n)$ admitting nontrivial factorizations $n = p\ell$ in the following table. 
 
\begin{center}
\begin{tabular}{|ccccc|}\hline
Locus &Type($\Gr(k,n)$)  &Type($\mca_n(k,\ell)$)  & \# $\ell$-collections &  \# $\ell$-cluster variables\\
\hline 
$\mcd_n(2,\ell) $&  $A_{n-3}$  & $C_{\ell-1}$ &$\binom{2\ell-2}{\ell-1}$ & $\ell(\ell-1)$ \\ \hline
$\mcd_6(3,2)$ & $D_4$& $G_2$ & 8 & 8\\ \hline
$\mcd_8(3,2)$ & $E_8$& $G_2$ & 8 & 8\\ \hline
$\mcd_6(3,3)$ & $D_4$ & N/A & 6 & 6\\ \hline
$\mcd_8(3,4)$ & $E_8$ & N/A & 88 & 28 \\ \hline
%\label{table:finite}
\end{tabular}
\end{center}

On a case by case basis, one sees that the position of $\mca_{p\ell}(k,\ell)$ in the trichotomy of finite cluster vs. finite mutation but infinite cluster type vs. infinite type cluster algebras matches that of the Grassmannian cluster algebra $\bbc[\Gr(k,k\ell)]$.

\begin{thm}\label{thm:finiteclusters}
When $\mca_{p\ell}(k,\ell)$ has finite type (and $p \geq k$), we have equality of algebras $\mca_{p\ell}(k,\ell) = \bbc[\mcd_{p\ell}(k,\ell)]$.  
\end{thm}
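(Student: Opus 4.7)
The plan is to deduce the theorem from the folding framework of Section~\ref{secn:folding}. By the proposition following Conjecture~\ref{conj:uniformity}, it suffices to verify Conjecture~\ref{conj:uniformity} for every parameter triple $(k,\ell,p)$ satisfying both the finite type hypothesis and $p\geq k$. Inspecting the table preceding the theorem, these triples are the infinite family $(k,\ell,p)=(2,\ell,p)$ with $\ell\geq 2$ and $p\geq 2$ (giving type $C_{\ell-1}$), together with the two sporadic $G_2$ cases $(k,\ell,p)=(3,2,3)$ and $(3,2,4)$. The entries $\mcd_6(3,3)$ and $\mcd_8(3,4)$ of the table are excluded by $p\geq k$.

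First I would dispatch the $k=2$ family uniformly using the triangulation model. Clusters in $\bbc[\Gr(2,n)]$ are in bijection with triangulations of the $n$-gon, and $\ell$-optimal clusters correspond to $\rho^\ell$-symmetric partial triangulations. Starting from the initial $\ell$-cluster of Definition~\ref{defn:initPluckers}, I would argue by induction on distance in the exchange graph of $\mca_{p\ell}(2,\ell)$ that every vertex $v$ can be labeled by a $\rho^\ell$-symmetric partial triangulation ${\bf x}(v)$, and that each mutation in $\mca_{p\ell}(2,\ell)$ unfolds to the simultaneous flip of a $\rho^\ell$-orbit of diagonals. Well-definedness rests on the combinatorial observation that no two diagonals in a common $\rho^\ell$-orbit share a triangle of a $\rho^\ell$-symmetric partial triangulation, which is a short geometric argument using the action of $\rho^\ell$ on the shared vertex. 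For the second clause, any extension ${\bf x}^\dagger(v)$ to a full triangulation works: by Lemma~\ref{lem:superfluous} each omitted diagonal becomes a positive scalar multiple of a diagonal already in ${\bf x}(v)$ upon restriction to $\mcd$.

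Next I would handle the two $G_2$ cases by finite enumeration. Both $\mca_6(3,2)$ (where $p=k=3$) and $\mca_8(3,2)$ (where $p=k+1=4$) have only $8$ clusters and $8$ cluster variables, so Conjecture~\ref{conj:uniformity} is a finite check. In the first case $\mca_6(3,2)$ coincides with its right companion FZ-cluster algebra, and non-special mutations unfold to $\rho^2$-symmetric mutations in $\bbc[\Gr(3,6)]$ (type $D_4$), while the special mutation is ordinary. In the second case the special mutation in $\mca_8(3,2)$ is simulated, via Lemma~\ref{lem:vanillaDT}, by the Donaldson--Thomas-type sequence $\tau_{k+1}$ acting on the oriented $(k+1)$-cycle cut out by the $\rho^\ell$-orbit of the special variable inside the Grassmannian quiver from $\bbc[\Gr(3,8)]$ (type $E_8$); non-special mutations again unfold directly. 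For each edge of the exchange graph I would verify that the unfolded mutation (or unfolded sequence) preserves the $\rho^\ell$-symmetry of the ambient Grassmannian quiver and produces an $\ell$-cluster. The second clause of Conjecture~\ref{conj:uniformity} is automatic in these cases because the $\ell$-optimal clusters produced are in fact $\ell$-clusters, so ${\bf x}^\dagger(v)={\bf x}(v)$.

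The main obstacle, as flagged in Section~\ref{secn:folding}, is well-definedness of the unfolded mutation sequences: after several symmetric mutations one must rule out arrows between two vertices of a common $\rho^\ell$-orbit, since such an arrow would prevent the next symmetric mutation. In the $C_{\ell-1}$ family this is absorbed into the combinatorics of triangulations; in the $G_2$ cases it reduces to a finite quiver-mutation verification. Once these checks are complete, the equality $\mca_{p\ell}(k,\ell)=\bbc[\mcd_{p\ell}(k,\ell)]$ follows exactly as in the proof of the proposition after Conjecture~\ref{conj:uniformity}: the inclusion $\mca_{p\ell}(k,\ell)\subset\bbc[\mcd]$ from the first clause, and the reverse inclusion by applying the Laurent phenomenon in $\bbc[\Gr(k,n)]$ to ${\bf x}^\dagger(v)$ and restricting to $\mcd$.
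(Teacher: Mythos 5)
Your overall strategy (verify Conjecture~\ref{conj:uniformity} case by case and invoke the proposition that follows it) has two genuine problems, one of logic and one of coverage.

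The logical gap is in the reverse inclusion. The proposition after Conjecture~\ref{conj:uniformity} only delivers $\bbc[\mcd]\subseteq\mca^{\rm up}_{p\ell}(k,\ell)$: expressing each Pl\"ucker coordinate as a Laurent polynomial in every cluster puts it in the \emph{upper} cluster algebra, not in $\mca_{p\ell}(k,\ell)$ itself. The theorem asserts equality with $\mca$, and the paper is elsewhere explicit that the gap between $\mca$ and $\mca^{\rm up}$ is substantive (cf.\ Example~\ref{eg:AnotU}). The paper closes this gap by an entirely different mechanism in Examples~\ref{eg:kis2} and~\ref{eg:kis3}: cluster monomials lie in $\mca$ and are linearly independent, so one shows they \emph{span} $\bbc[\mcd]_{(d)}$ by matching the count of degree-$d$ cluster monomials against the Hilbert function $\dim\bbc[\mcd]_{(d)}$ --- using Simion's $f$-vector formula for the type $C_{\ell-1}$ cluster complex together with a Vandermonde convolution to hit $\binom{d+\ell-1}{\ell-1}^2$ when $k=2$, and a direct generating-function identity against $(d+1)^3$ in type $G_2$. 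Without this (or a proof that $\mca=\mca^{\rm up}$ for these specialized CS-cluster algebras, which you do not supply), your argument proves a weaker statement than the theorem.

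The coverage gap: the finite type hypothesis is on $\mca_{p\ell}(k,\ell)$, whose cluster type is determined by the exchange matrix and is independent of $p$; so $\mca_{2p}(3,2)$ is finite type $G_2$ for \emph{every} $p\geq 3$, not only for the two values $p=3,4$ appearing in the table (which lists finite type \emph{Grassmannians}, a different condition). Your unfolding tools handle only $p=k$ (right companion) and $p=k+1$ (Lemma~\ref{lem:vanillaDT}); for $p\geq k+2$ the paper's own unfolding mechanism is explicitly conjectural (Remark~\ref{rmk:extendmethod}), so the cases $\mca_{2p}(3,2)$ with $p\geq 5$ are left unproved by your route. The paper avoids this by arguing the forward inclusion through direct enumeration of the eight $2$-cluster variables in $\bbc[\Gr(3,6)]$ (uniformly in $p\geq 3$) rather than through unfolding of mutation sequences.
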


The proof of this theorem is given in Examples~\ref{eg:kis2} and \ref{eg:kis3}. 

The cases of $\mcd_6(3,3)$ and $\mcd_8(3,4)$ are $p<k$ cases, to whom we have associated no cluster algebra. In Examples~\ref{eg:firstprestable} and \ref{eg:secondprestable}, we discuss the structure of the set of $\ell$-cluster monomials in these two cases. Ignoring a technicality (we do not prove the linear independence of cluster monomials in the case of 
$\mcd_8(3,4)$), we show that the $\ell$-cluster monomials are a basis for $\bbc[\mcd_n(k,\ell)]$, and discuss the resulting ``cluster complexes.''  

\begin{rmk}
The fourth column of the above table can often be computed using the cyclic sieving result \cite{EuFu} for the action of the Coxeter transformation on cluster complexes of finite type. This result allows one to count the 4- and 2-clusters in $\Gr(3,8)$, as well as the 3-clusters in $\Gr(3,6)$. One {\sl cannot} use this result to count the 2-clusters in $\Gr(3,6)$ because $\rho^2$ is not a power of the Coxeter transformation.  
\end{rmk}

\begin{rmk}
The folding of Dynkin diagrams $A_{2\ell-3} \to C_{\ell-1}$ and $D_4 \to G_2$ are classical, and the quotient maps $\bbc[\Gr(2,2\ell)] \twoheadrightarrow \bbc[\mcd_{2\ell}(2,\ell)]$ and $\bbc[\Gr(3,6)] \mapsto \bbc[\mcd_6(3,3)]$ are cluster-algebraic incarnations of these foldings. The first of these appears already in \cite[Section 12.3]{CAII}, via the model of type $C$ cluster algebras in terms of centrally symmetric triangulations.

The cluster algebra $\mca_{9}(3,3)$ (and more generally $\mca_{3p}(3,3)$ with $p \geq 3$) is of finite mutation type $G_2^{(1,1)}$ (cf.~\cite[Figure 1.1]{FSTskew}). The quotient map $\bbc[\Gr(3,9)] \to \bbc[\mcd_9(3,3)]$ reflects a known folding of extended affine root systems $E_8^{(1,1)} \to G_2^{(1,1)}$ \cite[Table 6.3]{FSTskew}.  The cluster algebras $\mca_{2p}(4,2)$ ($p \geq 4$) also have finite mutation type, and their cluster type is an {\sl exceptional $s$-block} in the terminology \cite[Figure 1.1]{FSTorb}. It is not hard to check that $\mca_{n}(k,\ell)$ has infinite mutation type outside of these listed examples. 
\end{rmk}

\begin{example}[$k=2$ examples]\label{eg:kis2}
Let $n = p\ell$ with $p \geq 2$. Then $\mca_{n}(2,\ell)$ has finite Dynkin type $C_{\ell-1}$. Cluster algebras with this Dynkin type have $\ell(\ell-1)$ many mutable variables and $\binom {2(\ell-1)}{\ell-1}$ many clusters. In our case, we have mutable variables $(\Delta_{ij})_{i \in [\ell], \, j-i \in [2,\ell+1]}$ and frozen variables $(\Delta_{i,i+1})_{i \in [\ell]}$. These (and their $\rho^\ell$-shifts) are {\sl all} of the $\ell$-cluster variables in $\bbc[\Gr(2,n)]$. Each cluster lifts to an $\ell$-collection in this case (cf.~Conjecture~\ref{conj:uniformity}), so that $\mca_{n}(2,\ell) \subset \bbc[\mcd_{n}(2,\ell)]$. The cluster algebra is graded with all extended cluster variables in degree one. 

We establish the inclusion $\bbc[\mcd_n(2,\ell)] \subset \mca_n(2,\ell)$. by showing that the cluster monomials in $\mca_{n}(2,\ell)$ span. Since cluster monomials are linearly independent, this amounts to verifying that the number of cluster monomials of degree $d$ coincides with $\dim \bbc[\mcd_n(2,\ell)]_{(d)}$. By the multiplicativity of the Hilbert function, the latter number is given by $(\dim \bbc[\bbp^{\ell-1} ]_{(d)})^2 = \binom{d+\ell-1}{\ell-1}^2$. 

Now we count the cluster monomials of degree~$d$. By \cite[Proposition 1]{Simion}, the number of $a$-subsets of compatible mutable variables in a cluster algebra of this Dynkin type is given by $\binom{\ell-1}a\binom{\ell-1+a}{\ell-1}$. For a fixed such $a$-subset, there are $\binom{d+\ell-1}{d-a}$ many cluster monomials using exactly these mutable variables. 
%(These mutable variables might appear with multiplicity, and the monomial is allowed to contain frozen variables). 
%nonneg int solns to e_1+...e_{a+\ell} = d-a; which is d-a+a+\ell-1 choose \ell-1+a
Using the enumerations of cluster variables and clusters given above, the number of cluster monomials of degree $d$ is given by 
\begin{align*}
\sum_{a\geq 0}\binom{\ell-1}a \binom{\ell-1+a}{\ell-1} \binom{d+\ell-1}{d-a}&= \sum_{a\geq 0}\binom{\ell-1}{a}\binom{d-\ell+1}{\ell-1} \binom{d}{d-a} \\
&= \binom{d+\ell-1}{\ell-1}\sum_{a\geq 0}\binom{\ell-1}{a}\binom{d}{d-a}  \\
&= \binom{d+\ell-1}{\ell-1}^2 \\
&= \dim \bbc[\mcd]_{(d)}
\end{align*}
The first equality relies on the binomial coefficient identity $\binom{n-h}{k}\binom{n}{h} = \binom{n}k\binom{n-k}{h}$, and the third equality relies on the Vandermonde convolution identity. 
%This establishes that $\mca_{n}(2,\ell) = \bbc[\mcd_0]$.
\end{example}

%(\ell-1+a)!(d+\ell-1)! / a! (\ell-1)!(\ell-1+a)!(d-a)!
%= (d+\ell-1)! / a! (\ell-1)!(d-a)!
%= (d-\ell+1)!(d+\ell-1)! / a! (\ell-1)!(d-a)!(d-\ell+1)!
%= (d-\ell+1)!(d+\ell-1)! / a! (\ell-1)!(d-a)!(d-\ell+1)!

%versus:
%(d)!(d+\ell-1)! / a! (\ell-1)!(d-\ell-1+a)!(d-\ell+1)!

\begin{rmk}An alternative approach to proving the equality of Hilbert functions is to establish directly that $\bbc[\mcd] \subset \mca$ by expressing each Pl\"ucker coordinate as a polynomial in extended cluster variables, and appealing to general results that cluster monomials are a basis for any cluster algebra of finite type. We are not aware of a reference for this fact for CS-cluster algebras, but it is well-known for FZ-algebras. 
\end{rmk}

\begin{example}[$k=3,\ell=2$]\label{eg:kis3}
Next we consider the cases $\mca_{2p}(3,2)$ for $p \geq 3$, which is a CS-cluster algebra of Dynkin type $G_2$. Let $X = \Delta_{124}\Delta_{356}-\Delta_{123}\Delta_{456}$ and $Y = \Delta_{235}\Delta_{146}-\Delta_{234}\Delta_{156}$ be the two non-Pl\"ucker cluster variables in $\bbc[\Gr(3,6)]$. Then $\mca_{2p}(3,2) \subset \bbc[\mcd]$ has 8 mutable variables (6 in degree one and 2 in degree two), 8 clusters, and 2 frozen variables. Specifically, the cluster variables are 
$246$, $124$, $X$, $356$, $135$, $235$, $Y$, and $236$, enumerated so that adjacent cluster variables form a generalized cluster. By direct enumeration, these (and their $\rho^2$-shifts) are a complete list of $2$-cluster variables in $\bbc[\Gr(3,6)]$. In particular $\mca_{n}(3,2) \subset \bbc[\mcd_{n}(3,2)]$. 

We argue the opposite containment as in the previous example. We have $\dim\bbc[\mcd_n(3,2)]_{(d)} =(\dim \bbc[\bbp^1]_{(d)})^3 =(d+1)^3$. So the desired equality amounts to the equality of generating functions
$$\sum_{d \geq 0}(d+1)^3 x^d = \frac{1}{(1-x)^2}+ \frac{6x}{(1-x)^3}+\frac{2x^2}{(1-x)^2(1-x^2)}+\frac{4x^2}{(1-x)^4}+\frac{4x^3}{(1-x)^3(1-x^2)},$$
which can be directly verified.
\end{example}

\begin{example}\label{eg:firstprestable}
Consider the cyclic symmetry locus $\Gr(3,6)^{\rho^3}$, a finite type Grassmannian corresponding to a $p<k$ case. The Hilbert function is $\dim \bbc[\mcd_6(3,3)]_{(d)} = \dim \bbc[\Gr(1,3)]_{(d)} \cdot \dim \bbc[\Gr(2,3)]_{(d)}  = \binom{d+2}2^2$. There are exactly twelve 3-cluster variables, grouped into 6 orbits with orbit representatives
$124$, $125$, $235$, $236$, $346$, $134$. Consecutive terms in this list form 3-clusters (upon adding their $\rho^3$-shifts).
%after taking $\rho^3$-orbits, e.g. 
%$124$,$125$ becomes a 3-cluster after adding their $\rho^3$-shifts $145$ and $245$. 
Moreover, these are {\sl all} of the 3-clusters in $\bbc[\Gr(3,6)]$. Folding the quiver naively, the mutable part is an oriented 2-cycle. If one were to cancel this oriented cycle, we would get an $A_1 \coprod A_1$ quiver, which has 4 clusters, not six, and does not seem to correctly describe the algebra $\bbc[\mcd_6(3,3)]$. 

Our {\sl proposal} is that one should treat these six 3-clusters as ``clusters'' for $\mcd_{6}(3,3)$, whose exchange graph is a hexagon, 
even though it is not yet apparent what cluster algebra formalism describes the mutations between these clusters. Taking this idea seriously, we can again compute the number of ``cluster monomials'' of degree $d$, obtaining the number
\begin{equation}\label{eq:binoms}
6\binom{d+2}4+6\binom{d+2}3+\binom{d+2}2.
\end{equation}
%The three terms count cluster monomials involving two, one, or zero mutable variables respectively. 
One can check that \eqref{eq:binoms} simplifies to $\binom {d+2}2^2$, matching the Hilbert series, and justifying our proposal. 
\end{example}

\begin{example}\label{eg:secondprestable}
We study the finite cluster type Grassmannian $\Gr(3,8)$ and its locus $\mcd_8(3,4)$, which is another $p<k$ case. Using cylic sieving, there are 88 4-clusters in $\bbc[\Gr(3,8)]$ connected to each other by $\rho^4$ symmetrical mutations in $\bbc[\Gr(3,8)]$. To further justify the proposal in Example~\ref{eg:firstprestable}, we advocate that one should treat these as ``clusters'' in $\bbc[\mcd_8(3,4)]$, together with the corresponding exchange graph and cluster complex, even though it is not clear what cluster algebraic formalism describes the mutations connecting them. 

The Hilbert function is $\dim \bbc[\mcd_8(3,4)]_{(d)} = \dim \bbc[\Gr(2,4)]_{(d)} \dim \bbc[\Gr(1,4)]_{(d)} = \frac{(d+1)(d+3)(d+2)^2}{12} \binom{d+3}3$. To count the cluster monomials of degree $d$, we first count the subsets of pairwise compatible cluster variables on a computer, keeping track of their degree. There are $24$ mutable variables (20 in degree one and 4 in degree two). 
%the Plucker coordinate take the form x,y,x+4 or x,x+2,x+3 or x,x+1,x+3
%the hexapods crawl 12 cap 34 cap 56. 
There are $112$ pairs of compatible mutable variables (86 pairs in degree $1^2$, 24 in $1^12^1$, and 2 in degree $2^2$). 
There are $156$ triples of compatible mutable variables (124 triplets in degree $1^3$, 44 in $1^22^1$, and 8 in degree $1^12^2$). Finally, there are $88$ clusters (56 of these in degree $1^4$, $24$ in degree $1^32^1$, and $8$ in degree $1^22^2$).

Thus, the generating function for cluster monomials of degree $d$ is given by 
\begin{align*}
&\frac{1}{(1-x)^4}+ 
\frac{20x}{(1-x)^5}+\frac{4x^2}{(1-x)^4(1-x^2)}
+\frac{86x^2}{(1-x)^6}+\frac{24x^3}{(1-x)^5(1-x^2)}+\frac{2x^4}{(1-x)^4(1-x^2)^2} \\
+&\frac{124x^3}{(1-x)^7}+\frac{44x^4}{(1-x)^6(1-x^2)}+\frac{8x^5}{(1-x)^5(1-x^2)^2}
+\frac{56x^4}{(1-x)^8}+\frac{24x^5}{(1-x)^7(1-x^2)}+\frac{8x^6}{(1-x)^6(1-x^2)^2},
\end{align*}
and a calculation establishes that this is the rational function $\sum_d \dim \bbc[\mcd_8(3,4)]_{(d)}x^d$. 
\end{example}

\subsection{Failure of $\rho$-equivariance}
The cyclic shift map on $\Gr(k,n)$ preserves the $\ell$-fixed locus and the TNN locus $\Gr(k,n)_{\geq 0}$, hence determines an automorphism of $\mcd_n(k,\ell)$.

We now show by example that $\rho$ need not be a cluster automorphism of $\mca_n(k,\ell)$. That is, the seeds $\Sigma_n(k,\ell)$ and 
$\rho^*(\Sigma_n(k,\ell))$ are (sometimes) mutation-inequivalent. 

\begin{example}\label{eg:rhodoesnotact} We show that the seeds $\Sigma_8(4,2)$ and $\rho^*(\Sigma_8(4,2))$ are mutation-inequivalent. The mutable variables are $I_1,I_2,I_3$ given by $\Delta_{1357},\Delta_{2357},\Delta+{3457}$  and the frozen variables $I_4,I_5 = \Delta_{1234},\Delta_{2345}$. 
Since $p=k=4$, $\Sigma_8(4,2)$ is a FZ-cluster algebra whose quiver has valued quiver arrows $I_1 \overset{(1,4)}{\to} I_3 \rightrightarrows I_2 \overset{(4,1)}{\to} I_1$, $I_4 \rightrightarrows I_3$ and $I_2 \to I_4$.

Let $a,b$ be indeterminates, and specialize the variables $I_1,\dots,I_5$ as follows: $I_1 \mapsto a^4$, $I_2,I_3 \mapsto a^3b$, and 
$I_4,I_5 \mapsto a^2b^2$. Thus, the exponent of $a$ (resp. $b$) records the grading of $I_i$ in the odd (resp. even) columns. This is a $\bbz^2$-grading on the algebra $\bbc[\mcd_8(4,2)]$, and moreover on the cluster algebra $\mca_8(4,2)$.

We claim by induction that cluster variables at vertex $1$ have degree of the form $a^{s+4}b^s$ for some $s \in \bbz$ while those at vertices $2$ and $3$ have degree $a^{s+2}b^s$ for some $s \in \bbz$. Since the frozen variables have degree $a^2b^2$, we can ignore multiplication by frozen variables in the remainder of the argument. 

The inductive claim holds for the initial seed. Suppose the claim holds at some intermediate seed of the form $\{a^{s+4}b^s, a^{t+2}b^t,a^{u+2}b^u,a^2b^2,a^2b^2\}$, and suppose we wish to perform a mutation at vertex $1$. The right hand side of the exchange relation is a sum of two monomials $M^+$ and $M^-$ and we can compute the degree of the neighboring cluster variable by subtracting the degree of the current cluster variable from the degree of $M^+$. The degree of $M^+$ is the form $m \cdot (a^{t+2}a^t)^4$ where $m$ is a monomial in the frozen variables, thus the degree of $M^+$ is 
of the form $a^{4t'+8}b^{4t'}$ for some $t' \in \bbz$. The degree of the neighboring cluster variable is therefore $a^{4t'+8-s-4}b^{4t'-s} = a^{(4t'-s)+4}b^{(4t'-s)}$, verifying the claim for mutation at vertex 1. The proof for vertices 2 and 3 is analogous. 

Finally, if $\rho^* \in \Aut(\bbc[\mcd])$ could be realized as a sequence of mutations, it would be possible to mutate from a seed containing $a^4$ to one containing $b^4$ in the specialization. And we have proved something even stronger: since we cannot arrive at a variable of the form $a^{4+s}b^s$, we see that $\rho^* \in \Aut(\bbc[\mcd])$ is not even a quasi-cluster transformation (i.e., a mutation sequence ``up to frozen variables''.) 
\end{example}

Since applying $\rho$ to any $\ell$-optimal collection clearly yields an $\ell$-optimal collection, we conclude that {\sl the set of $\ell$-optimal collections need not be connected by $\ell$-symmetrical square moves.} This is in contrast with the $\ell = n$ situation. In the latter case, an $\ell$-optimal collection is a maximal weakly separated collection, and these are known to tbe square-move connected. 

On the other hand, we can use Theorem~\ref{thm:titsmatsumoto} to give a ``large class'' of $\ell$-optimal collections which can be obtained from $\ov{\mcc}_n(k,\ell)$ which by $\rho^\ell$-symmetrical sequences of square moves (and thus, by mutations in $\mca_n(k,\ell)$).

We showed in the proof of Lemma~\ref{lem:seedandtest} that the initial $\ell$-optimal collection $\ov{\mcc}_n(k,\ell)$ is of the form $\mcc({\bf  f})$ for a maximal chain ${\bf f} = f_N \lessdot \cdots \lessdot f_0 \in \Bound_n(k,\ell)$. 

\begin{lem}\label{lem:seedandtestbis} Suppose that ${\bf f' } = f'_N \lessdot \cdots \lessdot f'_0 = f_0 \in \Bound_n(k,\ell)$ is another maximal chain ending at the same maximal element $f_0$. Then $\Delta(\mcc({\bf f'}))$ is a cluster in $\mca_{n}(k,\ell)$.
\end{lem}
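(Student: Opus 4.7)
The plan is to interpolate between $\mcc({\bf f})$ and $\mcc({\bf f'})$ via $\rho^\ell$-symmetrical square moves and then verify that each such move descends to a cluster mutation in $\mca_n(k,\ell)$. Both chains ${\bf f}$ and ${\bf f'}$ lie in ${\rm Chains}({\rm id}_k, f_0)$, where $f_0$ is maximal in $(\Bound_n(k,\ell),\leq_b)$. By Theorem~\ref{thm:titsmatsumoto} they are related by a finite sequence of 2-moves and 3-moves. In the proof of Theorem~\ref{thm:TPtests} we saw that each 2-move leaves the weakly separated collection $\mcc(\cdot)$ unchanged, while each 3-move transforms it by a single $\rho^\ell$-symmetrical square move; so it suffices to realize each such square move as a mutation in $\mca_n(k,\ell)$.

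First I would observe that the special $\rho^\ell$-orbit -- the one containing $I_1$ -- is never disturbed along this sequence. Indeed, $I_1$ and its $\rho^\ell$-shifts belong to $\rmci_{f_0}$, and $\rmci_{f_0} \subseteq \mcc({\bf g})$ for every intermediate chain ${\bf g}$; hence every $\rho^\ell$-symmetrical square move occurs at a non-special vertex $i \in [2,N]$ of the quiver $\tilde{Q}(\ov{\mcc}_n(k,\ell))$. At such a vertex $d_i = 1$, so the CS- exchange relation in $\mca_n(k,\ell)$ is an ordinary binomial $x_i x_i' = M_i^+ + M_i^-$. This binomial is exactly the image, under the surjection $\bbc[\Gr(k,n)] \twoheadrightarrow \bbc[\mcd_n(k,\ell)]$, of the three-term Plücker relation that implements the square move on $\mcc({\bf g})^\dagger$ in the Grassmannian cluster algebra. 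Consequently, the new variable produced by mutating at $i$ in $\mca_n(k,\ell)$ is the image of the swapped Plücker coordinate, which is precisely the new element of $\mcc(\mu_i({\bf g}))$.

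By induction along the sequence of 2-moves and 3-moves, it follows that $\Delta(\mcc({\bf f'}))$ is reached from $\Delta(\ov{\mcc}_n(k,\ell)) = \Delta(\mcc({\bf f}))$ by a sequence of mutations at non-special vertices, so $\Delta(\mcc({\bf f'}))$ is indeed an extended cluster in $\mca_n(k,\ell)$.

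The main delicate point, I expect, is verifying that the mutated quiver in $\mca_n(k,\ell)$ really equals the $\rho^\ell$-fold of the mutated Grassmannian quiver associated to $\mu_i(\mcc({\bf g}))^\dagger$. This is the standard compatibility of quiver mutation with folding, and it applies precisely because at a non-special vertex the $\rho^\ell$-orbit being mutated contains no two mutually adjacent vertices in the Grassmannian quiver (which is the hypothesis needed for folded mutation to be well-defined). Since our initial seed $\Sigma_n(k,\ell)$ was set up as the $\rho^\ell$-fold of the Grassmannian seed on $\ov{\mcc}_n(k,\ell)^\dagger$ away from the special orbit, this compatibility propagates to every seed reachable by mutations at non-special vertices, completing the argument.
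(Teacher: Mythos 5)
Your argument follows the paper's proof exactly: Theorem~\ref{thm:titsmatsumoto} reduces the comparison of the two chains to a sequence of 2- and 3-moves, which (via \cite{WilliamsBridge}) become $\rho^\ell$-symmetrical square moves on the $\ell$-optimal collections and then fold to mutations at non-special vertices of $\mca_n(k,\ell)$. Your added observation that $\rmci_{f_0}$ is contained in every intermediate collection, so that the special orbit of $I_1$ is never the site of a square move, is a correct justification of a point the paper dismisses as ``easy to see.''
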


That is, we can obtain $\Delta(\mcc({\bf f'}))$ from $\Delta(\mcc({\bf f}))$ by a sequence of mutations. 

\begin{proof}
By Theorem~\ref{thm:titsmatsumoto}, we can obtain ${\bf f'}$ from ${\bf f}$ by a sequence of 2- and 3- moves. By \cite[Theorem 5.3]{WilliamsBridge}, performing a 2-move in  $\Bound_n(k,n)$ does not affect the set of face labels of a bridge graph, and performing a 3-move amounts to a square move on bridge graphs. Any 3-move in  $\Bound_n(k,\ell)$ corresponds to $p$ commuting, $\rho^\ell$-equivariant, 3-moves in $\Bound_n(k,n)$, thus to $p$ commuting, $\rho^\ell$-equivariant sequences of square moves on the $\ell$-optimal cluster. It is easy to see that these $p$ square moves fold to a mutation (at a non-special variable) in $\mca_n(k,\ell)$. 
\end{proof}

It is natural to ask what happens when we remove the hypothesis $f_0 \in {\bf f'}$ from Lemma~\ref{lem:seedandtestbis}. 
One can verify that in this case, $\Delta(\mcc({\bf f'}))$ will be related to one of the clusters addressed in Lemma~\ref{lem:seedandtest} by an appropriate cyclic shift.

\begin{rmk}
Let $\mcd^\circ \subset \mcd$ be the localization at the frozen Pl\"ucker coordinates and define similarly $\Gr^\circ(k,n)$. Let $\mcb$ denote the extended affine braid group on $d$ strands, where $d = \gcd(k,n)$. It is the semidirect product of the affine braid group with an infinite cylic group generated by an element $\rho_\mcb$. There is a homomorphism $\Psi \colon \mcb \to {\rm Aut}(\Gr^\circ(k,n))$ \cite{FraserBraid} satisfying $\Psi{\rho_\mcb} = \rho$. (We denote $\Psi$ using subscripts.) This map restricts to a homomorphism 
$Z_\mcb(\rho_\mcb^\ell) \to {\rm Aut}(\mcd^\circ)$ whose domain is the centralizer of $\rho_\mcb^\ell$. Indeed, if $x \in \Gr^\circ(k,n)^{\rho^\ell}$ and $\sigma \in Z_\mcb(\rho^\ell) \subset \mcb$, then 
$$\rho^\ell(\Psi_\sigma(x)) = \rho^\ell(\Psi_\sigma(\rho^{-\ell}(x))) = \Psi_{\rho_\mcb^\ell \sigma\rho_\mcb^{-\ell}}(x) = \Psi_\sigma(x),$$
so that $\Psi_\sigma \in \Aut(\Gr(k,n)^{\rho^\ell}.$ Since $\Psi_\sigma$ preserves the TP part $\Gr(k,n)_{>0}$, we have $\Psi(\sigma) \in \Aut(\mcd)$. It would be interesting to investigate the conditions on $k,n,\ell$ which guarantee that $\Psi(Z_\mcb(\rho^\ell))$ consists of quasi-cluster transformations. We have checked that this is true for $\mcd_9(3,3)$. It {\sl fails} for $\mcd_8(4,2)$ by Example~\ref{eg:rhodoesnotact}. 
\end{rmk}

\subsection{Proper containment $\mca \subset \mca^{\rm up}$}
\begin{example}[$\mca \subsetneq \mcu$]\label{eg:AnotU} We establish in this example that we can have the containments $\mca_8(4,2) \subsetneq \bbc[\mcd_8(4,2)] \subseteq \mca^{\rm up}_8(4,2)$. We have $\dim \bbc[\mcd_8(4,2)]_{(1)} = \prod_{i=1}^k \dim \bbc[\bbp^{\ell-1}]_{(1)} = \ell^k = 16$. We have proved the containments \eqref{eq:hardcontain} and Conjecture~\ref{conj:uniformity} because this is a $p=k$ case in which the unfolding argument works. In particular, every cluster variable in $\mca_8(4,2)$ is the image of an $\ell$-cluster variable in $\bbc[|Gr(4,8)]$. By direct enumeration, 
exactly 42 elements of $\binom {[8]}4$ are $2$-cluster variables. They come grouped into 12 $\rho^2$-orbits, with orbit representatives listed here:
\begin{equation}\label{eq:48orbits}
1357,2468,1234,2345,1235,2346,1345,2456,1347,2458,1246,2357.
\end{equation}
If $x \in \bbc[\mcd_8(4,2)]_{(1)}$ is a cluster variable, then it must be amongst those listed in \eqref{eq:48orbits}. So $\dim \mca_{8}(4,2)_{(1)} \leq 12 < 16 = \dim \bbc[\mcd_8(4,2)]_{(1)}$, establishing the strict containment $\mca_8(4,2) \subsetneq \bbc[\mcd_8(4,2)]$. 

We can make a more specific statement.  By the $\bbz^2$-grading argument in Example~\ref{eg:rhodoesnotact}, none of the Pl\"ucker coordinates $2468$, $2346$, $2456$, $2458$, or $1246$ is a cluster variable, and by performing some mutations one sees that the remaining 7 Pl\"ucker coordinates {\sl are} extended cluster variables. So in fact $\dim \mca_8(4,2)_{(1)} = 7$. Note also that each of the above non-cluster variables would be in the cyclically shifted cluster structure $\rho^*(\mca_8(4,2))$. So in this example, the algebra generated by all extended cluster variables and their cyclic shifts coincides with $\bbc[\mcd_8(4,2)]$. 
\end{example}

\section{Miscellaneous proofs}\label{secn:proofs}
\subsection{Maximal elements and move-connectedness for $\leq_b$}\label{subsecn:bridgeorder}
\begin{proof}[Proof of Proposition~\ref{prop:maximalelts}]
First we explain that the elements $t_S,t_{S,s}$ are maximal. It follows quickly from $n$-boundedness that if $i \in {\rm Dec}(f)$ and $f \leq g$, then $f(i) = g(i)$ (this is true even in Bruhat order). We have ${\rm Dec}(t_S) = \bbz$ so that $t_S$ is clearly maximal. Maximality of $t_{S,s}$ is similar: ${\rm Dec}(t_{S,s}) = \bbz  \setminus (s+\ell\bbz)$, and an element of $\tilde{S}^k_\ell$ is determined by its values on such a set.   

Next let $f \in \Bound_n(k,\ell)$ be a maximal element in bridge order, we will argue that $f $ is either an $t_S$ or an $t_{S,s}$ accordingly. We can list the elements of $[\ell] \setminus {\rm Dec}(f)$ as $i_1 < i_2 < \cdots <  i_h$. First we claim that $h \in \{0,1\}$. Otherwise, by maximality of $f$, one must have that $f(i_1) > f(i_2) > \cdots > f(i_h)$, because otherwise we could perform a transposition which stays $n$-bounded and raises us in bridge order. But continuing in this way, using $\ell$-periodicity we see similarly that $f(i_h)>f(i_1)+\ell$ (otherwise we could perform a transposition of these two values). But this is clearly not possible. So indeed $h \in \{0,1\}$. If $h = 1$, then using $n$-boundedness and the assumption that $i_1 \notin {\rm Dec}(f)$, $f(i_1)$ must equal $i_1+s\ell$ for some $s \in [1,p-1]$. Let $a$ denote the number of $i \in [\ell]$ such that $f(i) = i+n$, so that  
$\sum_{i}f(i)-i = an+hs\ell = (ap+hs)\ell$. But since $f \in \tilde{S}^k_\ell$, we also have $\sum_{i=1}^\ell f(i)-i = k\ell$. This implies the relationship $k  = ap+sh$. If $p|k$ we conclude that $h=0$ and $a= \aa$. If not, we conclude that $h=1$, $s = \bb$, and $a = \aa$. The explicit description of the possible $f$ satisfying this numerology yields Proposition~\ref{prop:maximalelts}. 

Now we show that each maximal element has the same rank, given by the claimed formula for the height of the poset. 
The length of an element of $\tilde{S}^k_\ell$ can be computed as an inversion number.
In the case of $t_S$, each $i \in S$ participates in exactly $p(\ell-\aa)$ inversions, for a total of $\aa p (\ell-\aa) = \frac{k(n-k)}{p}$ many inversions, in agreement with the claimed formula for the height of the poset. In the case of $t_{S,s}$, each $i \in S$ participates in 
$(\ell-\aa-1)p$ inversions with elements of ${\rm Dec}(t_{S,s})$ and in $p-\bb$ inversions with elements of the form $s+a\ell$ for $a \in \bbz$. The element $s \in S$ participates in $(\ell-\aa-1)\bb$ inversions, for a total of 
$\aa p(\ell-\aa-1)+\aa(p-\bb)+\bb(\ell-\aa-1) = k(\ell-\aa)-\aa\bb-\bb$ inversions. This equals $\frac{1}{p}(k(n-k)+\bb(\bb-p))$ in agreement with the claimed formula. 
\end{proof}

%The length of $f$ can be directly computed as an inversion number: 
%$$\ell(f) = \# \{(i,j) \in [n] \times \bbz \colon \,  i < j \text{ and } f(i) > f(j)\}.$$

The following technical lemma is used in the proof of Theorem~\ref{thm:titsmatsumoto}.

\begin{lem}\label{lem:comparedirectly} Let $f,t \in \Bound_n(k,\ell)$ with $t$ maximal. Then $f \leq t$ if and only if 
\begin{enumerate}
%\item ${\rm Dec}(f) \subseteq {\rm Dec}(t)$ 
\item if $t(i) = i$ then whenever 
$j \in (f^{-1}(i),i) \setminus {\rm Dec}(f)$, we have either $f(j)>i$ or $t(f(j)) = f(j)$. 
\item if $t(i) = i+n$ then whenever $j \in (i,f^{-1}(i+n)) \setminus {\rm Dec}(f)$, we have either $f(j)<i+n$ or $t(j) = j+n$. 
\end{enumerate}
\end{lem}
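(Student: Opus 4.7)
The plan is to prove both directions by induction on the quantity $\ell(t) - \ell(f)$, exploiting the explicit description of maximal elements from Proposition~\ref{prop:maximalelts}.

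For the forward direction, I would induct on the length of a bridge-order chain from $f$ to $t$. The base case $f = t$ is immediate, since the intervals $(f^{-1}(i), i)$ and $(i, f^{-1}(i+n))$ appearing in the conditions are empty. For the inductive step, suppose we have a bridge cover $f \lessdot g \leq t$, so $g = f t_{i_0, j_0}$ with $i_0 < j_0$ and $f(a) = g(a) \in \{a, a+n\}$ for all $a \in (i_0, j_0)$. Assuming $g$ satisfies conditions (1) and (2), I would verify the same for $f$ by case analysis on how an index $i$ appearing in the conditions interacts with the pair $(i_0, j_0)$ and with the values $f(i_0), f(j_0)$. The key simplification is that $f$ and $g$ agree outside $\{i_0, j_0\}$, and all intermediate positions lie in $\mathrm{Dec}(f)=\mathrm{Dec}(g)$, so they contribute trivially to the quantifiers ``$j \in (f^{-1}(i), i) \setminus \mathrm{Dec}(f)$'' and its analogue in~(2).

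For the backward direction, I would construct a bridge cover $f \lessdot g \leq t$ whenever $f \neq t$, so that induction on $\ell(t) - \ell(f)$ applies. Choose any $i$ with $f(i) \neq t(i)$; by the symmetry exchanging conditions (1) and (2) (via the involution $f \mapsto f^{-1}$, $t \mapsto t^{-1}$, together with a cyclic shift), I may assume $t(i) = i$. Then $f(i) > i$ and $j := f^{-1}(i)$ satisfies $j < i$ by $n$-boundedness, so the window $(j, i)$ is nontrivial. Condition (1) tells us that every ``ascent position'' $j' \in (j, i) \setminus \mathrm{Dec}(f)$ either satisfies $f(j') > i$ (it ``escapes'' the window) or $t(f(j')) = f(j')$ (its image is already a $t$-fixed value). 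Using this, I would iteratively reduce the window: if ascents of the first type exist, replace $j$ by the largest such $j'$ and recurse on the smaller window $(j', i)$; otherwise, every ascent in $(j, i)$ has its image at a $t$-fixed point, and a bridge cover can be produced by first clearing these ascents via a sequence of smaller bridge covers (each swapping an ascent with a nearby extremal value), then applying the cover $t_{j, i}$ itself.

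The main obstacle will be making this clearing procedure precise and checking that each intermediate element still satisfies conditions (1) and (2), so the induction goes through. This requires tracking how both $f^{-1}$ and $\mathrm{Dec}(f)$ evolve under a bridge cover, and identifying a suitable monovariant --- for instance, $\sum_{i \in [\ell]}\bigl(t(i) - f(i)\bigr)$ restricted to indices with $t(i) = i+n$, which decreases at each step --- ensuring termination. The explicit structure of $t \in \{t_S, t_{S,s}\}$ given by Proposition~\ref{prop:maximalelts}, in particular the fact that $t(i) \in \{i, i+n\}$ for all but at most one $i \bmod \ell$, is what makes the selection both well-defined and compatible with the bridge-cover constraint $f(a) \in \{a, a+n\}$ on intermediate positions.
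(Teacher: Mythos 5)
Your outline has the right overall shape --- in particular, your sufficiency argument (build bridge covers greedily until you reach $t$) is essentially the paper's strategy --- but as written it is a plan with the two substantive verifications left undone, and you flag the harder one yourself as "the main obstacle." Concretely: (i) For necessity, the paper does not induct along a chain; it first reduces "$f\leq t$" to the statement "the values $t(i)$, $i\in\mathrm{Dec}(t)$, can be sorted into position $i$ by bridge covers" (using the rigidity of maximal elements: any $f'\geq f$ agreeing with $t$ on $\mathrm{Dec}(t)$ equals $t$), and then reads off conditions (1)--(2) as the exact obstructions: if $j\in(f^{-1}(i),i)\setminus\mathrm{Dec}(f)$ has $f(j)<i$, then sorting the value $i$ rightward past $f(j)$ is length-decreasing unless $f(j)$ has first been parked in a decorated position of $t$, and the case $t^{-1}(f(j))=f(j)-n$ is excluded by the inequality $i\leq f^{-1}(i)+n<f(j)$. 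Your chain induction could in principle substitute for this, but the "case analysis" is precisely where the content lives, and one of your simplifying claims is false as stated: $\mathrm{Dec}(f)$ and $\mathrm{Dec}(g)$ need not coincide for a bridge cover $f\lessdot g=ft_{i_0j_0}$ (they can differ at $i_0$ and $j_0$ themselves, e.g.\ when $f(i_0)=i_0$), and the quantities $f^{-1}(i)$ shift when $i$ lies in the orbit of $\{f(i_0),f(j_0)\}$, so the intervals in (1)--(2) move under the cover.

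(ii) For sufficiency, the "clearing procedure" is the crux and is left vague. Your first branch ("replace $j$ by the largest $j'$ with $f(j')>i$ and recurse on $(j',i)$") is not the right move: such a $j'$ is not an obstruction at all, since swapping the value $i$ rightward past a larger value is length-increasing and hence a legitimate upward bridge cover. The actual recursion the paper uses is in the other branch: when $f(j)<i$ and $t(f(j))=f(j)$, one has $f^{-1}(i)<j\leq f(j)<i$, so the value $f(j)$ can be sorted into its own (strictly smaller) window first, after which position $f(j)$ is decorated and no longer obstructs; termination comes from the strict nesting of windows, not from the monovariant you propose (note $\sum_i(t(i)-f(i))$ over a full period is zero for all $f,t\in\Bound_n(k,\ell)$, so any monovariant of that type needs care). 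Finally, the symmetry you invoke to reduce (2) to (1) --- "$f\mapsto f^{-1}$, $t\mapsto t^{-1}$, plus a cyclic shift" --- is not justified: inversion turns the positional condition defining a bridge cover into a condition on values, so it is not obvious that it preserves $\leq_b$; the duality between (1) and (2) should instead be argued by running the same sorting argument leftward for the values $i+n$ with $t(i)=i+n$.
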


\begin{proof}
By the explicit description of the maximal elements $t \in \Bound_n(k,\ell)$, saying that $f \leq t$ is the same as saying that there exists $f' \in \Bound_n(k,\ell)$ such that $f \leq f'$ such that $f'(i) = t(i)$ for all 
$i \in {\rm Dec}(t)$. Indeed, from the explicit description, $f'$ therefore equals $t$. 

In other words, to say that $f \leq t$ is to say that it is possible to ``sort'' the values $t(i)$ for $i \in {\rm Dec}(t)$ into position $i$ (where each sort-move is an a cover relation in bridge order). 

Now we argue the necessity of condition (1). If $t(i) = i$, then $n$-boundedness of $f$ implies that $f^{-1}(i) \leq i$, so we need to move the value $i$ right into position $i$. If $j \in (f^{-1}(i),i) \setminus {\rm Dec}(f)$ and $f(j) < i$, then sorting $i$ right past $f(j)$ is a downward move in bridge order. We cannot perform such a move unless the value $f(j)$ has already been sorted into a decorated position. In particular then, we must have $t^{-1}(f(j)) \in (f^{-1}(i),i) \cap {\rm Dec}(t)$. We cannot have $t^{-1}(f(j)) = f(j)-n$, because
$i \leq f^{-1}(i)+n < t^{-1}(f(j))+n = f(j)$. Otherwise $t(f(j)) = f(j)$. So condition (1) holds. 

The argument for necessity of condition (2) is dual. 

And the stated conditions are sufficient. By the argument just given, we can greedily sort each $t(i)$ for $i \in {\rm Dec}(t)$ into place assuming the conditions (1) and (2) hold. For example if $t(i) = i$ and $j \in (f^{-1}(i),i)$ then either $f(j)>i$ (so we can freely sort $i$ past $f(j)$) or $t(f(j)) = f(j)$. In the latter case, we would have $f^{-1} < j \leq f(j) < i$, so that we can sort the value $f(j)$ right into position $f(j)$ before we sort $i$ right into position $i$. 
\end{proof}

%\begin{prop}\label{prop:lattice}
%The poset $\widetilde\Bound_{r\ell}(,k,\ell)$ is a lattice. 
%\end{prop} 

Note that if $t_{i,j+s\ell}$ is a cover in bridge order, then $|j+s\ell-i| < \ell$. So we can always index such covers by $t_{a,b}$ with $a < b < a+\ell$ and $a \in [\ell]$, which is the notation we prefer below. 

\begin{lem}\label{lem:bothcover}
Suppose we have elements $z \lessdot x,y \leq t \in \Bound_{pl}(k,\ell)$. Then there exists an element $x \vee y \in \Bound_{p\ell}(k,\ell)$, satisfying $x \vee y \leq t$, and chains 
$z \lessdot x \lessdot \cdots \lessdot x \vee y$ and 
$z \lessdot y \lessdot \cdots \lessdot x \vee y$ related by either a 2- or 3-move.  
\end{lem}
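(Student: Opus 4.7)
The strategy mimics the standard proof that the right weak order on a Coxeter group is a meet semilattice via the exchange condition, suitably adapted to the bridge order on $\Bound_{p\ell}(k,\ell)$. We may assume $x \neq y$, and write $x = z\,t_{a,b}$ and $y = z\,t_{c,d}$ where $a < b < a+\ell$ (resp. $c<d<c+\ell$) and each transposition is a bridge cover of $z$ in the sense of Definition~\ref{defn:leapweak}. The argument proceeds by case analysis on how the two reflections $t_{a,b}$ and $t_{c,d}$ interact with each other.

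\textbf{Case A (commuting covers, 2-move).} If the values in $\{a,b\}$ and $\{c,d\}$ are pairwise distinct modulo $\ell$-periodicity, or more generally if after applying one of the two reflections the other remains a bridge cover, then set $x \vee y := z\,t_{a,b}\,t_{c,d} = z\,t_{c,d}\,t_{a,b}$. First I would check that this element lies in $\Bound_{p\ell}(k,\ell)$: both $x$ and $y$ are $n$-bounded by hypothesis, and because the supports of the two bridge intervals interact trivially (the ``obstruction'' values $z(i)\in\{i,i+n\}$ on each interval are unaffected by the other transposition), the composed element remains $n$-bounded and $\ell$-periodic. Then $z \lessdot x \lessdot x \vee y$ and $z \lessdot y \lessdot x \vee y$ are two saturated chains of length two differing in their middle element, giving a 2-move.

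\textbf{Case B (interfering covers, 3-move).} If the reflections share a value in common or their bridge intervals overlap, then a braid-type phenomenon forces a longer common upper bound. Here I would use a Coxeter-style identity among reflections, e.g. an identity of the form $t_{a,c}\,t_{a,b} = t_{b,c}\,t_{a,c}$ or $t_{a,b}\,t_{b,c}\,t_{a,b} = t_{b,c}\,t_{a,b}\,t_{b,c}$ restricted to the bridge setting, to identify an element $x \vee y$ of length $\ell(z)+2$ together with saturated chains $z \lessdot x \lessdot x' \lessdot x \vee y$ and $z \lessdot y \lessdot y' \lessdot x \vee y$ with $x' \neq y'$. The intermediate elements $x'$ and $y'$ are obtained by ``making room'' for the second reflection via an auxiliary bridge cover.

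\textbf{Verifying $x \vee y \leq t$.} In both cases, the bound by $t$ is obtained from Lemma~\ref{lem:comparedirectly}: the hypothesis $x, y \leq t$ controls the positions of all values moved under the relevant reflections, and those conditions (namely conditions (1) and (2) of Lemma~\ref{lem:comparedirectly}) propagate through the composition $t_{a,b}t_{c,d}$ (or its length-two analogue in Case B) because the transpositions only relocate values within the union of the two bridge intervals, while $t$'s decorated positions outside these intervals are unchanged.

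\textbf{Main obstacle.} The principal difficulty is the bookkeeping in Case B: the reflections $t_{i,j+s\ell}$ in the bridge order can straddle multiple $\ell$-periods, so one must enumerate how two overlapping bridge intervals can meet (shared endpoints, nested intervals, partial overlap modulo $\ell$), and in each subcase both construct $x \vee y$ explicitly and certify that it remains $n$-bounded and $\ell$-periodic. The hypothesis that $x, y \leq t$ is what rules out pathological interactions; without it the lemma would fail (as the failure of the lattice property for $\widetilde{\Bound}_{p\ell}(k,\ell)$ alluded to in Remark~\ref{rmk:lattice} illustrates). Once Case B is settled, Theorem~\ref{thm:titsmatsumoto} follows by the standard ``two chains diverge then reconverge'' induction argument familiar from the Matsumoto/Tits lemma.
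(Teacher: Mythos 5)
Your outline follows the same strategy as the paper (classify how the two bridge covers $x=zt_{a,b}$, $y=zt_{c,d}$ of $z$ can interact, then exhibit the join and the 2- or 3-move in each configuration), but it stops exactly where the content of the lemma begins: the enumeration of configurations, which you explicitly defer as "bookkeeping," is the whole proof. Moreover, your two-case dichotomy does not match the actual classification. After normalizing (using $t_{a,b}=t_{a+\ell,b+\ell}$ and assuming $a\le c$), one finds \emph{three} live configurations: (i) disjoint intervals $a<b<c<d<a+\ell$, where the transpositions commute and $x\vee y=zt_{a,b}t_{c,d}$ gives a 2-move; (ii) a shared endpoint $b=c$ with $b\in\{z(a),\,z(d)-n\}$, where the transpositions do \emph{not} commute and yet the join still has length $\ell(z)+2$, via the identity $zt_{a,b}t_{a,d}=zt_{b,d}t_{a,b}$ ("leaping over a decorated position") --- again a 2-move; and (iii) a shared endpoint $b=c$ in general position, where the braid relation $zt_{a,b}t_{b,d}t_{a,b}=zt_{b,d}t_{a,b}t_{b,d}$ gives a 3-move. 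Your Case A formula $x\vee y=zt_{a,b}t_{c,d}=zt_{c,d}t_{a,b}$ cannot produce configuration (ii), and your Case B, as written, describes chains of length three (hence a 3-move) while also citing a length-two identity, so it conflates (ii) and (iii).

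The second missing ingredient is the precise role of the hypothesis $x,y\le t$. It is used exactly once: to exclude the configuration $b=c$ and $d=a+\ell$, in which every position of $(a,b)\cup(b,a+\ell)$ is decorated, the order filters above $x$ and above $y$ are each single chains, and those chains meet only at the formal maximum $\hat 1$ --- so no join exists in $\Bound_{p\ell}(k,\ell)$ and the lemma would be false without the hypothesis (this is the $2k{+}2$-gon of Remark~\ref{rmk:lattice}). You correctly sense that the hypothesis "rules out pathological interactions," but identifying \emph{which} configuration it rules out, and why, is an essential step, not a remark. Finally, one must also verify $x\vee y\le t$ for the surviving cases and check $n$-boundedness of the intermediate elements; your plan to route the first of these through Lemma~\ref{lem:comparedirectly} is reasonable, but neither verification is carried out.
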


We believe that $x\vee y$ is in fact a least upper bound of $x,y$ in bridge order, but we do not carefully prove this.

\begin{proof} Suppose $x,y$ cover $z$ and moreover with $x = zt_{ab}$, $y = zt_{cd}$. There is some flexibility in this notation, since e.g. $t_{ab} = t_{a+\ell,b+\ell}$. 

We claim that up to this flexibility, the covering either takes the form 
\begin{itemize}
\item $x = zt_{ab}$, $y = zt_{cd}$ where $a < b < c < d < a+\ell$,
\item $x = zt_{ab}$,$y = zt_{bd}$ where $b \in \{z(a),z(d)-n\}$ and $d < a+\ell$,
\item $x = zt_{ab}$, $y = zt_{bd}$ where $d < a+\ell$. 
\end{itemize}

First we argue that these are indeed the only cases. We may assume $a \leq c$. We can immediately rule out that $c < b$ since the covering $z < zt_{ab}$ would imply that $c \in {\rm Dec}(z)$, hence that $y \notin \Bound_n(k,\ell)$. Similarly we can rule out $d>a+\ell$. We have cases $b<c$ or $b=c$. 

If $b < c$ and $d = a+\ell$ then using the flexibility above we can rename 
$t_{ab},t_{c,a+\ell}$ as $t_{c,a+\ell},t_{a+\ell,b+\ell}$, 
which is an instance of the ``$b=c$'' case. 
Otherwise, we have $a < b < c < d$ as in the first case. 

If $b=c$ and $d = a+\ell$, then every element of $(a,b)\cup (b,a+\ell) \in {\rm Dec}(z)$. A simple calculation shows that the order filter above $x$ is a chain, the order filter above $y$ is a chain, and these two chains only meet at the maximal element  $\hat{1} \in \widetilde{\Bound}_{n}(k,\ell)$. So $x,y$ are not covered by a common element $t$ as in the statement of the lemma. 

So we are left assuming $a < b = c < d <a+\ell$, and we have broken this up into the further two cases above because they will correspond to different coverings. 

Now we construct the element $x\vee y$ in each of the three cases.  In the first case, we have $x\vee y := zt_{ab}t_{cd} = zt_{cd}t_{ab}$. In the second case, if $b = z(a)$, we have $x \vee y := t_{ab}t_{ad} = zt_{bd}t_{ab}$, and if 
$b = z(d)-n$, we have $x \vee y := zt_{bd}t_{ad} = zt_{ab}t_{bd}$. These covers involve leaping over decorated positions. Finally, in the third case we have the usual braid relation $x \vee y := zt_{ab}t_{bd}t_{ab} = zt_{bd}t_{ab}t_{bd}$. In the first two cases the chains are related by a 2-move, and in the third case they are related by a 3-move.
\end{proof}

\begin{proof}[Proof of Theorem~\ref{thm:titsmatsumoto}]
We already argued that all maximal chains have the claimed length while arguing Proposition~\ref{prop:maximalelts}. It remains to prove the move-connectedness statement. We do this by induction on $\ell(t)-\ell(s)$. Let $f_h \lessdot f_{h-1} \lessdot \cdots \lessdot f_0$ be the chain ${\bf f}$ and use primes 
$f'_h \lessdot \cdots \lessdot f'_0$ to denote elements of the chain ${\bf f'}$. Let $f_i \neq f'_i$ be the earliest step in which these chains disagree. Then $i>h$ by hypothesis. 

We have $f_{i-1} \leq f_i,f'_i \leq f_0$. Thus by Lemma~\ref{lem:bothcover} there exists $f_i \vee f_i'$ with $f_i,f'_i \leq f_i \vee f'_i \leq f_0$. And moreover, we have chains $f_{i-1} \lessdot f_i \lessdot \cdots \lessdot f_i \vee f'_i$ and $f_{i-1} \lessdot f'_i \lessdot \cdots \lessdot f_i \vee f'_i$ related by a 2- or 3-move. Composing these with an arbitrary chain in ${\rm Chains}(f_{i} \vee f'_i,t)$ we get chains ${\bf g} = f_{h} \lessdot \cdots \lessdot f_i \lessdot \cdots \lessdot f_i \vee f'_i \cdots \lessdot f_0$ and ${\bf g'} = f'_{h} \lessdot \cdots \lessdot f'_i \lessdot \cdots \lessdot f_i \vee f'_i \cdots \lessdot f_0$ differing by a 2- or 3-move. By induction, the chains  ${\bf g} $ and ${\bf f}$ are related by a finite sequence of 2- or 3-moves, and likewise for 
${\bf g'}$ and ${\bf f'}$. Thus, all of the chains ${\bf f}$, ${\bf f'}$, ${\bf g'}$, ${\bf g}$ are connected by sequences of 2 or 3-moves, as required.  
%\in {\rm Chains}(f'_{i},t)$, both containing $f_{i} \vee f'_{i}$ and agreeing on all elements $\geq f_i \vee f'_i$. By induction, the chains ${\bf f}$ and $\{f_h,\dots,f_{i-1}\} \cup {\bf f''}$ are related by a finite sequence of 2-and 3-moves, and likewise for the chains $\{f_h,\dots,f_{i-1}\} \cup {\bf f'''}$. But by Lemma~\ref{lem:bothcover}, the chains $\{f_h,\dots,f_{i-1}\} \cup {\bf f''}$ and $\{f_h,\dots,f_{i-1}\} \cup {\bf f'''}$ are related by a 2-or 3-move. 
\end{proof}

\subsection{{Proof of Lemma~\ref{lem:Toep}}}\label{secn:proofToep}
We explicitly compute the determinant of the Toeplitz matrix from Lemma~\ref{lem:Toep} as needed to prove Theorem~\ref{thm:onestep}. 

\begin{proof} Abbreviate $J:= \Delta_{I_{\ell+1}}$ and $K:= \Delta_{I_{2}}$. The first column of $M$ looks like $K+\eta_{[1]}J,J,0,\dots$. Expanding along this column, %we have $\det(p_{[j-i]}K+p_{[j-i+1]}J)_{i,j \in [t]}$ is given by
\begin{align*}
\det(M_{i,j \in [t]})&= (K+\eta_1J)\det(M_{i,j \in [1,t-1]})-J\det(M_{i \in 1 \cup [3,t], j \in [2,t]}) \\
&= \sum_{s = 0}^{t-1} K^sJ^{t-s}\eta_s+J(\eta_1\det(M)_{i,j \in [2,t]}-\det(M)_{i \in 1 \cup [3,t], j \in [2,t]})\\
&= \sum_{s = 0}^{t-1} \eta_sK^sJ^{t-s}+J\det(M')_{i \in 1 \cup [3,t], j \in [2,t]},
\end{align*}
so \eqref{eq:principal} follows by induction on $t$ if we establish \eqref{eq:offprincipal}.

To do this, we perform row operations to $M'$ (which do not change the value of the determinant) and apply \eqref{eq:straightenrs} repeatedly. To establish \eqref{eq:offprincipal} for a given $t$, we replace the first row $\eta_1R_2-\eta_1$ of $M'$ by the linear combination 
$\eta_1R_2-R_1-(\eta_2R_3+\eta_3R_4+\cdots+\eta_{t-1}R_{t})$. We claim that after this row operation is performed, the first $(t-1)$ entries in the first row equal zero, and the $t$th entry equals $(-1)^{t-1}\eta_{t}J$. Once we prove this, \eqref{eq:offprincipal} follows by expanding along the first column (picking up a sign of $(-1)^{t-1}$), noting that the matrix $M'_{i \in [3,t], \, j \in [2,t-1]} = M_{i \in [3,t], j \in [2,t-1]}$ is upper triangular with $J$ on each diagonal. 

The case $t=2$ corresponds the $1\times 1$ matrix with entry $\eta_1(K+\eta_1J)-(\eta_1K+\eta_{[2]}J)$, which equals $\eta_2J$ using \eqref{eq:straightenrs}. More generally, the entry in the first row and in column $i+2$ equals
$$\eta_1(\eta_{[i]}K+\eta_{[i+1]J})-(\eta_{[i+1]}K+\eta_{[i+2]J}) = \eta_{[i-1] \cup i+1}K+\eta_{[i] \cup i+2}J,$$
again using \eqref{eq:straightenrs}.

Subtracting off $\eta_2R_3$, we zero out the first nonzero entry, and the entries to the right take the form
\begin{align*}
(\eta_{[i-1] \cup i+1}K+\eta_{[i] \cup i+2}J)-\eta_2(\eta_{[i-1]}K+\eta_{[i]}J) &= (\eta_{[i-1] \cup i+1}-\eta_2\eta_{[i-1]})K+(\eta_{[i] \cup i+2}-\eta_2\eta_{[i]})J \\
&= -(\eta_{[i-2] \cup i+1})K-(\eta_{[i-1] \cup i+2})J.
\end{align*}
The first nonzero entry is in column 3 with value $i=1$, and is given by $-(\eta_{[i-2] \cup i+1})K-(\eta_{[i-1] \cup i+2})J = -(\eta_{3})J$ as claimed. Continuing in this fashion, each time we subtract off $\eta_{t-1}R_t$, we zero out one more entry in the first column, and we rewrite each of the nonzero entries using \eqref{eq:straightenrs}, with the first nonzero entry given by $(-1)^{t-1}\eta_tJ$.  
\end{proof}


\begin{thebibliography}{1}
\bibitem{AHBC}
N.~Arkani-Hamed, J.L.~Bourjaily, F.~Cachazo, A.B.~Goncharov, A.~Postnikov, J.~Trnka
Scattering Amplitudes and the Positive Grassmannian,
{\tt arXiv:1212.5605} (2014). 


\bibitem{BjornerBrentiA}
A.~Bj\"orner, F.~Brenti, 
Affine permutations of type A,
The Foata Festschrift. {\sl Electron. J. Combin.} 3 (1996), no. 2, Research Paper 18, 35 pp.


\bibitem{CaoLi}
P.~Cao, F.~Li,
On some combinatorial properties of generalized cluster algebras, 
{\tt arXiv:1910.013154} (2019). 


\bibitem{ChekShap}
L.~Chekhov, M.~Shapiro,
Teichm\"uller spaces of Riemann surfaces with orbifold points of arbitrary order and cluster variables,
{\sl Int. Math. Res. Not.} 2014, no. 10, 2746--2772.

\bibitem{CDFL}
W.~Chang, B.~Duan, C.~Fraser, J.-R. Li,
Quantum affine algebas and Grassmannians, 
{\sl Math. Z.} 296 (2020), no. 3-4, 1539--1583.




%\bibitem{DKK}
%V.I.~Danilov, A.V.~Karzanov, G.A.~Koshevoy, 
%On maximal weakly separated set-systems,
%{\sl J. Algebraic Combin.} 32 (2010), no. 4, 497–-531. 


\bibitem{EuFu}
S.-P. Eu, T.-S. Fu, 
The cyclic sieving phenomenon for faces of generalized cluster complexes. 
{\sl Adv. in Appl. Math.} 40 (2008), no. 3, 350--376.


\bibitem{FSTskew}
A.~Felikson, M.~Shapiro, P.~Tumarkin, 
Cluster algebras of finite mutation type via unfoldings,
{\sl Int. Math. Res. Not. IMRN} 2012, no. 8, 1768--1804. 

\bibitem{FSTorb}
A.~Felikson, M.~Shapiro, P.~Tumarkin, 
Cluster algebras and triangulated orbifolds, 
{\sl Adv. Math.} 231 (2012), no. 5, 2953--3002.

\bibitem{FGMod}
V.~V.~Fock, A.~B.~Goncharov, 
Moduli spaces of local systems and higher Teichm\"uller Theory,
\textsl{Publ. Math. Inst. Hautes. Etudes. Sci.} no. \textbf{103} (2006) 1--211. 


%\bibitem{CATSI}
%S.~Fomin, M.~Shapiro, D.~Thurston, 
%Cluster algebras and triangulated surfaces. Part I: Cluster complexes,
%\textsl{Acta Math.}\ \textbf{201} (2008), no. 1, 83--146. 

%\bibitem{CATSII} 
%S.~Fomin, D.~Thurston,
%Cluster algebras and triangulated surfaces. Part II: Lambda lengths,
%{\sl Mem. Amer. Math. Soc.} 255 (2018), no. 1223, v+97 pp.



\bibitem{CAI}
S.~Fomin, A.~Zelevinsky,
Cluster algebras~I: Foundations,
\textsl{J. Amer. Math. Soc.}  \textbf{15}  (2002),  no. 2, 497--529.

\bibitem{CAII}
S.~Fomin, A.~Zelevinsky,
Cluster algebras~II: Finite type classification,
Invent. Math. 154 (2003), no. 1, 63--121. 

%\bibitem{CAIII}
%A.~Berenstein, S.~Fomin, A.~Zelevinsky,
%Cluster algebras. III. Upper bounds and double Bruhat cells,
%\textsl{Duke Math. J.}  \textbf{126}  (2005),  no. 1, 1--52.


\bibitem{CABook}
S.~Fomin, L.~Williams, A.~Zelevinsky,
{\sl Introduction to Cluster Algebras}, textbook draft available at {\tt arxiv:1707.07190} (2020). 




\bibitem{FraserQH}
C.~Fraser,
Quasi-homomorphisms of cluster algebras,
\textsl{Adv. in Appl. Math} 81, (2016) 40--77. 


\bibitem{FraserBraid}
C.~Fraser,
Braid group symmetries of Grassmannian cluster algebras,
{\sl Selecta Math.} (N.S.) 26 (2020), no. 2, Paper No. 17, 51 pp.




\bibitem{GKLI}
P.~Galashin, S.~N.~Karp, T.~Lam, 
The totally nonnegative Grassmannian is a ball,
{\sl S\'em. Lothar. Combin.} 80B (2018), Art. 23, 12 pp.


\bibitem{GKLIII}
P.~Galashin, S.N.~Karp, T.~Lam, 
Regularity theorem for totally nonnegative flag varieties,
{\tt arXiv:1904.00527 } (2019). 

\bibitem{GalashinLamParity}
P.~Galashin, T.~Lam,
Parity duality for the amplituhedron,
{\tt arXiv:1805.00600} (2018). 



\bibitem{GalashinLam}
P.~Galashin, T.~Lam, 
Positroid varieties and cluster algebras,
{\tt arxiv:1906.03501} (2019). 



\bibitem{CAPG} 
M.~Gehktman, M.~Shapiro, A.~Vainshtein,
\textsl{Cluster Algebras and Poisson Geometry},
American Mathematical Society,
Providence, RI, 2010. 

\bibitem{GSVDouble}
M.~Gekhtman, M.~Shapiro, A.D.~Vainshtein, 
Double of $\GL_n$ and generalized cluster structures,
{\sl Proc. Lond. Math. Soc.} (3) 116 (2018), no. 3, 429--484.

\bibitem{GSVComplete}
M.~Gekhtman, M.~Shapiro, A.D.~Vainshtein, 
Generalized cluster structures related to the Drinfeld Double of $\GL_n$, 
{\tt arxiv:2004.05118} (2020).


\bibitem{GSVStaircase}
M.~Gekhtman, M.~Shapiro, A.D.~Vainshtein, 
Periodic staircase matrices and generalized cluster structures,
{\tt arXiv:1912.00453} (2019).



\bibitem{Gleitz}
A.-~S.~Gleitz,
Representations of $U_q(Lsl2)$ at roots of unity and generalised cluster algebras, {\sl European J. Combin.} 57 (2016), 94--108. 

\bibitem{GoncharovShenDT}
A.~Goncharov, L.~Shen,
Donaldson-Thomas transformations for moduli spaces of G-local systems,
{\sl Adv. Math.} 327 (2018), 225--348.

\bibitem{GHKK}
M.~Gross, P.~Hacking, S.~Keel, M.~Kontsevich, 
Canonical bases for cluster algebras. 
{\sl J. Amer. Math. Soc.} 31 (2018), no. 2, 497--608.

\bibitem{WKB}
K.~Iwaki, T.~Nakanishi, 
Exact WKB analysis and cluster algebras II: Simple poles, orbifold points, and generalized cluster algebras,
{\sl Int. Math. Res. Not.} IMRN 2016, no. 14, 4375--4417. 

\bibitem{Karp}
S.~N.~Karp, 
Moment curves and cyclic symmetry for positive Grassmannians,
{\sl Bull. London Math. Soc.}, 51 (2019), 900--916.

\bibitem{Karpman}
R.~Karpman,
Bridge graphs and Deodhar parametrizations for positroid varieties, {\sl J. Combin. Theory Ser. A} 142 (2016), 113--146.


\bibitem{KLS}
A.~Knutson, T.~Lam, D.E.~Speyer,
Positroid varieties: juggling and geometry,  
{\it Compos. Math.} 149 (2013), no. 10, 1710–-1752.

\bibitem{LamCDM}
T.~Lam, 
Totally nonnegative Grassmannian and Grassmann polytopes, 
{\sl Current developments in mathematics} 2014, 51--152, Int. Press, Somerville, MA, (2016).

%\bibitem{LevinsonPurbhoo}
%J.~Levinson, K.~Purbhoo, 
%Class groups of open Richardson varieties in the Grassmannian are trivial,
%{\tt arXiv:1908.02925} (2019). 


\bibitem{Lusztig94}
G.~Lusztig, 
Total positivity in reductive groups, 
 Lie theory and geometry, 531--568, Progr. Math., 123, 
Birkhäuser, 1994.


\bibitem{MSTwist}
G.~Muller, D.E.Speyer,
The twist for positroid varieties,
{\sl Proc. Lond. Math. Soc.} (3) 115 (2017), no. 5, 1014--1071.

\bibitem{OPS}
S.~Oh, A.~Postnikov, D.~E~Speyer,
Weak separation and plabic graphs, 
{\sl Proc. Lond. Math. Soc.} (\textbf{3}) 110 (2015), no. 3, 721--754. 

%\bibitem{OS}
%S.~Oh, D.~E.~Speyer, 
%Links in the complex of weakly separated collections,
%{\sl J. Comb. 8} (2017), no. 4, 581--592.


\bibitem{PaqSchiff}
C.~Paquette, R.~Schiffler, 
Group actions on cluster algebras and cluster categories,
{\sl Adv. Math.} 345 (2019), 161--221.

\bibitem{PTZ}
A.~Pasquali, E.~Th\"ornblad, J.~Zimmermann, 
Existence of symmetric maximal noncrossing collections of $k$-element sets.
{\sl J. Algebraic Combin.} 52 (2020), no. 1, 41--58.

\bibitem{Postnikov}
A.~Postnikov, 
Total positivity, grassmannians, and networks, 
\texttt{arXiv:math/0609764} (2006). 

\bibitem{PSW}
A.~Postnikov, D.~Speyer, L.~Williams, 
Matching polytopes, toric geometry, and the totally non-negative Grassmannian, 
{\sl J. Algebraic Combin. 30 (2009), no. 2, 173--191.}


\bibitem{tensors} 
S.~Fomin, P.~Pylyavskyy, 
Tensor diagrams and cluster algebras,
\textsl {Adv. Math.} (2016) 717--787. 

\bibitem{NakRup}
T.~Nakanishi, D.~Rupel, Dylan, 
Companion cluster algebras to a generalized cluster algebra. 
{\sl Travaux math\'ematiques. Vol. XXIV, 129–149, Trav. Math.}, 24, Fac. Sci. Technol. Commun. Univ. Luxemb., 2016.

\bibitem{RietschWilliams}
K.~Rietsch, L.~Williams,
Discrete Morse theory for totally non-negative flag varieties, 
{\sl Adv. Math.} 223 (2010), no. 6, 1855--1884.

\bibitem{Rhoades}
B.~Rhoades, 
Cyclic sieving, promotion, and representation theory,
{\it J. Combin. Theory Ser. A} 117 (2010), no. 1, 38--76.


\bibitem{Scott}
J.~Scott,
Grassmannians and cluster algebras,
\textsl{Proc. London Math. Soc. } (3) \textbf{92} (2006), no. 2, 345--380. 

\bibitem{ShenWeng} 
L.~Shen, D.~Weng, 
Cyclic sieving and cluster duality of Grassmannian.
{\sl SIGMA Symmetry Integrability Geom. Methods Appl.} 16 (2020), 067, 41 pages.



\bibitem{Simion}
R.~Simion,
A type-B associahedron,
{\sl Adv. in Appl. Math.} 30 (2003), no. 1-2, 2--25.

\bibitem{WilliamsBridge}
L.~K.~Williams, 
A positive Grassmannian analogue of the permutohedron, {\sl Proc. Amer. Math. Soc.} 144 (2016), no. 6, 2419--2436.


\bibitem{WilliamsCount}
L.~K.~Williams, 
Enumeration of totally positive Grassmann cells. 
{\sl Adv. Math.} 190 (2005), no. 2, 319--342.

\bibitem{WilliamsShelling}
L.~K.~Williams,
Shelling totally nonnegative flag varieties, 
{\sl J. Reine Angew. Math.} 609 (2007), 1--21.

\bibitem{Yakimov}
M.~Yakimov, 
Cyclicity of Lusztig's stratification of Grassmannians and Poisson geometry. 
{\sl Noncommutative structures in mathematics and physics}, 259--263, K. Vlaam. Acad. Belgie Wet. Kunsten (KVAB), Brussels, 2010.


\end{thebibliography}
\end{document}